\def\l@section{\@tocline{1}{10pt}{1pc}{}{}}
\def\l@subsection{\@tocline{2}{0pt}{1pc}{4.6em}{}}
\def\l@subsubsection{\@tocline{3}{0pt}{1pc}{7.6em}{}}
\renewcommand{\tocsection}[3]{%
  \indentlabel{\@ifnotempty{#2}{\makebox[2.3em][l]{%
    \ignorespaces#1 #2.\hfill}}}\textbf{#3}}
\renewcommand{\tocsubsection}[3]{%
  \indentlabel{\@ifnotempty{#2}{\hspace*{2.3em}\makebox[2.3em][l]{%
    \ignorespaces#1 #2.\hfill}}}#3}
\renewcommand{\tocsubsubsection}[3]{%
  \indentlabel{\@ifnotempty{#2}{\hspace*{4.6em}\makebox[3em][l]{%
    \ignorespaces#1 #2.\hfill}}}#3}
\newcommand{\MM}{\mathcal{M}}
\newcommand{\CC}{\mathcal{C}}
\newcommand{\IR}{\mathbb{R}}
\newcommand{\IB}{\mathbb{B}}
\newcommand{\LL}{\mathcal{L}}
\newcommand{\eps}{\varepsilon}
\newcommand{\ov}[1]{\overline{#1}}
\newcommand{\un}[1]{\underline{#1}}
\newcommand{\td}[1]{\widetilde{#1}}
\DeclareMathOperator{\thick}{thick}
\DeclareMathOperator{\thin}{thin}
\DeclareMathOperator{\stan}{stan}
\DeclareMathOperator{\scal}{scal}
\DeclareMathOperator{\hyp}{hyp}
\DeclareMathOperator{\Ric}{Ric}
\DeclareMathOperator{\dist}{dist}
\DeclareMathOperator{\diam}{diam}
\DeclareMathOperator{\vol}{vol}
\DeclareMathOperator{\const}{const}
\DeclareMathOperator{\Rm}{Rm}
\newcommand{\EMPTY}[1]{}
\newtheorem{Theorem}{Theorem}[section]
\newtheorem{Lemma}[Theorem]{Lemma}
\newtheorem{Corollary}[Theorem]{Corollary}
\newtheorem{Proposition}[Theorem]{Proposition}
\newtheorem{Definition}[Theorem]{Definition}
\numberwithin{equation}{section}
\newtheorem*{Claim}{Claim}
\newtheorem*{Claim1}{Claim 1}
\newtheorem*{Claim2}{Claim 2}
\theoremstyle{definition}
\newcommand{\blocktext}{aaa}
\newtheorem*{blocktheorem}{\blocktext}
\newenvironment{block}[1]{\renewcommand{\blocktext}{#1} \begin{blocktheorem}}{\end{blocktheorem}}
\title[Long-time behavior of 3d Ricci flow --- A]{Long-time behavior of 3 dimensional Ricci flow\\A: Generalizations of Perelman's long-time estimates}
\author{Richard H Bamler}
\address{UC Berkeley, Department of Mathematics, 970 Evans Hall, Berkeley, CA 94720, USA}
\email{rbamler@math.berkeley.edu}
\date{\today}
\begin{document}
\begin{abstract}
This is the first of a series of papers on the long-time behavior of 3 dimensional Ricci flows with surgery.
In this paper we first fix a notion of Ricci flows with surgery, which will be used in this and the following three papers.
Then we review Perelman's long-time estimates and generalize them to the case in which the underlying manifold is allowed to have a boundary.
Eventually, making use of Perelman's techniques, we prove new long-time estimates, which hold whenever the metric is sufficiently collapsed.
\end{abstract}

\maketitle
\tableofcontents

\section{Introduction}
In this paper, we first introduce a notion of Ricci flows with surgery, which will be used throughout the whole series of papers (see section \ref{sec:IntroRFsurg}).
We will also mention known existence and extension results for such Ricci flows with surgery.
Then we review the long-time estimates of Perelman (cf \cite{PerelmanII}) using our notion of Ricci flows with surgery (see section \ref{sec:Perelman}).
For future purposes we include in this discussion the case in which the underlying manifold is allowed to be non-compact or has a boundary.
The estimates in this more general case are of independent interest.
Eventually, we derive new long-time estimates using Perelman's techniques, which hold under certain collapsing conditions (see section \ref{sec:maintools}).
Those estimates will be used in \cite{Bamler-LT-main}.

In the following we will outline the results of this paper.
For a shorter summary we refer to subsection \ref{subsec:outlineofintroduction} of \cite{Bamler-LT-Introduction}, where these results are also explained within the context of the whole series of papers.
All results of this paper will be used to describe (3 dimensional) Ricci flows with surgery $\MM$ at large times $t$.
For a precise definition of Ricci flows with surgery, we refer to the subsequent section \ref{sec:IntroRFsurg}.
In this introduction we assume for simplicity that $\MM$ is non-singular, i.e. that it is given by a smooth family of Riemannian metrics $(g_t)_{t \in [0, \infty)}$ on a $3$-manifold $M$ that satisfies the Ricci flow equation $\partial_t g_t = - 2 \Ric_{g_t}$.

Our first two results are generalizations of results of Perelman for the case in which the underlying manifold $M$ is non-compact and/or has a boundary.

\begin{block}{Result I: Non-collapsedness controls curvature (Proposition \ref{Prop:genPerelman}, subsection \ref{subsec:Perelmanlongtime})}
This result is a generalization of a celebrated theorem of Perelman.
It roughly states the following:

 \textit{For every $w > 0$ there are constants $\ov{r} = \ov{r}(w) >0$ and $K = K(w) < \infty$ such that if $(x_0,t_0) \in M \times [0, \infty)$ and $0 < r_0 < \ov{r} \sqrt{t_0}$, then the the following holds: If the time-$t_0$ volume of the time-$t_0$ ball $B(x_0, t_0, r_0)$ is greater than $w r_0^3$ and the time-$t_0$ sectional curvatures are bounded from below by $- r_0^{-2}$ on this ball, then $|{\Rm_{t_0}}| < K r_0^{-2}$ on $B(x_0,t_0, r_0)$.}
 
We will generalize this result to the case in which $M$ is non-compact and/or has a boundary.
It will turn out that if the boundary of $M$ stays sufficiently far away from $x_0$ on a time-interval of the form $[t_0- \frac1{10} r_0^2, t_0]$, then the same estimate holds.
\end{block}

\begin{block}{Result II: Bounded curvature at bounded distance from non-collapsed regions (Lemma \ref{Lem:6.3bc}, subsection \ref{subsec:Perelmanlongtime})}
This result is a generalization of another result of Perelman, which can be summarized as follows:

\textit{For every $A < \infty$ there are constants $\ov{r} = \ov{r}(A) > 0$ and $K = K(A) < \infty$ such that if $(x_0,t_0) \in M \times [0, \infty)$ and $0 < r_0 < \ov{r} \sqrt{t_0}$, then we can make the following conclusion: If $|{\Rm_{t_0}}| \leq r_0^{-2}$ on the parabolic neighborhood $P(x_0, t_0, r_0, - r_0^2) = B(x_0, t_0, r_0) \times [t_0 - r_0^2, t_0]$, and $\vol_{t_0} B(x_0, t_0, r_0) > A^{-1} r_0^3$, then $|{\Rm_t}| < K r_0^{-2}$ on $B(x_0, t_0, A r_0)$.}

This result is an ingredient for the proof of Result I.
\end{block}

The next 5 results characterize the Ricci flow in regions that are collapsed, but that become non-collapsed when we pass to the universal, or a local cover of $M$.
By this we mean the following:
Consider the universal cover $\pi : \td{M} \to M$ of $M$ and the pull-backs $\td{g}_t = \pi^* g_t$ of the Riemannian metrics $g_t$ on $\td{M}$.
Then $(\td{g}_t)_{t \in [0, \infty)}$ is a solution to the Ricci flow on $\td{M}$.
Let $x \in M$ and consider a lift $\td{x} \in \td{M}$ of $x$.
Then the volume of a ball $B^{\td{M}} (\td{x}, t, r)$ around $\td{x}$ in $(\td{M}, \td{g}_t)$ is not smaller than the volume of the corresponding ball $B(x,t,r)$ in $(M, g_t)$.
In fact, the restriction
\[ \pi |_{B^{\td{M}} (\td{x}, t, r)} : B^{\td{M}} (\td{x}, t, r) \to B(x,t,r) \]
is a surjective local diffeomorphism.
Note that the volume of $B^{\td{M}} (\td{x}, t, r)$ can be much larger than that of $B(x,t,r)$, for example if $B(x,t,r)$ collapses along \emph{incompressible} (i.e. $\pi_1$-injective) $S^1$ or $T^2$-fibers.
On the other hand, if this collapse occurs along $S^1$ or $T^2$-fibers that are compressible in $M$, but incompressible in some subset $U \subset M$, then the volume of $B^{\td{M}} (\td{x}, t, r)$ might be comparable to that of $B(x,t,r)$, but the volume of the ball $B^{\td{U}} (\td{x}', t, r)$ around a lift $\td{x}'$ of $x$ in the universal cover $\td{U} \to U$ of $U$ will be much larger.
It can be seen that the volume of $B^{\td{U}} (\td{x}', t, r)$ is monotone in $U$ (with respect to inclusion), the largest volume being achieved if we choose $U = B(x,t,r)$.
We refer to subsection \ref{subsec:outlineofintroduction} of \cite{Bamler-LT-Introduction} for a further discussion of collapsing behaviors within the context of this series of papers.

We will now use the following terminology; for more details see Definition \ref{Def:goodness}:
For every point $x \in M$ and time $t \geq 0$, we first fix a local scale $\rho(x,t) > 0$, which roughly measures how large the negative sectional curvatures are in a neighborhood around $x$ (for more details see Definition \ref{Def:rhoscale}).
Then we call a point $x \in M$ \emph{good} if it is non-collapsed in the universal cover $\td{M}$ of $M$, i.e. if the volume of $B^{\td{M}}(\td{x},t,\rho(x,t))$ is larger than $c \rho^3(x,t)$ for some uniform $c > 0$.
If the volume of the ball $B^{\td{U}}(\td{x}',t,\rho(x,t))$ is larger than $c \rho^3(x,t)$ for some subset $U \subset M$, then we say that $x$ is \emph{good relative to $U$}.
Finally, if $x$ is good relative to $U = B(x,t,\rho(x,t))$, then we call $x$ \emph{locally good}.

We can now state the next 5 results:

\begin{block}{Result III: Bounded curvature around good points (Proposition \ref{Prop:curvcontrolgood}, subsection \ref{subsec:boundedcurvaroundgoodpts})}
This result can be summarized as follows:

\textit{For every $w > 0$ there are $\ov{r} = \ov{r} (w) > 0$ and $K = K(w) < \infty$ such that:
For every $(x_0, t_0) \in M \times [0,\infty)$ and $0 < r_0 < \sqrt{t_0}$ we have:
If $\vol_{t_0} B^{\td{M}} (\td{x}_0, t_0, r_0) > w r_0^3$ and if the time-$t_0$ sectional curvatures are bounded from below by $-r_0^{-2}$ on $B(x_0, t_0, r_0)$, then $|{\Rm_{t_0}}| < K r_0^{-2}$ on that ball.}

This result will follow from Result I applied to the universal covering flow $(\td{M}, \td{g}_t)$.
\end{block}

\begin{block}{Result IV: Bounded curvature at bounded distance from sufficiently collapsed and good regions (Proposition \ref{Prop:curvcontrolincompressiblecollapse}, subsection \ref{subsec:boundedcurvboundeddistgoodregions})}
This result can be interpreted as a variation of Result II in the collapsed case.
It reads:

\textit{For every $A < \infty$ there are constants $\ov{w} = \ov{w}(A) > 0$ and $K = K(A) < \infty$ such that for every $(x_0,t_0) \in M \times [0, \infty)$ and $0 < r_0 < \sqrt{t_0}$ we have: If $|{\Rm_{t_0}}| \leq r_0^{-2}$ on the parabolic neighborhood $P(x_0, t_0, r_0, - r_0^2) = B(x_0, t_0, r_0) \times [t_0 - r_0^2, t_0]$, $\vol_{t_0} B^{\td{M}} (\td{x}_0, t_0, r_0) > A^{-1} r_0^3$ and $\vol_{t_0} B(x_0, t_0, r_0) < \ov{w} r_0^3$, then $|{\Rm_{t_0}}| < K r_0^{-2}$ on $B(x_0, t_0, A r_0)$.}

Note that we did not need to assume that $r_0 < \ov{r} \sqrt{t_0}$ for some $\ov{r} = \ov{r}(A) > 0$ as in Result II.
This difference will be essential for us.
So Result IV is not strictly a generalization of Result II and does not directly follow from Result II by passing to the universal cover.
Instead, the proof of this result makes use of the fact that $B(x_0, t_0, r_0)$ is sufficiently collapsed.
\end{block}

\begin{block}{Result V: Curvature control at points that are good relative to regions whose boundary is geometrically controlled (Proposition \ref{Prop:curvboundinbetween}, subsection \ref{subsec:curvboundinbetween})}
We next consider a subset $U \subset M$ and a point $x_0 \in U$ that is good relative to $U$.
We then obtain a generalization of Result III:

\textit{For every $w > 0$ there are constants $\ov{r} = \ov{r} (w) > 0$ and $K = K(w) < \infty$ such that:
For every $(x_0, t_0) \in U \times [0, \infty)$ and $0 < r_0 < \ov{r}(w) \sqrt{t_0}$ we have:
If $\vol_{t_0} B^{\td{U}}(\td{x}'_0, t_0, r_0) > w r_0^3$ and the sectional curvatures are bounded from below by $- r_0^{-2}$ on $B(x_0, t_0, r_0)$ and if $|{\Rm}| < r_0^{-2}$ on $P(x,t_0, r_0, - r_0^2)$ for all $x \in \partial U$, then $|{\Rm_{t_0}}| < K r_0^{-2}$ on $B(x_0, t_0, r_0)$.}

The idea of the proof will be that under these assumptions the boundary of $U$ stays far enough away from $x_0$ for all times of $[t_0 - r_0^2, t_0]$ if it is far enough away at time $t_0$.
This fact will enable us to localize the arguments in the proof of Result III.
\end{block}

\begin{block}{Result VI: Controlled diameter growth of regions whose boundary is sufficiently collapsed and good (Proposition \ref{Prop:slowdiamgrowth}, subsection \ref{subsec:controlleddiamgrowth})}
We will next control the diameter growth of a subset $U \subset M$ under the Ricci flow, only based on geometric control around its boundary and a diameter bound at early times.
In rough terms, our statement will be:

\textit{For every $A < \infty$ there are $\ov{w} = \ov{w} (A) > 0$ and $A' = A'(A), K = K(A) < \infty$ such that:
Assume that $0 < r_0 < \sqrt{t_0}$ and $x_0 \in M$.
Assume that at time $t_0 - r_0^2$ the subset $U$ has bounded diameter: $U \subset B(x_0, t_0 - r_0^2, A r_0)$ and assume that the boundary of $U$ stays within controlled distance to $x_0$ for some time, so that $\partial U \subset B(x_0, t, A r_0)$ for all $t \in [t_0 - r_0^2, t_0]$.
Then if $\vol_{t_0} B^{\td{M}} (\td{x}_0, t_0, r_0) > A^{-1} r_0^3$, $\vol_{t_0} B(x_0, t_0, r_0) < \ov{w} r_0^3$ and the sectional curvatures on $B(x_0, t_0, r_0)$ are bounded from below by $- r_0^{-2}$, we have $U \subset B(x_0, t, A' r_0)$ for all $t \in [t_0 - r_0^2, t_0]$.
Moreover, $|{\Rm}| < K r_0^{-2}$ on $U \times [t_0 - r_0^2, t_0]$.}
\end{block}

\begin{block}{Result VII: Curvature control in large regions that are locally good everywhere (Proposition \ref{Prop:curvboundnotnullinarea}, subsection \ref{subsec:curvcontrollocgoodeverywhere})}
In the last result we derive a curvature bound assuming only local goodness.
In order to achieve this bound, we must however assume that the local goodness holds in a sufficiently large region and also at earlier times:

\textit{For all $w > 0$ there is a constant $K = K(w) < \infty$ such that the following holds:
Let $x_0 \in U \subset M$ and $0 < r_0 < \sqrt{t_0}$ and $b > 0$ and assume that $|{\Rm}| < r_0^{-2}$ on $P(x,t_0, r_0, - r_0^2)$ for all $x \in \partial U$.
Assume moreover that for every $t \in [t_0 - r_0^2, t_0]$, $x \in B(x_0, t, b) \cap U$ and every $0 < r < r_0$ for which $B(x,t,r) \subset U$ and for which the sectional curvatures on $B(x, t, r)$ are bounded from below by $-r^{-2}$ we have
\[ \vol_t B^{\td{B(x,t,r)}} (\td{x}', t, r) > w r^{3}, \]
where $\td{B(x,t,r)}$ is the universal cover of $B(x,t,r)$ and $\td{x}' \in \td{B(x,t,r)}$ is a lift of $x \in B(x,t,r)$. 
Then $|{\Rm_{t_0}}| < K r_0^{-2}$ on $U \cap B(x_0, t_0, b - r_0)$.}
\end{block}

We refer to \cite{Bamler-LT-Introduction} for historical remarks and acknowledgements.

Note that in the following all manifolds are always assumed to be orientable and 3 dimensional, unless stated otherwise.

\section{Introduction to Ricci flows with surgery} \label{sec:IntroRFsurg}
\subsection{Definition of Ricci flows with surgery} \label{sec:DefRFsurg}
In this section, we give a precise definition of the Ricci flows with surgery that we are going to analyze subsequently.
We will mainly use the language developed in \cite{Bamler-diploma} here.
In a first step, we define Ricci flows with surgery in a very broad sense.
After explaining some useful notions, we will make precise how we assume that the surgeries are performed.
This characterization can be found in Definition \ref{Def:precisecutoff}.
We have chosen a phrasing that unifies most of the common constructions of Ricci flows with surgery, such as those presented in \cite{PerelmanII}, \cite{KLnotes}, \cite{MTRicciflow}, \cite{BBBMP} and \cite{Bamler-diploma}.
Hence the main Theorems \ref{Thm:MainTheorem-III} and \ref{Thm:geombehavior} of \cite{Bamler-LT-main} can be applied to the Ricci flows with surgery that were constructed in each of these publications.

\begin{Definition}[Ricci flow with surgery] \label{Def:RFsurg}
Consider a time-interval $I \subset \IR$.
Let $T^1 < T^2 < \ldots$ be times of the interior of $I$ that form a possibly infinite, but discrete subset of $\IR$ and divide $I$ into the intervals
\[ I^1 = I \cap (-\infty, T^1), \quad I^2 = [T^1, T^2), \quad I^3 = [T^2, T^3), \quad \ldots \]
and $I^{k+1} = I \cap [T^k,\infty)$ if there are only finitely many $T^i$'s and $T^k$ is the last such time and $I^1 = I$ if there are no such times.
Consider Ricci flows $(M^1 \times I^1, g^1_t), (M^2 \times I^2, g^2_t), \ldots$ on manifolds $M^1, M^2, \ldots$, which may have a boundary, and time-intervals $I^1, I^2, \ldots$ (i.e. $\partial_t g_t^i = - 2 \Ric_{g^i_t}$ for each $i$).
Let $\Omega^i \subset M^i$ be open sets on which the metric $g^i_t$ converges smoothly as $t \nearrow T^i$ to some Riemannian metric $g^i_{T^i}$ on $\Omega_i$ and let 
\[ U^i_- \subset \Omega^i \qquad \text{and} \qquad U^i_+ \subset M^{i+1} \]
be open subsets such that there are isometries
\[ \Phi^i : (U^i_-, g_{T^i}^i) \longrightarrow (U^i_+, g_{T^i}^{i+1}), \qquad (\Phi^i)^* g_{T^i}^{i+1} |_{U_+^i} = g^i_{T^i} |_{U^i_-}. \]
We assume moreover that we never have $U_-^i = \Omega^i = M^i$ and $U_+^i = M^{i+1}$ and that every component of $M^{i+1}$ contains a point of $U^i_+$.
Then, we call $\MM = ((T^i), (M^i \times I^i, g_t^i), (\Omega^i), (U^i_{\pm}), (\Phi^i))$ a \emph{Ricci flow with surgery on the time-interval $I$} and the times $T^1, T^2, \ldots$ \emph{surgery times}.

If $t \in I^i$, then $(\MM(t), g(t)) = (M^i \times \{ t \}, g_t^i)$ is called the \emph{time-$t$ slice of $\MM$}.
The points in $\MM(T^i) \setminus U^i_+ \times \{T^i \}$ are called \emph{surgery points}.
For $t = T^i$, we define the \emph{(presurgery) time $T^{i-}$-slice} to be $(\MM(T^{i-}), g(T^{i-})) = (\Omega^i \times \{ T^i \}, g^i_{T^i})$.
The points $\Omega^i \times \{ T^i \} \setminus U^i_- \times \{ T^i \}$ are called \emph{presurgery points}.

If $\MM$ has no surgery points, then we call $\MM$ \emph{non-singular} and write $\MM = M \times I$.
\end{Definition}

We will often view $\MM$ in the \emph{space-time picture}, i.e. we imagine $\MM$ as a topological space $\bigcup_{t \in I} \MM (t) = \bigcup_i M^i \times I^i$ where the components in the latter union are glued together via the diffeomorphisms $\Phi^i$.

The following vocabulary will prove to be useful when dealing with Ricci flows with surgery:

\begin{Definition}[Ricci flow with surgery, space-time curve]
Consider a sub-interval $I' \subset I$.
A map $\gamma : I' \to \bigcup_{t \in I'} \MM(t)$ (also denoted by $\gamma : I' \to \MM$) is called a \emph{space-time curve} if $\gamma(t) \in \MM(t)$ for all $t \in I'$, if $\gamma$ restricted to each sub-time-interval $I^i$ is continuous and if $\lim_{t \nearrow T^i} \gamma(t) \in U^i_-$ and $\gamma(T^i) = \Phi^i(\lim_{t \nearrow T^i} \gamma(t))$ for all $i$.
\end{Definition}

So a space-time curve is a continuous curve in $\MM$ in the space-time picture that is parameterized by the time function.

\begin{Definition}[Ricci flow with surgery, points in time] \label{Def:pointsurvives}
For $(x,t) \in \MM$, consider a spatially constant space-time curve $\gamma$ in $\MM$ that starts or ends in $(x,t)$ and exists forwards or backwards in time for some duration $\Delta t \in \IR$ and that doesn't hit any (pre-)surgery points except possibly at its endpoints.
Then we say that the point $(x,t)$ \emph{survives until time} $t + \Delta t$ and we denote the other endpoint by $(x, t + \Delta t)$.

Observe that this notion also makes sense, if $(x, t^-) \in \MM$ is a presurgery point and $\Delta t \leq 0$.
\end{Definition}

Note that the point $(x, t+\Delta t)$ is only defined if $(x, t)$ survives until time $t + \Delta t$, which entails that $\MM$ is defined at time $t + \Delta t$.
Using the previous definition, we can define parabolic neighborhoods in $\MM$.

\begin{Definition}[Ricci flow with surgery, parabolic neighborhoods] \label{Def:parabnbhd}
Let $(x,t) \in \MM$, $r \geq 0$ and $\Delta t \in \IR$.
Consider the ball $B = B(x,t,r) \subset \MM(t)$.
For each $(x',t) \in B$ consider the union $I^{\Delta t}_{x',t}$ of all points $(x',t+t') \in \MM$ that are well-defined in the sense of Definition \ref{Def:pointsurvives} for $t' \in [0, \Delta t]$ resp. $t' \in [\Delta t, 0]$.
Define the \emph{parabolic neighborhood} $P(x,t,r,\Delta t) = \bigcup_{x' \in B} I^{\Delta t}_{x',t}$.
We call $P(x,t,r,\Delta t)$ \emph{non-singular} if all points in $B(x,t,r)$ survive until time $t+ \Delta t$.
\end{Definition}

The following notion will be used in section \ref{sec:maintools} and in \cite{Bamler-LT-main}.
\begin{Definition}[sub-Ricci flow with surgery] \label{Def:subRF}
Consider a Ricci flow with surgery $\MM = ((T^i), (M^i \times I^i, g_t^i), (\Omega^i), (U^i_{\pm}), (\Phi^i))$ on the time-interval $I$.
Let $I' \subset I$ be a sub-interval and consider the indices $i$ for which the intervals ${I'}^i = I^i \cap I'$ are non-empty.
For each such $i$ consider a submanifold ${M'}^i \subset M^i$ of the same dimension and possibly with boundary.
Let ${g'_t}^i$ be the restriction of $g^i_t$ onto ${M'}^i \times {I'}^i$ and set ${\Omega'}^i = \Omega^i \cap {M'}^i$ and ${U'_-}^i = U^i_- \cap {M'}^i$ as well as ${U'_+}^i = U^i_+ \cap {M'}^{i+1}$.
Assume that for each $i$ for which ${I'}^i$ and ${I'}^{i+1}$ are non-empty, we have $\Phi^i ({U'_-}^i) = {U'_+}^i$ and let ${\Phi'}^i$ be the restriction of $\Phi^i$ to ${U'_-}^i$.

In the case in which ${U'_-}^i = {\Omega'}^i = {M'}^i$ and ${U'_+}^i = {M'}^{i+1}$ for some $i$, we can combine the Ricci flows ${g'_t}^i$ and ${g'_t}^{i+1}$ on ${M'}^i \times {I'}^i$ and ${M'}^{i+1} \times {I'}^{i+1}$ to a Ricci flow on the time-interval ${I'}^i \cup {I'}^{i+1}$ and hence remove $i$ from the list of indices.

Then $\MM' = (({T'}^i), ({M'}^i \times {I'}^i, {g'_t}^i), ({\Omega'}^i), ({U'_{\pm}}^i), ({\Phi'}^i))$ is a Ricci flow with surgery in the sense of Definition \ref{Def:RFsurg}.

Assume that for all $t \in I'$ the boundary points $\partial \MM' (t) \subset \MM(t)$ (by this we mean all points in $\MM(t)$ that don't lie in the interior of $\MM'(t)$ or $\MM(t) \setminus \MM'(t))$ survive until any other time of $I'$ and that $\partial \MM' (t)$ is constant in $t$.
Then we call $\MM'$ a \emph{sub-Ricci flow with surgery} and we write $\MM' \subset \MM$.
\end{Definition}

We will now characterize three important local, approximate geometries, which we will frequently be dealing with: $\varepsilon$-necks, strong $\varepsilon$-necks and $(\varepsilon, E)$-caps.
The notions below also make sense for presurgery time-slices.

\begin{Definition}[Ricci flow with surgery, $\varepsilon$-necks] \label{Def:epsneck}
Let $\varepsilon > 0$ and consider a Riemannian manifold $(M,g)$.
We call an open subset $U \subset M$ an $\varepsilon$-neck, if there is a diffeomorphism $\Phi : S^2 \times (-\frac1{\varepsilon}, \frac1{\varepsilon}) \to U$ such that there is a $\lambda > 0$ with $\Vert \lambda^{-2} \Phi^* g - g_{S^2 \times \IR} \Vert_{C^{[\varepsilon^{-1}]}} < \varepsilon$, where $g_{S^2 \times \IR}$ is the standard round metric on $S^2 \times (-\frac1{\varepsilon}, \frac1{\varepsilon})$ of constant scalar curvature $2$.

We say that $x \in U$ is a \emph{center} of $U$ if $x \in \Phi(S^2 \times \{0\})$ for such a $\Phi$.

If $\MM$ is a Ricci flow with surgery and $(x,t) \in \MM$, then we say that \emph{$(x,t)$ is a center of an $\varepsilon$-neck} if $(x,t)$ is a center of an $\varepsilon$-neck in $\MM(t)$.
\end{Definition}

\begin{Definition}[Ricci flow with surgery, strong $\varepsilon$-necks]
Let $\varepsilon > 0$ and consider a Ricci flow with surgery $\MM$ and a time $t_2$.
Consider a subset $U \subset \MM(t_2)$ and assume that all points of $U$ survive until some time $t_1 <  t_2$.
Then the subset $U \times [t_1,t_2] \subset \MM$ is called a \emph{strong $\varepsilon$-neck} if there is a factor $\lambda > 0$ such that after parabolically rescaling by $\lambda^{-1}$, the flow on $U \times [t_1,t_2]$ is $\varepsilon$-close to the standard flow on $[-a,0]$ for $a \geq 1$.
By this we mean $a = \lambda^{-2} (t_2 - t_1) \geq 1$ and there is a diffeomorphism $\Phi : S^2 \times (- \frac1\varepsilon, \frac1\varepsilon ) \to U$ such that
\[ \Vert \lambda^{-2} \Phi^* g( \lambda^2 t + t_2) - g_{S^2 \times \IR} (t) \Vert_{C^{[\varepsilon^{-1}]}( S^2 \times ( - \frac1\varepsilon, \frac1\varepsilon ) \times [-a, 0])} < \varepsilon. \]
Here $(g_{S^2 \times \IR}(t))_{t \in (-\infty,0]}$ is the standard Ricci flow on $S^2 \times \IR$ that has constant scalar curvature $2$ at time $0$ and $\lambda^{-2} \Phi^*  g (\lambda^2 t + t_2)$ denotes the pull-back of the parabolically rescaled flow on $U \times [t_1, t_2]$.

A point $(x, t_2) \in U \times \{ t_2 \}$ is called a \emph{center of $U \times [t_1, t_2]$} if $(x, t_2) \in \Phi ( S^2 \times \{ 0 \} \times \{ t_2 \} )$ for such a $\Phi$.
\end{Definition}

\begin{Definition}[Ricci flow with surgery, $(\varepsilon, E)$-caps] \label{Def:epscap}
Let $\varepsilon, E > 0$ and consider a Riemannian manifold $(M, g)$ and an open subset $U \subset M$.
Suppose that $(\diam U)^2 |{\Rm}|(y) < E^2$ for any $y \in U$ and $E^{-2} |{\Rm}|(y_1) \leq |{\Rm}|(y_2) \leq E^2 |{\Rm}|(y_1)$ for any $y_1, y_2 \in U$.
Furthermore, assume that $U$ is either diffeomorphic to $\IB^3$ or $\IR P^3 \setminus \ov{\IB}^3$ and that there is a compact set $K \subset U$ such that $U \setminus K$ is an $\varepsilon$-neck.

Then $U$ is called an \emph{$(\varepsilon, E)$-cap}.
If $x \in K$ for such a $K$, then we say that $x$ is a \emph{center of $U$}.

Analogously as in Definition \ref{Def:epsneck}, we define $(\varepsilon, E)$-caps in Ricci flows with surgery.
\end{Definition}

With these concepts at hand we can soon give an exact description of the surgery process that will be assumed to be carried out at each surgery time.
To do this, we first fix a geometry that models the metric with which we will endow the filling $3$-balls after each surgery.

\begin{Definition}[surgery model]
Consider $M_{\stan} = \IR^3$ with its natural $SO(3)$-action and let $g_{\stan}$ be a complete metric on $M_{\stan}$ such that
\begin{enumerate}
\item $g_{\stan}$ is $SO(3)$-invariant,
\item $g_{\stan}$ has non-negative sectional curvature,
\item $(M_{\stan}, g_{\stan})$ is isometric to the standard round $S^2 \times (0, \infty)$ of scalar curvature $2$, outside of some compact subset.
\end{enumerate}
For every $r > 0$, we denote the $r$-ball around $0$ by $M_{\stan}(r)$.

Let $D_{\stan} > 0$ be a positive number such that the compact subset in (3) is contained in $M_{\stan} (D_{\stan})$.
Then we call $(M_{\stan}, g_{\stan}, D_{\stan})$ a \emph{surgery model}.
\end{Definition}

\begin{Definition}[$\varphi$-positive curvature] \label{Def:phipositivecurvature}
We say that a Riemannian metric $g$ on a manifold $M$ has \emph{$\varphi$-positive curvature} for $\varphi > 0$ if for every point $x \in M$ there is an $X > 0$ such that $\sec_x \geq - X$ and
\[ \scal_x \geq - \tfrac32 \varphi \qquad \text{and} \qquad \scal_x \geq 2 X (\log (2 X) - \log \varphi - 3). \]
\end{Definition}
Observe that by \cite[Theorem 4.1]{Ham} this condition is improved by the Ricci flow in the following sense: If $(M, (g_t)_{t \in [t_0, t_1]})$ is a Ricci flow on a compact $3$-manifold with $t_0 > 0$ and $g_{t_0}$ is $t_0^{-1}$-positive, then the curvature of $g_t$ is $t^{-1}$-positive for all $t \in [t_0, t_1]$.

\begin{Definition}[Ricci flow with surgery, $\delta(t)$-precise cutoff] \label{Def:precisecutoff}
Let $\MM$ be a Ricci flow with surgery defined on some time-interval $I \subset [0,\infty)$, let $(M_{\stan},g_{\stan},D_{\stan})$ be a surgery model and let $\delta : I  \to (0, \infty)$ be a function.
We say that $\MM$ is \emph{performed by $\delta(t)$-precise cutoff (using the surgery model $(M_{\stan},g_{\stan}, D_{\stan})$)} if
\begin{enumerate}
\item For all $t > 0$ the metric $g(t)$ is complete and has $t^{-1}$-positive curvature.
\item For every surgery time $T^i$, the subset $\MM(T^i) \setminus U^i_+$ is a disjoint union $D^i_1 \cup D^i_2 \cup \ldots$ of finitely or countably infinitely many smoothly embedded $3$-disks.
\item For every such $D^i_j$ there is an embedding 
\[ \Phi^i_j : M_{\stan}(\delta^{-1}(T^i)) \longrightarrow \MM(T^i) \]
such that $D^i_j \subset \Phi^i_j (M_{\stan}(D_{\stan}))$ and such that the images $\Phi^i_j (M_{\stan} \linebreak[1] (\delta^{-1} \linebreak[1] (T^i)))$ are pairwise disjoint and there are constants $0 <\lambda^i_j \leq \delta(T^i) \sqrt{T^i}$ such that 
\[ \big\Vert g_{\stan} - (\lambda^i_j)^{-2} (\Phi^i_j)^* g(T^i) \big\Vert_{C^{[\delta^{-1}(T^i)]}(M_{\stan}(\delta^{-1}(T^i)))} < \delta(T^i). \]
\item For every such $D^i_j$, the points on the boundary of $U^i_-$ in $\MM(T^{i-})$ corresponding to $\partial D^i_j$ are centers of strong $\delta(T^i)$-necks.
\item For every $D^i_j$ for which the boundary component of $\partial U^i_-$ corresponding to the sphere $\partial D^i_j$ bounds a $3$-disk component $(D')^i_j$ of $M^i \setminus U^i_-$ (i.e. a ``trivial surgery'', see below), the following holds:
For every $\chi > 0$, there is some $t_\chi < T^i$ such that for all $t \in (t_\chi,T^i)$ there is a $(1+\chi)$-Lipschitz map $\xi : (D')^i_j \to D^i_j$ that corresponds to the identity on the boundary.
\item For every surgery time $T^i$, the components of $\MM(T^{i-}) \setminus U^i_-$ are diffeomorphic to one of the following manifolds: $S^2 \times I$, $D^3$, $\IR P^3 \setminus B^3$, a spherical space form, $S^1 \times S^2$, $\IR P^3 \# \IR P^3$ and (in the non-compact case) $S^2 \times [0,\infty)$, $S^2 \times \IR$, $\IR P^3 \setminus \ov{B}^3$.
\end{enumerate}
We will speak of each $D^i_j$ as \emph{a surgery} and if $D^i_j$ satisfies the property described in (5), we call it a \emph{trivial surgery}.

If $\delta > 0$ is a number, we say that $\MM$ is \emph{performed by $\delta$-precise cutoff} if this is true for the constant function $\delta(t) = \delta$.
If $\MM$ is performed by $\delta(t)$-precise cutoff for some function $\delta : I \to (0, \infty)$ and if this function is not of essence, then we sometimes also say that $\MM$ is \emph{performed by precise cutoff}.
\end{Definition}

Note that a Ricci flow with surgery that is performed by precise cutoff may have time-slices that are non-compact or have a boundary.
We need to allow for this possibility, because we later want to analyze universal covers or subsets of Ricci flows with precise cutoff; such settings will, however, only be considered in this paper.
For this reason, we also need to allow the possibility of countably infinitely many surgeries in Definition \ref{Def:precisecutoff}(2).
If, however, a Ricci flow with surgery $\MM$ that is performed by precise cutoff has a time-slice $\MM (t_0)$ that is closed (i.e. compact and no boundary), then all later time-slices $\MM(t)$, $t \geq t_0$ are closed as well.
In particular, if the initial time-slice of $\MM$ is closed, as it will be assumed for normalized initial conditions (see Definition \ref{Def:normalized}), then so are all time-slices.

Observe furthermore that we have phrased the Definition in such a way that if $\MM$ is a Ricci flow with surgery that is performed by $\delta(t)$-precise cutoff, then it is also performed by $\delta'(t)$-precise cutoff whenever $\delta'(t) \geq \delta(t)$ for all $t$.
Note also that trivial surgeries don't change the topology of the component on which they are performed.

We remark that our notion of ``$\delta(t)$-precise cutoff'' differs slightly from Perelman's notion of ``$\delta(t)$-cutoff'' (cf \cite{PerelmanII}).
For example, in our picture the surgeries have size $\lesssim \delta(t) \sqrt{t}$, while in Perelman's construction the size is $\approx h( \delta(t), \linebreak[1] \delta^2 (t) r(t)) \linebreak[1] < \delta(t) r(t)$, where $r(t)$ is similar to the function $\un{r}_\varepsilon (t)$ introduced in Proposition \ref{Prop:CNThm-mostgeneral} below.
This difference will not be essential.
In fact, every ``Ricci flow with $\delta(t)$-cutoff'', as constructed by Perelman, is a ``Ricci flow with surgery that is performed by $\delta'(t)$-precise cutoff'', in the sense of Definition \ref{Def:precisecutoff}, for some suitable function $\delta'(t)$.

\subsection{Existence of Ricci flows with surgery} \label{sec:ExRFsurg}
Ricci flows with surgery and precise cutoff as introduced in Definition \ref{Def:precisecutoff} can indeed be constructed from any given initial metric.
We will make this fact more precise in this subsection.
To simplify things, we restrict the geometries that we want to consider as initial conditions.

\begin{Definition}[Normalized initial conditions] \label{Def:normalized}
We say that a Riemannian $3$-manifold $(M,g)$ is \emph{normalized} if 
\begin{enumerate}
\item $M$ is compact, orientable and has no boundary,
\item $|{\Rm}| < 1$ everywhere and
\item $\vol B(x,1) > \frac{\omega_3}2$ for all $x \in M$, where $\omega_3$ is the volume of a standard Euclidean $3$-ball.
\end{enumerate}
We say that a Ricci flow with surgery $\MM$ has \emph{normalized initial conditions}, if $\MM(0)$ is normalized.
\end{Definition}
Any Riemannian metric on a compact and orientable $3$-manifold can be rescaled to be normalized.
Next, recall
\begin{Definition}[$\kappa$-noncollapsedness]
Let $\MM$ be a Ricci flow with surgery, $(x,t) \in \MM$ and $\kappa, r_0 > 0$.
We say that $\MM$ is \emph{$\kappa$-noncollapsed in $(x,t)$ on scales less than $r_0 > 0$} if $\vol_t B(x,t,r) \geq \kappa r^3$ for all $0 < r < r_0$ for which
\begin{enumerate}
\item the ball $B(x,t,r)$ is relatively compact in $\MM(t)$ and does not intersect the boundary $\partial \MM(t)$,
\item the parabolic neighborhood $P(x,t,r, -r^2)$ is non-singular and
\item $|{\Rm}| < r^{-2}$ on $P(x,t,r,-r^2)$.
\end{enumerate}
\end{Definition}

We now introduce a notion of canonical neighborhood assumptions, which slightly differs from the notions that can be found in other sources, but which suits better our purposes.
\begin{Definition}[canonical neighborhood assumptions] \label{Def:CNA}
Let $\MM$ be a Ricci flow with surgery, $(x, t) \in \MM$ and $r, \varepsilon, \eta > 0$, $E < \infty$ be constants.
We say that $(x,t)$ satisfies the \emph{canonical neighborhood assumptions $CNA(r, \varepsilon, E, \eta)$} if $|{\Rm}|(x,t) < r^{-2}$ or if the following three properties hold:
\begin{enumerate}[label=(\textit{\arabic*})]
\item $(x,t)$ is a center of a strong $\varepsilon$-neck or an $(\varepsilon, E)$-cap $U \subset \MM (t)$, \\
If $U \approx \IR P^3 \setminus \ov{B}^3$, then there is a time $t_1 < t$ such that all points on $U$ survive until time $t_1$ and such that flow on $U \times [t_1, t]$ lifted to its double cover contains strong $\varepsilon$-necks and both lifts of $(x,t)$ are centers of such strong $\varepsilon$-necks,
\item $|{\nabla |{\Rm}|^{-1/2}}| (x,t) < \eta^{-1}$ and $| \partial_t |{\Rm}|^{-1} | (x,t)  < \eta^{-1}$,
\item $\vol_t B( x, t, r' ) > \eta (r')^3$ for all $0 < r' \leq |{\Rm}|^{-1/2} (x, t)$,
\end{enumerate}
or property (2) holds and the component of $\MM(t)$ in which $x$ lies, is closed and the sectional curvatures are positive and $E^2$-pinched on this component, i.e. they are contained in an interval of the form $(\lambda, E^2 \lambda)$ for some $\lambda > 0$ (and hence that component is diffeomorphic to a spherical space form).
\end{Definition}

Note that we have added an additional assumption in the case in which $U \approx \IR P^3 \setminus \ov{B}^3$ to ensure that the canonical neighborhood assumptions are stable when taking covers of Ricci flows with surgery (compare with Lemma \ref{Lem:tdMM}).
We remark that every manifold that contains a set diffeomorphic to $\IR P^3 \setminus \ov{B}^3$, admits a double cover in which this set lifts to a set diffeomorphic to $S^2 \times (0,1)$.
So it is possible to verify this extra assumption if all the other canonical neighborhood assumptions hold in any double cover. 

The following proposition gives a characterization of regions of high curvature in a Ricci flow with surgery that is performed by precise cutoff.
The power of this proposition lies in the fact that none of the parameters depends on the number or the preciseness of the preceding surgeries.
Thus, it provides a tool to perform surgeries in a controlled way and hence it can be used to construct long-time existent Ricci flows with surgery as presented in Proposition \ref{Prop:RFwsurg-existence} below.
The following proposition also plays an important role in the long-time analysis of Ricci flows with surgery that are performed by precise cutoff and will, in particular, be used in sections \ref{sec:Perelman} and \ref{sec:maintools} of this paper.

\begin{Proposition}[Canonical Neighborhood Theorem, Ricci flows with surgery] \label{Prop:CNThm-mostgeneral}
For every surgery model $(M_{\stan}, \linebreak[1] g_{\stan}, \linebreak[1] D_{\stan})$ and every $\varepsilon > 0$ there are constants $\un\eta > 0$ and $\un{E}_\varepsilon < \infty$ and decreasing continuous positive functions $\un{r}_\varepsilon, \un\delta_\varepsilon, \un\kappa : [0,\infty) \to (0, \infty)$ such that the following holds:

Let $\MM$ be a Ricci flow with surgery on some time-interval $[0,T)$ that has normalized initial conditions and that is performed by $\un\delta_\varepsilon (t)$-precise cutoff.
Then for every $t \in [0,T)$
\begin{enumerate}[label=(\textit{\alph*})]
\item $\MM$ is $\un\kappa(t)$-noncollapsed at scales less than $\sqrt{t}$ at all points of $\MM(t)$.
\item All points of $\MM (t)$ satisfy the canonical neighborhood assumptions $CNA \linebreak[1] (\un{r}_\varepsilon (t) \sqrt{t}, \linebreak[1] \varepsilon, \linebreak[1] \un{E}_\varepsilon,  \linebreak[1] \un\eta)$.
\end{enumerate}
\end{Proposition}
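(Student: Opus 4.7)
The plan is to prove (a) and (b) simultaneously by a bootstrap induction over time, adapting Perelman's original argument (cf.\ \cite{PerelmanII}, \cite{KLnotes}, \cite{MTRicciflow}) to the notion of Ricci flow with surgery of Definition~\ref{Def:precisecutoff}. Given $\varepsilon > 0$ and the surgery model $(M_{\stan}, g_{\stan}, D_{\stan})$, the constants $\un{E}_\varepsilon$ and $\un\eta$ are extracted from the classification of $3$-dimensional ancient $\kappa$-solutions, and the functions $\un{r}_\varepsilon, \un\delta_\varepsilon, \un\kappa : [0, \infty) \to (0, \infty)$ are constructed as monotone decreasing step-functions on a sequence of intervals $[t_j, t_{j+1}]$: having fixed admissible values up to $t_j$, one picks $\un\delta_\varepsilon$ small enough on $[t_j, t_{j+1}]$ so that any surgery performed with that precision does not destroy the estimates, and then shrinks $\un{r}_\varepsilon$ and $\un\kappa$ to the values dictated by the inductive closure condition. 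For the base case on a short initial interval, normalized initial conditions (Definition~\ref{Def:normalized}) and standard short-time Ricci flow estimates directly give both (a) and (b) with surgery-independent constants.

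For the inductive step, part (a) is established via Perelman's $\LL$-geometry and reduced volume monotonicity. The surgery-specific obstacle is that worldlines used in the $\LL$-geodesic argument may not survive back to the initial time: one must show that either the minimising $\LL$-geodesics can be chosen to avoid surgery regions, or that the $\LL$-length contributions of geodesic segments crossing surgery caps are negligible. This is achieved by exploiting the $t^{-1}$-positive curvature of Definition~\ref{Def:phipositivecurvature}, the strong $\un\delta_\varepsilon$-neck structure at surgery boundaries provided by Definition~\ref{Def:precisecutoff}(4), and the inductively established (b) at earlier times to cap the measure of paths that traverse high-curvature or surgery regions.

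Part (b) is argued by contradiction and parabolic blow-up. Suppose the assumption $CNA(\un{r}_\varepsilon(t_k), \varepsilon, \un{E}_\varepsilon, \un\eta)$ fails at points $(x_k, t_k)$ in a sequence of flows $\MM_k$. Rescale parabolically by $|{\Rm}|(x_k, t_k) \to \infty$ so that the curvature at the basepoint becomes $1$. Part (a) supplies $\un\kappa(t_k)$-noncollapsedness at the rescaled scale; a point-picking argument (Perelman's Claim~1 and Claim~2 in \cite{PerelmanII}, \S12) combined with the inductive (b) at slightly earlier times spreads the curvature bound to larger and larger backward parabolic neighborhoods; Hamilton--Cheeger--Gromov compactness then extracts a smooth pointed limit which is an ancient $\kappa$-solution, whose canonical neighborhood structure contradicts the supposed failure at $(x_k, t_k)$.

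The main obstacle is ensuring that this blow-up limit is genuinely a smooth ancient flow rather than a flow retaining surgery scars: the scales at which surgeries occur, controlled by $\un\delta_\varepsilon(t)$, must be chosen small enough relative to $\un{r}_\varepsilon(t)$ that after rescaling by the unboundedly large curvature $|{\Rm}|(x_k, t_k)$, any encountered surgery cap has vanishing scale and disappears in the limit. The key subsidiary fact is Definition~\ref{Def:precisecutoff}(4), namely that the boundary of each surgery cap consists of centers of strong $\un\delta_\varepsilon$-necks, so near-surgery points already satisfy (b) and cannot themselves serve as blow-up basepoints. Orchestrating the order of quantifiers so that the choice of $\un\delta_\varepsilon$ in each inductive step is strictly smaller than all thresholds later imposed by the contradiction argument --- without circularity --- is the technical heart of the proof.
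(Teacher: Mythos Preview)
Your outline is correct and matches the standard Perelman bootstrap argument. Note, however, that the paper itself does not supply a proof of this proposition: immediately after the statement it simply refers the reader to \cite[sec 5]{PerelmanII}, \cite[sec 77ff]{KLnotes}, \cite[Proposition 17.1, Theorem 15.9]{MTRicciflow}, \cite[Proposition B, Theorem 5.3.1]{BBBMP}, and \cite[Theorem 7.5.1]{Bamler-diploma}. Your sketch is precisely the architecture of the proofs found in those references, so in that sense you are following the same approach the paper endorses.
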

For a proof of this proposition and of Proposition \ref{Prop:RFwsurg-existence} see \cite[sec 5]{PerelmanII}, \cite[sec 77ff]{KLnotes}, \cite[Proposition 17.1, Theorem 15.9]{MTRicciflow}, \cite[Proposition B, Theorem 5.3.1]{BBBMP}, \cite[Theorem 7.5.1]{Bamler-diploma}.
The following proposition provides us an existence result for Ricci flows with surgery.
\begin{Proposition} \label{Prop:RFwsurg-existence}
Given a surgery model $(M_{\stan}, g_{\stan}, D_{\stan})$, there is a continuous function $\un\delta : [0, \infty) \to (0, \infty)$ such that if $\delta' : [0, \infty) \to (0, \infty)$ is a continuous function with $\delta'(t) \leq \un\delta(t)$ for all $t \in [0,\infty)$ and $(M,g)$ is a normalized Riemannian manifold, then there is a Ricci flow with surgery $\MM$ defined for times $[0, \infty)$ with $\MM(0) = (M,g)$ and that is performed by $\delta'(t)$-precise cutoff. (Observe that we can possibly have $\MM(t) = \emptyset$ for large $t$.)

Moreover, if $\MM$ is a Ricci flow with surgery on some time-interval $[0,T)$ that has normalized initial conditions and that is performed by $\un\delta(t)$-precise cutoff, then $\MM$ can be extended to a Ricci flow on the time-interval $[0, \infty)$ that is performed by $\delta'(t)$-precise cutoff on the time-interval $[T, \infty)$.
\end{Proposition}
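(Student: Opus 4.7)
The plan is to construct $\MM$ inductively, performing a surgery each time the Ricci flow encounters a singularity, using the canonical neighborhood structure guaranteed by Proposition \ref{Prop:CNThm-mostgeneral}. First, fix a small $\varepsilon > 0$ (small enough that, for a neck of modulus $\varepsilon$, the Hamilton-Ivey pinching constants transfer correctly to the scaled surgery cap), and consider the output $\un\delta_\varepsilon, \un r_\varepsilon, \un\kappa, \un E_\varepsilon, \un\eta$ of that proposition. Define $\un\delta(t)$ to be $\un\delta_\varepsilon(t)$, possibly further multiplied by a small universal factor so that any $\delta'(t) \le \un\delta(t)$ is small enough to (i) admit cutoff surgery along the centers of strong $\delta'(t)$-necks supplied by the CNA, (ii) preserve the $t^{-1}$-positive curvature condition of Definition \ref{Def:phipositivecurvature} across the glued-in cap, and (iii) leave room for the inductive verification of the CNA at subsequent surgery times.

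Given a normalized $(M,g)$, invoke Hamilton's short-time existence to obtain a smooth Ricci flow on a maximal interval $[0,T^1)$. Hamilton's result on preservation of pinching (cited after Definition \ref{Def:phipositivecurvature}) and a rescaling show the flow is normalized in the sense of (1) of Definition \ref{Def:precisecutoff} at every $t>0$. If $T^1 = \infty$, set $\MM = M \times [0,\infty)$ and stop. Otherwise, let $\Omega^1 \subset M$ be the open set on which $g_t$ converges smoothly as $t\nearrow T^1$; the complement consists of the high-curvature region. By Proposition \ref{Prop:CNThm-mostgeneral}(b) every point in the complement is a center of a strong $\varepsilon$-neck or $(\varepsilon,E)$-cap, so a standard ``$\varepsilon$-horn'' analysis exhibits each connected component of the complement as a union of disks bounded by $\delta'(T^1)$-necks centered sufficiently deep inside $\Omega^1$. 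Cut along these necks, discard the high-curvature components, and for each boundary sphere glue in a rescaled piece of $(M_{\stan}, g_{\stan})$ of scale $\lambda_j^i \le \delta'(T^1)$ matched to the neck via a near-isometric diffeomorphism $\Phi^1_j$. This produces the data $(\Omega^1, U^1_-, U^1_+, \Phi^1)$ required by Definition \ref{Def:RFsurg}, and condition (4) of Definition \ref{Def:precisecutoff} holds by construction. Then restart the Ricci flow on the post-surgery manifold $M^2$, obtain a new maximal existence time $T^2 > T^1$, and iterate.

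To see the iteration reaches $[0,\infty)$, one must show that the surgery times do not accumulate and that Proposition \ref{Prop:CNThm-mostgeneral} remains applicable after each surgery with the same constants. Accumulation is ruled out by a volume drop argument: each surgery removes a region whose volume is at least a fixed multiple of the cube of the surgery scale, and for times in a bounded interval the total volume is bounded, whereas all surgery scales are bounded below by a positive continuous function of time (this is where the lower bound on the neck radii coming from $\un r_\varepsilon$ and the upper bound on curvature at the cutting spheres enter). Applicability of the CNA across surgeries is exactly what Proposition \ref{Prop:CNThm-mostgeneral} asserts, since that statement is formulated for any flow performed by $\un\delta_\varepsilon(t)$-precise cutoff regardless of how many surgeries precede a given time. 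For the second, extension, statement, observe that the inductive construction above only uses the hypothesis that the flow has been performed by $\un\delta(t)$-precise cutoff up to the present time, so it may be started at $t=T$ as well as at $t=0$; at $T$ one either extends smoothly (if the flow is non-singular there) or performs one more cutoff surgery and continues.

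The main obstacle, and the reason the proof cannot be done by pure induction from scratch, is the circular interplay between the choice of $\un\delta$ and the constants $\un r_\varepsilon, \un\kappa, \un E_\varepsilon, \un\eta$ of Proposition \ref{Prop:CNThm-mostgeneral}: the canonical neighborhood conclusions must hold uniformly in the number of prior surgeries, while the legitimacy of each prior surgery depends on those very conclusions. This is resolved in the references \cite{PerelmanII,KLnotes,MTRicciflow,BBBMP,Bamler-diploma} by a simultaneous induction on the surgery times, in which $\un\delta$ is chosen small enough (relative to the parameters at that scale) that each surgery is a small perturbation of the pre-surgery metric in the $C^{[\delta^{-1}]}$ norm. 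Since this argument is carried out in detail in each of the cited works and only needs to be translated into the slightly generalized vocabulary of Definition \ref{Def:precisecutoff}, we defer the technical verification to those sources and will treat Proposition \ref{Prop:RFwsurg-existence} as a citation for the remainder of the paper.
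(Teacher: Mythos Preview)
Your proposal is correct and aligns with the paper's treatment: the paper does not give its own proof of this proposition but simply refers to \cite[sec 5]{PerelmanII}, \cite[sec 77ff]{KLnotes}, \cite[Proposition 17.1, Theorem 15.9]{MTRicciflow}, \cite[Proposition B, Theorem 5.3.1]{BBBMP}, \cite[Theorem 7.5.1]{Bamler-diploma}, and your sketch (inductive surgery construction, non-accumulation via volume drop, simultaneous induction to resolve the $\un\delta$--CNA circularity) accurately summarizes the content of those references before deferring to them just as the paper does.
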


We point out that the functions $\un\delta_\epsilon (t), \un{r}_\varepsilon(t), \un\kappa (t)$ and the constants $\un{\eta}, \un{E}_\varepsilon$ in Proposition \ref{Prop:CNThm-mostgeneral} as well as the function $\un\delta(t)$ in Proposition \ref{Prop:RFwsurg-existence} depend on the choice of the surgery model.
\begin{quote}
\textit{\textbf{From now on we will fix a surgery model $(M_{\stan}, g_{\stan}, D_{\stan})$ for the rest of this and the following three papers and we will not mention this dependence anymore.}}
\end{quote}

\section{Perelman's long-time analysis results and certain generalizations} \label{sec:Perelman}
\subsection{Perelman's long-time curvature estimates} \label{subsec:Perelmanlongtime}
In this subsection, we will review some of Perelman's long-time analysis results (see \cite{PerelmanII}).
We will generalize these results to the boundary case and go through most of their proofs.
The most important result of this section will be Proposition \ref{Prop:genPerelman} below.
It will be used in section \ref{sec:maintools} of this paper.
In addition, many of the Lemmas leading to this Proposition will also be used in that section.
The boundary case will be important for us, because we want to analyze Ricci flows in local covers.

The following notation will be used throughout the whole paper.
\begin{Definition} \label{Def:rhoscale}
Let $(M,g)$ be a Riemannian manifold and $x \in M$ a point.
We define
\[ \rho(x) = \sup \{ r \;\; : \;\; \sec \geq - r^{-2} \;\; \text{on} \;\; B(x,r) \}. \]
For $r _0 > 0$ we set furthermore $\rho_{r_0}(x) = \min \{ \rho(x), r_0 \}$.
If $(M, g) = \MM(t)$ is the time-slice of a Ricci flow (with surgery) $\MM$, then we often use the notation $\rho(x,t)$ and $\rho_{r_0} (x,t)$.
\end{Definition}

We also need to use the $\LL$-functional as defined in \cite[sec 7]{PerelmanI}:
For any smooth space-time curve $\gamma : [t_1, t_2] \to \MM$ ($t_1 < t_2 \leq t_0$) in a Ricci flow with surgery $\MM$ set
\begin{equation} \label{eq:defLLlength}
 \LL(\gamma) = \int_{t_1}^{t_2} \sqrt{t_0 - t'} \big(|\gamma'|^2(t') + \scal (\gamma(t'), t') \big) dt'.
\end{equation}
We say that $\LL$ is \emph{based in $t_0$} and call $\LL(\gamma)$ the \emph{$\LL$-length} of $\gamma$.
A curve $\gamma$ that is a critical point of $\LL$ with respect to variations that fix the endpoints is called \emph{$\LL$-geodesic}. 

We now present the main result of this section.
Before we do that we introduce the following convention that we will assume from now on:
We will often be dealing with Ricci flows with surgery $\MM$ defined on a time-interval of the form $[ t_0 - r_0^2, t_0]$ and most results require certain canonical neighborhood assumptions to hold on $\MM$.
For times that are very close to $t_0 - r_0^2$ this may be problematic since strong $\varepsilon$-necks might stick out of the time-interval.
So we will assume from now on that in such a setting $\MM$ can be extended backwards to a Ricci flow with surgery $\MM' \supset \MM$ in which the required canonical neighborhood assumptions hold on the time-interval $[t_0 - r_0^2, t_0]$.
In fact, in our applications $\MM$ will always arise as such a restriction.
Note that we could also resolve this issue by requiring in the assumptions of each of the following results that the canonical neighborhood assumptions hold on a slightly smaller time-interval $[t_0 - 0.99 r_0^2, t_0]$.

\begin{Proposition}[\hbox{\cite[6.8]{PerelmanII}} in the non-compact case] \label{Prop:genPerelman}
There is a constant $\varepsilon_0 > 0$ such that for all $w, r, \eta > 0$ and $E < \infty$ and $1 \leq A < \infty$ and $m \geq 0$ there are $\tau = \tau(w, A, E, \eta), \ov{r} = \ov{r}(w, A, E, \eta), \widehat{r} = \widehat{r}(w, E, \eta), \delta = \delta(r, w, A, E, \eta, m) > 0$ and $K_m = K_m (w, A, E, \eta), C_1 = C_1(w, A, E, \eta), Z = Z(w, A, \linebreak[1] E, \linebreak[1] \eta) \linebreak[1] < \infty$ such that: \\
Let $r_0^2 \leq t_0/2$ and let $\MM$ be a Ricci flow with surgery (whose time-slices are allowed to have boundary) on the time-interval $[t_0 - r_0^2, t_0]$ that is performed by $\delta$-precise cutoff and consider a point $x_0 \in \MM(t_0)$.
Assume that the canonical neighborhood assumptions $CNA (r \sqrt{t_0}, \varepsilon_0, E, \eta)$, as described in Definition \ref{Def:CNA}, are satisfied on $\MM$.
We also assume that the curvature on $\MM$ is uniformly bounded on compact time-intervals which don't contain surgery times and that all time slices of $\MM$ are complete.

In the case in which some time-slices of $\MM$ have non-empty boundary, we assume that
\begin{enumerate}[label=(\roman*)]
\item For all $t_1, t_2 \in [t_0 - \frac1{10} r_0^2, t_0]$, $t_1 < t_2$ we have:
if some $x \in B(x_0, t_0, r_0)$ survives until time $t_2$ and $\gamma : [t_1, t_2] \to \MM$ is a space-time curve with endpoint $\gamma(t_2) \in B(x, t_2, (A+3) r_0)$ that meets the boundary $\partial\MM$ somewhere, then it has $\LL$-length $\LL(\gamma) > Z r_0$ ($\LL$ being based in $t_2$, see (\ref{eq:defLLlength})).
\item For all $t \in [t_0 - \frac1{10} r_0^2, t_0]$ we have: if some $x \in B(x_0, t_0, r_0)$ survives until time $t$, then $B(x, t, 2(A+3) r_0 + r \sqrt{t_0})$ does not meet the boundary $\partial\MM(t)$.
\end{enumerate}
Now assume that
\begin{enumerate}[label=(\roman*), start=3]
\item $0 < r_0 \leq \ov{r} \sqrt{t_0}$,
\item $\sec_{t_0} \geq - r_0^{-2}$ on $B(x_0, t_0, r_0)$ and
\item $\vol_{t_0} B(x_0, t_0, r_0) \geq w r_0^3$.
\end{enumerate}
Then $|{\nabla^k \Rm}| < K_m r_0^{-2 - k}$ on $B(x_0, t_0, A r_0)$ for all $k \leq m$.
In particular, if $r_0 = \rho(x_0, t_0)$, then $r_0 > \widehat{r} \sqrt{t_0}$.

If moreover the surgeries on $\MM$ are performed by $\delta'$-cutoff for some $0 < \delta' \leq \delta$ with $C_1 \delta' \sqrt{t_0} \leq r_0$, then the parabolic neighborhood $P(x_0, t_0, A r_0, -\tau r_0^2)$ is non-singular and we have $|{\nabla^k \Rm}| < K_k r_0^{-2-k}$ on $P(x_0, t_0, A r_0, -\tau r_0^2)$ for all $k \geq 0$.
\end{Proposition}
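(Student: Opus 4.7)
The plan is to follow Perelman's proof of \cite[6.8]{PerelmanII} as a blueprint, adapted to the boundary setting by using assumptions (i) and (ii) to confine every auxiliary path and parabolic neighborhood that appears in the argument to the interior of $\MM$. I would argue by contradiction: assume the proposition fails, extract sequences $\MM_\alpha$, $(x_{0,\alpha}, t_{0,\alpha})$, $r_{0,\alpha}$ with $\ov{r}_\alpha, \widehat{r}_\alpha, \delta_\alpha \to 0$ for which the conclusion is violated while assumptions (i)--(v) continue to hold, and drive this to a contradiction by analysing the pointed parabolic limit.

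The first main step is the reduced-distance/non-collapsing propagation. Starting from $\vol_{t_0} B(x_0, t_0, r_0) \geq w r_0^3$ and the lower sectional curvature bound on the ball, one uses the $\LL$-functional based at $t_0$ to produce, at some earlier time $t_0 - \theta r_0^2$ for a controlled $\theta > 0$, a point within controlled distance of $x_0$ with bounded reduced length (compare \cite[\S 7]{PerelmanI}). Here assumption (i) is exactly what is needed: any space-time curve competing for the infimum of $\LL$ that meets $\partial\MM$ has $\LL$-length greater than $Z r_0$, and by choosing $Z = Z(w, A, E, \eta)$ larger than the $\LL$-length bound one gets from purely interior competitors (which can be pre-estimated in terms of $w$ alone by the lower curvature bound and volume lower bound on $B(x_0, t_0, r_0)$), the minimizing $\LL$-geodesic is forced to stay in the interior of $\MM$. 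From the usual monotonicity and gradient estimates for reduced length one then obtains a point at time $t_0 - \theta r_0^2$ which is non-collapsed at a definite scale $\sim r_0$.

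Next, I would feed this non-collapsed point into Lemma \ref{Lem:6.3bc} (Result II) to propagate a curvature bound out to distance $A r_0$ from $x_0$ at time $t_0$. Assumption (ii) enters here: it guarantees that the parabolic neighborhoods of radius $\lesssim 2(A+3) r_0 + r$ about points of $B(x_0, t_0, r_0)$ stay inside $\MM(t)$ for all $t \in [t_0 - \tfrac{1}{10} r_0^2, t_0]$, so the hypotheses of Lemma \ref{Lem:6.3bc} are satisfied and its conclusion can be iterated along a chain of balls joining our non-collapsed earlier point back to $B(x_0, t_0, A r_0)$. Points whose local curvature scale is $\leq r$ are handled by the canonical neighborhood assumptions $CNA(r, \varepsilon_0, E, \eta)$; the two inputs combine to give $|{\Rm_{t_0}}| < K r_0^{-2}$ on $B(x_0, t_0, A r_0)$. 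The higher-order estimates $|{\nabla^k \Rm}| < K_m r_0^{-2-k}$ and the non-singularity of $P(x_0, t_0, A r_0, -\tau r_0^2)$ then follow from local Shi derivative estimates on a slightly shrunken parabolic neighborhood, together with the observation that by $\delta$-precise cutoff any surgery whose support meets this neighborhood would have scale $\lesssim \delta \leq C_1^{-1} r_0$, which for $C_1 = C_1(w, A, E, \eta)$ large enough is incompatible with the $k=0$ bound just obtained. The assertion $r_0 > \widehat{r} \sqrt{t_0}$ in the case $r_0 = \rho(x_0, t_0)$ is then immediate from the $k=0$ bound and the definition of $\rho$ (the condition $r_0 \leq \ov{r}\sqrt{t_0}$ would otherwise force the sectional curvature to drop below $-r_0^{-2}$ somewhere on $B(x_0, t_0, r_0)$, contradicting the bound just proved).

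I expect the main obstacle to be ensuring that the contradiction argument closes cleanly once we allow a boundary. When passing to a limit along the counterexample sequence, one must rule out boundary points appearing in the pointed limit; this is precisely where assumptions (i) and (ii) must be used in tandem. Assumption (ii) controls all the ordinary metric balls and parabolic neighborhoods that Perelman's argument manipulates, while assumption (i) is tailor-made to confine the $\LL$-geodesics whose existence is invoked in the non-collapsing step. Verifying that the specific thresholds ``$Z r_0$'' and ``$2(A+3) r_0 + r$'' appearing in (i) and (ii) suffice to cover every auxiliary object in Perelman's proof is essentially careful bookkeeping, but it is the substantive content of the boundary generalization and where most of the work will go.
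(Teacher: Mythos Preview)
Your outline misses the central bootstrapping step and, as written, cannot get off the ground. Both the $\LL$-geometry argument (Lemma \ref{Lem:6.3a}) and Lemma \ref{Lem:6.3bc} require as a \emph{hypothesis} that $|{\Rm}| \leq r_0^{-2}$ on a non-singular parabolic neighborhood $P(x_0, t_0, r_0, -r_0^2)$. The assumptions of Proposition \ref{Prop:genPerelman} give you only a \emph{one-sided} sectional curvature bound $\sec_{t_0} \geq -r_0^{-2}$ at a \emph{single} time $t_0$, together with a volume lower bound. You therefore cannot ``feed the non-collapsed point into Lemma \ref{Lem:6.3bc}'' as you propose, nor can you run the reduced-length argument, because neither input is available: there is no a priori curvature bound backward in time to control the $\LL$-functional or to verify the parabolic hypothesis of Lemma \ref{Lem:6.3bc}. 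Your contradiction/limit scheme has the same problem: without non-collapsing you cannot extract a limit, and without curvature control on a parabolic neighborhood you cannot establish non-collapsing.

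The paper closes this gap via Lemma \ref{Lem:6.4} (Perelman's 6.4), which is precisely the device that converts the time-$t_0$ assumptions (iv) and (v) into a two-sided curvature bound on a parabolic neighborhood of definite size. That lemma uses Lemma \ref{Lem:6.6} to locate a sub-ball on which \emph{every} sub-ball has good volume, then runs a point-picking iteration (descending to smaller and smaller scales while staying inside the original ball) that intertwines Lemma \ref{Lem:6.5} and Lemma \ref{Lem:6.3bc} until the process terminates. Only after this does the paper apply Lemma \ref{Lem:6.3bc} at the large radius $(A+2)(r^*)^{-1}$ to propagate the bound out to $B(x_0, t_0, A r_0)$. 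The case $C_1 \delta > r_0$ (hence $r_0 < r$) is handled separately by a direct CNA argument. Your sketch of the derivative estimates, the surgery-avoidance via $C_1 \delta \leq r_0$, and the $\widehat{r}$ conclusion are fine once the $k=0$ bound is in hand, but the core missing ingredient is Lemma \ref{Lem:6.4}.
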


The following Corollary is a consequence of Propositions \ref{Prop:genPerelman} and \ref{Prop:CNThm-mostgeneral}.

\begin{Corollary}[cf \hbox{\cite[6.8, 7.3]{PerelmanII}}] \label{Cor:Perelman68}
There is a continuous positive function $\delta : [0, \infty) \to (0, \infty)$ such that for every $w > 0$, $1 \leq A < \infty$ and $m \geq 0$ there are constants $\tau = \tau(w, A), \ov{\rho} = \ov{\rho} (w), \ov{r} = \ov{r} (w, A), c_1 = c_1(w, A) > 0$ and $T = T(w, A, m), K_m = K_m(w, A) < \infty$ such that:

Let $\MM$ be a Ricci flow with surgery on the time-interval $[0, \infty)$ with normalized initial conditions (whose time-slices are all closed) that is performed by $\delta(t)$-precise cutoff.
Let $t > T$ and $x \in \MM (t)$.
\begin{enumerate}[label=(\textit{\alph*})]
\item If $0 < r \leq \min \{ \rho(x,t), \ov{r} \sqrt{t} \}$ and $\vol_t B(x,t, r) \geq w r^3$, then $|{\nabla^k \Rm}| < K_m r^{-2-k}$ on $B(x,t, Ar)$ for all $k \leq m$.
Moreover, if all surgeries on the time-interval $[t - r^2, t]$ are performed by $c_1 r t^{-1/2}$-precise cutoff, then the parabolic neighborhood $P(x, t, Ar, - \tau r^2)$ is non-singular and we have $|{\nabla^k \Rm}| < K_k r^{-2 - k}$ on $P(x, t, Ar, - \tau r^2)$ for all $k \geq 0$.
\item If $\vol_t B(x,t,\rho(x,t)) \geq w \rho^3 (x,t)$, then $\rho(x,t) > \ov{\rho} \sqrt{t}$ and the parabolic neighborhood $P(x,t, A \sqrt{t}, - \tau t)$ is non-singular and we have $|{\nabla^k \Rm}| < K_k t^{-1-k/2}$ on $P(x,t, A \sqrt{t}, - \tau t)$ for all $k \geq 0$.
\end{enumerate}
\end{Corollary}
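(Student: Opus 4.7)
Plan. This corollary follows by feeding the canonical neighborhood assumptions produced by Proposition \ref{Prop:CNThm-mostgeneral} into Proposition \ref{Prop:genPerelman}. First fix $\varepsilon = \varepsilon_0$ from Proposition \ref{Prop:genPerelman} and apply Proposition \ref{Prop:CNThm-mostgeneral} with this $\varepsilon$ to obtain the universal constants $\un\eta, \un{E}_{\varepsilon_0}$ and the decreasing continuous positive functions $\un{r}_{\varepsilon_0}, \un\delta_{\varepsilon_0}$. Declare the corollary's $\delta(t)$ to be any continuous positive function with $\delta(t) \leq \un\delta_{\varepsilon_0}(t)$ that also decays fast enough as $t \to \infty$ (see the final paragraph below); any Ricci flow with surgery $\MM$ with normalized initial data that is performed by $\delta(t)$-precise cutoff then satisfies $CNA(\un{r}_{\varepsilon_0}(t), \varepsilon_0, \un{E}_{\varepsilon_0}, \un\eta)$ at every time $t$.

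For part (a), fix $w > 0$, $A \geq 1$, $m \geq 0$ and apply Proposition \ref{Prop:genPerelman} with these parameters together with $E = \un{E}_{\varepsilon_0}$ and $\eta = \un\eta$. This yields constants $\tau_0(w,A), \ov{r}_0(w,A), \widehat{r}_0(w), K_0(w,A), C_0(w,A)$, along with an $m$-dependent precision $\delta_0 = \delta_0(\un{r}_{\varepsilon_0}(t), w, A, m)$. Set the corollary's $\tau := \tau_0$, $\ov{r} := \ov{r}_0$ (shrunk below $1/\sqrt{2}$), $K_m := K_0$, $c_1 := 1/C_0$, and choose $T(w,A,m)$ large enough that for every $t \geq T$ the scale $\un{r}_{\varepsilon_0}(t)$ is a valid uniform CNA-scale on the whole interval $[t-r^2, t]$ and $\delta(t') \leq \delta_0$ throughout that interval. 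Under the corollary's hypothesis, Proposition \ref{Prop:genPerelman}(iii)--(v) hold verbatim (with $r_0 = r$ and the sectional curvature bound coming from $r \leq \rho(x,t)$); its boundary clauses (i)--(ii) are vacuous because the time-slices are compact. The main conclusion of Proposition \ref{Prop:genPerelman} supplies the required bound on $B(x,t,Ar)$. If moreover all surgeries on $[t-r^2, t]$ are performed by $c_1 r$-precise cutoff, then $C_0 \delta(t') \leq C_0 c_1 r = r$, activating the parabolic clause of Proposition \ref{Prop:genPerelman}.

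For part (b), set $\ov\rho(w,A) := \min(\widehat{r}_0(w), \ov{r}(w,A))$. If $\rho(x,t) \geq \ov{r}\sqrt{t}$ then $\rho(x,t) > \ov\rho\sqrt{t}$ automatically; otherwise Proposition \ref{Prop:genPerelman} applies with $r_0 := \rho(x,t)$ and $A = 1$, and its ``in particular'' clause produces $\rho(x,t) > \widehat{r}_0 \sqrt{t}$. In either case $\rho(x,t) > \ov\rho \sqrt{t}$. To obtain the parabolic estimate on scale $\sqrt{t}$, reapply Proposition \ref{Prop:genPerelman} with $r_0 := \rho(x,t)$ and $A' := A/\ov\rho$ (a constant depending only on $w, A$; shrink $\ov{r}$ further, if necessary, so that the hypothesis $r_0 \leq \ov{r}_0(w, A')\sqrt{t}$ is also satisfied in case 2). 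Then $A'\rho(x,t) \geq A\sqrt{t}$ and $\tau_0 \rho^2(x,t) \geq \tau_0 \ov\rho^2 t$, so the curvature bound $|\nabla^k \Rm| < K_0 \rho^{-2-k}$ on $P(x,t, A'\rho(x,t), -\tau_0 \rho^2(x,t))$ implies, after absorbing $\ov\rho^{-2-k}$ into $K_m$ and setting $\tau := \tau_0 \ov\rho^2$, the bound $|\nabla^k \Rm| < K_m t^{-1-k/2}$ on the smaller parabolic neighborhood $P(x,t, A\sqrt{t}, -\tau t)$. The precision condition $C_0 \delta(t') \leq r_0 = \rho(x,t)$ is automatic for $t \geq T$ because $\rho(x,t) > \ov\rho\sqrt{t}$ and $\delta(t) \to 0$.

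The only delicate technical point is that the surgery-precision $\delta(t)$ must be chosen once and for all, independent of $w, A, m$, while the bound $\delta_0$ demanded by Proposition \ref{Prop:genPerelman} depends genuinely on all three (through the derivative order $m$ in particular). The resolution, built into the statement, is to route that dependence through the threshold $T(w, A, m)$: taking $\delta(t)$ to decay sufficiently rapidly guarantees that for each prescribed $(w, A, m)$ one has $\delta(t) \leq \delta_0(\un{r}_{\varepsilon_0}(t), w, A, m)$ for all sufficiently large $t$. The same threshold absorbs the remaining technical constraints, such as $r^2 \leq t/2$ and the uniform matching of CNA-scales across the interval $[t - r^2, t]$.
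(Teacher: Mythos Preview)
Your argument is essentially the paper's: feed the canonical neighborhood assumptions from Proposition~\ref{Prop:CNThm-mostgeneral} into Proposition~\ref{Prop:genPerelman}, and make $\delta(t)$ decay via a diagonal choice so that the $(w,A,m)$-dependence is absorbed into the threshold $T$; the paper writes this out explicitly as $\delta(t) < \min\{\delta_{\ref{Prop:genPerelman}}(\un{r}_{\varepsilon_0}(2t), t^{-1}, t, \un{E}_{\varepsilon_0}, \un\eta, \cdot),\, \un\delta_{\varepsilon_0}(t)\}$, which is exactly the mechanism you describe in your last paragraph.

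There is one small slip in part~(b). After establishing $\rho(x,t) > \ov\rho\sqrt{t}$, you reapply Proposition~\ref{Prop:genPerelman} with $r_0 := \rho(x,t)$, and your parenthetical only secures hypothesis~(iii) in case~2. In case~1 ($\rho(x,t) \geq \ov{r}\sqrt{t}$) there is no upper bound on $\rho(x,t)$, so the requirements $r_0 \leq \ov{r}_{\ref{Prop:genPerelman}}(w,A')\sqrt{t}$ and $r_0^2 \leq t_0/2$ can both fail. The fix is immediate: take $r_0 = \min\{\rho(x,t),\, \ov{r}_{\ref{Prop:genPerelman}}(w,A')\sqrt{t}\}$ (transferring the volume lower bound to this smaller radius by Bishop--Gromov, at the cost of replacing $w$ by a universal multiple), and then $r_0 > \ov\rho'\sqrt{t}$ for $\ov\rho' = \min\{\ov\rho,\, \ov{r}_{\ref{Prop:genPerelman}}(w,A')\}$, so $A'r_0 \geq A\sqrt{t}$ once $A'$ is chosen against $\ov\rho'$. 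The paper handles this by applying part~(a) at the fixed scale $r = \ov{r}\sqrt{t}$ rather than at $\rho(x,t)$, which is the same patch.
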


In the case $A = 1$, this Corollary implies \cite[6.8]{PerelmanII} and parts of \cite[7.3]{PerelmanII}.

In the following, we will present proofs of Proposition \ref{Prop:genPerelman} and Corollary \ref{Cor:Perelman68}.
They require a few rather complicated Lemmas, which we will establish first.
The proofs of Proposition \ref{Prop:genPerelman} and Corollary \ref{Cor:Perelman68} can be found at the end of this subsection.
Note that the following arguments will be very similar to those presented in \cite{PerelmanII} and \cite{KLnotes} with small modifications according to the author's taste.
Occasionally, we will omit shorter arguments and refer to \cite{KLnotes}.
The main objective in the proofs will be the discussion of the influence of the boundary.
Upon the first reading, it is recommended to skip the remainder of this subsection.
The boundary case of Proposition \ref{Prop:genPerelman}, which is the new result of this subsection, will only be used in subsection \ref{subsec:curvboundinbetween}.

The following distance distortion estimates will be used frequently throughout this paper.
\begin{Lemma}[distance distortion estimates] \label{Lem:distdistortion}
Let $(M, (g_t)_{t \in [t_1, t_2]})$ be a Ricci flow whose time-slices are complete and let $x_1, x_2 \in M$.
Then
\begin{enumerate}[label=(\alph*)]
\item If $\Ric_t \leq K$ along some minimizing geodesic between $x_1$ and $x_2$ in $(M, g_t)$, then at time $t$ we have $\frac{d}{d t^-} \dist_t (x_1, x_2) \geq - K \dist_t (x_1, x_2)$ in the barrier sense.
Likewise, if $\Ric_t \geq - K$ along such a minimizing geodesic, then $\frac{d}{d t^+} \dist_t (x_1, x_2) \leq K \dist_t (x_1, x_2)$ in the barrier sense.
\item If at some time $t$ we have $\dist_t (x_1, x_2) \geq 2 r$ and $\Ric_t \leq r^{-2}$ on $B(x_1, r) \cup B(x_2, r)$ for some $r > 0$, then $\frac{d}{dt^-} \dist_t (x_1, x_2) \geq - \frac{16}3 r^{-1}$ in the barrier sense.
\end{enumerate}
Both statements are also true in a Ricci flow with surgery if we can guarantee that some minimizing geodesic between $x_1$ and $x_2$ doesn't intersect surgery points.
For example, this condition is satisfied if at time $t$ the surgeries are performed by $\delta$-precise cutoff for some sufficiently small $\delta$ and if $|{\Rm}| (x_1, t), |{\Rm}| (x_2, t) < c \delta^{-2} t^{-1}$ for a certain universal $c > 0$, which depends on the chosen surgery model.
\end{Lemma}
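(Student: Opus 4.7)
The plan is to reduce both statements to first- and second-variation-of-arclength arguments, with a minimizing geodesic at time $t$ serving as a differentiable upper barrier for the function $t' \mapsto \dist_{t'}(x_1, x_2)$. Let $\gamma : [0, L] \to M$ be a unit-speed minimizing geodesic from $x_1$ to $x_2$ at time $t$, so that $L = \dist_t(x_1, x_2)$. The function $f(t') := L_{t'}(\gamma)$ (the length of the fixed curve $\gamma$ measured in the time-$t'$ metric) satisfies $f(t') \geq \dist_{t'}(x_1, x_2)$ with equality at $t' = t$, and differentiating under the integral using $\partial_{t'} g_{t'} = -2\Ric_{t'}$ gives $f'(t) = -\int_0^L \Ric_t(\gamma', \gamma')\, ds$. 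For part (a), the pointwise bound $\Ric_t \leq K$ along $\gamma$ immediately yields $f'(t) \geq -K L = -K \dist_t(x_1, x_2)$, and the standard translation of an upper-barrier inequality into a Dini-derivative bound produces $\tfrac{d}{dt^-}\dist_t(x_1, x_2) \geq -K\dist_t(x_1, x_2)$; the companion statement under $\Ric_t \geq -K$ is symmetric.

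For part (b), the difficulty is the absence of any Ricci bound on the interior segment $[r, L - r]$, and I will sidestep this with a second-variation inequality. Pick parallel orthonormal vector fields $X_1, X_2$ along $\gamma$ normal to $\gamma'$ and a piecewise-linear cut-off $\phi : [0, L] \to [0, 1]$ rising linearly from $0$ to $1$ on $[0, r]$, equal to $1$ on $[r, L - r]$ (non-empty since $L \geq 2r$), and descending linearly back to $0$ on $[L - r, L]$. The variations $\phi X_i$ vanish at the endpoints, so minimality of $\gamma$ renders both second variations of arc length non-negative, and summation over $i = 1, 2$ yields
\begin{equation*}
  \int_0^L \phi^2\, \Ric_t(\gamma', \gamma')\, ds \;\leq\; 2 \int_0^L |\phi'|^2\, ds \;=\; \tfrac{4}{r}.
\end{equation*}
Because $\gamma$ is minimizing and unit-speed, its initial and terminal sub-arcs of length $r$ lie in $\overline{B(x_1, r)}$ and $\overline{B(x_2, r)}$ respectively, where $\Ric_t \leq r^{-2}$ by hypothesis. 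Splitting $\int_0^L \Ric_t = \int_0^L \phi^2 \Ric_t + \int_0^L (1 - \phi^2) \Ric_t$ and controlling the second summand (supported precisely on those two sub-arcs) by the pointwise Ricci bound gives $\int_0^L (1 - \phi^2) \Ric_t \leq 2 r^{-2} \int_0^r \bigl(1 - (s/r)^2\bigr) ds = \tfrac{4}{3r}$, so altogether $\int_0^L \Ric_t \leq \tfrac{16}{3r}$, and the barrier argument of (a) converts this into the desired estimate.

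For the surgery version, both arguments require only that the chosen minimizing geodesic at time $t$ avoid surgery points, so that the flow and its first and second variations are smooth in a neighborhood of $\gamma$. The sufficient condition stated in the lemma is justified by noting that under $\delta$-precise cutoff each surgery cap sits in a region of curvature of order $\delta^{-2}$, so the hypothesis $|{\Rm}|(x_1, t) < \tfrac{1}{2} \delta^{-2}$ places $x_1$ at a definite scale away from every cap, and a short argument using the strong $\delta$-neck attached at the base of each cap shows that a minimizing arc starting at $x_1$ cannot enter and exit a cap without contradicting minimality. The main obstacles I anticipate are the careful bookkeeping needed to pass from the upper-barrier inequality $f \geq \dist$ to the stated Dini-derivative bound, the verification that the end sub-arcs of $\gamma$ really are contained in the claimed balls, and the surgery-avoidance claim in the final sentence.
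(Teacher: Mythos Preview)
Your proof is correct and reproduces exactly the standard argument that the paper defers to by citation (Perelman's Lemma 8.3 and the Kleiner--Lott treatment in \cite[sec 27]{KLnotes}): the first-variation upper barrier for part (a), the second-variation trick with the tent function $\phi$ and parallel normal fields for part (b), and the cap-curvature observation from Definition~\ref{Def:precisecutoff}(3) for the surgery clause. The numerical bookkeeping in (b) is right and lands on the stated constant $\tfrac{16}{3}r^{-1}$; the only point to tidy is the surgery-avoidance sketch at the end, where the cleanest phrasing is simply that by Definition~\ref{Def:precisecutoff}(3) every point of the image $\Phi^i_j(M_{\stan}(\delta^{-1}))$ has curvature at least $c(\lambda^i_j)^{-2} \geq c\,\delta^{-2}$, so the hypothesis on $x_1$ places it outside this entire neighborhood, and a minimizing segment cannot traverse the long neck portion twice.
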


\begin{proof}
See \cite[sec 27]{KLnotes}, \cite[8.3]{PerelmanI}, \cite[sec 2.3]{Bamler-diploma}.
The very last statement follows from Definition \ref{Def:precisecutoff}(3).
\end{proof}

We will also need
\begin{Lemma} \label{Lem:shortrangebounds}
Let $\MM$ be a Ricci flow with surgery that satisfies the canonical neighborhood assumptions $CNA (r, \varepsilon, E, \eta)$ for some $r, \varepsilon, E, \eta > 0$, let $(x, t) \in \MM$ and set $Q = |{\Rm}|(x,t)$.
\begin{enumerate}[label=(\alph*)]
\item If $Q \leq r^{-2}$, then $|{\Rm}| < 2 r^{-2}$ on $P(x, t, \frac{\eta}{10} r, - \frac{\eta}{10} r^2)$.
\item If $Q \geq r^{-2}$, then $|{\Rm}| < 2 Q$ on $P(x, t, \frac{\eta}{10} Q^{-1/2}, - \frac{\eta}{10} Q^{-1})$.
\end{enumerate}
\end{Lemma}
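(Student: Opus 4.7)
The plan is to exploit canonical neighborhood assumption~(2), which asserts that wherever $|{\Rm}| \geq r^{-2}$ the function $f := |{\Rm}|^{-1/2}$ satisfies $|\nabla f| < \eta^{-1}$ and $u := |{\Rm}|^{-1}$ satisfies $|\partial_t u| < \eta^{-1}$. These give Lipschitz control on $f$ in space and on $u$ in time on the region of large curvature, while on the complementary region $|{\Rm}| < r^{-2}$ the desired bound on $|{\Rm}|$ is already automatic. Along paths of length $\tfrac{\eta}{10} r$ in space, respectively duration $\tfrac{\eta}{10} r^2$ in time, the total variation of $f$ (resp.~$u$) on the region where the bound applies is at most $\tfrac{r}{10}$ (resp.~$\tfrac{r^2}{10}$), which will be just enough to close both inequalities with a safety factor.

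For part~(a), I would fix $y \in B(x, t, \tfrac{\eta}{10} r)$ and a unit-speed minimizing geodesic $\gamma : [0, L] \to \MM(t)$ from $x$ to $y$ at time $t$. The main step is to analyze the closed set $T = \{s \in [0, L] : f(\gamma(s)) \leq r\}$. On the complement of $T$ one has $f > r$ for free. On each connected component of $T$ the gradient bound makes $f \circ \gamma$ Lipschitz with constant $\eta^{-1}$, and at each interior boundary point $f \circ \gamma$ equals $r$; combined with $f(\gamma(0)) = Q^{-1/2} \geq r$ and $L < \tfrac{\eta}{10} r$, integration yields $f(\gamma(s)) > \tfrac{9}{10} r$ throughout $T$. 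Hence $f(y) > \tfrac{9}{10} r$ and $|{\Rm}|(y, t) \leq (10/9)^2 r^{-2} < 2 r^{-2}$. For the time direction, I would fix such a $y$ surviving to some $s_0 \in [t - \tfrac{\eta}{10} r^2, t]$ and argue by contradiction: if $u(y, s') \leq r^2/2$ for some $s' \in [s_0, t]$, let $s^*$ be the largest such $s'$. Then $u(y, s^*) = r^2/2$, and applying the analogous component argument to $\{s \in [s^*, t] : u(y, s) \leq r^2\}$ with the time Lipschitz bound yields $u(y, s) \leq \tfrac{r^2}{2} + \eta^{-1}(s - s^*) \leq \tfrac{3}{5} r^2$ on the component of $s^*$; at a possible right-endpoint crossing this contradicts $u = r^2$, and at $s = t$ it contradicts the spatial bound $u(y, t) \geq \tfrac{81}{100} r^2$.

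Part~(b) follows by the same arguments with $r$ replaced by $Q^{-1/2} \leq r$ on the parabolic neighborhood scales, the only new ingredient being that, since $Q^{-1/2} \leq r$, each path is split into segments on which $f \leq r$ (use the CNA gradient bound) and $f > r$ (in which case $|{\Rm}| < r^{-2} \leq Q < 2Q$ is free); the Lipschitz integration then proceeds component by component exactly as above, using $f(x) = Q^{-1/2}$ as the starting value. I expect the main obstacle to be the bookkeeping at threshold crossings where $|{\Rm}|$ enters or leaves the CNA regime $|{\Rm}| \geq r^{-2}$: on one side the gradient bound is absent, so one must patch together contributions from distinct connected components without exceeding the available $\tfrac{1}{10}$ margin. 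The constants $\tfrac{9}{10}$, $\tfrac{3}{5}$, $\tfrac{81}{100}$ above are chosen precisely so that the comparisons close numerically without any bootstrap.
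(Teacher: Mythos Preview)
Your proof is correct and follows precisely the standard argument that the paper defers to via its citations (\cite[4.2]{PerelmanII}, \cite[Lemma 70.1]{KLnotes}, \cite[sec 6.2]{Bamler-diploma}): integrate the derivative bounds from canonical neighborhood assumption~(2) along a minimizing geodesic in space and then backward in time, handling the region $\{|{\Rm}| < r^{-2}\}$ where the CNA bound is unavailable by noting the desired estimate is automatic there. Your careful component-by-component bookkeeping at the threshold $|{\Rm}| = r^{-2}$, with the explicit constants $\tfrac{9}{10}$, $\tfrac{3}{5}$, $\tfrac{81}{100}$, is exactly the content that the cited references spell out, and the paper itself adds nothing beyond the reference.
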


\begin{proof}
See \cite[4.2]{PerelmanII}, \cite[Lemma 70.1]{KLnotes}, \cite[sec 6.2]{Bamler-diploma}.
\end{proof}

We now present the first main Lemma.

\begin{Lemma}[cf \hbox{\cite[6.3(a)]{PerelmanII}}] \label{Lem:6.3a}
For any $1 \leq A < \infty$ and $w, r, \eta > 0$ there are $\kappa = \kappa(w, A, \eta), \delta = \delta( A, r, \eta) > 0$, $Z = Z(A) < \infty$ such that: \\
Let $r_0^2 < t_0/2$ and let $\MM$ be a Ricci flow with surgery (whose time-slices are allowed to have boundary) on the time-interval $[ t_0 - r_0^2, t_0]$ that is performed by $\delta$-precise cutoff and consider a point $x_0 \in \MM(t_0)$.
Assume that the canonical neighborhood assumptions $CNA (r \sqrt{t_0}, \varepsilon, E, \eta)$ are satisfied on $\MM$ for some $\varepsilon, \eta > 0$.
We also assume that the curvature on $\MM$ is uniformly bounded on compact time-intervals which don't contain surgery times and that all time-slices of $\MM$ are complete.

Assume that the parabolic neighborhood $P(x_0, t_0, r_0, - r_0^2)$ is non-singular, that $|{\Rm}| \leq r_0^{-2}$ on $P(x_0, t_0, r_0, - r_0^2)$ and $\vol_{t_0} B(x_0, t_0, r_0) \geq w r_0^3$.

In the case in which some time-slices of $\MM$ have non-empty boundary, we assume that
\begin{enumerate}[label=(\roman*)]
\item every space-time curve $\gamma : [t, t_0] \to \MM$ with $t \in [t_0 - r_0^2, t_0)$ that ends in $\gamma(t_0) \in B(x_0, t_0, A r_0)$ and that meets the boundary $\partial \MM (t')$ at some time $t' \in [t, t_0]$, has $\LL$-length $\LL(\gamma) > Z r_0$ (based in $t_0$),
\item the ball $B (x_0, t_0, (2A+1) r_0 + r \sqrt{t_0})$ does not hit the boundary $\partial \MM(t_0)$ and for every $t \in [t_0 - \frac12 r_0^2, t_0]$ the ball $B(x_0, t, A (1 - 2 (t_0 - t) r_0^{-2} ) r_0 + \frac1{10} r_0)$ does not hit the boundary $\partial \MM(t)$.
\end{enumerate}

Then $\MM$ is $\kappa$-noncollapsed on scales less than $r_0$ at all points in the ball $B(x_0, t_0, A r_0)$.
\end{Lemma}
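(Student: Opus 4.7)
The plan is to adapt Perelman's reduced-volume argument for local $\kappa$-noncollapsing so that it goes through in the presence of boundary, surgeries, and the localized hypotheses (i)--(ii). Fix $\ov{x} \in B(x_0, t_0, A r_0)$ and a scale $r \leq r_0$ satisfying the three conditions in the definition of $\kappa$-noncollapsedness. Base the $\LL$-functional of (\ref{eq:defLLlength}) at $(\ov{x}, t_0)$, write $\tau = t_0 - t$, and consider the reduced length $\ell(\cdot, \tau)$ and reduced volume $\td{V}(\tau) = \int \tau^{-3/2} \exp(-\ell(\cdot, \tau)) \dd\vol_{t_0 - \tau}$. The aim is to deduce a uniform lower bound $\td{V}(\tfrac12 r_0^2) \geq \kappa_0 = \kappa_0(w, A, \eta)$ and then to convert this, via monotonicity and a standard local-to-global comparison, into the desired inequality $\vol_{t_0} B(\ov{x}, t_0, r) \geq \kappa r^3$.

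First I would construct a point $(x^*, t^*)$, with $t^* = t_0 - \tfrac12 r_0^2$ and $x^* \in B(x_0, t^*, \tfrac12 r_0)$, at which $\ell(x^*, \tfrac12 r_0^2) \leq C(A)$. This comes from applying the barrier maximum principle for $L + 2n\tau$ along space-time curves that first travel spatially (at time $t_0$) from $\ov{x}$ toward $x_0$ and then flow backward in time inside $P(x_0, t_0, r_0, -r_0^2)$; the curvature bound $|{\Rm}| \leq r_0^{-2}$ on that neighborhood turns an explicit competitor curve into an upper bound on $\ell$. Using Lemma~\ref{Lem:distdistortion} to control distance distortion on the parabolic neighborhood, the bound on $\ell$ at the single point $(x^*, t^*)$ propagates (by the standard Hamilton--Perelman Hessian/Jacobian estimates together with $|{\Rm}|$-bounds) to $\ell \leq C'(A)$ on a ball $B(x_0, t^*, c(A) r_0)$. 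Pushing the hypothesis $\vol_{t_0} B(x_0, t_0, r_0) \geq w r_0^3$ backward to time $t^*$ via the same curvature bound, one obtains a volume lower bound on this ball of order $c(A, w) r_0^3$, hence $\td{V}(\tfrac12 r_0^2) \geq \kappa_0(w, A, \eta)$. Monotonicity of $\td{V}$ then gives $\td{V}(\tau) \geq \kappa_0$ for every $\tau \leq \tfrac12 r_0^2$; applying this to $\tau = r^2$ and exploiting the curvature bound $|{\Rm}| \leq r^{-2}$ on $P(\ov{x}, t_0, r, -r^2)$ (which is built into the definition of $\kappa$-noncollapsedness) in the standard local comparison yields $\vol_{t_0} B(\ov{x}, t_0, r) \geq \kappa r^3$.

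The main obstacle, and the genuinely new content over the boundary-free case, is ensuring that all the $\LL$-geodesics entering this argument stay inside $\MM$ and avoid surgery points. Only $\LL$-geodesics $\gamma$ of $\LL$-length bounded by some threshold $Z'$ contribute non-negligibly to the reduced-volume estimates above (the part of $\td{V}$ coming from $\ell \gg 1$ is suppressed by the exponential weight and by Bishop--Gromov-type tail bounds); consequently, hypothesis (i) applied with $Z \geq Z'(A)$ forces any such contributing geodesic to miss $\partial\MM$ entirely. Hypothesis (ii) provides exactly the spatial room needed so that distance-distortion estimates from Lemma~\ref{Lem:distdistortion} keep the endpoints and intermediate points of these geodesics away from $\partial\MM$ at every intermediate time. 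Surgery points are evaded by choosing $\delta = \delta(w, A, r, \eta)$ small enough that, whenever an $\LL$-geodesic of bounded reduced length would pass near a surgery, the canonical neighborhood assumption $CNA(r, \varepsilon, E, \eta)$ combined with Lemma~\ref{Lem:shortrangebounds} and the very-high-curvature scale $\delta^{-2}$ of surgery caps rules this out. Once these nuisances are handled, the computation reduces to the classical Perelman/Kleiner--Lott argument.
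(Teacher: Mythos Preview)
Your overall architecture---reduced volume monotonicity, an upper bound on $\ell$ near $x_0$ at a fixed backward time, then the standard conversion to a volume lower bound at scale $r$---is indeed the paper's approach. But the step where you produce the bound $\ell(x^*, \tfrac12 r_0^2) \leq C(A)$ has a real gap.

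You propose to obtain this either from ``the barrier maximum principle for $L + 2n\tau$'' or from an ``explicit competitor curve'' that first moves spatially from $\ov{x}$ toward $x_0$ and then flows backward inside $P(x_0, t_0, r_0, -r_0^2)$. Neither works as stated. The unlocalized differential inequality only tells you $\min_{\MM(t)} \ell(\cdot, \tau) \leq \tfrac32$; it does not tell you this minimum is attained anywhere near $x_0$. And a direct competitor curve fails because on the annular region between $\ov{x} \in B(x_0, t_0, A r_0)$ and $B(x_0, t_0, r_0)$ you have \emph{no} upper bound on the scalar curvature, so the term $\int \sqrt{t_0 - t}\,\scal(\gamma(t),t)\,dt$ is uncontrolled. (Compressing the spatial motion into a very short time interval does not help: the $|\gamma'|^2$ contribution then blows up like $\varepsilon^{-1/2}$.)

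The paper resolves this exactly as Perelman does in \cite[8.2]{PerelmanI}: one introduces the localized quantity
\[
h(y,t) = \phi\big( r_0^{-1} \dist_t(x_0, y) - A(1 - 2 r_0^{-2}(t_0 - t)) \big)\, \widehat{L}(y,t),
\]
where $\phi$ is a cutoff that equals $1$ on $(-\infty, \tfrac1{20}]$, blows up on $[\tfrac1{10}, \infty)$, and satisfies $2(\phi')^2/\phi - \phi'' \geq (2A+300)\phi' - C(A)\phi$. The point is that $h$ is automatically infinite outside the shrinking ball $B(x_0, t, A(1 - 2(t_0-t)r_0^{-2})r_0 + \tfrac1{10} r_0)$, so its minimum is forced to lie in the interior (this is also where hypothesis (ii) is actually used). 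A barrier computation for $(\partial_t - \triangle) h$ at the minimum then gives a differential inequality for $h_0(t) = \min h(\cdot, t)$ that integrates to $h_0(t_0 - \tfrac12 r_0^2) \leq 2 r_0 \sqrt{\tfrac12} r_0 \exp(C(A) + 100)$, yielding a point $y \in B(x_0, t_0 - \tfrac12 r_0^2, \tfrac1{10} r_0)$ with $L(y, t_0 - \tfrac12 r_0^2) \leq C'(A) r_0$. This localization device is the missing idea; once you have it, your treatment of the boundary (via (i) and a threshold $Z = Z(A)$) and of surgeries (via $\delta$ small depending on $r$) is correct in spirit and matches the paper's Claim~2.

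One further point you should not skip: the paper first reduces (its Claim~1) to the case $r_1 > \tfrac12 r$ by a separate argument using the canonical neighborhood assumptions and volume comparison. Without this reduction the surgery-avoidance argument for $\LL$-geodesics (which needs the basepoint to sit at moderate curvature) is not available.
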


\begin{proof}
We follow the lines of \cite[6.3(a)]{PerelmanII}.

We first consider the case in which for some $t \in [t_0 - r_0^2, t_0]$ the component of $\MM(t)$ that contains $x_0$ is closed and has positive sectional curvature.
Then the same is true for the corresponding component of $\MM (t_0)$ and hence we are done by volume comparison.
So in the following, we exclude this case and hence the last option in Definition \ref{Def:CNA} of the canonical neighborhood assumptions will not occur.

Let $x_1 \in B(x_0, t_0, A r_0)$ and $0 < r_1 < r_0$ be such that $P(x_1, t_0, r_1, - r_1^2)$ is non-singular and such that $|{\Rm}| < r_1^{-2}$ on $P(x_1, t_0, r_1, - r_1^2)$.
Note that by condition (ii) the ball $B(x_1, t_0, r_1)$ does not hit the boundary $\partial \MM (t_0)$.

\begin{Claim1}
There is a universal constant $\delta_0 > 0$ such that if $\delta < \delta_0$, then we can restrict ourselves to the case $r_1 > \frac12 r \sqrt{t_0}$.
By this we mean that if the Lemma holds under the additional restriction that $r_1 > \frac12 r \sqrt{t_0}$ for some $\kappa' = \kappa' (w, A, \eta) > 0$, then it also holds whenever $r_1 \leq \frac12 r \sqrt{t_0}$ for some $\kappa = \kappa (w, A, \eta) > 0$.
\end{Claim1}

\begin{proof}
Assume that the Lemma holds whenever $r_1 > \frac12 r \sqrt{t_0}$, but assume that $r_1 \leq \frac12 r \sqrt{t_0}$.
Let $s > 0$ be the supremum over all $r'_1>0$ such that the properties above still hold for\footnote{In this and the following papers we use the notation ``$a \leftarrow b$'' for ``$a$ is replaced by $b$'' or ``$b$ is assigned to $a$''.} $r_1 \leftarrow r'_1$, that is $0 < r'_1 < r_0$, $P(x_1, t_0, r'_1, - r_1^{\prime 2})$ is non-singular and $|{\Rm}| < r_1^{\prime - 2}$ on $P(x_1, t_0, r'_1, - r_1^{\prime 2})$.
If $s \leq \frac12 r \sqrt{t_0}$, then there are several cases:
\begin{enumerate}[label=(\arabic*)]
\item The closure of $P(x_1, t_0, s, -s^2)$ hits a surgery point $(x', t')$.
By Definition \ref{Def:precisecutoff}(3), there is a neighborhood $U \subset \MM (t')$ of $(x',t')$ whose geometry is modeled on the surgery model on a scale of at least $c_1 s$ for some universal $c_1 > 0$.
Note that, again by Definition \ref{Def:precisecutoff}(3) and the fact that $(x', t')$ is a surgery point, we have $B(x', t', c_2 \delta^{-1} s) \subset U$ for some universal $c_2 > 0$.
Since by distance distortion estimates $\dist_{t'} (x_1, x') \leq 10 \dist_{t_0} (x_1, x') \leq 10 s$, we find that for $\delta < \frac1{20} c_2$ we have $B(x_1, t', \frac1{10} s) \subset U$.
Since the standard solution is uniformly noncollapsed, we have $\vol_{t'} B(x_1, t', \frac1{10} s) > \kappa' s^3$ for some universal $\kappa' > 0$.
Again, by distance distortion estimates, we have $B(x_1, t', \frac1{10} s) \subset B(x_1, t_0, s)$.
So together with a volume distortion estimate, we conclude $\vol_{t_0} B(x_1, t_0, s) > \kappa'' s^3$ for some universal $\kappa'' > 0$.
By volume comparison, this implies $\vol_{t_0} B(x_1, t_0, r_1) > \kappa r_1^3$ for some universal $\kappa > 0$ (recall that by our assumptions $s \geq r_1$).
\item There is a point $(x', t')$ in the closure of $P(x_1, t_0, s, -s^2)$ with $|{\Rm}|(x',t') = s^{-2}$.
Then let $\gamma : [0,l] \to \MM(t_0)$ be a time-$t_0$ minimizing geodesic, parameterized by arclength, between $x_1$ and $x'$.
So the image of $\gamma$ lies in the closure of $B(x_1, t_0, s)$.
Let $x'' = \gamma ( l - \frac{\eta}{100} s ) \in B(x_1, t_0, s)$ if $l > \frac{\eta}{100} s$ and $x'' = x'$ otherwise.
Using distance-distortion estimates, we find that $\dist_{t'} (x'', x') \leq 10 \dist_{t_0} (x'', x') \leq \frac{\eta}{10} s$.
By Lemma \ref{Lem:shortrangebounds} and the canonical neighborhood assumptions $CNA ( r \sqrt{t_0}, \linebreak[1] \varepsilon, \linebreak[1] E, \linebreak[1] \eta)$, we conclude that $|{\Rm}|(x'', t') \geq \frac12 s^{-2} > r^{-2} t_0^{-1}$.

Next, we use distance-distortion estimates to show that $B(x'', t', \frac{\eta}{2000} s)  \subset B(x_1, t_0, s)$:
Assume that this inclusion was wrong and pick $t'' \in (t', t_0]$ minimal such that we have $B(x'', t'', \frac{\eta}{2000} s) \subset B(x_1, t_0, s)$ (note that the inclusion holds for $t'' = t_0$, because $B(x'', t_0, \frac{\eta}{100} s ) \subset B(x_1, t_0, s)$ and note that the set of all $t'' \in (t', t_0]$ for which the inclusion holds is closed).
We can then use distance-distortion estimates to show that $B(x'', t'', \frac{\eta}{2000} s) \subset B(x'', t_0, \frac{\eta}{200} s) \subset B(x_1, t_0, s - \frac{\eta}{200} s)$.
Since $t'' > t'$, this contradicts the minimality of $t''$.

We can now use the canonical neighborhood assumptions, to conclude that $\vol_{t'} B(x'', \linebreak[1] t', \linebreak[1]  \frac{\eta}{2000} s) > \eta ( \frac{\eta}{2000} s)^3$ and as in case (1) we obtain that $\vol_{t_0} B(x_1, t_0, r_1) > \kappa r_1^3$ for some universal $\kappa = \kappa(\eta) > 0$.
\item We have $s = r_0$.
So $r_0 \leq \frac12 r \sqrt{t_0}$.
In this case choose $0 < d \leq (A+1) r_0$ maximal with the property that $|{\Rm}| < r_0^{-2} = s^{-2}$ on $B(x_1, t_0, d)$.
So $d \geq r_0$.
If $d = (A+1) r_0$, then
\[ \vol_{t_0} B(x_1, t_0, d) \geq \vol_{t_0} B(x_0, t_0, r_0) \geq w r_0^{3} = \frac{w}{(A+1)^3} d^3. \]
So by volume comparison and assumption (ii) we obtain a lower volume bound on the normalized volume of $B(x_1, t_0, r_1)$ since $r_1 < r_0 < d$.
Assume now that $d < (A+1) r_0$.
Then $|{\Rm}|(x', t_0) = r_0^{-2} \geq 4 r^{-2} t_0^{-1}$ for some $(x', t_0)$ in the closure of $B(x_1, t_0, d)$.
As in case (2), we can find a point $(x'', t_0) \in B(x_1, t_0, d - \frac{\eta}{10} r_0)$ with $|{\Rm}|(x'', t_0) > \frac12 r_0^{-2} > r^{-2} t_0^{-1}$.
By the canonical neighborhood assumptions, we have $\vol_{t_0} B(x_1, t_0, d) \geq \vol_{t_0} B(x'', t_0, \frac{\eta}{10} r_0) > \eta (\frac{\eta}{10} r_0)^3$.
So again by volume comparison, we find that $\vol_{t_0} B(x_1, t_0, r_1) > \kappa r_1^3$ for some $\kappa = \kappa (\eta, A) > 0$.
\end{enumerate}
Lastly, if $s > \frac12 r \sqrt{t_0}$, then the conditions mentioned at the beginning of the proof hold for some $r_1' > \frac12 r \sqrt{t_0}$.
If the assertion of the Lemma holds for $r'_1$ and some $\kappa' = \kappa'(w, A, \eta) > 0$, then by volume comparison, it also holds for all $r_1 \leq r'_1$ and some $\kappa = \kappa(w, A, \eta) > 0$.
\end{proof}

So assume in the following that $r_1 > \frac12 r \sqrt{t_0}$.
We will now set up an $\LL$-geometry argument.
Define for any $t \in [t_0 - r_0^2, t_0]$ and $y \in \MM(t)$
\[
 L(y, t) = \inf \Big\{ \LL(\gamma) \;\; : \;\; \gamma : [t, t_0] \to \MM \; \text{smooth}, \; \gamma(t) = y, \; \gamma(t_0) = x_1 \Big\}.
\]
Moreover, set
\[ \ov{L}(y, t) = 2 \sqrt{t_0 - t} L(y, t) \qquad \text{and} \qquad \ell(y, t) = \frac1{2\sqrt{t_0 - t}} L(y, t). \]
Let
\begin{multline*}
 D_t = \big\{ y \in \MM (t) \;\; : \;\; \text{there is a minimizing $\LL$-geodesic $\gamma : [t, t_0] \to \MM \setminus \partial \MM$} \\ \text{with $\gamma(t) = y$ and $\gamma(t_0) = x_1$ that does not hit any surgery points} \big\}.
\end{multline*}
We can then define the reduced volume
\[ \widetilde{V}(t) = (t_0 - t)^{-n/2} \int_{D_t} e^{-\ell(\cdot, t)} d {\vol_t}. \]
It is shown in \cite[7.1]{PerelmanI} that $\widetilde{V}(t)$ is non-decreasing in $t$.

We will now prove that the quantity $\ell (\cdot, t_0 - r_0^2)$ is uniformly bounded from above on $B(x_0, t_0, r_0)$ by a constant that only depends on $A$ if $\delta$ is chosen small enough depending on $A$, $r$ and $\eta$.
To do this we will use a maximum principle argument on $D_t$.
The following claim will ensure hereby that extremal points of $L$ lie inside $D_t$.

\begin{Claim2}
For any $\Lambda < \infty$ there is a constant $\delta^* = \delta^* (\Lambda, r, \eta) > 0$ such that whenever $\delta \leq \delta^*$ and $Z \geq \Lambda$, then the following holds:
Assume that $r_1 > \frac12 r \sqrt{t_0}$.
If $t \in [t_0 - r_0^2, t_0]$, $y \in \MM(t)$ and $L(y,t) < \Lambda r_0$, then $y \in D_t$, which also implies that $(y,t)$ is not a surgery point.
\end{Claim2}

\begin{proof}
Assume that $y \in \MM (t) \setminus D_t$ or that $(y,t)$ is a surgery point.
Then there is a space-time curve $\gamma : [t, t_0] \to \MM$ with $\LL(\gamma) < \Lambda r_0$ that either touches $\partial \MM$ or a surgery point.
The first case is excluded by assumption (i), so $\gamma$ touches a surgery point.
We will now follow the lines of \cite[5.3]{PerelmanII}, \cite[Lemma 79.3]{KLnotes} or \cite[p 92]{Bamler-diploma}.

First, define
\[  \LL_+ (\gamma) = \int_{t}^{t_0} \sqrt{t_0 - t_*} \big(|\gamma'|^2(t_*) + \scal_+ (\gamma(t_*), t_*) \big) dt_*, \]
where $\scal_+ (\gamma(t_*), t_*)  = \max \{ \scal (\gamma(t_*), t_*), 0 \}$ denotes the non-negative part.
Using Definition \ref{Def:precisecutoff}(1) and Definition \ref{Def:phipositivecurvature}, we can estimate
\begin{multline*}
 \LL_+ (\gamma) \leq \LL (\gamma) + \int_{t}^{t_0} \sqrt{t_0 - t_*} \cdot \frac{3}{2 t_*}  dt_*
\leq \Lambda r_0 + \frac{3}2 \int_{t_0 - r_0^2}^{t_0} \frac{\sqrt{r_0^2}}{t_0 - r_0^2} dt_* \\
\leq \Lambda r_0 + \frac{3}2 \frac{\sqrt{r_0^2}}{t_0 - r_0^2} r_0^2
\leq (\Lambda + \tfrac32 ) r_0 .
\end{multline*}
Note that for any sub-interval $[t^*_1, t^*_2] \subset [t, t_0]$, we have
\begin{multline} \label{eq:LLplussmall}
 \LL_+ \big( \gamma|_{[t^*_1, t^*_2]} \big) = \int_{t^*_1}^{t^*_2} \sqrt{t_0 - t_*} \big(|\gamma'|^2(t_*) + \scal_+ (\gamma(t_*), t_*) \big) dt_* \\ \leq \LL_+ (\gamma) \leq (\Lambda + \tfrac32) r_0. 
\end{multline}

Assume that $\gamma$ touches a surgery point $(y', t')$.
Let $\theta > 0$, $D < \infty$ be constants, whose values will be fixed later in the proof, depending only on $\Lambda, r$.
Using \cite[4.5]{PerelmanII}, \cite[Lemma 74.1]{KLnotes} or \cite[Lemma 7.4.1]{Bamler-diploma}, and assuming $\delta$ to be small depending on $\theta, D, \eta$, we may find constants $\sigma \in (0, 1-\theta]$ and $\lambda > 0$ such that the parabolic neighborhood
\[ P = P(y', t', D \lambda, \sigma \lambda^2 ) \]
is non-singular and such that, after rescaling by $\lambda^{-2}$, the Ricci flow on $P$ is $\eps'$-close to a subset of a standard solution on the time-interval $[0, \sigma]$ for some suitably small $\eps' > 0$.
Here, the constant $\sigma$ can be chosen such that one of the following is true: $\sigma = 1- \theta$ or no point of $B(y', t', D \lambda)$ survives past time $t' + \sigma \lambda^2$.
Recall that a standard solution is a Ricci flow with initial metric $(M_{\textnormal{stan}}, g_{\textnormal{stan}})$, bounded curvature on compact time-intervals and complete time-slices.
Note also that the proofs for \cite[4.5]{PerelmanII}, \cite[Lemma 74.1]{KLnotes} or \cite[Lemma 7.4.1]{Bamler-diploma} still hold in the boundary case, since by Definition \ref{Def:precisecutoff}(3) we have the bound $\dist_{t'} (y', \partial \MM(t') ) > c'' \delta^{-1} \lambda$ for some universal constant $c'' > 0$.
So for small enough $\delta$, depending on $D$, we have $\dist_{t'} (y', \partial \MM(t') ) \gg D \lambda$.

The fact that $P$ is close to a subset of a standard solution implies that
\begin{equation} \label{eq:lowerscaltheta}
  \scal (x_* , t_*) >  \frac{c \lambda^{-2}}{1 - \lambda^{-2} (t_* - t')}     \qquad \text{for all} \qquad (x_*,t_* ) \in P
\end{equation}
and
\begin{equation} \label{eq:uppercurvboundtheta}
 |{\Rm}| < C_{\theta} \lambda^{-2} \qquad \text{on} \qquad P,
\end{equation}
where $c > 0$ is some universal constant and $C_\theta < \infty$ is a constant that only depends on $\theta$.
For more details see \cite[sec 2]{PerelmanII}, \cite[Lemma 63.1]{KLnotes} or \cite[Theorem 7.3.2]{Bamler-diploma}.
In particular, (\ref{eq:lowerscaltheta}) implies that $\scal > c \lambda^{-2} > c' \delta^{-2} t^{\prime -1}$ on $P$ for some universal $c' > 0$.
So if $\delta < \frac12 \sqrt{c'} r$, then $\scal > 4 r^{-2} t^{\prime -1} > r_1^{-2}$ on $P$.
This implies that $P$ is disjoint from $P(x_1, t_0, r_1, - r_1^2)$.
So $\gamma |_{[t', t_0]}$ has to exit $P$ before entering $P(x_1, t_0, r_1, - r_1^2)$.

We will now fix the constants $\theta, D$.
Set
\[ \tau = \big( 400 ( \Lambda + 10 ) \big)^{-2} r^2. \]
Next, set
\[ \theta = \exp \Big( - \frac{2 (\Lambda + 10)}{c r \sqrt{\tau}} \Big) \qquad \text{and} \qquad D = \frac{2 (\Lambda + 10)}{r \sqrt{\tau}} \exp (2C_{\theta}). \]
Note that these constants only depend on $\Lambda$ and $r$

We first prove that 
\begin{equation} \label{eq:gammastaysinP}
  \gamma \big( [t_0 - \tau r_1^2, t_0] \big) \subset P (x_1, t_0, r_1, - r_1^2).
\end{equation}
Assume not and let $t'' \in [t_0 - \tau r_1^2, t_0]$ be maximal such that $\gamma (t'') \not \in P (x_1, \linebreak[1] t_0, \linebreak[1] r_1,\linebreak[1] - r_1^2)$.
Then $\gamma ( (t'', t_0] ) \subset P (x_1, \linebreak[1] t_0, \linebreak[1] r_1, \linebreak[1] - r_1^2)$.
Using the fact that the distance distortion on $P(x_1, \linebreak[1] t_0, \linebreak[1] r_1, \linebreak[1] - r_1^2)$ is bounded by a factor of 10 and Cauchy-Schwarz, we get
\begin{multline*}
 \LL_+ \big( \gamma |_{[t'', t_0]} \big) \geq \int_{t''}^{t_0} \sqrt{t_0 - t_*} \; |\gamma' |^2_{t_*} (t_*) dt_* \displaybreak[1] \\
\geq \frac1{100} \bigg ( \int_{t''}^{t_0} \sqrt{t_0 - t_*} \; |\gamma' |^2_{t_0} (t_*) dt_* \bigg) \bigg( \int_{t''}^{t_0} \frac1{\sqrt{t_0 - t_*}} dt_* \bigg) \cdot \frac1{2 \sqrt{t_0 - t''}} \displaybreak[1] \\
\geq \frac1{200 \sqrt{\tau r_1^2}} \bigg( \int_{t''}^{t_0} |\gamma' |_{t_0} (t_*) dt_* \bigg)^2 
\geq \frac{r_1^2}{200 \sqrt{\tau r_1^2}} \displaybreak[1] \\
= \frac{r_1}{200 \sqrt{\tau}} > 2 (\Lambda + \tfrac32) \frac{r_1}r >  (\Lambda + \tfrac32) \sqrt{t_0}  >  (\Lambda + \tfrac32) r_0 ,
\end{multline*}
in contradiction to (\ref{eq:LLplussmall}).
So we have verified (\ref{eq:gammastaysinP}).

Next we consider the case in which $\gamma |_{[t', t_0]}$ exits $P$ through its final time-slice.
By this we mean that $\gamma ([t', t' + \sigma \lambda^2]) \subset P$ (note that $P \cap \MM ( t' + \sigma \lambda^2)$ is open).
This is only possible if the point $\gamma (t' + \sigma \lambda^2)$ survives past time $t' + \sigma \lambda^2$, which implies by our earlier discussion that $\sigma = 1 - \theta$.
Moreover, due to (\ref{eq:gammastaysinP}), we must have $t' + (1-\theta) \lambda \leq t_0 - \tau r_1^2$.
Using (\ref{eq:lowerscaltheta}), we can now compute that
\begin{multline*}
  \LL_+ \big( \gamma |_{[t', t' + \sigma \lambda^2]} \big) \geq \int_{t'}^{t' + \sigma \lambda^2} \sqrt{t_0 - t_*} \; \scal (\gamma (t_*), t_*) dt_* \\
>  \int_{t'}^{t' + (1-\theta) \lambda^2} \frac{ c \lambda^{-2} \sqrt{\tau r_1^2}}{1 - \lambda^{-2} (t_*- t')} dt_* 
= c r_1 \sqrt{\tau}  \int_0^{1-\theta} \frac{ 1}{1 - u} du \\ = c r_1 \sqrt{\tau} |{\log \theta}| > \frac{c}2 r \sqrt{t_0} \sqrt{\tau} |{\log \theta}| > (\Lambda + \tfrac32 ) r_0,
\end{multline*}
in contradiction to (\ref{eq:LLplussmall}).
So $\gamma |_{[t', t_0]}$ cannot exit $P$ through its final time-slice.

It follows that $\gamma$ exists $P$ through the boundary $\partial B( y', t', D \lambda) \times [t', t' + \sigma \lambda^2]$.
In other words, there is some $\sigma' \in (0, \sigma] \subset (0, 1- \theta]$ such that for $t'' = t' + \sigma' \lambda^2$, we have $\gamma ( [t', t'') ) \subset P$ and $\gamma (t'') \in \partial B( y', t', D \lambda)$.
By (\ref{eq:uppercurvboundtheta}), we have $|{\Rm}| < C_\theta  \lambda^{-2}$ on $P$.
So distance elements on $P$ are distorted by a factor of at most $\exp ( C_\theta )$.
We can now estimate
\begin{multline*}
\LL_+ \big(  \gamma |_{[t', t'']} \big) \geq \int_{t'}^{t' + \sigma' \lambda^2} \sqrt{t_0 - t} \; |\gamma' |^2_{t_*} (t_*) dt_*
\geq \frac{\sqrt{\tau r_1^2}}{\exp( 2C_\theta)} \int_{t'}^{t' + \sigma' \lambda^2}  |\gamma' |^2_{t'} (t_*) dt_* \\
\geq \frac{\sqrt{\tau r_1^2}}{\exp (2C_\theta ) \sigma' \lambda^2} \bigg ( \int_{t'}^{t' + \sigma' \lambda^2} |\gamma' |_{t'} (t_*) dt_* \bigg)^2 
\geq  \frac{\sqrt{\tau r_1^2}}{\exp ( 2C_\theta ) \lambda^2} (D \lambda)^2 \\
 \geq \frac{r \sqrt{\tau}D}{2 \exp ( 2C_\theta )} \sqrt{t_0} 
> (\Lambda + \tfrac32) r_0.
\end{multline*}
Hence we obtain another contradiction to (\ref{eq:LLplussmall}).
This finishes the proof of the claim.
\end{proof}

We can now carry out the main argument.
Recall that for all $t \in [t_0 - r_0^2, t_0]$, we have
\[ \scal (\cdot, t) \geq - \frac{3}{2t} \geq - 3 r_0^{-2}. \]
So
\[ \ov{L}(\cdot, t) \geq - 6 \sqrt{t_0 - t} \int_{t_0 - t}^{t_0} r_0^{-2} \sqrt{t_0 - t'} dt' = - 4 r_0^{-2} (t_0 - t)^2. \]
Hence, for $t \in [t_0 - \frac14 r_0^2, t_0]$ we have 
\[ \widehat{L}(\cdot, t) := \ov{L}(\cdot, t) + 2 r_0 \sqrt{t_0 - t} > r_0 \sqrt{t_0 - t} > 0. \]
Let $\phi$ be a cutoff function that is equal to $1$ on $(-\infty, \frac1{20}]$, equal to $\infty$ on $[\frac1{10}, \infty)$ and everywhere greater or equal to $1$ and that satisfies
\[ 2 \frac{(\phi')^2}{\phi} - \phi '' \geq (2A + 300) \phi' - C(A) \phi. \]
Here $C(A) < \infty$ is a positive constant, which only depends on $A$.
For more details see \cite[sec 28]{KLnotes}.
Then set for all $t \in [t_0 - \frac14 r_0^2, t_0]$ and $y \in \MM (t)$
\[ h(y, t) = \phi \big( r_0^{-1} \dist_t (x_0, y) - A(1 - 2 r_0^{-2} (t_0 -  t) ) \big) \widehat{L}(y,t). \]
So $h(\cdot, t)$ is infinite outside $B(x_0, t, A(1 - 2(t_0 - t) r_0^{-2}) r_0 + \frac1{10} r_0) \subset \MM (t) \setminus \partial \MM (t)$ (compare with assumption (ii)) and hence it attains a minimum $h_0(t)$ at some interior point $y \in \MM (t)$.

Assume first that $h(y, t) \leq 2 r_0 \sqrt{t_0 - t} \exp (C(A) + 100)$.
So $L(y, t) \leq r_0 \exp \linebreak[2] (C(A) + \linebreak[1] 100)$.
Then by Claim 2, assuming $\delta < \delta^* (\exp (C(A) + 100), r, \eta)$ and $Z > \exp (C(A) + 100)$, we have $y \in D_t$ and we can compute (cf \cite[6.3]{PerelmanII}, \cite[sec 85]{KLnotes}) that in the barrier sense
\[ r_0^2 \Big( \frac{\partial}{\partial t^-} - \triangle \Big) h(y,t) \geq - C(A) h(y,t) - \Big( 6 + \frac{r_0}{\sqrt{t_0 - t}} \Big) \phi r_0^2.  \]
So by the maximum principle we have in the barrier sense (compare with \cite[(85.7)]{KLnotes})
\[ r_0^2 \frac{d}{d t^-} \Big( \log \frac{h_0(t)}{\sqrt{t_0 - t}} \Big) \geq - C(A) - \frac{50 r_0}{\sqrt{t_0 - t}} \]
if $h_0(t) \leq 2 r_0 \sqrt{t_0 - t} \exp (C(A) + 100)$.

Since $\frac{h_0(t)}{r_0 \sqrt{t_0 - t}} \to 2$ for $t \to t_0$, we find that if $h_0 (t') \leq 2 r_0 \sqrt{t_0 - t'} \exp (C(A) + 100)$ for all $t' \in [t, t_0]$, then
\begin{multline*}
 h_0 (t) \leq 2 r_0 \sqrt{t_0 - t} \exp \big( C(A) r_0^{-2} (t_0 - t) + 100 r_0^{-1} \sqrt{t_0 - t} \big) \\
  < 2 r_0 \sqrt{t_0 - t} \exp (C(A) + 100).
\end{multline*}
This implies that the assumption $h_0 (t) \leq 2 r_0 \sqrt{t_0 - t} \exp (C(A) + 100)$ is actually satisfied for all $t \in [t_0 - \frac14 r_0^2, t_0]$.
So we can find a $y \in B(x_0, t_0 - \frac14 r_0^2, \frac1{10} r_0)$ such that $L(y, t_0 - \frac14 r_0^2) \leq r_0 \exp (C(A) + 100) = C'(A) r_0$.

Since by length distortion estimates $B(x_0, t_0 - \frac14 r_0^2, \frac1{10} r_0) \subset B(x_0, t_0, \frac12 r_0)$, we find by joining paths that for all $x \in B(x_0, t_0, r_0)$ we have $L(x, t_0 - r_0^2) < C''(A) r_0$.
So, assuming $\delta < \delta^* (C''(A), r, \eta)$ and $Z > C''(A)$, we can use Claim 2 to conclude that $P(x_0, t_0, r_0, - r_0^2) \cap \MM(t_0 - r_0^2) \subset D_{t_0 - r_0^2}$ and we have
\[ \widetilde{V}(t_0 - r_0^2) > v(w, A) \]
for some constant $v(w, A) > 0$, which only depends on $A$ and $w$.
This implies a uniform lower bound on $r_1^{-3} \vol_{t_0} B(x_1, t_0, r_1)$ (cf \cite[7.3]{PerelmanI}, \cite[Theorem 26.2]{KLnotes}, \cite[Lemma 4.2.3]{Bamler-diploma}).
\end{proof}

The noncollapsing result from Lemma \ref{Lem:6.3a} will be applied in Lemma \ref{Lem:6.3bc} below.
Before we continue, we introduce the concept of $\kappa$-solutions, which will be used as models for singularities and for regions of high curvature.
The definition makes sense in all dimensions.

\begin{Definition}[$\kappa$-solution]
Let $\kappa > 0$.
An ancient Ricci flow $(M, (g_t)_{t \in (-\infty, 0]})$ is called a \emph{$\kappa$-solution} if
\begin{enumerate}[label=(\arabic*)]
\item The curvature is uniformly bounded on $M \times (-\infty, 0]$.
\item The metric on every time-slice is complete and has non-negative curvature operator (i.e. it has non-negative sectional curvature in dimension $3$).
\item The scalar curvature at time $0$ is positive.
\item At every point the scalar curvature is non-decreasing in time.
\item The solution is $\kappa$-noncollapsed on all scales at all points.
\end{enumerate}
\end{Definition}
Note that, by Hamilton's Harnack inequality (cf \cite{Hamilton-Harnack}), condition (4) follows from conditions (1), (2).

We also mention that there is a universal $\kappa_0 > 0$ such that every $3$ dimensional $\kappa$-solution that is not round (i.e. isometric to a quotient of a round sphere), is in fact a $\kappa_0$-solution (cf \cite[11.9]{PerelmanI}, \cite[Proposition 50.1]{KLnotes}).
$\kappa$-solutions can be used to detect strong $\varepsilon$-necks or $(\varepsilon, E)$-caps or, more generally, to verify the canonical neighborhood assumptions, as explained in the next Lemma.

\begin{Lemma} \label{Lem:kappasolCNA}
There is an $\eta > 0$ and for any $\varepsilon > 0$ there is an $E = E(\varepsilon) < \infty$ such that for every orientable $3$ dimensional $\kappa$-solution $(M, (g_t)_{t \in (-\infty, 0]})$ the following holds:
For all $r > 0$, the canonical neighborhood assumptions $CNA (r, \varepsilon, E, \eta)$ hold everywhere on $M \times (- \infty, 0]$.
More precisely, $M$ is diffeomorphic to a spherical space form and has positive, $E^2$-pinched sectional curvatures or for any $(x,t) \in M \times (-\infty, 0]$ we have:
\begin{enumerate}[label=(\alph*)]
\item $(x,t)$ is a center of a strong $\varepsilon$-neck or an $(\varepsilon, E)$-cap $U \subset M$. \\ 
If $U \subset \IR P^3 \setminus \ov{B}^3$, then there is a double cover of $M$ such that any lift of $(x,t)$ is the center of a strong $\varepsilon$-neck.
\item $|\nabla |{\Rm}|^{-1/2}| (x,t) < \eta^{-1}$ and $| \partial_t |{\Rm}|^{-1} | (x,t) < \eta^{-1}$.
\item $\vol_t B(x,t, r') > \eta r^{\prime 3}$ for all $0 < r' < |{\Rm}|^{-1/2} (x,t)$.
\end{enumerate}
\end{Lemma}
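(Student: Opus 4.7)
The plan is to derive all three conclusions from the compactness theorem for $\kappa$-solutions together with Perelman's structure/classification of 3-dimensional $\kappa$-solutions. The first step is Perelman's compactness theorem: for any $\kappa > 0$, the family of pointed orientable 3-dimensional $\kappa$-solutions $(M, (g_t), (x_0, 0))$ normalized by $|{\Rm}|(x_0, 0) = 1$ is precompact in the pointed smooth Cheeger--Gromov topology. This uses Hamilton's compactness (with curvature controlled a priori, and injectivity radius controlled via $\kappa$-noncollapsing) plus Shi's derivative estimates and Hamilton's trace Harnack to propagate all derivative bounds. A separate input is that any non-round 3-dimensional $\kappa$-solution is in fact $\kappa_0$-noncollapsed for a universal $\kappa_0$, so that, up to the spherical space form exception, we are arguing inside a single compact family.

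Parts (b) and (c) then follow from a standard contradiction argument with a universal $\eta$. Suppose a sequence of $\kappa$-solutions contains points $(x_i, t_i)$ at which one of $|\nabla |{\Rm}|^{-1/2}|$, $|\partial_t|{\Rm}|^{-1}|$, or some $r'^{-3}\vol B(x_i,t_i,r')$ with $r' \leq |{\Rm}|^{-1/2}(x_i, t_i)$ violates the bound with $\eta_i \to 0$. Parabolically rescale to unit curvature at $(x_i, t_i)$ and time-shift so $t_i = 0$; extract a smooth limit $\kappa_0$-solution. On the limit the first two quantities are smooth and finite at the base point, while the volume ratio is bounded below by Bishop--Gromov (non-negative curvature) plus the limiting $\kappa_0$-noncollapsing, yielding a contradiction.

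For part (a), I would argue analogously. If a sequence $(M_i, g_{i,t}, x_i, t_i)$ of $\kappa$-solutions has the property that $(x_i, t_i)$ is neither a center of a strong $\varepsilon$-neck nor of an $(\varepsilon, E_i)$-cap with $E_i \to \infty$, then after rescaling and passing to a limit $(M_\infty, g_\infty, x_\infty, 0)$ we would obtain a point of a $\kappa_0$-solution that is not a center of a strong $\varepsilon$-neck and not a center of any $(\varepsilon, E)$-cap. Here I would invoke Perelman's structure theorem for 3-dimensional $\kappa$-solutions, proved via the asymptotic shrinking gradient soliton (which in dimension 3 is the round cylinder, its $\IZ_2$ quotient, or a round spherical space form): if $M_\infty$ is closed of positive curvature it is a spherical space form, matching the exceptional alternative of the lemma; otherwise, the complement of a compact ``core'' consists of $\varepsilon$-necks, and the base point either sits in such a neck or lies in the core, which, together with one adjoining neck, constitutes an $(\varepsilon, E)$-cap for some finite $E$. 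The extra case in which the cap has topology $\IR P^3 \setminus \ov{\IB}^3$ is treated by passing to the orientation double cover, which is again a $\kappa$-solution, and applying the strong $\varepsilon$-neck conclusion at any lift of the base point.

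The main obstacle is the uniformity of $E$ in the preceding step. Non-uniformity would mean the ``core'' of the limiting $\kappa$-solution is not controlled, i.e.\ its diameter and curvature ratio (both measured in the natural curvature scale) are unbounded across the family. This is ruled out by compactness of the family of normalized $\kappa$-solutions together with the observation that, in a given noncompact 3-dimensional $\kappa$-solution, the locus of points not lying in an $\varepsilon$-neck has diameter bounded in terms of the curvature at a single reference point; this bound is upper semi-continuous under Cheeger--Gromov convergence, which is exactly what is needed to produce $E = E(\varepsilon)$ after the contradiction argument closes.
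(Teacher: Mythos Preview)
Your proposal is correct and follows exactly the standard route that the paper's proof invokes by citation (\cite[11.8]{PerelmanI}, \cite[Corollary 48.1]{KLnotes}, \cite[Theorem 5.4.11]{Bamler-diploma}): compactness of normalized $\kappa_0$-solutions gives (b) and (c) by contradiction, and Perelman's structure theorem for 3-dimensional $\kappa$-solutions (asymptotic soliton $\Rightarrow$ neck/cap decomposition with uniformly bounded core) gives (a), including the $\IR P^3 \setminus \ov{B}^3$ case via the double cover. One small clarification: in your part (a) you write that if $M_\infty$ is closed of positive curvature this ``matches the exceptional alternative''; strictly speaking you should first note that in the contradiction argument one may assume the $M_i$ are non-compact (compact $\kappa$-solutions are already diffeomorphic to spherical space forms and hence fall under the stated exception), and then observe that a pointed Cheeger--Gromov limit of connected non-compact manifolds cannot be compact, so this case does not arise.
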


\begin{proof}
See \cite[11.8]{PerelmanI}, \cite[Corollary 48.1]{KLnotes} or \cite[Theorem 5.4.11]{Bamler-diploma}.
\end{proof}

The following Lemma will enable us to identify $\kappa$-solutions as limits of Ricci flows with surgeries under very weak curvature bounds.
We first need to coin the following notion.

\begin{Definition}[convergence of pointed Ricci flows with surgery]
Let $\MM^\alpha$, $\alpha = 1, 2, \ldots$, be a sequence of Ricci flows with surgery and let $(x^\alpha, t^\alpha) \in \MM^\alpha$ be basepoints.
Furthermore, consider a constant $0 < T \leq \infty$, a non-singular Ricci flow $(M^\infty, (g^\infty_t)_{t \in (- T, 0]})$ and a basepoint $(x^\infty, t^\infty) \in M^\infty \times (-T, 0]$.
We say that the \emph{pointed Ricci flows with surgery $(\MM^\alpha, (x^\alpha, t^\alpha))$ converge to the pointed Ricci flow $(M^\infty, \linebreak[1] (g^\infty_t)_{t \in (- T, 0]}, \linebreak[1] (x^\infty, t^\infty))$}, if the following holds:
We can find an increasing sequence of open subsets $U^\alpha \subset M^\infty$, open subsets $V^\alpha \subset \MM^\alpha (t^\alpha)$, diffeomorphisms $\Phi^\alpha : U^\alpha \to V^\alpha$, and numbers $0 < T^\alpha < T$ such that the following holds:
\begin{enumerate}[label=(\arabic*)]
\item $\lim_{\alpha \to \infty} T^\alpha = T$.
\item $\bigcup_{\alpha = 1}^\infty U^\alpha = M^\infty$.
\item For any $\alpha$, all points of $V^\alpha$ survive until time $t^\alpha - T^\alpha$.
In other words, the flow restricted to $V^\alpha \times [t^\alpha - T^\alpha, t^\alpha]$ is non-singular.
\item Denote by $(\ov{g}^\alpha)_{t \in [t^\infty - T^\alpha, t^\infty]}$ the pullbacks $\ov{g}^\alpha_{t^\infty + t} := (\Phi^\alpha)^* g^\alpha (t^\alpha + t)$, $t \in [-T^\alpha, 0]$.
Then $(\ov{g}^\alpha_t)_{t \in [t^\infty - T^\alpha, t^\infty]}$ converges to $(g^\infty_t)_{t \in (t^\infty - T, t^\infty]}$ locally in any $C^m$-norm on $M^\infty \times (- T, 0]$.
\end{enumerate}
\end{Definition}

Note that in the case in which all flows $\MM^\alpha$ are non-singular, this notion coincides with the smooth convergence of Ricci flows as introduced by Hamilton (cf \cite{Hamilton-compactness}).

\begin{Lemma} \label{Lem:lmiitswithCNA}
There is an $\varepsilon_0 > 0$ such that:
Let $\MM^\alpha$ be a sequence of ($3$ dimensional) Ricci flows with surgery on the time-intervals $[- \tau_0^\alpha, 0]$, $\tau^\alpha \leq \tau_0^\alpha$, $x^\alpha_0 \in \MM^\alpha (0)$ a sequence of basepoints that survive until time $- \tau^\alpha$, and $a^\alpha \to \infty$ a sequence of positive numbers such that for $P^\alpha = \{ (x,t) \in \MM^\alpha \;\; : \;\; t \in [- \tau^\alpha, 0], \; \dist_t(x_0^\alpha, x) < a^\alpha \}$ the following conditions hold:
\begin{enumerate}[label=(\roman*)]
\item the ball $B(x_0^\alpha, t, a^\alpha)$ is relatively compact in $\MM^\alpha (t)$ and does not hit the boundary $\partial \MM^\alpha (t)$ for all $t \in [- \tau^\alpha, 0]$,
\item $|{\Rm}|(x^\alpha,0) \leq 1$,
\item the curvature on $P^\alpha$ is $\varphi^\alpha$-positive for some $\varphi^\alpha \to 0$,
\item all points of $P^\alpha$ are $\kappa$-noncollapsed on scales $< a^\alpha$ for some uniform $\kappa > 0$,
\item all points on $P^\alpha$ satisfy the canonical neighborhood assumptions $CNA(r, \linebreak[1] \varepsilon_0, \linebreak[1] E, \eta)$ for some uniform $r, E, \eta > 0$,
\item there is a sequence $K^\alpha \to \infty$ such that for every surgery point $(x', t') \in P^\alpha$ we have $|{\Rm}| (x', t') > K^\alpha$.
\end{enumerate}
Then whenever $\tau^\infty = \limsup_{\alpha \to \infty} \tau^\alpha > 0$, a subsequence of the pointed Ricci flows with surgery $(\MM^\alpha, (x_0^\alpha, 0))$ converges to some pointed, non-singular Ricci flow $(M^\infty, \linebreak[1] (g_t^\infty)_{t \in (- \tau^\infty, 0]}, \linebreak[1] (x_0^\infty,0))$ on a manifold $M^\infty$ without boundary.
Moreover, this limiting Ricci flow has complete time-slices and bounded, non-negative sectional curvature.
If $\tau^\infty = \infty$ and $|{\Rm}|(x^\infty, 0) > 0$, then $(M^\infty, \linebreak[1] (g_t^\infty)_{t \in (- \infty, 0]})$ is a $\kappa$-solution.
\end{Lemma}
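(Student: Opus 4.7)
The overall plan is a Hamilton--Cheeger--Gromov compactness argument in which the main work is to upgrade the single-point curvature bound at $(x_0^\alpha,0)$ to uniform curvature bounds on all regions of fixed parabolic size around $(x_0^\alpha,0)$. Once such bounds are in place, non-collapsedness via assumption (iv) gives injectivity radius lower bounds, assumption (vi) rules out surgery points from the limiting region, and $\varphi^\alpha$-positivity combined with $\varphi^\alpha \to 0$ forces non-negative sectional curvature on the limit.

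The key step is the following curvature bound, established by contradiction and point-picking: for every $A < \infty$ and $\sigma > 0$ there exists $K = K(A,\sigma) < \infty$ such that $|{\Rm}| < K$ on $B(x_0^\alpha, 0, A) \times [-\min(\tau^\alpha, \sigma), 0]$ for all sufficiently large $\alpha$. The starting ingredient is the hypothesis $|{\Rm}|(x_0^\alpha, 0) \leq 1$ combined with Lemma \ref{Lem:shortrangebounds}(a), which yields $|{\Rm}| < 2$ on a parabolic neighborhood of controlled size. To propagate this bound outward in space, one integrates the canonical neighborhood gradient estimate $|\nabla |{\Rm}|^{-1/2}| < \eta^{-1}$ along minimizing geodesics in the time-$0$ slice: this shows that $|{\Rm}|^{-1/2}$ grows at most linearly in distance, so $|{\Rm}|(\cdot, 0)$ remains bounded on any ball $B(x_0^\alpha, 0, A)$ by a constant depending only on $A$ and $\eta$, provided one never enters the regime where the canonical neighborhood assumptions fail. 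For propagation backward in time, the estimate $|\partial_t |{\Rm}|^{-1}| < \eta^{-1}$, applied together with Lemma \ref{Lem:shortrangebounds}, allows one to push curvature bounds onto parabolic neighborhoods; a standard iteration (halving the spatial radius at each step if necessary) yields the stated $K(A,\sigma)$.

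With these uniform curvature bounds in hand on any fixed parabolic region, Shi's local derivative estimates provide bounds on all covariant derivatives of $\Rm$ on slightly smaller regions. Assumption (iv) furnishes a uniform lower bound on the injectivity radius at $(x_0^\alpha, 0)$ (by the $\kappa$-non-collapsedness together with the curvature bound just established), and therefore the Hamilton--Cheeger--Gromov compactness theorem for Ricci flows produces a pointed smooth subsequential limit $(M^\infty, (g_t^\infty)_{t \in (-\tau^\infty, 0]}, (x_0^\infty, 0))$ on a manifold $M^\infty$ without boundary (note that by (i) the boundary stays at distance $\to \infty$, and $M^\infty$ has complete time slices by non-collapsedness and the curvature bounds). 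Assumption (vi) is what ensures the limit is non-singular on $M^\infty \times (-\tau^\infty, 0]$: any sequence of surgery points in the region of bounded curvature would have $R \to \infty$ along condition (vi), contradicting the uniform curvature bound. Assumption (iii), together with Hamilton's improving estimate \cite[Theorem 4.1]{Ham}, forces $\sec_{g_t^\infty} \geq 0$, and the curvature bound guarantees boundedness on compact time intervals.

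Finally, if $\tau^\infty = \infty$ and $|{\Rm^\infty}|(x_0^\infty, 0) > 0$, we verify the five $\kappa$-solution axioms. Completeness, non-negativity of the curvature operator (the three-dimensional case reduces to $\sec \geq 0$), and ancient existence on $(-\infty, 0]$ are already established. The scalar curvature of the limit is positive at $(x_0^\infty, 0)$ since $|{\Rm}| > 0$ combined with $\sec \geq 0$ forces $\scal > 0$; monotonicity of $\scal$ in $t$ at each point follows in the limit from the evolution inequality $\partial_t \scal = \triangle \scal + 2|\Ric|^2 \geq 0$ valid under $\sec \geq 0$. Uniform boundedness of curvature on $M^\infty \times (-\infty, 0]$ is obtained by applying the curvature bound from the first step with $\sigma \to \infty$. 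The $\kappa$-non-collapsedness at all scales descends directly from assumption (iv) since $a^\alpha \to \infty$. The main obstacle throughout is the first step: ensuring that the propagation of curvature bounds via the canonical neighborhood gradient estimate does not break down because one exits the curvature regime in which those estimates apply; this is handled by the standard continuity/contradiction argument that simultaneously rules out escape to the high-curvature canonical-neighborhood regime on bounded parabolic regions.
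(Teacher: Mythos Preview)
The paper does not prove this lemma in detail; it simply refers to \cite[Proposition 6.3.1]{Bamler-diploma}, \cite[4.2]{PerelmanII}, \cite[12.1]{PerelmanI}, and \cite[Theorem 52.7]{KLnotes}. Your outline follows the structure of those arguments, but two of the steps, as you describe them, do not actually go through.

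First, the spatial propagation of the curvature bound at time $0$ cannot be obtained by directly integrating $|\nabla |{\Rm}|^{-1/2}| < \eta^{-1}$. That estimate only holds where $|{\Rm}| > 4$; integrating along a geodesic from $x_0^\alpha$ gives at best $|{\Rm}|^{-1/2}(\gamma(s)) \geq \tfrac12 - \eta^{-1} s$ on the high-curvature portion, which becomes vacuous once $s > \eta/2$ and therefore yields no bound on $B(x_0^\alpha, 0, A)$ for large $A$. The actual argument in \cite[12.1]{PerelmanI} (and \cite[Theorem 52.7]{KLnotes}) is a contradiction via point-picking and rescaling: if curvature blew up at bounded distance, one would obtain an incomplete non-negatively curved limit whose blow-down near the incomplete end is a non-flat metric cone, contradicting Hamilton's strong maximum principle for the curvature operator under Ricci flow. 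You gesture at a ``standard continuity/contradiction argument'' in your final paragraph, but the geometric core of that argument is absent from the sketch, and your description of the difficulty (``exits the curvature regime in which those estimates apply'') has the regimes reversed.

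Second, the uniform curvature bound on $M^\infty \times (-\infty, 0]$ does not follow from sending $\sigma \to \infty$ in your $K(A,\sigma)$, since that constant may itself blow up. The correct route first establishes a \emph{global} time-$0$ curvature bound on the limit (using the neck structure together with the splitting theorem for complete non-negatively curved $3$-manifolds, as in Claim~2 of \cite[12.1]{PerelmanI}), and then extends the bound backward in time using the time-derivative estimate $|\partial_t |{\Rm}|^{-1}| < \eta^{-1}$ from the CNA. Relatedly, your derivation of pointwise monotonicity $\partial_t \scal \geq 0$ from $\partial_t \scal = \triangle \scal + 2|{\Ric}|^2$ is incorrect, since $\triangle \scal$ need not be non-negative; the monotonicity is a consequence of Hamilton's trace Harnack inequality for ancient solutions with bounded non-negative curvature operator, which only becomes available once the global curvature bound is in hand.
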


\begin{proof}
We follow the lines of the proofs of \cite[Proposition 6.3.1]{Bamler-diploma}, \cite[4.2]{PerelmanII}, \cite[12.1]{PerelmanI} and \cite[Theorem 52.7]{KLnotes}.

We first use assumptions (i), (iv), (v) at time $0$ and assumptions (ii), (iii) to apply Perelman's ``bounded curvature at bounded distance estimate''.
For more details see \cite[Claim 2, 4.2]{PerelmanII}, the proof of \cite[Lemma 89.2]{KLnotes} or \cite[Lemma 70.2]{KLnotes} or \cite[Proposition 6.2.4]{Bamler-diploma}.
In order to carry out this estimate, we need to assume that $\varepsilon$ is smaller than some universal constant $\varepsilon_0 > 0$.
The ``bounded curvature at bounded distance estimate'' yields a function $K_1^* : [0, \infty) \to (0, \infty)$ such that for every $d > 0$ we have
\[ |{\Rm}| (\cdot, 0) < K_1^* (d) \qquad \text{on} \qquad B(x^\alpha_0, 0, d) \subset \MM^\alpha (0) \]
for large $\alpha$ (depending on $d$).
Using Lemma \ref{Lem:shortrangebounds}(b) and assumption (v), we obtain functions $\tau^*_2, K^*_2 : [0, \infty) \to (0,\infty)$ such that for all $d > 0$ we have
\[ |{\Rm}| < 2 K^{*}_2 (d) \qquad \text{on} \qquad P(x_0^\alpha, 0, d, - \tau^*_2 (d)) \]
for large $\alpha$ (depending on $d$).
By assumption (vi), this implies that for any $d > 0$ and large $\alpha$, the parabolic neighborhood $P(x_0^\alpha, 0, d, - \tau^*_2 (d))$ is non-singular.
So we obtain uniform bounds on the curvature derivatives on slightly smaller parabolic neighborhoods.
This and assumption (iv) implies that, after passing to a subsequence, the pointed Riemannian manifolds $(\MM^\alpha (0), x^\alpha_0)$ converge to a complete pointed Riemannian manifold $(M^\infty, g^\infty, x^\infty_0)$ in the smooth Cheeger-Gromov sense.
In the following, we will only work with this subsequence.

By assumption (iii), we conclude that $(M^\infty, g_\infty)$ has non-negative sectional curvature.
Moreover, by assumption (v), we find that any point $x \in M^\infty$ with $|{\Rm}| (x) > 2 E^2 r^{-2}$ is the center of a $2\eps$-neck or a $(2\eps, 2E)$-cap.
This fact implies that the curvature on $(M^\infty, g_\infty)$ is uniformly bounded.
For more details see the proof of \cite[Theorem 52.7]{KLnotes} (see also step 3 in the proof of \cite[Theorem 52.7]{KLnotes}), the proof of \cite[Proposition 6.3.1]{Bamler-diploma} or the second paragraph on page 34 of \cite{PerelmanI}.

So there is a constant $K^*_3 < \infty$ such that for any $d > 0$ we have
\[ |{\Rm}|(\cdot, 0) < K^*_3 \qquad \text{on} \qquad B(x_0^\alpha, 0, d) \subset \MM^\alpha (0) \]
for sufficiently large $\alpha$ (depending on $d$).
Again, by Lemma \ref{Lem:shortrangebounds} and assumption (v), we obtain constants $\tau^*_4 > 0$ and $K^*_4 < \infty$ such that for all $d > 0$ we have
\[ |{\Rm}| < K^*_4 \qquad \text{on} \qquad P(x_0^\alpha, 0, d, - \tau^*_4) \]
for sufficiently large $\alpha$ (depending on $d$).
So, again by assumption (vi), for large $\alpha$ (depending on $d$) the parabolic neighborhood $P(x_0^\alpha, 0, d, - \tau^*_4)$ is non-singular.

Choose now $0 < \tau^* \leq \tau^\infty$ maximal with the following property:
After possibly passing to a subsequence, the following holds:
For any $0 < \tau^{**} < \tau^*$ there is a constant $K_{\tau^{**}} < \infty$ such that for all $d > 0$ we have
\[ |{\Rm}| < K^*_{\tau^{**}} \qquad \text{on} \qquad P(x_0^\alpha, 0, d, - \tau^{**}) \]
for large $\alpha$ (depending on $\tau^{**}$ and $d$).
By our previous conclusions, $\tau^* \geq \tau^*_4 > 0$.
It follows that we can pick sequences $d^\alpha \to \infty$ and $\tau^{**}_\alpha \to \tau^*$ such that the parabolic neighborhoods $P(x_0^\alpha, 0, d^\alpha, - \tau^{**}_\alpha)$ are non-singular.
So we can apply Hamilton's compactness theorem for (non-singular) Ricci flows to conclude that the pointed Ricci flows with surgery $(\MM^\alpha, (x^\alpha_0, 0))$ subconverge to some non-singular Ricci flow $(M^\infty, (g_t^\infty)_{t \in (-\tau^*, 0]}, (x^\infty_0, 0))$ with the property that $g_0^\infty = g^\infty$.
Moreover, $(M^\infty, (g_t^\infty)_{t \in (-\tau^*, 0]})$ has bounded curvature on compact time-intervals, complete time-slices and, by assumption (iii), non-negative sectional curvature.

Next we show that $(M^\infty, (g_t^\infty)_{t \in (-\tau^*, 0]})$ has uniformly bounded curvature.
In the case in which $\tau^* = \infty$, this fact follows from Hamilton's Harnack inequality (cf \cite{Hamilton-Harnack}), which implies that the scalar curvature is pointwise non-decreasing along the flow.
In the case in which $\tau^* < \infty$, we can argue as in step 4 of the proof of \cite[Theorem 52.7]{KLnotes}.
See also the proof of \cite[Proposition 6.3.1]{Bamler-diploma} or the third paragraph on page 34 of \cite{PerelmanI}.
For these proofs the following statement, which follows from assumption (v), is important: any $(x,t) \in M^\infty \times (-\tau^*, 0]$ with $|{\Rm}| (x,t) > 2 r^{-2}$ is the center of a $2\varepsilon$-neck or a $(2\varepsilon, 2E)$-cap.

So it follows that, after passing to a subsequence once again, there is a uniform constant $K^*_5 < \infty$ such that for all $0 < \tau^{**} < \tau^*$ and all $d > 0$ we have
\[ |{\Rm}| < K^*_5 \qquad \text{on} \qquad P(x_0^\alpha, 0, d, - \tau^{**}) \]
for sufficiently large $\alpha$ (depending on $\tau^{**}$ and $d$).
Now assume that $\tau^* < \tau^\infty$.
Then, using Lemma \ref{Lem:shortrangebounds}(b) and assumption (v), we can find some $\tau^*_6$ with $\tau^* < \tau^*_6 < \tau^\infty$ and some $K^*_6 < \infty$ such that for any $d > 0$ we have
\[ |{\Rm}| < K^*_6 \qquad \text{on} \qquad P(x_0^\alpha, 0, d, - \tau^*_6) \]
for sufficiently large $\alpha$ (depending on $d$).
This, however, contradicts the choice of $\tau^*$.
So we conclude that indeed $\tau^* = \tau^\infty$.

It remains to consider the case in which $\tau^\infty = \infty$ and $|{\Rm}|(x^\infty, 0) > 0$.
Note that in this case $(M^\infty, (g^\infty_t)_{t \in (-\infty, 0]})$ is an ancient solution with uniformly bounded, non-negative sectional curvature and complete time-slices.
Since $|{\Rm}| \linebreak[1] (x^\infty, \linebreak[1] 0) > 0$, we must have $\scal (x^\infty, t) > 0$ for some $t < 0$.
So by the strong maximum principle, we have $\scal (\cdot, 0) > 0$ everywhere on $M^\infty$.
The fact that the scalar curvature is pointwise non-decreasing in time follows from Hamilton's Harnack inequality (cf \cite{Hamilton-Harnack}) and the fact that $(M^\infty, (g^\infty_t)_{t \in (-\infty, 0]})$ is $\kappa$-noncollapsed on all scales at all points is a consequence of assumption (iv).
\end{proof}

We now state the second main Lemma.

\begin{Lemma}[cf \hbox{\cite[6.3(b)+(c)]{PerelmanII}}] \label{Lem:6.3bc}
There are constants $\eta_0, \varepsilon_0 > 0$ and for every $\varepsilon \in (0, \varepsilon_0]$ there is a constant $E_0 = E_0(\varepsilon) < \infty$ such that: \\
For any $1 \leq A < \infty$, $w, r > 0$, $\eta \in (0, \eta_0]$ and $E \geq E_0$ there are constants $K = K(w, A, E, \eta), Z = Z(A) < \infty$ and $\td\rho = \td\rho (w, A, \varepsilon, E, \eta), \ov{r}=\ov{r}(A, w, E, \eta), \delta = \delta (w, A, r, \varepsilon, E, \eta) > 0$ such that: \\
Let $r_0^2 \leq t_0/2$ and let $\MM$ be a Ricci flow with surgery (whose time-slices are allowed to have boundary) on the time-interval $[ t_0 - r_0^2, t_0]$ that is performed by $\delta$-precise cutoff and consider a point $x_0 \in \MM(t_0)$.
Assume that the canonical neighborhood assumptions $CNA (r \sqrt{t_0}, \varepsilon, E, \eta)$ hold on $\MM$.
We also assume that the curvature on $\MM$ is uniformly bounded on compact time-intervals that don't contain surgery times and that all time-slices of $\MM$ are complete.

Assume that the parabolic neighborhood $P(x_0, t_0, r_0, -r_0^2)$ is non-singular, that $|{\Rm}| \leq r_0^{-2}$ on $P(x_0, t_0, r_0, - r_0^2)$ and $\vol_{t_0} B(x_0, t_0, r_0) \geq w r_0^3$.

In the case in which some time-slices of $\MM$ have non-empty boundary, we assume that
\begin{enumerate}[label=(\roman*)]
\item every space-time curve $\gamma : [t_1, t_2] \to \MM$ with $t_2 \in [t_0 - \frac1{10} r_0^2, t_0]$ and $\gamma(t_2) \in B(x_0, t_2, (A+1) r_0)$ that meets the boundary $\partial \MM$ somewhere, has $\LL(\gamma) > Z r_0$ (based in $t_2$),
\item for all $t \in [t_0 - \frac15 r_0^2, t_0]$, the ball $B(x_0, t, 2 (A+3)r_0 + r \sqrt{t_0})$ does not meet the boundary $\partial \MM (t)$.
\end{enumerate}

Then
\begin{enumerate}[label=(\alph*)]
\item Every point $x \in B(x_0, t_0, A r_0)$ satisfies the canonical neighborhood assumptions $CNA( \td\rho r_0, \varepsilon, E , \eta )$.
\item If $r_0 \leq \ov{r} \sqrt{t_0}$, then $|{\Rm}| \leq K r_0^{-2}$ on $B(x_0, t_0, A r_0)$.
\end{enumerate}
\end{Lemma}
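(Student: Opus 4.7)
The proof follows Perelman \cite[6.3(b)+(c)]{PerelmanII}, modified to incorporate the boundary hypotheses (i) and (ii). First I would apply Lemma \ref{Lem:6.3a} with $A$ replaced by $2(A+3)$: the hypotheses (i)--(ii) of the present lemma are strictly stronger than those of Lemma \ref{Lem:6.3a} for this enlarged radius, provided $Z = Z(A)$ is chosen at least as large as the constant appearing in that lemma. This yields $\kappa = \kappa(w, A, \eta) > 0$ such that $\MM$ is $\kappa$-noncollapsed at scales less than $r_0$ throughout $B(x_0, t_0, 2(A+3) r_0)$, which will feed all subsequent blow-up arguments.

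For (a), I would argue by contradiction. Suppose there is a sequence $(\MM^\alpha, x_0^\alpha, t_0^\alpha, r_0^\alpha)$ satisfying the hypotheses and points $y^\alpha \in B(x_0^\alpha, t_0^\alpha, A r_0^\alpha)$ at which $CNA(\td\rho^\alpha r_0^\alpha, \varepsilon, E, \eta)$ fails with $\td\rho^\alpha \to 0$. A Perelman point-picking adjustment replaces $y^\alpha$ by a point with $Q^\alpha := |{\Rm}|(y^\alpha, t_0^\alpha) \to \infty$ such that canonical neighborhoods do hold at all nearby points of comparable curvature. Parabolically rescaling $\MM^\alpha$ at $(y^\alpha, t_0^\alpha)$ by $\sqrt{Q^\alpha}$, hypothesis (ii) guarantees that $\partial \MM^\alpha$ escapes to infinity in the rescaled flow, and Step 1 supplies the uniform noncollapsedness required to invoke Lemma \ref{Lem:lmiitswithCNA}. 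The surgery curvature condition of that lemma is ensured by $\delta$-precise cutoff once $\delta$ is chosen small depending on $r, \varepsilon, E, \eta$. The resulting limit $(M^\infty, g^\infty, y^\infty)$ is a $\kappa$-solution, and Lemma \ref{Lem:kappasolCNA} then asserts that $y^\infty$ satisfies the canonical neighborhood assumptions, contradicting the failure of $CNA$ at the $y^\alpha$.

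For (b), again arguing by contradiction, assume a sequence with $\ov{r}^\alpha \to 0$ and points $y^\alpha \in B(x_0^\alpha, t_0^\alpha, A r_0^\alpha)$ with $|{\Rm}|(y^\alpha, t_0^\alpha) \geq K^\alpha (r_0^\alpha)^{-2}$, $K^\alpha \to \infty$. Rescale by $\sqrt{Q^\alpha}$ at $(y^\alpha, t_0^\alpha)$, where $Q^\alpha = |{\Rm}|(y^\alpha, t_0^\alpha)$. Part (a) upgrades to uniform canonical neighborhood control throughout $B(x_0^\alpha, t_0^\alpha, A r_0^\alpha)$ down to scale $\td\rho r_0^\alpha$, which blows up to arbitrarily large scale after rescaling. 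Combined with Step 1 and Lemma \ref{Lem:lmiitswithCNA}, this produces a $\kappa$-solution limit $(M^\infty, g^\infty, y^\infty)$ in which the rescaled images of $x_0^\alpha$ converge to a (possibly ideal) limit point $x_0^\infty$ with curvature zero, while the volume bound $\vol_{t_0^\alpha} B(x_0^\alpha, t_0^\alpha, r_0^\alpha) \geq w (r_0^\alpha)^3$ translates into a uniform positive lower bound on the volume ratio of arbitrarily large balls around $x_0^\infty$ in $M^\infty$. This violates the vanishing asymptotic volume ratio of a nontrivial three-dimensional $\kappa$-solution (\cite[11.4]{PerelmanI}), yielding the contradiction.

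The main obstacle is controlling the boundary during the blow-up procedures. Hypothesis (ii) provides a spatial separation of $\partial \MM$ from $B(x_0, t_0, A r_0)$ which grows under rescaling because $Q^\alpha \to \infty$, and hypothesis (i) controls the $\LL$-length of any space-time trajectory crossing the boundary; this is precisely what Lemma \ref{Lem:6.3a} requires in order for its $\LL$-geometry argument to go through. The delicate part of the proof is the simultaneous choice of $Z(A)$ large enough to propagate (i) and (ii) from the present lemma's hypotheses to those of Lemma \ref{Lem:6.3a} at the enlarged radius $2(A+3) r_0$, and of $\delta$ small enough to keep the surgery-avoidance statements of Lemmas \ref{Lem:6.3a} and \ref{Lem:lmiitswithCNA} valid throughout the iteration.
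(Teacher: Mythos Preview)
Your overall architecture is right, but there is a genuine gap in part (a) concerning the time direction of the blow-up.

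You apply Lemma \ref{Lem:6.3a} only at time $t_0$, obtaining $\kappa$-noncollapsedness on $B(x_0, t_0, 2(A+3)r_0)$. However, Lemma \ref{Lem:lmiitswithCNA} requires $\kappa$-noncollapsedness on the entire space-time region $P^\alpha$, i.e.\ at all times $t \in [-\tau^\alpha, 0]$ after rescaling, and more seriously it only produces a $\kappa$-solution when $\tau^\infty = \infty$. Your proposal does not explain either how to obtain noncollapsedness at earlier times or why the backward time interval can be pushed to $-\infty$. The paper handles the first point by invoking Lemma \ref{Lem:6.3a} not at $t_0$ but at every $t \in [\ov{t}^\alpha - \tfrac1{20}(\td\rho^\alpha)^{-2}(\ov{q}^\alpha)^2, \ov{t}^\alpha]$, with $r_0 \leftarrow \tfrac1{10} r_0$; this is precisely why hypothesis (i) of the present lemma is stated for all $t_2 \in [t_0 - \tfrac1{10} r_0^2, t_0]$ rather than just $t_2 = t_0$. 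Your single application at $t_0$ does not use the full strength of (i) and cannot feed the convergence lemma.

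The second point --- showing $\tau^\alpha \to \infty$ --- is the technical heart of the paper's argument and is entirely absent from your sketch. The paper's point-picking produces a point $(\ov{x}^\alpha, \ov{t}^\alpha)$ (at a possibly earlier time than $t_0^\alpha$) together with a parabolic-type region $P_{\ov{x}^\alpha, \ov{t}^\alpha}$ on which the finer assumptions $CNA(\tfrac12 \ov{q}^\alpha, \varepsilon, E, \eta)$ hold. One then defines $\tau^\alpha$ via a distance condition, obtains a partial limit with curvature bound $D$, and uses the derivative estimate from $CNA$ property (2) to push the curvature bound slightly past $\tau^\alpha$; a distance-distortion estimate (Lemma \ref{Lem:distdistortion}(b)) then contradicts the maximality of $\tau^\alpha$ unless $\tau^\infty = \infty$. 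Your phrase ``canonical neighborhoods do hold at all nearby points of comparable curvature'' does not capture the backward-in-time content of this point-picking, and without it you cannot conclude that the limit is ancient.

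Your outline for (b) is essentially correct and matches the paper, which simply refers to \cite[6.3]{PerelmanII} and \cite[Lemma 70.2]{KLnotes} at that stage.
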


It is important in this lemma that $\td\rho$, unlike $\delta$, may not depend on $r$.

\begin{proof}
The proof follows the lines of \cite[6.3(b), (c)]{PerelmanII}.

Let $\varepsilon_0$ be the constant from Lemma \ref{Lem:lmiitswithCNA}.
Choose $\eta_0$ and $E_0 = E_0 (\varepsilon)$ to be strictly less/larger than the constants $\eta$, $E(\varepsilon)$ in Lemma \ref{Lem:kappasolCNA}.
By choosing $\td\rho$ small and $K$ large enough, we can again exclude the case in which  for some time $t \leq t_0$ the component of $\MM(t)$ that contains $x_0$ has positive, $E^2$-pinched sectional curvatures.

We first establish part (a).
Assume that, given some small $\td\rho$, there is a point $x \in B(x_0, t_0, A r_0)$ such that $(x, t_0)$ does not satisfy the canonical neighborhood assumptions $CNA(\td\rho r_0, \varepsilon, E, \eta)$, i.e. we have $|{\Rm}|(x, t_0) \geq \td{\rho}^{-2} r_0^{-2}$ and $(x, t_0)$ does not satisfy the assumptions (1)--(3) in Definition \ref{Def:CNA}.
Set for $\ov{t} \in [t_0 - r_0^2, t_0]$, $\ov{x} \in \MM(\ov{t})$
\begin{multline*}
 P_{\ov{x}, \ov{t}} = \big\{ (y,t) \in \MM \;\; : \;\; t \in  [\ov{t} - \tfrac1{20} \td{\rho}^{-2} |{\Rm}|^{-1} (\ov{x},\ov{t}), \ov{t}], \;\; y \in \MM(t), \\
 \;\; \dist_t (x_0, y) \leq \dist_{\ov{t}}(x_0, \ov{x}) + \tfrac14 \td{\rho}^{-1} |{\Rm}|^{-1/2}(\ov{x},\ov{t}) \big\}.
\end{multline*}
We will now find a  particular $(\ov{x}, \ov{t}) \in \MM$ with $\ov{t} \in [t_0 - \frac1{10} r_0^2, t_0]$ and $\ov{x} \in B(x_0, \ov{t}, (A+\frac12) r_0)$ by a point-picking process:
Set first $(\ov{x}, \ov{t}) = (x, t_0)$.
Let $\ov{q} = |{\Rm}|^{-1/2} (\ov{x}, \ov{t}) \leq \td{\rho} r_0$.
If every $(x',t') \in P_{\ov{x}, \ov{t}}$ satisfies the canonical neighborhood assumptions $CNA(\frac12 \ov{q}, \varepsilon, E, \eta)$, then we stop.
If not, we replace $(\ov{x}, \ov{t})$ by such a counterexample and start over.
In every step of this algorithm, $\ov{q}$ decreases by at least a factor of $\frac12$, which implies that the algorithm has to terminate after a finite number of steps since after a finite number of steps we have $\ov{q} < r \sqrt{t_0}$ and we can make use of the canonical neighborhood assumptions $CNA(r \sqrt{t_0}, \varepsilon, E, \eta)$ from the assumptions of the lemma.
So the algorithm yields an $(\ov{x}, \ov{t}) \in \MM$ and a $\ov{q} = |{\Rm}|^{-1/2} (\ov{x}, \ov{t}) \leq \td\rho r_0$ such that $(\ov{x}, \ov{t})$ does not satisfy the canonical neighborhood assumptions $CNA (\ov{q}, \varepsilon, E, \eta)$, but all points in $P_{\ov{x}, \ov{t}}$ satisfy the canonical neighborhood assumptions $CNA( \frac12 \ov{q}, \varepsilon, E, \eta)$.
By convergence of the geometric series, we conclude $\ov{t} - \frac1{20} \td\rho^{-2} \ov{q}^2 \in [t_0 - \frac1{10} r_0^2, t_0]$ and $\dist_{\ov{t}}(x_0, \ov{x}) < (A + \frac12) r_0$.
Moreover, for all $(x', t') \in P_{\ov{x}, \ov{t}}$ we have $\dist_{t'} (x_0, x') < (A+1) r_0$.

We now claim that there is a constant $\td\rho = \td\rho (w, A, \varepsilon, E, \eta) > 0$ such that assertion (a) holds for $Z (A) = Z_{\ref{Lem:6.3a}} ( 10 (A+1) )$ and
\[ \delta = \delta ( w, A, r, \varepsilon, E, \eta ) = \min \big\{ \delta_{\ref{Lem:6.3a}} \big( 10 (A+1), r, \eta \big), r^2 \big\}, \]
where $Z_{\ref{Lem:6.3a}}$ and $\delta_{\ref{Lem:6.3a}}$ are the constants from Lemma \ref{Lem:6.3a}.
Assume that this was wrong, i.e. that for fixed parameters $w, A, \varepsilon, E, \eta$, there is no such constant $\td\rho$.
Then we can find a sequence $\td\rho^\alpha \to 0$ and a sequence of counterexamples $\MM^\alpha$, $t_0^\alpha$, $r_0^\alpha$, $x_0^\alpha$, $r^\alpha$ that satisfy the assumptions of the Lemma for $Z = Z ( A )$ and $\delta^\alpha = \delta (w, A, r^\alpha, \varepsilon, E, \eta)$, but for which there are points $x^\alpha \in B(x_0^\alpha, t_0^\alpha, A r_0^\alpha)$ such that $(x^\alpha, t_0^\alpha)$ doesn't satisfy the canonical neighborhood assumptions $CNA (\td{\rho}^\alpha r_0^\alpha, \linebreak[1] \varepsilon, \linebreak[1] E, \linebreak[1] \eta)$.
Note that by assumption, the point $(x^\alpha, t^\alpha_0)$ satisfies the canonical neighborhood assumptions $CNA ( r^\alpha \sqrt{t^\alpha_0}, \varepsilon, E, \eta)$.
So we must have $\td\rho^\alpha r^\alpha_0 > r^\alpha \sqrt{t^\alpha_0} > r^\alpha r^\alpha_0$ and hence $r^\alpha \to 0$ for $\alpha \to \infty$.
By the choice of $\delta$ this implies that that $\delta^\alpha / r^\alpha \to 0$ for $\alpha \to \infty$.

First, let $(\ov{x}^\alpha, \ov{t}^\alpha)$ and $\ov{q}^\alpha$ be the point and the constant obtained by the algorithm two paragraphs earlier.
We now apply Lemma \ref{Lem:6.3a} with 
\begin{multline*}
 r_0 \leftarrow \tfrac1{10} r_0^\alpha, \; x_0 \leftarrow x_0^\alpha, \; t_0 \leftarrow t \in [\ov{t}^\alpha - \tfrac1{20} (\td\rho^\alpha)^{-2} (\ov{q}^\alpha)^{2}, \ov{t}^\alpha], \\ \; w \leftarrow c w, \; A \leftarrow 10(A+1), \; r \leftarrow r^\alpha,
\end{multline*}
where $c > 0$ is a universal constant, which arises from volume comparison and distortion estimates on $P( x_0^\alpha, t_0^\alpha, r_0^\alpha, - r_0^\alpha)$ and which has the property that $\vol_t B(x_0^\alpha, \linebreak[1] t, \linebreak[1] \frac1{10} r_0^\alpha) > c w ( \frac1{10} r_0^\alpha )^3$.
We conclude that any $(x',t') \in \MM^\alpha$ with $t' \in [\ov{t}^\alpha - \frac1{20} (\td\rho^\alpha)^{-2} (\ov{q}^\alpha)^2, \ov{t}^\alpha]$ and $x' \in B(x_0^\alpha, t', (A+1) r_0^\alpha)$ is $\kappa$-noncollapsed for some uniform $\kappa > 0$ on scales less than $\frac1{10} r_0^\alpha$.
This implies that the points on $P_{\ov{x}^\alpha, \ov{t}^\alpha}$ are $\kappa$-noncollapsed on scales less than $\frac1{10} r_0^\alpha$.

Observe that the assumption on $\delta^\alpha$ and Definition \ref{Def:precisecutoff}(3) imply that there is a universal constant $c' > 0$ such that for every surgery point $(x',t') \in \MM^\alpha$ with $t' \leq t^\alpha_0$ we have 
\begin{multline} \label{eq:surgpointhighcurv}
 |{\Rm}|(x',t') > c' (\delta^\alpha)^{-2} t^{\prime -1} = c' \Big( \frac{\delta^\alpha}{r^\alpha} \Big)^{-2} \big( r^\alpha \sqrt{t'} \big)^{-2} \\ \geq c' \Big( \frac{\delta^\alpha}{r^\alpha} \Big)^{-2} \big( r^\alpha \sqrt{t^\alpha_0} \big)^{-2} >  c' \Big( \frac{\delta^\alpha}{r^\alpha} \Big)^{-2} (\ov{q}^\alpha)^{-2}.
\end{multline}
Here we have again made use of the inequality $\ov{q}^\alpha > r^\alpha \sqrt{t_\alpha}$, which follows from the fact that the point $(\ov{x}^\alpha, \ov{t}^\alpha)$ satisfies the canonical neighborhood assumptions $CNA (r^\alpha \sqrt{t^\alpha_0}, \linebreak[1] \varepsilon, \linebreak[1] E, \linebreak[1] \eta)$, but not $CNA (\ov{q}^\alpha, \varepsilon, E, \eta)$.
Recall moreover, that the factor $(\delta^\alpha / r^\alpha)^{-2} \to \infty$ as $\alpha \to \infty$.

So for large $\alpha$ the point $(\ov{x}^\alpha, \ov{t}^\alpha)$ is not a surgery point.
Pick $0 < \tau^\alpha \leq \frac1{20} (\td{\rho}^\alpha)^{-2}$ maximal such that the point $\ov{x}^\alpha$ survives until time $\ov{t}^\alpha - \tau^\alpha (\ov{q}^\alpha)^2$ and such that
\[ \dist_t (x_0^\alpha, \ov{x}^\alpha ) < \dist_{\ov{t}^\alpha_0} (x^\alpha_0, \ov{x}^\alpha ) + \tfrac18 (\td{\rho}^\alpha)^{-1} \ov{q}^\alpha \qquad \text{for all} \qquad t \in \big( \ov{t}^\alpha - \tau^\alpha ( \ov{q}^\alpha)^2, \ov{t}^\alpha \big]. \]
This implies
\begin{equation} \label{eq:BisinsideP} 
B(\ov{x}^\alpha, t, \tfrac18 (\td{\rho}^\alpha)^{-1} \ov{q}^\alpha ) \subset P_{\ov{x}^\alpha, \ov{t}^\alpha} \qquad \text{for all} \qquad t \in \big[ \ov{t}^\alpha - \tau^\alpha ( \ov{q}^\alpha)^2, \ov{t}^\alpha \big]. 
\end{equation}
By passing to a subsequence, we may assume that the limit $\tau_\infty = \lim_{\alpha \to \infty} \tau^\alpha \in [0, \infty]$ exists.
So after parabolically rescaling by $(\ov{q}^\alpha)^{-1}$, the Ricci flows with surgery $\MM^\alpha$ restricted to the time-interval $[\ov{t}^\alpha - \tau^\alpha (\ov{q}^\alpha)^2, \ov{t}^\alpha]$ and based at $\ov{x}^\alpha$ satisfy the assumptions of Lemma \ref{Lem:lmiitswithCNA} for some sequence $a^\alpha \to \infty$ (we also need to make use of assumption (ii) here).
Hence, again after passing to a subsequence, these flows subconverge to some non-singular Ricci flow on $M_\infty \times (- \tau_\infty, 0]$ of bounded curvature.

The previous conclusion has the following implication:
There is a uniform constant $4 \leq D < \infty$ such that whenever $0 <\tau' < \tau_\infty$ or $\tau' = 0$, then we have
\begin{equation} \label{eq:curvboundedbyD}
 |{\Rm}| (\ov{x}^\alpha, t) < D (\ov{q}^\alpha)^{-2} \qquad \text{for all} \qquad t \in \big[ \ov{t}^\alpha - \tau' (\ov{q}^\alpha)^2, \ov{t}^\alpha \big]
\end{equation}
for large $\alpha$ (in the case $\tau_\infty = 0$ the statement holds for  $D = 4$).

Assume first that $\tau_\infty < \infty$.
Observe that by (\ref{eq:BisinsideP}) the point $(\ov{x}^\alpha, t)$ satisfies the canonical neighborhood assumptions $CNA (\frac12 \ov{q}^\alpha, \varepsilon, E, \eta)$ for all $t \in [\ov{t}^\alpha - \tau^\alpha (\ov{q}^\alpha)^2, \ov{t}^\alpha]$.
This implies that (cf Definition \ref{Def:CNA}(2))
\begin{multline} \label{eq:dtcurvboundfromcna}
 |{\Rm}| (\ov{x}^\alpha, t) < 4 (\ov{q}^\alpha)^{-2}  \leq D (\ov{q}^\alpha )^{-2} \quad \text{or} \quad  |{\partial_t |{\Rm}|}^{-1}| (\ov{x}^\alpha, t) < \eta^{-1} \\ \text{for all} \qquad t \in \big[ \ov{t}^\alpha - \tau^\alpha ( \ov{q}^\alpha)^2, \ov{t}^\alpha \big].
\end{multline}
We now use (\ref{eq:curvboundedbyD}) for $\tau' = \max \{ \tau_\infty - \frac{\eta}{4 D}, 0 \}$ and integrate the derivative bound of (\ref{eq:dtcurvboundfromcna}) from $\ov{t}^\alpha - \tau' (\ov{q}^\alpha)^2$ backwards in time to any $t  \in [\ov{t}^\alpha - \tau^\alpha (\ov{q}^\alpha)^2, \ov{t}^\alpha - \tau' (\ov{q}^\alpha )^2]$, for large $\alpha$.
Note that for large $\alpha$ and any such $t$, we have $t - (\ov{t}^\alpha - \tau^\alpha (\ov{q}^\alpha )^2 ) \leq (\frac{\eta}{4D} + (\tau^\alpha - \tau_\infty) ) (\ov{q}^\alpha)^2 < 2 \cdot \frac{\eta}{4 D} (\ov{q}^\alpha)^2$.
So we obtain that for large $\alpha$ we have (compare with Lemma \ref{Lem:shortrangebounds})
\[ |{\Rm}| (\ov{x}^\alpha, t) < 2D (\ov{q}^\alpha)^{-2} \qquad \text{for all} \qquad t \in \big[ \ov{t}^\alpha - \tau^\alpha (\ov{q}^\alpha)^2, \ov{t}^\alpha \big]. \]
In particular, it follows from (\ref{eq:surgpointhighcurv}) that for large $\alpha$ none of the points $(\ov{x}^\alpha, t)$ for $t \in [\ov{t}^\alpha - \tau^\alpha (\ov{q}^\alpha)^2, \ov{t}^\alpha]$ are surgery points.
So $(\ov{x}^\alpha, \ov{t}^\alpha)$ even survives past time $\ov{t}^\alpha - \tau^\alpha (\ov{q}^\alpha)^2$.

Next, we use the following consequence of the canonical neighborhood assumptions $CNA ( \frac12 \ov{q}^\alpha, \varepsilon, E, \eta)$, which hold on $P_{\ov{x}, \ov{t}}$:
\begin{multline*}
 |{\Rm}| (x, t) < 4 (\ov{q}^\alpha)^{-2} \leq D (\ov{q}^\alpha)^{-2} \quad \text{or} \quad  |{\nabla|{\Rm}|^{-1/2}}| (x, t) < \eta^{-1} \\ \text{for all} \qquad t \in \big[ \ov{t}^\alpha - \tau^\alpha ( \ov{q}^\alpha)^2, \ov{t}^\alpha \big] \qquad
 \text{and}  \qquad x \in B(\ov{x}^\alpha, t, \tfrac18 (\td\rho^\alpha)^{-1} \ov{q}^\alpha).
\end{multline*}
Integrating these assumptions as in the proof of Lemma \ref{Lem:shortrangebounds} yields that for large $\alpha$
\[ |{\Rm}| < 16 D (\ov{q}^\alpha)^{-2} \quad \text{on} \quad B(\ov{x}^\alpha, t, \tfrac14 \eta D^{-1/2} \ov{q}^\alpha ) \quad \text{for all} \quad t \in \big[ \ov{t}^\alpha - \tau^\alpha (\ov{q}^\alpha)^2, \ov{t}^\alpha \big]. \]
Note that here we have used the fact that $\tfrac18 (\td{\rho}^\alpha)^{-1} > \frac14 \eta D^{-1/2}$ for large $\alpha$.
By distance distortion estimates (Lemma \ref{Lem:distdistortion}(b)) and (\ref{eq:surgpointhighcurv}), we then obtain that for large $\alpha$ and some universal constant $C < \infty$
\[ \frac{d}{dt} \dist_t (x_0^\alpha, \ov{x}^\alpha) \geq - C \eta^{-1} \sqrt{D} (\ov{q}^\alpha)^{-1} \qquad \text{for all} \qquad t \in \big[ \ov{t}^\alpha - \tau^\alpha (\ov{q}^\alpha)^2, \ov{t}^\alpha \big]. \]
Integrating this estimate yields that for large $\alpha$
\[ \dist_t (x_0^\alpha, \ov{x}^\alpha) < \dist_{\ov{t}^\alpha}(x_0^\alpha, \ov{x}^\alpha) + C \eta^{-1} \tau^\alpha \sqrt{D}  \ov{q}^\alpha \qquad \text{for all} \qquad t \in \big[ \ov{t}^\alpha - \tau^\alpha (\ov{q}^\alpha)^2, \ov{t}^\alpha \big]. \]
Since $C \eta^{-1} \sqrt{D} < \frac1{16} (\td{\rho}^\alpha)^{-1}$ for large $\alpha$, this implies that for large $\alpha$
\[ \dist_t (x_0^\alpha, \ov{x}^\alpha) < \dist_{\ov{t}^\alpha}(x_0^\alpha, \ov{x}^\alpha) + \tfrac1{16} (\td{\rho}^\alpha)^{-1}  \ov{q}^\alpha \qquad \text{for all} \qquad t \in \big[ \ov{t}^\alpha - \tau^\alpha (\ov{q}^\alpha)^2, \ov{t}^\alpha \big]. \]
This fact, however, contradicts the definition of $\tau^\alpha$.

So it follows that $\tau_\infty = \infty$.
Hence, after parabolically rescaling by $(\ov{q}^\alpha)^{-1}$, the Ricci flows with surgery $\MM^\alpha$ restricted to the time-interval $[\ov{t}^\alpha - \tau^\alpha (\ov{q}^\alpha)^2, \ov{t}^\alpha]$ and based at $\ov{x}^\alpha$ subconverge to a $\kappa$-solution $M_\infty \times (-\infty, 0]$.
Using Lemma \ref{Lem:kappasolCNA}, we finally obtain a contradiction to the assumption that the points $(\ov{x}^\alpha, \ov{t}^\alpha)$ do not satisfy the canonical neighborhood assumptions $CNA(\ov{q}^\alpha, \varepsilon, E, \eta)$.

Part (b) follows exactly the same way as in \cite[6.3]{PerelmanII}.
See also \cite[Lemma 70.2]{KLnotes} and \cite[Proposition 6.2.4]{Bamler-diploma}.
The boundary $\partial \MM (t_0)$ does not create any issues since it is far enough away from $x_0$.
\end{proof}

We now prepare for the proof of the next main result, Lemma \ref{Lem:6.4}.
We believe that we have to modify the result in \cite[6.5]{PerelmanII} as follows to make its proof work.
\begin{Lemma}[\hbox{\cite[6.5]{PerelmanII}}] \label{Lem:6.5}
For all $w > 0$ there exist $\tau_0 = \tau_0(w) > 0$ and $K_0 = K_0(w) < \infty$, such that: \\
Let $\MM$ be a Ricci flow with surgery with complete time-slices that is defined on the time-interval $[- \tau, 0]$ and let $x_0 \in \MM(0)$.
Assume that $(x_0, 0)$ survives until time $-\tau$, that for all $t \in (-\tau, 0]$ the ball $B(x_0, t, 1)$ does not intersect any surgery points or the boundary $\partial \MM (t)$, that $\sec \geq -1$ on $\bigcup_{t \in [-\tau,0]} B(x_0, t, 1) \cap P(x_0, 0, 1, - \tau)$ and that $\vol_0 B(x_0, 0, 1) \geq w$.
Then
\begin{enumerate}[label=(\alph*)]
\item $|{\Rm}| \leq K_0 \tau^{-1}$ in $P(x_0, 0, \frac14, -\tau/2)$.
\item All points in $B(x_0, - \tau, \frac14)$ survive until time $0$ and $B(x_0, - \tau, \frac14) \subset B(x_0, \linebreak[1] 0, \linebreak[1] 1)$.
\item $\vol_{-\tau} B(x_0, -\tau, \frac14) > \frac12 w (\frac14)^3$.
\end{enumerate}
\end{Lemma}
\begin{proof}
See \cite[Lemma 82.1]{KLnotes} for a proof of the first part and the proof of \cite[Corollary 45.1(b)]{KLnotes} for the third.
The second part follows from the lower bound on the sectional curvature.
\end{proof}

\begin{Lemma}[\hbox{\cite[6.6]{PerelmanII}}] \label{Lem:6.6}
For any $w > 0$ there is a $\theta_0 = \theta_0 (w) > 0$ such that:
Let $(M, g)$ be a Riemannian $3$-manifold and $B(x,1) \subset M$ a ball of volume at least $w$ that is relatively compact and does not meet the boundary of $M$.
Assume that $\sec \geq - 1$ on $B(x, 1)$.
Then there exists a ball $B(y,\theta_0) \subset B(x,1)$, such that every subball $B(z,r) \subset B(y, \theta_0)$ of any radius $r$ has volume at least $\frac1{10} r^3$.
\end{Lemma}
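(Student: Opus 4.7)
The plan is to argue by contradiction and extract a Gromov--Hausdorff limit, using the structure of $3$-dimensional Alexandrov spaces to derive a contradiction.

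Suppose the conclusion fails for some $w > 0$. Then for each integer $n \geq 1$, I would find a pointed Riemannian $3$-manifold $(M_n, g_n, x_n)$ satisfying the hypotheses such that every ball $B(y, 1/n) \subset B(x_n, 1)$ contains a subball $B(z, r)$ with $\vol_{g_n}(B(z, r)) < \tfrac{1}{10} r^3$. Since $\sec \geq -1$ on each $B(x_n, 1)$, Gromov's precompactness theorem provides a subsequence of the pointed closed balls $(\overline{B(x_n, 1)}, g_n, x_n)$ that converges in the pointed Gromov--Hausdorff topology to some pointed Alexandrov space $(X, d_\infty, x_\infty)$ of curvature $\geq -1$. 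The uniform volume bound $\vol_{g_n}(B(x_n, 1)) \geq w > 0$ rules out collapse, so the limit is $3$-dimensional and $\mathcal{H}^3(B(x_\infty, 1)) \geq w$.

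Next I would invoke the structure theory of $3$-dimensional Alexandrov spaces with curvature $\geq -1$: the set of regular points, at which the tangent cone is isometric to $\IR^3$, has full $\mathcal{H}^3$-measure. Picking any such regular point $y_\infty$ in the interior of $B(x_\infty, 1)$, the Euclidean tangent cone at $y_\infty$ produces a scale $\theta_0 > 0$ below which every metric ball $B(z, r) \subset B(y_\infty, \theta_0)$ satisfies $\mathcal{H}^3(B(z, r)) \geq \tfrac{1}{5} r^3$.

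Finally, I would choose approximating points $y_n \in M_n$ converging to $y_\infty$ and use volume continuity for non-collapsing Gromov--Hausdorff convergence of manifolds with $\sec \geq -1$ (a consequence of Bishop--Gromov monotonicity together with the nondegeneracy of the tangent cone at $y_\infty$) to transfer the volume estimate: for all sufficiently large $n$, every subball $B(z, r) \subset B(y_n, \theta_0/2) \subset M_n$ satisfies $\vol_{g_n}(B(z, r)) \geq \tfrac{1}{10} r^3$. Taking $n$ so large that $1/n < \theta_0/2$ and using the inclusion $B(y_n, 1/n) \subset B(y_n, \theta_0/2)$ then contradicts the counterexample property of $(M_n, g_n, x_n)$. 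The hard part is this final volume-transfer step: one must argue carefully that the ball-wise lower volume bound passes uniformly from the Alexandrov limit to the approximating manifolds for all $n$ sufficiently large, which rests essentially on the non-collapsing hypothesis and the Euclidean structure of the tangent cone at the chosen regular point.
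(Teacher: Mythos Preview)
Your argument is correct and is essentially the one the paper defers to (the paper simply cites \cite[Lemma 83.1]{KLnotes}, whose proof is precisely this contradiction argument via Gromov--Hausdorff convergence to a non-collapsed $3$-dimensional Alexandrov space and selection of a regular point). For the final transfer step you can make the uniformity explicit by combining volume convergence at the single ball $B(y_n,\theta_0)$ with Bishop--Gromov monotonicity: once $\vol B(y_n,\theta_0)$ is nearly Euclidean, every $z\in B(y_n,\theta_0/2)$ has $\vol B(z,\tfrac32\theta_0)\geq \vol B(y_n,\theta_0)$, and the monotone ratio $\vol B(z,r)/V_{-1}(r)$ then propagates the lower bound to all smaller radii.
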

\begin{proof}
See \cite[Lemma 83.1]{KLnotes}.
\end{proof}

\begin{Lemma} \label{Lem:HIbound}
For any $K < \infty$ there is an $\ov{r} = \ov{r} (K) < \infty$ such that:
Let $r_0 \leq \ov{r} \sqrt{t_0}$ and $\frac12 t_0 \leq t \leq t_0$.
Assume that $(M, g)$ is a Riemannian manifold of $t^{-1}$-positive curvature and $|{\Rm}| < K r_0^{-2}$ on $M$.
Then the sectional curvature is bounded from below: $\sec \geq - \frac12 r_0^{-2}$.
\end{Lemma}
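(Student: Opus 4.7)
The plan is to unpack Definition~\ref{Def:phipositivecurvature} of $t^{-1}$-positive curvature and exploit its third condition, which forces any putative lower sectional curvature bound $-X$ with $X$ large to come with a large \emph{positive} lower bound on the scalar curvature. Since $|{\Rm}| < K r_0^{-2}$ automatically caps the scalar curvature from above, the combination of the two will force $X$ to be small once $\ov{r}$ is small.

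Concretely, fix $x \in M$ and suppose for contradiction that $\sec_x(\Pi) < -\tfrac12 r_0^{-2}$ for some $2$-plane $\Pi \subset T_x M$. Then every witness $X > 0$ in the sense of Definition~\ref{Def:phipositivecurvature} must satisfy $X > \tfrac12 r_0^{-2}$, and in particular
\[ \scal_x \;\geq\; 2X\bigl(\log(2X) - \log t^{-1} - 3\bigr) \;=\; 2X\bigl(\log(2Xt) - 3\bigr). \]
From $r_0 \leq \ov{r}\sqrt{t_0}$ and $t \geq t_0/2$ one obtains $t/r_0^{2} \geq 1/(2\ov{r}^{2})$, whence $2Xt \geq t\,r_0^{-2} \geq 1/(2\ov{r}^{2})$, so $\log(2Xt) - 3 \geq \log(1/(2\ov{r}^{2})) - 3$. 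On the other hand, in dimension $3$ the scalar curvature equals twice the sum of three sectional curvatures on an orthonormal frame, so the hypothesis $|{\Rm}| < K r_0^{-2}$ gives $\scal_x \leq 6K r_0^{-2}$.

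Combining these estimates yields
\[ 2X\bigl(\log(1/(2\ov{r}^{2})) - 3\bigr) \;\leq\; 6K r_0^{-2}, \]
and inserting $X > \tfrac12 r_0^{-2}$ on the left forces $\log(1/(2\ov{r}^{2})) - 3 < 6K$. This fails as soon as $\ov{r} = \ov{r}(K) > 0$ is chosen small enough, e.g.\ $\ov{r} \leq 2^{-1/2}\, e^{-(6K+3)/2}$, giving the desired contradiction. The argument is in essence just unpacking Definition~\ref{Def:phipositivecurvature}; the only step requiring any care is the verification that the logarithmic factor dominates $K$ once $\ov{r}$ is sufficiently small, so there is no substantive obstacle.
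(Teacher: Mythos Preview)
Your proof is correct and is essentially the same approach the paper has in mind: unpack Definition~\ref{Def:phipositivecurvature} and play the upper bound $\scal_x \leq 6K r_0^{-2}$ against the lower bound $\scal_x \geq 2X(\log(2Xt) - 3)$, which blows up once $X > \tfrac12 r_0^{-2}$ and $t r_0^{-2} \geq 1/(2\ov{r}^2)$ is large. The paper's own proof is the two-line remark ``clear for $r_0 = 1$; the rest follows from rescaling'', which amounts to exactly the computation you wrote out in full for general $r_0$.
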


\begin{proof}
The claim is clear for $r_0 = 1$.
The rest follows from rescaling.
\end{proof}

\begin{Lemma}[\hbox{\cite[6.4]{PerelmanII}}] \label{Lem:6.4}
There is a constant $\varepsilon_0 > 0$ such that for all $r, \eta > 0$ and $E < \infty$ there are constants $\tau = \tau(\eta, E), \ov{r} = \ov{r} (\eta, E), \delta = \delta (r, \eta, E) > 0$ and $K = K( E),  C_1 = C_1 (E), Z = Z( \eta, E) < \infty$ such that: \\
Let $r_0^2 < t_0 /2$ and let $\MM$ be a Ricci flow with surgery (whose time-slices are allowed to have boundary) on the time-interval $[ t_0 - r_0^2, t_0]$ that is performed by $\delta'$-precise cutoff for some $0 < \delta' \leq \delta$ and consider a point $x_0 \in \MM(t_0)$.
Assume that the canonical neighborhood assumptions $CNA (r \sqrt{t_0}, \varepsilon_0, E, \eta)$ hold on $\MM$.
We also assume that the curvature on $\MM$ is uniformly bounded on compact time-intervals that don't contain surgery times and that all time-slices of $\MM$ are complete.

In the case in which some time-slices of $\MM$ have non-empty boundary, we assume that
\begin{enumerate}[label=(\roman*)]
\item For all $t_1 < t_2 \in [t_0 - \frac1{10} r_0^2, t_0]$ we have: if some $x \in B(x_0, t_0, r_0)$ survives until time $t_2$ and $\gamma : [t_1, t_2] \to \MM$ is a space-time curve with endpoint $\gamma(t_2) \in B(x, t_2, 3 r_0)$ that meets the boundary $\partial\MM$ somewhere, then $\LL(\gamma) > Z r_0$ (where $\LL$ is based in $t_2$).
\item For all $t \in [t_0 - \frac1{10} r_0, t_0]$ we have: if some $x \in B(x_0, t_0, r_0)$ survives until time $t$, then $B(x, t, 5 r_0 + r \sqrt{t_0})$ does not meet the boundary $\partial \MM (t)$.
\end{enumerate}
Now assume that 
\begin{enumerate}[label=(\roman*), start=3]
\item $C_1 \delta' \sqrt{t_0} \leq r_0 \leq \ov{r} \sqrt{t_0}$,
\item $\sec \geq - r_0^{-2}$ on $B(x_0, t_0, r_0)$ and
\item $\vol_{t_0} B(x_0, t_0, r_0) \geq \frac1{10} r_0^3$.
\end{enumerate}
Then the parabolic neighborhood $P(x_0, t_0, \frac14 r_0, - \tau r_0^2)$ is non-singular and we have $|{\Rm}| < K r_0^{-2}$ on $P(x_0, t_0, \frac14 r_0, - \tau r_0^2)$.
\end{Lemma}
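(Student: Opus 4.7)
The plan is to combine Lemmas \ref{Lem:6.6}, \ref{Lem:6.5}, and \ref{Lem:6.3bc} in a three-step argument, following the strategy of \cite[6.4]{PerelmanII}. After parabolically rescaling so that $r_0 = 1$, apply Lemma \ref{Lem:6.6} to $B(x_0, t_0, 1)$ with $w = \tfrac1{10}$; the hypothesis that this ball avoids $\partial\MM(t_0)$ comes from our assumption (ii). This yields a ball $B(y_0, t_0, \theta_0) \subset B(x_0, t_0, 1)$, with $\theta_0 = \theta_0(\tfrac1{10}) > 0$ universal, inside which every subball of radius $r$ has volume at least $\tfrac1{10} r^3$.

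Next, apply Lemma \ref{Lem:6.5} at $y_0$, after rescaling by $\theta_0^{-1}$, with $w = \tfrac1{10}\theta_0^3$. This produces constants $\tau_0 = \tau_0(\theta_0)$ and $K_0 = K_0(\theta_0)$ with $|{\Rm}| \leq K_0\theta_0^{-2}\tau_0^{-1}$ on $P(y_0, t_0, \tfrac14\theta_0, -\tfrac12\tau_0\theta_0^2)$, plus a volume lower bound $\vol_{t_0 - \tfrac12\tau_0\theta_0^2} B(y_0, t_0 - \tfrac12\tau_0\theta_0^2, \tfrac14\theta_0) > \tfrac1{20} (\tfrac14\theta_0)^3$. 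The non-singularity hypothesis of Lemma \ref{Lem:6.5} must be verified: by Definition \ref{Def:precisecutoff}(3) any surgery point carries $|{\Rm}| \gtrsim \delta^{-2} \geq C_1^2$; choosing $C_1 = C_1(\eta,E)$ large and $\ov{r} = \ov{r}(\eta,E)$ small, Lemma \ref{Lem:HIbound} preserves a sectional curvature lower bound of the form $\sec \geq -\tfrac12$ in a neighborhood of $y_0$ under the Ricci flow, which is incompatible with such high curvature and the volume non-collapsing around $y_0$.

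Third, at time $t_0 - \tfrac12\tau_0\theta_0^2$ we have a ball around $y_0$ of radius $\tfrac14\theta_0$ with bounded curvature, non-collapsed volume, and (again via $t^{-1}$-positivity plus Lemma \ref{Lem:HIbound}) sectional curvature bounded below by $-(\tfrac14\theta_0)^{-2}$. Invoke Lemma \ref{Lem:6.3bc}(b) with new base scale $\tilde r_0 = \tfrac14\theta_0$ and $A = 8\theta_0^{-1}$, chosen so that the resulting ball $B(y_0, t_0 - \tfrac12\tau_0\theta_0^2, 2)$ contains $B(x_0, t_0 - \tfrac12\tau_0\theta_0^2, 1)$ (using $y_0 \in B(x_0, t_0, 1)$ and short-time distance distortion via Lemma \ref{Lem:distdistortion}). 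This yields $|{\Rm}| < K''\theta_0^{-2}$ on the latter ball. Finally, the derivative bound $|{\partial_t |{\Rm}|^{-1}}| < \eta^{-1}$ in Definition \ref{Def:CNA}(2), together with Lemma \ref{Lem:distdistortion}, propagates this curvature bound forward to the parabolic neighborhood $P(x_0, t_0, \tfrac14, -\tau)$ for some $\tau = \tau(\eta, E)$; non-singularity of this parabolic neighborhood follows from the curvature bound together with $C_1\delta \leq 1$ (which forces surgery points to have even larger curvature).

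The main obstacle will be transferring the boundary hypotheses (i) and (ii) of Lemma \ref{Lem:6.4} into the corresponding hypotheses of Lemma \ref{Lem:6.3bc} when the latter is applied at the base point $y_0$ rather than $x_0$. Since $y_0 \in B(x_0, t_0, 1)$ and $A\tilde r_0 = 2$ is a universal constant, any ball $B(y_0, t, 2(A+3)\tilde r_0 + r)$ appearing in Lemma \ref{Lem:6.3bc}(ii) is contained in $B(x_0, t, 5r_0 + r)$ and hence avoids $\partial\MM(t)$ by our hypothesis (ii). For the $\LL$-length condition, any space-time curve $\gamma$ hitting $\partial\MM$ and ending in $B(y_0, t_2, (A+1)\tilde r_0)$ may be extended by a short space-like segment of uniformly bounded $\LL$-length to a curve ending in $B(x_0, t_2, 3 r_0)$; hence, taking $Z = Z(\eta,E)$ sufficiently large in Lemma \ref{Lem:6.4}(i) forces the $\LL$-length hypothesis of Lemma \ref{Lem:6.3bc}(i) to hold.
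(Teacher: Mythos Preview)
Your proposal has a genuine circularity in Step 2 that breaks the argument. Lemma \ref{Lem:6.5} requires the sectional curvature lower bound $\sec \geq -1$ (after rescaling) on the \emph{entire backward region} $P(y_0, t_0, \theta_0, -\tau_0\theta_0^2) \cap \bigcup_t B(y_0, t, \theta_0)$, not merely at the final time $t_0$. Hypothesis (iv) only gives you $\sec \geq -r_0^{-2}$ on $B(x_0, t_0, r_0)$ at time $t_0$. Your attempt to supply the missing bound via Lemma \ref{Lem:HIbound} is circular: that lemma converts an \emph{upper} bound $|{\Rm}| < K r_0^{-2}$ into a sectional lower bound, but an upper curvature bound at earlier times is precisely what you are trying to prove. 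Without the backward-in-time $\sec$ bound, Lemma \ref{Lem:6.5} cannot be invoked, and the rest of the chain collapses.

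This is exactly the obstacle the paper's proof is designed to overcome, and it does so with two additional ingredients you have omitted. First, a point-picking argument: assuming the conclusion fails for $(x_0, t_0, r_0)$, one passes to a minimal counterexample with the property that for any smaller-scale triple $(x''_0, t''_0, \theta_0 r'_0)$ satisfying the analogues of (iv) and (v), the conclusion \emph{does} hold, i.e.\ one already has $|{\Rm}| < K(\theta_0 r'_0)^{-2}$ on $P(x''_0, t''_0, \tfrac14\theta_0 r'_0, -\tau\theta_0^2 {r'_0}^2)$. Second, a continuity argument: let $\ov\tau$ be the maximal time for which the $\sec \geq -r_0^{-2}$ bound persists on $\bigcup_{t \in [t_0 - \ov\tau r_0^2, t_0]} B(x_0, t, r_0)$; on this interval, Lemma \ref{Lem:6.5} is now legitimately applicable (the backward $\sec$ bound holds by definition of $\ov\tau$), then Lemma \ref{Lem:6.6} locates a good subball, the point-picking conclusion supplies the parabolic curvature control needed to feed Lemma \ref{Lem:6.3bc}(b), and the resulting $|{\Rm}|$ bound together with Lemma \ref{Lem:HIbound} improves the sectional bound to $\sec \geq -\tfrac12 r_0^{-2}$, contradicting maximality of $\ov\tau$ unless $\ov\tau = 2\tau$. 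Your linear three-lemma chain skips both the point-picking and the $\ov\tau$ continuity step; without them there is no mechanism to propagate curvature control backward in time.
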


\begin{proof}
Before we start with the main argument, we first discuss the case in which $r_0 \leq r \sqrt{t_0}$:
We first show that for a universal $K' = K' (E) < \infty$ and sufficiently small but universal $\varepsilon_0$, we can guarantee that $|{\Rm}| < \frac12 K' r_0^{-2}$ on $B(x_0, t_0, \frac14 r_0)$.
The constant $K'$ and the smallness of the constant $\varepsilon_0$ will be determined in the course of this paragraph.
Assume the assumption was wrong, i.e. there is a point $x \in B(x_0, t_0, \frac14 r_0)$ such that $Q = |{\Rm}|(x,t_0) \geq \frac12 K' r_0^{-2}$.
By the canonical neighborhood assumptions $CNA (r \sqrt{t_0}, \varepsilon_0, E, \eta)$, we know that $(x,t_0)$ is either a center of a strong $\varepsilon_0$-neck or of an $(\varepsilon_0, E)$-cap or $\MM (t_0)$ has positive $E^2$-pinched curvature (here we assumed that $K' > 2$).
The latter case cannot occur by assumption (v), for large enough $K'$, so assume that $(x, t_0)$ is a center of a strong $\varepsilon_0$-neck or an $(\varepsilon_0, E)$-cap.
In both of these cases there is a $y \in \MM(t_0)$ with $\dist_{t_0}(x, y) < E Q^{-1/2}$ such that $(y,t_0)$ is a center of an $\varepsilon_0$-neck and $E^{-2} Q < |{\Rm}|(y, t_0) < E^2 Q$.
Assuming $K' > 72 E^2$, we conclude that $y \in B(x_0, t_0, \frac13 r_0)$.
Since $\varepsilon_0$-necks are sufficiently collapsed for small enough $\varepsilon_0$, we can make the following conclusion:
For every $w > 0$ there is an $\varepsilon'_0 = \varepsilon'_0 (w) > 0$ and a $D = D(w) < \infty$ such that if $\varepsilon_0 < \varepsilon'_0$, then $\vol_{t_0} B(y, t_0, D Q^{-1/2}) < w D^3 Q^{-3/2}$.
By assumption (v) and by volume comparison, there is a universal constant $w_0 > 0$ such that $\vol_{t_0} B(y, t_0, d) \geq w_0 d^3$ for all $0 < d < \frac12 r_0$.
Assume now that $\varepsilon_0 < \varepsilon'_0 (w_0)$ and $K' > 8 D^2 (w_0)$.
Then we obtain a contradiction for $d = D(w_0) Q^{-1/2} < \frac12 r_0$.
So we indeed have $|{\Rm}| < \frac12 K' r_0^{-2}$ on $B(x_0, t_0, \frac14 r_0)$.
Next, by Lemma \ref{Lem:shortrangebounds}, assumption (iii) and the fact that at every surgery point $(x',t')$ we have (compare with (\ref{eq:surgpointhighcurv}))
\[ |{\Rm}| (x', t') > c' \delta^{\prime -2} t^{\prime -1} \geq c' \delta^{-2} t^{\prime -1} \geq c' \delta^{-2} t_0^{-1} \geq c' C_1^2 r_0^{-2}, \]
we conclude that there is a $\tau' = \tau' (\eta, E) > 0$ such that if $C_1 = C_1 (E) = c^{\prime -1/2} K^{\prime 1/2}$, then $P(x_0, t_0, \frac14 r_0, - \tau' r_0^2)$ is non-singular and $|{\Rm}| < K' r_0^{-2}$ on $P(x_0, \linebreak[1] t_0, \linebreak[1] \frac14 r_0, \linebreak[1] - \tau' r_0^2)$.

Now we return to the general case, allowing $r_0 \geq r \sqrt{t_0}$.
We will first fix some constants:
Let $\varepsilon_0, C_1$ be the constants from the last paragraph.
Without loss of generality, we may assume that $\varepsilon_0$ is smaller than the corresponding constant from Lemma \ref{Lem:6.3bc}.
Next assume that the constants $r, \eta, E$ have already been chosen.
Consider the constants $\tau_{0, \ref{Lem:6.5}}$ and $K_{0, \ref{Lem:6.5}}$ from Lemma \ref{Lem:6.5}, $\theta_{0, \ref{Lem:6.6}}$ from Lemma \ref{Lem:6.6}, $K_{\ref{Lem:6.3bc}}$, $\ov{r}_{\ref{Lem:6.3bc}}$, $Z_{\ref{Lem:6.3bc}}$ and $\delta_{\ref{Lem:6.3bc}}$ from Lemma \ref{Lem:6.3bc} and $\ov{r}_{\ref{Lem:HIbound}}$ from Lemma \ref{Lem:HIbound} and set:
\begin{alignat*}{1}
\tau &= \min \{ \tau', \tfrac12 \tau_{0, \ref{Lem:6.5}}(\tfrac1{10}), \tfrac1{100}\}  \displaybreak[1] \\
K &= \max\{ K', K_{0, \ref{Lem:6.5}} (\tfrac1{10}) \tau^{-1} \}, \displaybreak[1] \\
\theta_0 &= \min \{ \tfrac14 \theta_{0, \ref{Lem:6.6}}(\tfrac1{20}), \linebreak[1] \tfrac1{10} \} \displaybreak[1] \\
r^* &= \theta_0 \min \{ \tau^{1/2}, \linebreak[1] K^{-1/2}, \linebreak[1] \tfrac1{10} \} \displaybreak[1] \\
K^* &= (r^*)^{-2} K_{\ref{Lem:6.3bc}}(\tfrac1{10}, 2 (r^*)^{-1}, E, \eta) \displaybreak[1] \\
Z &= Z_{\ref{Lem:6.3bc}}(2 (r^*)^{-1}) \displaybreak[1] \\
\ov{r} &= \min \{ \ov{r}_{\ref{Lem:6.3bc}}(\tfrac1{10}, 2 (r^*)^{-1}, E, \eta), \ov{r}_{\ref{Lem:HIbound}} (K^*) \} \displaybreak[1] \\
\delta &= \min \{ C_1^{-1} \theta_0 r, \delta_{\ref{Lem:6.3bc}} ( \tfrac1{10}, 2 (r^*)^{-1}, r, \varepsilon_0, E, \eta), {c'}^{1/2} (K^*)^{-1/2} r \}
\end{alignat*}
Here $c'$ is again the constant from (\ref{eq:surgpointhighcurv}).

We now claim that the conclusion of the Lemma holds with this choice of the constants $\tau, \ov{r}, \delta, K, C_1, Z$ and for any $0 < \delta' \leq \delta$.
Assume not, i.e. that $P(x_0, t_0, \frac14 r_0, - \tau r_0^2)$ is singular or we don't have $|{\Rm}| < K r_0^{-2}$ on $P (x_0, \linebreak[1] t_0, \linebreak[1] \frac14 r_0, \linebreak[1] - \tau r_0^2)$.
We now carry out a point-picking process.
In the first step set $x'_0 = x_0$, $t'_0 = t_0$ and $r'_0 = r_0$.
If there are $x''_0$, $t''_0$ and $r''_0$,  such that
\begin{enumerate}
\item  $t''_0 \in [t'_0 - 2\tau (r'_0)^2, t'_0]$,
\item the point $x'_0$ survives until time $t''_0$ and for all $t \in (t''_0, t'_0]$ there are no surgery points in $B(x'_0, t, r'_0)$ and $B(x'_0, t, r'_0) \cap \partial \MM (t) = \emptyset$,
\item $\sec \geq - (r'_0)^{-2}$ on $\bigcup_{t \in [t''_0, t'_0]} B(x'_0, t, r'_0)$,
\item $x''_0 \in B(x'_0, t''_0, r'_0/4)$,
\item $r''_0 = \theta_0 r'_0 \geq C_1 \delta' \sqrt{t'_0}$,
\item $\vol_{t''_0} B(x''_0, t''_0, r''_0) \geq \frac1{10} (r''_0)^3$ and
\item we don't have $|{\Rm}| < K (r''_0)^{-2}$ on $P(x''_0, t''_0, \frac14 r''_0, - \tau (r''_0)^2)$ or the parabolic neighborhood $P(x''_0, t''_0, \frac14 r''_0, - \tau (r''_0)^2)$ is singular,
\end{enumerate}
then we replace the triple $(x'_0, t'_0, r'_0)$ by $(x''_0, t''_0, r''_0)$ and repeat.
If not, we stop the process.
Observe that here and in the rest of the proof the parabolic neighborhoods are not assumed to be non-singular unless otherwise noted (compare with Definition \ref{Def:parabnbhd}).
By the smallness of $\tau$, we have $t'_0 > t_0 - \frac1{10} r_0^2$ at every step of this process.
So by condition (5) this process always has to terminate after a finite number of steps.

Observe that by the smallness of $\tau$, $\theta_0$, distance distortion estimates and condition (3), we have in every step of this process
\[ P(x''_0, t''_0, r''_0, - \tfrac1{10} (r''_0)^2) \subset P(x'_0, t'_0, r'_0, - \tfrac1{10} (r'_0)^2). \]
So these parabolic neighborhoods for each step are nested and we have for the final triple $(x'_0, t'_0, r'_0)$
\[  P(x'_0, t'_0, r'_0, - \tfrac1{10} (r'_0)^2) \subset P(x_0, t_0, r_0, - \tfrac1{10} (r_0)^2). \]
So the triple $(x'_0, t'_0, r'_0)$ satisfies assumptions (i) and (ii) of the Lemma.
By conditions (3) and (6), also assumptions (iv) and (v) are satisfied.
Moreover, we have $(r'_0)^2 < t'_0 / 2$ and by condition (5) we have after the first step $C_1 \delta' \sqrt{t'_0} \leq r'_0 \leq \theta_0 r_0 \leq \frac1{10} r_0 \leq \frac1{10} \ov{r} \sqrt{t_0} \leq \ov{r} \sqrt{t'_0}$.
So the triple $(x'_0, t'_0, r'_0)$ also satisfies assumption (iii) of the Lemma.
However, by condition (7), the assertion of the Lemma fails for the triple $(x'_0, t'_0, r'_0)$.
Note also that the Ricci flow with surgery $\MM$ restricted to $[t'_0 - (r'_0)^2 , t'_0]$ satisfies the canonical neighborhood assumptions $CNA ( r \sqrt{t'_0}, \varepsilon_0, E, \eta)$.
Thus, after passing to this restriction and the triple $(x'_0, t'_0, r'_0)$, we may assume, without loss of generality, that $x_0 = x'_0$, $t_0 = t'_0$ and $r_0 = r'_0$ and add to our assumptions that whenever we find $x''_0$, $t''_0$ and $r''_0$ satisfying the assumptions (1)--(6) above, then the opposite of assumption (7) holds (and hence we have curvature control on $P(x''_0, t''_0, \frac14 r''_0, - \tau (r''_0)^2)$).
By the discussion at the beginning of this proof and the fact that $\tau \leq \tau'$, $K \geq K'$, we must have $r_0 > r \sqrt{t_0}$.

Now let $\ov{\tau} \leq 2\tau$ be maximal with the property that
\begin{enumerate}[label=$-$]
\item the point $x_0$ survives until time $t_0 - \ov\tau r_0^2$,
\item for all $t \in (t_0 - \ov\tau r_0^2, t_0]$, there are no surgery points in $B(x_0, t, r_0)$,
\item $\sec \geq - r_0^{-2}$ on $\bigcup_{t \in [t_0 - \ov{\tau} r_0^2, t_0]} B(x_0, t, r_0)$.
\end{enumerate}
Note that by assumption (ii), we have $B(x_0, t, r_0) \cap \partial\MM(t) = \emptyset$ for all $t \in [t_0 - \ov{\tau} r_0^2, t_0]$.
If $\ov{\tau} = 2\tau$, then the assertion of the Lemma follows using Lemma \ref{Lem:6.5}.

So assume now $\ov{\tau} < 2\tau$.
We will derive a curvature bound at times $[t_0 - \ov\tau r_0^2, t_0]$, which implies a better lower bound on the sectional curvature and hence contradicts the maximality of $\ov\tau$.
Fix for a moment $t \in [t_0 - \ov\tau r_0^2, t_0]$.
By Lemma \ref{Lem:6.5} we first conclude $\vol_t B(x_0, t, \frac14 r_0) > \frac1{20} (\frac14)^3 r_0^3$.
Hence, using Lemma \ref{Lem:6.6}, we can find a ball $B(y, t, \theta_0 r_0) \subset B(x_0, t, \frac14 r_0)$ such that $\vol_t B(y, t, \theta_0 r_0) \geq \frac1{10} \theta_0^3 r_0^3$ and such that every sub-ball of $B(y, t, \theta_0 r_0)$ has volume ratio of at least $\frac1{10}$.
Moreover, by the choice of $\delta$, we have $\theta_0 r_0 > \theta_0 r \sqrt{t_0} \geq C_1 \delta \sqrt{t_0} \geq C_1 \delta' \sqrt{t_0}$.
So the triple $(y, t, \theta_0 r_0)$ satisfies the assumptions (1)--(6) above and hence, by choice of the triple $(x_0, t_0, r_0)$, we find that the parabolic neighborhood $P(y, t, \tfrac14 \theta_0 r_0, -\tau \theta_0^2 r_0^2)$ is non-singular and
\[ |{\Rm}| < K \theta_0^{-2} r_0^{-2} \qquad \text{on} \qquad P(y, t, \tfrac14 \theta_0 r_0, - \tau \theta_0^2 r_0^2). \]

This implies that $|{\Rm}| < (r^* r_0)^{-2}$ on $P(y, t, r^* r_0, - (r^* r_0)^2)$.
Recall that by Lemma \ref{Lem:6.6} we have $\vol_t B(y, t, r^* r_0) \geq \frac1{10} (r^* r_0)^3$.
Applying Lemma \ref{Lem:6.3bc}(b) for $x_0 \leftarrow y$, $t_0 \leftarrow t$, $r_0 \leftarrow r^* r_0$, $w \leftarrow \frac1{10}$, $A \leftarrow A^* = 2 (r^*)^{-1}$, $E \leftarrow E$, $\eta \leftarrow \eta$, $r \leftarrow r$ yields
\[ |{\Rm}|(\cdot, t) < K^* r_0^{-2} \qquad \text{on} \qquad B(y, t, 2 r_0) \qquad \text{for all} \qquad t \in [t_0 - \ov\tau r_0^2, t_0]. \] 
Observe here that by the choice of $Z$ and assumptions (i), (ii) of this Lemma, the assumptions (i), (ii) of Lemma \ref{Lem:6.3bc} are satisfied.
We conclude that
\begin{equation} \label{eq:Kstarboundonball}
 |{\Rm}|(\cdot, t) < K^* r_0^{-2} \quad \text{on} \quad B(x_0, t, r_0) \quad \text{for all} \quad t \in [t_0 - \ov{\tau} r_0^2, t_0].
\end{equation}
By Lemma \ref{Lem:HIbound} and the choice of $\ov{r}$, this curvature bound implies $\sec \geq - \frac12 r_0^{-2}$ on $B(x_0, t, r_0)$ for all $t \in [t_0 - \ov{\tau} r_0^2, t_0]$.
We now argue that even for $t = t_0 - \ov{\tau} r_0^2$ there are no surgery points in $B(x_0, t, r_0)$:
By (\ref{eq:surgpointhighcurv}), the choice of $\delta$ and the fact that $r_0 > r \sqrt{t_0}$, we find that at any such surgery point $(z,t)$ we have
\[ |{\Rm}| (z,t) > c' \delta^{-2} t^{-1} \geq K^* r^{-2} t^{-1} \geq K^* r^{-2} t_0^{-1} > K^* r_0^{-2}, \]
in contradiction to (\ref{eq:Kstarboundonball}).
This also implies that the point $x_0$ survives until some time that is strictly smaller than $t_0 - \ov\tau r_0^2$ and that $B(x_0, t, r_0)$ does not contain surgery points or meet the boundary for times which are strictly smaller than $t_0 - \ov\tau r_0^2$.
This contradicts the maximality of $\ov{\tau}$ and hence finishes the proof.
\end{proof}

\begin{proof}[Proof of Proposition \ref{Prop:genPerelman}]
Let $\varepsilon_0$ be smaller than the corresponding constants from Lemmas \ref{Lem:6.3bc} and \ref{Lem:6.4}.
By Lemma \ref{Lem:6.6} we can find a ball $B(y, t_0, \theta_0(w) r_0) \subset B(x_0, t_0, r_0)$ with $\vol_{t_0} B(y, t_0, \theta_0(w) r_0) \geq \frac1{10} (\theta_0 r_0)^3$.
So we can apply Lemma \ref{Lem:6.4} with $t_0 \leftarrow t_0$, $x_0 \leftarrow y$, $r_0 \leftarrow \theta_0 r_0$, $\varepsilon_0 \leftarrow \varepsilon_0$, $E \leftarrow E$, $\eta \leftarrow \eta$, $r \leftarrow r$ and obtain that if $\delta < \delta_{\ref{Lem:6.4}} (r, \eta, E)$, if the surgeries on $\MM$ are performed by $\delta'$-cutoff for some $0 < \delta' \leq \delta$ for which
\[ C_{1, \ref{Lem:6.4}} ( E) \delta' \sqrt{t_0} \leq \theta_0 r_0, \]
if $Z > Z_{\ref{Lem:6.4}} (\eta, E)$ and if $r_0 < \ov{r}_{\ref{Lem:6.4}} (\eta, E) \sqrt{t_0}$, then the parabolic neighborhood $P(y, t_0, \frac14 \theta_0 r_0, - \tau_{\ref{Lem:6.4}}(\eta, E) \theta_0^2 r_0^2)$ is non-singular and
\[ |{\Rm}| < K_{\ref{Lem:6.4}} (E) \theta_0^{-2} r_0^{-2} \qquad \text{on} \qquad P(y, t_0, \tfrac14 \theta_0 r_0, - \tau_{\ref{Lem:6.4}} \theta_0^2 r_0^2). \]
Now choose $r^* = r^*(w, \eta, E) \in (0, \frac1{100})$ so small that $P(y, t, r^* r_0, -(r^* r_0)^2) \subset P(y, t_0, \linebreak[1] \frac14 \theta_0 r_0 , \linebreak[1] - \tau_{\ref{Lem:6.4}} \theta^2_0 r_0^2)$ for all $t \in [t_0 - (r^* r_0)^2, t_0]$ and $|{\Rm}| < (r^* r_0)^{-2}$ there.
By volume comparison and distortion estimates and the conclusion of Lemma \ref{Lem:6.6}, we find that $\vol_t B(y,t, r^* r_0) > c (r^* r_0)^3$ for all $t \in [t_0 - (r^* r_0)^2, t_0]$ and for some universal constant $c > 0$.
We can then invoke Lemma \ref{Lem:6.3bc}(b) with $t_0 \leftarrow t \in [t_0 - (r^* r_0)^2, t_0]$, $x_0 \leftarrow y$, $r_0 \leftarrow r^* r_0$, $w \leftarrow c$,  $A \leftarrow (A+2) (r^*)^{-1}$, $r \leftarrow r$, $\eta \leftarrow \eta$, $E \leftarrow E$ and obtain that if $\delta < \delta_{\ref{Lem:6.3bc}} (c, (A+2) (r^*)^{-1}, r, \varepsilon_0, E, \eta)$, $Z > Z_{\ref{Lem:6.3bc}} ( (A+2) (r^*)^{-1})$ and $r_0 < \ov{r}_{\ref{Lem:6.3bc}} ( (A+2) (r^*)^{-1}, c, E, \eta) \sqrt{t_0}$, then
\begin{equation} \label{eq:RmboundedbyKonAplus2}
 |{\Rm}| < K r_0^{-2} \qquad \text{on} \qquad B(y, t, (A+ 2)r_0) \qquad \text{for all} \qquad t \in [t_0 - (r^* r_0)^2, t_0]
\end{equation}
for $K = K_{\ref{Lem:6.3bc}} (c, (A+2) (r^*)^{-1}, E, \eta) (r^*)^{-2} < \infty$.
Fix $\delta$ for the rest of this paragraph.
We now argue that there is a constant $C_1 = C_1 (w, A, \eta, E) < \infty$ such that the following holds:
If the surgeries on $\MM$ are even performed by $\delta'$-precise cutoff for some $0 < \delta' \leq \delta$ for which $C_1 \delta' \sqrt{t_0} \leq r_0$, then there are no surgery points in $B(y, t, (A+ 2)r_0)$ for all $t \in [t_0 - (r^* r_0)^2, t_0]$.
Similarly as in  (\ref{eq:surgpointhighcurv}), at every surgery point $(z,t)$
\[ |{\Rm}| (z,t) > c' \delta^{\prime -2} t^{-1} \geq c' \delta^{\prime -2} t_0^{-1} \geq c' C_1^{2} r_0^{-2}, \]
which contradicts (\ref{eq:RmboundedbyKonAplus2}) if $C_1 > c^{\prime -1/2} K^{1/2}$.
So we can find a $\tau = \tau(w, A, \eta, E) > 0$ such that for all $t \in [t_0 - 2 \tau r_0^2, t_0]$ the points in $B(y, t_0, (A+1.5) r_0)$ survive until time $t$ and $B(y, t_0, (A+1.5) r_0) \subset B(y, t, (A+2) r_0)$.
Hence $P(y, t_0, (A+1.5) r_0, \linebreak[1] - 2\tau r_0^2)$ is non-singular and we have $|{\Rm}| < K r_0^{-2}$ on $P(x_0, t_0, (A + 0.5) r_0, - 2\tau r_0^2) \linebreak[1] \subset \linebreak[1] P(y, t_0 (A+1.5) r_0, - 2\tau r_0^2)$.
The higher derivative estimates follow from Shi's estimates on $P(x_0, t_0, (A + 0.5) r_0, - 2 \tau r_0^2)$.
Fix $C_1$ for the rest of the proof.
Note that $C_1$ can be chosen independently of $\delta$.
So we may decrease $\delta$ depending on $C_1$ and $r$ and assume that $C_1 \delta < r$.

It remains to consider the case $C_1 \delta \sqrt{t_0} > r_0$, which implies $r_0 < r \sqrt{t_0}$.
Let $Q = |{\Rm}|(x_0, t_0)$.
In the next paragraph we show that $Q r_0^2$ is bounded by a constant, which only depends on $w$, $E$ and $\eta$.

For the next two paragraphs fix $w$, $E$ and $\eta$ and assume that $Q r_0^2 > 1$.
Using the same reasoning as in the proof of Lemma \ref{Lem:6.3bc}(b) (compare with the ``bounded curvature at bounded distance''-estimate in \cite[Claim 2, 4.2]{PerelmanII}, see also the proof of \cite[Lemma 89.2]{KLnotes} or \cite[Lemma 70.2]{KLnotes} or \cite[Proposition 6.2.4]{Bamler-diploma}) we can conclude that $|{\Rm}| > K^*_1 (Q r_0^2) r_0^{-2}$ on $B(x_0, t_0, r_0)$ if $Q r_0^2 > S_0$ and $r_0 < \ov{r}^*(Q r_0^2) \sqrt{t_0}$ for some constant $S_0 < \infty$ and some functions $K^*_1, \ov{r}^* : [0, \infty) \to (0, \infty)$ with $K^*_1 (s) \to \infty$ and $s \to \infty$, which only depend on $w$, $E$ and $\eta$ (we remark that for this argument the basepoint has to be chosen at a point $x' \in B(x_0, t_0, r_0)$ with $r_0^{-2} \leq |{\Rm}| (x', t_0) \leq K^*_1(Q r_0^2) r_0^{-2}$).
So there is some $S_1 = S_1 (w, E, \eta) < \infty$ such that if $Q r_0^2 > S_1$ and $r_0 < \ov{r}^* (S_1) \sqrt{t_0}$, then all points on $B(x_0, t_0, r_0)$ are centers of strong $\varepsilon$-necks or $(\varepsilon, E)$-caps, whose cross-sectional $2$-spheres have diameter at most $C (K^*_1 (Q r_0^2))^{-1/2} r_0$, where $C < \infty$ is a universal constant.
These necks and caps can be glued together to give long tubes as described in \cite[Proposition 5.4.7]{Bamler-diploma} or \cite[sec 58]{KLnotes} and we conclude that $\vol_{t_0} B(x_0, t_0, r_0) < w^* (Q r_0^2) r_0^3$ for some function $w^* : [0, \infty) \to (0, \infty)$ with $w^*(s) \to 0$ as $s \to \infty$.
Choose now $S_2 = S_2 (w, E, \eta) < \infty$ large enough such that $w^* (S_2) < w$.
Then by assumption (v) we get $Q r_0^2 < S_2$ assuming $\ov{r} < \ov{r}^* (S_2)$.

Again, by the same reasoning as before (this time, we choose the basepoint to be $(x_0, t_0)$), we obtain the estimate $|{\Rm}| < K^*_2 r_0^{-2}$ on $B(x_0, t_0, (A+1) r_0)$ for some universal constant $K^*_2 = K^*_2 (w, A, E, \eta) < \infty$ if $r_0 < \ov{r}^{**}(w, A, E, \eta) \sqrt{t_0}$.
Let $\ov\tau \geq 0$ be maximal such that the parabolic neighborhood $P(x_0, t_0, (A+1) r_0, - \ov\tau r_0^2)$ is non-singular.
By Lemma \ref{Lem:shortrangebounds}, we conclude that there is a constant $\tau_0 > 0$ such that  $|{\Rm}| < 2 K^*_2 r_0^{-2}$ on $P (x_0, t_0, (A+1) r_0, - \min \{ \ov\tau, \tau_0 \} r_0^2)$.
If $\ov\tau \geq \tau_0$, then we can deduce curvature derivative bounds on $B(x_0, t_0, A r_0)$ by Shi's estimates.
On the other hand, if $\ov\tau < \tau_0$, then by assuming $\delta$ to be sufficiently small depending on $m$, we can use Definition \ref{Def:precisecutoff}(3) to conclude that $|{\nabla^m \Rm}| < C_k r_0^{-2-k}$ for all $k \leq m$ on initial time-slice of $P (x_0, t_0, (A+1) r_0, - \ov\tau r_0^2)$.
So by a modified version of Shi's estimates (see \cite[sec 14.4]{Cetal}), we obtain a bound on $r_0^{2+k} |{\nabla^k \Rm}|$ in $B(x_0, t_0, A r_0)$ for all $k \leq m$.

Finally, we consider the case $r_0 = \rho(x_0, t_0)$.
Applying the Proposition with $A \leftarrow 1$ yields $|{\Rm}| < K r_0^{-2}$ on $B(x_0, t_0, r_0)$ for some $K = K(w, E, \eta) < \infty$.
So by Lemma \ref{Lem:HIbound}, if we had $r_0 < \ov{r}_{\ref{Lem:HIbound}} (K) \sqrt{t_0}$, then $\sec \geq - \frac12 r_0^{-2}$ on $B(x_0, t_0, r_0)$, which would contradict the choice of $r_0$.
\end{proof}

\begin{proof}[Proof of Corollary \ref{Cor:Perelman68}]
Let $\varepsilon_0$ be the constant from Proposition \ref{Prop:genPerelman}.
Observe that by Proposition \ref{Prop:CNThm-mostgeneral} there are constants $\un\eta > 0$, $\un{E}_{\varepsilon_0} < \infty$ and decreasing, continuous, positive functions $\un{r}_{\varepsilon_0}, \un{\delta}_{\varepsilon_0} : [0, \infty) \to (0, \infty)$ such that if $\delta(t) \leq \un\delta_{\varepsilon_0} (t)$ for all $t \in [0, \infty)$, then every point $(x,t) \in \MM$ satisfies the canonical neighborhood assumptions $CNA (\un{r}_{\varepsilon_0} (t) \sqrt{t}, \varepsilon_0, \un{E}_{\varepsilon_0}, \un\eta)$.
Now consider the constant $\delta_{\ref{Prop:genPerelman}} = \delta_{\ref{Prop:genPerelman}}(r, w, A, E, \eta, m)$ from Proposition \ref{Prop:genPerelman}.
We can assume that it depends on its parameters $r$, $w$ and $A$ in a monotone way, i.e. $\delta_{\ref{Prop:genPerelman}}(r', w', A', E, \eta, m') \leq \delta_{\ref{Prop:genPerelman}}(r, w, A, E, \eta, m)$ if $r' \leq r$, $w' \leq w$, $A' \geq A$ and $m' \geq m$.
Assume now that for all $t > 0$
\begin{equation} \label{eq:deltatlessdeltas}
 \delta(t) < \min \big\{ \delta_{\ref{Prop:genPerelman}} (\tfrac12 \un{r}_{\varepsilon_0} (2 t), t^{-1}, t, \un{E}_{\varepsilon_0}, \un\eta, [t]), \; \un\delta_{\varepsilon_0} (t), \; t^{-1} \big\}.
\end{equation}
Let $w, A, m$ be given.
Choose $T = T(w, A, m) < \infty$ such that $2T^{-1} < w$, $\frac12 T > A$ and $\frac12 T > m$.

Consider the point $x$, the time $t > T$ and the scale $r$ from part (a) of the corollary.
We may assume $\ov{r} < \frac12$ such that $r^2 < t/4$.
The flow $\MM$ satisfies the canonical neighborhood assumptions $CNA (\frac12 \un{r}_{\varepsilon_0} (t) \sqrt{t}, \varepsilon_0, \un{E}_{\varepsilon_0}, \un\eta)$ on $[t - r^2, t]$.
Moreover, by (\ref{eq:deltatlessdeltas}), the surgeries on $[t - r^2, t]$ are performed by $\delta_{\ref{Prop:genPerelman}} (\frac12 \un{r}_{\varepsilon_0} (t), \linebreak[1] 2 t^{-1}, \linebreak[1] \frac12 t, \linebreak[1] \un{E}_{\varepsilon_0}, \linebreak[1] \un\eta, [\frac12 t])$-precise cutoff.
By the choice of $T$ and the monotonicity of $\delta_{\ref{Prop:genPerelman}}$, this implies that the surgeries on $[t - r^2, t]$ are performed by $\delta_{\ref{Prop:genPerelman}} (\frac12 \un{r}_{\varepsilon_0} (t), \linebreak[1] w, \linebreak[1] A, \linebreak[1] \un{E}_{\varepsilon_0}, \linebreak[1] \un\eta, m)$-precise cutoff.
So we can apply Proposition \ref{Prop:genPerelman} with $x_0 \leftarrow x$, $t_0 \leftarrow t$, $r_0 \leftarrow r$, $r \leftarrow \frac12 \un{r}_{\varepsilon_0} (t)$, $w \leftarrow w$, $A \leftarrow A$, $E \leftarrow \un{E}_{\varepsilon_0}$, $\eta \leftarrow \un\eta$, $m \leftarrow m$ to conclude that if $r \leq \ov{r}_{\ref{Prop:genPerelman}} (w, A, \un{E}_{\varepsilon_0}, \un\eta) \sqrt{t}$, then $|{\nabla^k \Rm}| < K_{m, \ref{Prop:genPerelman}} (w, A, \un{E}_{\varepsilon_0}, \un\eta) r^{-2-k}$ on $B(x, t, A r)$ for all $k \leq m$.
If the surgeries on $[t-r^2, t]$ are performed by $C_{1, \ref{Prop:genPerelman}}^{-1} (w, A, \un{E}_{\varepsilon_0}, \un\eta) r t^{-1/2}$-precise cutoff, then the second part of Proposition \ref{Prop:genPerelman} gives us that $P(x, t, A r, - \tau_{\ref{Prop:genPerelman}}(w, A, \un{E}_{\varepsilon_0}, \un\eta))$ is non-singular and $|{\nabla^k \Rm}| < K_{m, \ref{Prop:genPerelman}} \linebreak[1] (w, \linebreak[1] A, \linebreak[1] \un{E}_{\varepsilon_0}, \linebreak[1] \un\eta)  r^{-2-k}$ there for all $k \leq m$.
This establishes assertion (a).

For part (b) we argue as follows:
If $\rho(x,t) \leq \ov{r} \sqrt{t}$, then by our discussion in the last paragraph for $r = \rho(x,t)$ and Proposition \ref{Prop:genPerelman}, we obtain $r = \rho(x,t) > \widehat{r}_{\ref{Prop:genPerelman}} (w, \un{E}_{\varepsilon_0}, \un\eta) \sqrt{t}$.
So, in general, we have $\rho(x,t) > \ov\rho \sqrt{t}$ for $\ov\rho = \ov\rho (w) = \min \{ \ov{r}, \widehat{r}_{\ref{Prop:genPerelman}} \}$  and we can apply assertion (a) with $r \leftarrow \ov\rho \sqrt{t}$ and $A \leftarrow A \ov\rho^{-1}$ to deduce a curvature bound on $P(x,t, A \sqrt{t}, - \tau \ov{\rho}^2 t)$.
For this application it is important that all surgeries on $[t - \ov{\rho}^2 t, t]$ are performed by $c_1 \ov{\rho}$-precise cutoff.
This is certainly the case for sufficiently large $T= T(w, A)$, because for large $t$ we have $\delta (t) \leq t^{-1} < c_1 \ov{\rho}$.
\end{proof}

\subsection{The thick-thin decomposition} \label{subsec:thickthin}
We now describe how, in the long-time picture, Ricci flows with surgery decompose the manifold into a thick and a thin part.
In this process, the thick part approaches a hyperbolic metric while the thin part collapses at local scales.
Compare this Proposition with \cite[7.3]{PerelmanII} and \cite[Proposition 90.1]{KLnotes}.

\begin{Proposition} \label{Prop:thickthindec}
There is a function $\delta : [0, \infty) \to (0, \infty)$ such that, given a Ricci flow with surgery $\MM$ with normalized initial conditions that is performed by $\delta(t)$-precise cutoff and defined on the interval $[0,\infty)$, we can find a constant $T_0 < \infty$, a function $w : [T_0, \infty) \to (0, \infty)$ with $w(t) \to 0$ as $t \to \infty$ and a collection of orientable, complete, finite volume hyperbolic (i.e. of constant sectional curvature $-1$) manifolds $(H'_1, g_{\hyp,1}), \ldots, (H'_k, g_{\hyp, k})$ such that: \\
There are finitely many embedded $2$-tori $T_{1,t}, \ldots, T_{m,t} \subset \MM(t)$ for $t \in [T_0, \infty)$ that move by isotopies and don't hit any surgery points and that separate $\MM(t)$ into two (possibly empty) closed subsets $\MM_{\thick}(t), \linebreak[1] \MM_{\thin}(t) \subset \MM(t)$ such that
\begin{enumerate}[label=(\textit{\alph*})]
\item $\MM_{\thick}(t)$ does not contain surgery points for any $t \in [T_0, \infty)$.
\item The $T_{j,t}$ are incompressible in $\MM(t)$ and $\diam_t T_{j,t} < w(t) \sqrt{t}$.
\item The topology of $\MM_{\thick}(t)$ stays constant in $t$ and $\MM_{\thick}(t)$ is a disjoint union of components $H_{1,t}, \ldots, H_{k,t} \subset \MM_{\thick}(t)$ such that the interior of each $H_{i,t}$ is diffeomorphic to $H'_i$.
\item We can find an embedded cross-sectional torus $T'_{j,t}$ inside each cusp of each $H'_i$, at a distance of at least $w^{-1}(t)$ from a fixed base point, which moves by isotopy and speed at most $w(t) t^{-1/2}$ such that the following holds:
Chop off the ends of the $H'_i$ along the $T'_{j,t}$ and call the remaining open manifolds $H''_{i,t}$.
Then there are smooth families of diffeomorphisms $\Psi_{i,t} : H''_{i,t} \to H_i$, which become closer and closer to being isometries, i.e.
\[ \big\Vert \tfrac1{4t} \Psi_{i,t}^* g(t) - g_{\hyp,i} \big\Vert_{C^{[w^{-1}(t)]}(H''_{i,t})} < w(t) \]
and which move slower and slower in time, i.e.
\[  \sup_{H''_{i,t}} t^{1/2} | \partial_t \Psi_{i,t} |  < w(t) \]
for all $t \geq T_0$ and $i = 1, \ldots, k$.
Moreover, the sectional curvatures on a $w^{-1}(t) \sqrt{t}$-tubular neighborhood of $\MM_{\thick}(t)$ lie in the interval $(\frac1t (-\frac14 - w(t)), \frac1t (-\frac14 + w(t)))$ for all $t \geq T_0$.
And for every $2$-torus $T_{j,t}$, $j = 1, \ldots, m$ and all $t \geq T_0$, there are neighborhoods $P_{j,t} \subset \MM_{\thin} (t)$ with $T_{j,t} \subset P_{j,t}$ that have the following properties: $P_{j,t} \approx T^2 \times I$, $P_{j,t}$ has a $T^2$-fibration over an interval whose fibers have time-$t$ diameter $< w(t) \sqrt{t}$, one of these fibers is $T_{j,t}$ and the boundary components of $P_{j,t}$ have time-$t$ distance of at least $w^{-1} (t) \sqrt{t}$ from $T_{j,t}$.
\item A large neighborhood of the part $\MM_{\thin}(t)$ is better and better collapsed, i.e. for every $t \geq T_0$ and $x \in \MM(t)$ with 
\[ \dist_t (x, \MM_{\thin}(t)) < w^{-1}(t) \sqrt{t} \]
we have
\[ \vol_t B ( x,t, \rho_{\sqrt{t}} (x,t) ) < w(t) \rho_{\sqrt{t}}^3 (x,t). \]
\end{enumerate}
\end{Proposition}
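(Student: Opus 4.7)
The plan is to follow Perelman's thick-thin decomposition argument (compare \cite[7.3]{PerelmanII} and \cite[Proposition 90.1]{KLnotes}), adapted to our formalism of Ricci flow with surgery. The strategy rests on three ingredients: (i) monotonicity (up to a controlled error coming from the $t^{-1}$-positive curvature condition) of the rescaled volume $\bar V(t) = t^{-3/2} \vol_t \MM(t)$; (ii) the curvature and parabolic non-singularity estimate of Corollary \ref{Cor:Perelman68}(b) applied at well-chosen basepoints; and (iii) Hamilton's minimal-disk argument to certify incompressibility of the separating tori.

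First I would fix a slowly decaying threshold $w(t) \to 0$ and declare a point $x$ \emph{provisionally thick} at time $t$ if $\vol_t B(x,t,\rho(x,t)) \geq w(t) \rho^3(x,t)$. By Corollary \ref{Cor:Perelman68}(b) such a point satisfies $\rho(x,t) > \bar\rho(w(t)) \sqrt{t}$, and the rescaled metric $\tfrac{1}{4t} g(t)$ has uniformly bounded geometry and slow time derivative on a rescaled parabolic neighborhood of arbitrarily large size. Shi-type estimates and Cheeger-Gromov compactness then produce pointed smooth subsequential limits that are eternal Ricci flows of bounded curvature; combined with the near-monotonicity of $\bar V$ and the improving $t^{-1}$-positivity condition, any such limit is forced to be the stationary hyperbolic Ricci flow of sectional curvature $-\tfrac14$. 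This step yields at most finitely many hyperbolic models $(H'_i, g_{\hyp,i})$ of finite volume, their total rescaled volume being bounded by $\lim_{t \to \infty} \bar V(t)$.

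Next I would define $\MM_{\thick}(t)$ as the union of components of the provisionally thick set that lie in a large tubular neighborhood of a fixed thick hyperbolic core, and cut along embedded cross-sectional tori $T'_{j,t}$ placed deep in each hyperbolic cusp. The diffeomorphisms $\Psi_{i,t}$ of (d) and the $C^{[w^{-1}(t)]}$- and time-derivative estimates are read off directly from the Cheeger-Gromov convergence established above, on slightly enlarged parabolic neighborhoods. Assertion (a) follows from the bound $|{\Rm}| \lesssim t^{-1}$ on $\MM_{\thick}(t)$ together with the fact, already used in the proof of Lemma \ref{Lem:6.3bc}, that any surgery point has $|{\Rm}| \gtrsim \delta^{-2}(t)$; for $\delta(t)$ chosen small enough the two regimes are disjoint. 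The collapsing assertion (e) is then built into the choice of $w(t)$, since a point within distance $w^{-1}(t)\sqrt{t}$ of $\MM_{\thin}(t)$ that violated the volume bound at scale $\rho_{\sqrt{t}}$ would itself satisfy the thick-point criterion for a slightly looser threshold, contradicting its position.

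The key obstacle, and main technical step, is part (b): incompressibility of the tori $T_{i,t}$ in $\MM(t)$. I would follow Hamilton and Perelman (\cite[sec 8]{PerelmanII}, \cite[sec 91--93]{KLnotes}): assuming some $T_{i,t}$ were compressible, compare the time derivative of the area of a least-area compressing disk against the rate consistent with the near-monotonicity of $\bar V$. On the hyperbolic thick part the minimal disk area decays strictly faster than the ambient rate, producing a loss of rescaled volume incompatible with convergence of $\bar V(t)$. In the surgery setting one must argue additionally that the minimizing disk avoids surgery regions; this is secured by a barrier argument using the strong $\delta(T^i)$-neck structure guaranteed by Definition \ref{Def:precisecutoff}(4), together with the scale separation between surgeries (of size $\lesssim \delta(t)$) and the thick region (of size $\sim \sqrt{t}$), provided $\delta(t)$ is chosen sufficiently small. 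Once incompressibility is in hand, the collar structure $P_{i,t} \approx T^2 \times I$ in (d) is extracted from Cheeger-Fukaya-Gromov collapsing theory on the thin side of an incompressible cuspidal torus, where incompressibility rules out $S^1$-collapse with non-trivial monodromy and forces a $T^2$-fibration with small fibres.
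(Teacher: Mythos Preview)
The paper does not actually supply its own proof of this proposition: it states the result and refers the reader to \cite[7.3]{PerelmanII} and \cite[Proposition 90.1]{KLnotes}. Your sketch follows precisely those references --- rescaled-volume monotonicity, Corollary~\ref{Cor:Perelman68}(b) for curvature control at noncollapsed points, Cheeger--Gromov compactness to extract hyperbolic limits, and Hamilton's minimal-disk argument for incompressibility --- so it is aligned with what the paper cites.

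Two small inaccuracies in your outline are worth flagging, even though they do not affect the overall strategy. First, the contradiction in the incompressibility step is not that the minimal-disk area ``decays faster than the ambient rate, producing a loss of rescaled volume''; rather, the differential inequality for the area $A(t)$ of a least-area compressing disk forces $A(t)$ to vanish in finite time, which is impossible. Second, the $T^2 \times I$ collar $P_{i,t}$ in (d) is not obtained from general Cheeger--Fukaya--Gromov collapsing theory applied on the thin side; it is read off directly from the hyperbolic cusp geometry on the thick side together with the almost-isometry $\Psi_{i,t}$, since deep in a cusp the metric is already close to a standard $T^2$-fibred collar.
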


\section{Long-time estimates under the presence of collapse} \label{sec:maintools}
In the following we derive more specialized estimates using the methods and results presented in the previous section.
Those statements will be used in \cite{Bamler-LT-main}.

\subsection{The goodness property} \label{subsec:goodness}
The following notion will become important for us.

\begin{Definition}[goodness] \label{Def:goodness}
Let $(M,g)$ be a Riemannian 3-manifold (possibly with boundary), $r_0 > 0$ and consider the function $\rho_{r_0} : M \to (0, \infty)$ from Definition \ref{Def:rhoscale}.
Let $w > 0$ be a constant and $x \in M$ be a point.
\begin{enumerate}[label=(\arabic*)]
\item Let $\td{x}$ be a lift of $x$ in the universal cover $\td{M}$ of $M$.
Then $x \in M$ is called \emph{$w$-good at scale $r_0$} if $\vol B(\td{x}, \rho_{r_0}(x)) > w \rho_{r_0}^3(x)$.
Here $B(\td{x}, \rho_{r_0}(x))$ denotes the $\rho_{r_0}(x)$-ball in the universal cover $\td{M}$ of $M$.
\item Let $U \subset M$ be an open subset and assume that $x \in U$.
Assume now that $\td{x}$ is a lift of $x$ in the universal cover $\td{U}$ of $U$.
Then $x$ is called \emph{$w$-good at scale $r_0$ relative to $U$} if either $B(x, \rho_{r_0}(x)) \not\subset U$ or $\vol B(\td{x}, \rho_{r_0}(x)) > w \rho_{r_0}^3(x)$, where now $B(\td{x}, \rho_{r_0}(x))$ denotes the $\rho_{r_0}(x)$-ball in the universal cover $\td{U}$ of $U$.
\item The point $x$ is called \emph{locally $w$-good at scale $r_0$} if it is $w$-good at scale $r_0$ relative to $B(x, \rho_{r_0}(x))$.
\end{enumerate}
\end{Definition}

Observe that the choice of the lift $\td{x}$ of $x$ is not essential.
We remark that the property ``$w$-good'' implies the properties ``$w$-good relative to a subset $U$'' and ``locally $w$-good''.
The opposite implication, however, is generally false: Consider for example a smoothly embedded solid torus $S \subset M$, $S \approx S^1 \times D^2$ and a collar neighborhood $U$ of $\partial S$ in $S$, i.e. $U \subset S$, $U \approx T^2 \times (0,100)$ and $\partial S \subset \partial U$, such that the geometry on $U$ is close to a product geometry $T^2 \times (0,100)$ in which the $T^2$-factor is very small.
Then for some $w > 0$ all points of $U$ are $w$-good relative to $U$ as well as locally $w$-good, but none of the points of $U$ are $w$-good (see \cite[Figure \ref{fig:expandingsolidtorus}]{Bamler-LT-Introduction} for an illustration).

We also remark that by volume comparison there is a universal constant $\td{c} > 0$ such that if $x \in M$ is $w$-good at scale $r_0 > 0$ for some $w > 0$, then $x$ is also $\td{c} w$-good at any scale $r'_0 \leq r_0$.

\subsection{Universal covers of Ricci flows with surgery}
In the following subsections we will need to carry out Perelman's methods in the universal covering flow $\td\MM$ of a given Ricci flow with surgery $\MM$.
In the case in which $\MM$ is non-singular, $\td\MM$ is just the universal cover of the underlying manifold equipped with the pullback of the time-dependent metric.
In the general case, the existence of $\td\MM$ is established by the following lemma.

\begin{Lemma} \label{Lem:tdMM}
Let $\MM$ be a Ricci flow with surgery on a time-interval $I \subset [0, \infty)$ that is performed by precise cutoff.
Then there is a Ricci flow with surgery $\td\MM$ (called the \emph{universal covering flow}) that is performed by precise cutoff and a family of Riemannian coverings $\pi_t : \td\MM(t) \to \MM(t)$ that are locally constant in time away from surgery points such that the components of all time-slices $\td\MM(t)$ are simply connected (i.e. $\td\MM(t)$ is the disjoint union of components that are isometric to the universal cover of $\MM(t)$).

Moreover, if $\MM$ is performed by $\delta(t)$-precise cutoff for some $\delta : I \to (0, \infty)$, then so is $\td\MM$.
If all time-slices of $\MM$ are complete, then the same is true for $\td\MM$.
If the curvature on $\MM$ is bounded on compact time-intervals that don't contain surgery times, then this property also holds on $\td\MM$.
\end{Lemma}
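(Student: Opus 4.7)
The plan is to construct $\td\MM$ by induction on the well-ordered sequence of surgery times $T^1 < T^2 < \ldots$ (the hypothesis that a minimal surgery time exists is what makes the induction well-founded). On the first smooth piece $M^1 \times I^1$, define $\td{M}^1$ to be the disjoint union of the universal covers of the connected components of $M^1$ with the pulled-back metric. The Ricci flow equation, completeness of time-slices, and bounded curvature on compact surgery-free subintervals are local conditions and thus immediately pass to the covering.

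To cross a surgery time $T^i$, I first note that the smooth convergence $g^i_t \to g^i_{T^i}$ on $\Omega^i$ lifts to smooth convergence on $\td\Omega^i := (\td\pi^i)^{-1}(\Omega^i)$. I then build $\td{M}^{i+1}$ as follows: start from $\td{U}^i_- := (\td\pi^i)^{-1}(U^i_-) \subset \td\Omega^i$ and, for each boundary sphere of $\td{U}^i_-$ that lies above some $\partial D^i_j$, attach a fresh copy of the cap $D^i_j$ equipped with the downstairs cap metric. Define $\td\pi^{i+1}$ to agree with $\td\pi^i$ on $\td{U}^i_-$ and to project each cap copy isometrically to its original downstairs, and let $\td\Phi^i$ be the natural componentwise identification. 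Extend the flow past $T^i$ by pulling back the downstairs flow through $\td\pi^{i+1}$.

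The heart of the proof is verifying that every component of $\td\MM(t)$ is simply connected. By continuity of simple connectivity under smooth deformation, it suffices to check this after each surgery. By van Kampen, since each cap $D^i_j$ is a simply connected $3$-disk attached along a simply connected $2$-sphere, the components of $\td{M}^{i+1}$ are simply connected as soon as those of $\td{U}^i_-$ are. The latter reduces to showing that each component of $U^i_-$ injects on $\pi_1$ into the corresponding component of $M^i$. This injectivity follows from another application of van Kampen: by Definition \ref{Def:precisecutoff}, $\partial U^i_-$ is a disjoint union of $2$-spheres lying in strong $\delta$-necks, so $M^i$ decomposes as $\overline{U^i_-}$ glued to its complement along these $S^2$'s. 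Since $\pi_1(S^2) = 1$, the amalgamated free product reduces to an ordinary free product, in which $\pi_1(U^i_-)$ sits as a subgroup.

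The remaining task is to verify the claimed \emph{moreover} properties for $\td\MM$. Conditions (1)--(5) of Definition \ref{Def:precisecutoff} are local geometric properties that transfer directly through the local isometries $\td\pi^i$, $\td\pi^{i+1}$, with the upstairs cap metric by construction preserving $\delta(t)$-closeness to the standard cap. Condition (6) requires the list of allowed complementary topologies to be closed under taking components of covering preimages, a short case check (spherical space forms cover spherical space forms; $\IR P^3 \setminus \ov{B}^3$ covers either itself or $S^2 \times I$; $\IR P^3 \# \IR P^3$ covers $S^2 \times \IR$; $S^1 \times S^2$ covers itself or $S^2 \times \IR$; and so forth). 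Completeness and surgery-free compact-time-interval curvature bounds pass through local isometries unchanged. The main obstacle throughout is the $\pi_1$-injectivity step in paragraph three, which is exactly the place where the restricted topology of Perelman-style surgery regions (every cut happens along a $2$-sphere) must be used in an essential way.
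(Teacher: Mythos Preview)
Your proof is correct and follows essentially the same strategy as the paper: induction on surgery times, with the key topological input being that $U^i_-$ is bounded by embedded $2$-spheres and hence $\pi_1$-injective in $M^i$. The one cosmetic difference is that you build $\td{M}^{i+1}$ by attaching fresh disk copies to $\td{U}^i_-$, whereas the paper instead takes, for each component $\CC_-$ of $\td{U}^i_-$, a copy of the component of the universal cover $\td{M}^{i+1}_0$ of $M^{i+1}$ containing the matching $\CC_+ \subset \td{U}^i_{0,+}$, and identifies $\CC_-$ with $\CC_+$ via a lift of $(\Phi^i)^{-1}$; these two constructions produce the same space, and your version is arguably more direct. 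Your explicit van Kampen justification of the $\pi_1$-injectivity and your discussion of condition~(6) are more detailed than what the paper provides.
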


\begin{proof}
Recall that $\MM = ((T^i), (M^i \times I^i, g_t^i), (\Omega^i), (U^i_{\pm}), (\Phi^i))$ where each $g^i_t$ is a Ricci flow on the $3$-manifold $M^i$ defined for times $I^i$.
We can lift each of these flows to the universal cover $\widetilde{M}_0^i$ of $M^i$ via the natural projections $\pi^i_0 : \td{M}^i_0 \to M^i$ and obtain families of metrics $\widetilde{g}^i_{0, t}$, which still satisfy the Ricci flow equation.
If $M^i$ is disconnected, then we define $\td{M}_0^i$ to be the disjoint union of the universal covers of the components of $M^i$.

We will now assemble the flows $(\widetilde{M}_0^i \times I^i, \widetilde{g}_{0, t}^i)$ to a Ricci flow with surgery $\td\MM$.
Each time-slice $\td\MM (t)$ of the resulting flow will be composed of a (possibly infinite) number of copies of components of $(\td{M}_0^i, \td{g}_{0, t}^i)$ if $t \in I^i$.
If there are no surgery times in $I$, i.e. $I = I^1$, then we set $\MM = (\cdot, (\td{M}_0^1, \td{g}_{0, t}^1), \cdot, \cdot, \cdot)$ and we are done.
Assume now that there are surgery times.
For any $i$ let $\MM^i$ be the restriction of $\MM$ to the time-interval $I \cap (-\infty, T^i)$ and if $T^{i-1}$ is the last surgery time, set $\MM^i = \MM$.
By induction, we can assume that $\td\MM^i$ already exists and we only need to prove that we can extend this flow to a Ricci flow with surgery $\td\MM^{i+1}$, which is the universal covering flow of $\MM^{i+1}$.
In order to do this, it suffices to construct the objects $(\td{M}^{i+1} \times I^{i+1}, \td{g}^{i+1}_t)$, $\td\Omega^i$, $\td{U}^i_\pm$, $\td{\Phi}^i$ and the projection $\pi^{i+1} : \td{M}^{i+1} \to M^{i+1}$.

Fix $i$ and consider $(\td{M}^i \times I^i, \td{g}^i_t)$ from $\td{\MM}^i$ and the projection $\pi^i : \td{M}^i \to M^i$ corresponding to $\pi_t$ for $t \in I^i$.
Denote by $\widetilde{\Omega}^i \subset \td{M}^i$ the preimage of $\Omega^i$ and by $\td{U}^i_- \subset \td{\Omega}^i$ the preimage of $U^i_-$ under $\pi^i$ and let $\td{U}^i_{0, +} \subset \td{M}^{i+1}_0$ be the preimage of $U^i_+$ under $\pi^{i+1}_0$.
Recall that by Definition \ref{Def:precisecutoff}(6) the subset $U^i_- \subset M^i$ is bounded by pairwise disjoint, embedded $2$-spheres.
So for every point $p \in U^i_-$, the natural map $\pi_1(U^i_-, p) \to \pi_1(M^i,p)$ is an injection.
Consider now the set $\widetilde{U}^i_{0, +} \subset \widetilde{M}^{i+1}_0$.
Recall that by Definition \ref{Def:precisecutoff}(2) the complement of $U^i_+$ in $M^{i+1}$ is a collection of pairwise disjoint, embedded $3$-disks.
So the complement of $\td{U}^i_{0, +}$ in $\td{M}^{i+1}_0$ is still a collection of pairwise disjoint, embedded $3$-disks and hence the preimage of each component of $U^i_+$ under $\pi^i_0$ is simply connected.
The map $(\Phi^i)^{-1} \circ \pi^{i+1}_0 : \widetilde{U}^i_{0, +}  \to U^i_-$ is a covering map.
Consider a component $\CC_+$ of $\td{U}^i_{0, +}$ and a component $\CC_-$ of $\td{U}^i_-$ with $\pi^{i+1}_0 (\CC_+) = \Phi^i (\pi^i (\CC_-))$.
Since $\CC_+$ is simply-connected, we find a lift $\phi^i_{\CC_+, \CC_-} : \CC_+ \to \td{U}^i_-$ of $(\Phi^i)^{-1} \circ \pi^{i+1}_0 |_{\CC_+} : \CC_+  \to U^i_- \subset M^i$ such that $\phi^i_{\CC_+, \CC_-} (\CC_+) = \CC_-$ and $\pi^i \circ \phi^i_{\CC_+, \CC_-} = ( \Phi^i)^{-1} \circ \pi^{i+1}_0 |_{\CC_+}$.
Since $U^i_- \to M^i$ is $\pi_1$-injective, the map $\phi^i_{\CC_+, \CC_-}$ must be injective.

We can now construct $\td{M}^{i+1}$.
For every component $\CC_-$ of $\td{U}^i_-$ there is a (unique) component $\CC_+ = \CC_+(\CC_-)$ of $\td{U}^i_{0, +}$ such that $\pi^{i+1}_0 (\CC_+) = \Phi^i (\pi^i (\CC_-))$.
Let $\td{M}^{i+1}_{0}(\CC_+)$ be the component of $\td{M}^{i+1}_0$ that contains $\CC_+$ (observe that $\CC_+$ is the only component of $\td{U}^i_{0, +}$ in $\td{M}^{i+1}_{0}(\CC_+)$).
Now define $\td{M}^{i+1}$ to be the disjoint union of all components $\td{M}^{i+1}_0 (\CC_+(\CC_-))$ where $\CC_-$ runs through all components of $\td{U}^i_-$.
The set $\td{U}^i_+$ is the disjoint union of all the $\CC_+(\CC_-)$ and the diffeomorphism $\td{\Phi}^i$ is defined to be the inverse of $\phi^i_{\CC_+(\CC_-), \CC_+}$ on each $\CC_-$.
We also define the projection $\pi^{i+1} : \td{M}^{i+1} \to M^{i+1}$, corresponding to $\pi_t$ for $t \in I^{i+1}$, to be equal to $\pi^{i+1}_0 : \td{M}^{i+1}_0 \to M^{i+1}$ restricted to $\td{M}^{i+1}_0 ( \CC_+ (\CC_-))$ for every component $\CC_-$ of $\td{U}^i_-$.
Finally, we set $\td{g}^{i+1}_t = (\pi^{i+1} )^* g_t^{i+1}$ for all $t \in I^{i+1}$.
This finishes the proof.
\end{proof}

\subsection{Quotients of necks}
Before we discuss the main results of this section, we need to establish the following Lemma, which asserts that sufficiently precise $\varepsilon$-necks cannot have arbitrarily small quotients.

\begin{Lemma} \label{Lem:neckhasfewquotients}
There are constants $\td\varepsilon_0, \td{w}_0 > 0$ such that the following is true:
Let $(M, g)$ be a (possibly open) Riemannian manifold, $\varepsilon \leq \td\varepsilon_0$, assume that $x_0 \in M$ is a center of an $\varepsilon$-neck and that $0 < r < |{\Rm}|^{-1/2} (x)$.
Consider a local isometry $\pi : (M, g) \to (M', g')$ (i.e. $\pi^* g' = g$) such that $\pi(M) \subset M'$ is not compact (i.e. $\pi(M)$ is not a closed manifold) and let $x'_0 = \pi (x_0) \in M'$.
Then $\vol_{g'} B(x'_0, r) > \td{w}_0 r^3$.
\end{Lemma}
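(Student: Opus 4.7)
I argue by contradiction via a blow-up/compactness argument in the spirit of the canonical neighbourhood theorem. Assume the conclusion fails; then there is a sequence of counterexamples: $\varepsilon_n \to 0$, Riemannian $3$-manifolds $(M_n, g_n)$, points $x_{0,n} \in M_n$ centring $\varepsilon_n$-necks, local Riemannian coverings $\pi_n : M_n \to M'_n$ with $\pi_n(M_n)$ not closed in $M'_n$, and radii $0 < r_n < |{\Rm}|^{-1/2}(x_{0,n})$ with $\vol B(x'_{0,n}, r_n) \leq n^{-1} r_n^3$, where $x'_{0,n} := \pi_n(x_{0,n})$. Rescaling so that $|{\Rm}|(x_{0,n}) = 1$, the $\varepsilon_n$-neck hypothesis yields pointed smooth convergence $(M_n, g_n, x_{0,n}) \to (S^2 \times \IR, g_{S^2 \times \IR}, (p_\ast, 0))$ on compact subsets, hence a universal $c_0 > 0$ with $\vol B_{M_n}(x_{0,n}, r_n) \geq c_0 r_n^3$ (for $n$ large and using $r_n \leq 1$).

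Since $\pi_n$ is locally volume-preserving, setting $k_n := \sup_{y \in M'_n} |\pi_n^{-1}(y) \cap B(x_{0,n}, r_n)|$ one has
\[
\vol B(x'_{0,n}, r_n) \ \geq \ \vol \pi_n \bigl( B(x_{0,n}, r_n) \bigr) \ \geq \ \frac{\vol B(x_{0,n}, r_n)}{k_n} \ \geq \ \frac{c_0 r_n^3}{k_n},
\]
so the counterexample assumption forces $k_n \to \infty$. The pigeon-hole principle inside the bounded ball then produces, for each $n$, distinct $y_{1,n}, y_{2,n} \in B(x_{0,n}, r_n)$ with $\pi_n(y_{1,n}) = \pi_n(y_{2,n})$ and $\dist(y_{1,n}, y_{2,n}) \to 0$. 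Composing a local inverse $\sigma_n$ of $\pi_n$ near $\pi_n(y_{2,n})$ with $\pi_n$ near $y_{1,n}$ defines a local isometry $\varphi_n := \sigma_n \circ \pi_n$ of $M_n$ with $\varphi_n(y_{1,n}) = y_{2,n}$. Because isometries are determined by their $1$-jet at a single point, and the neck converges smoothly to $S^2 \times \IR$, analytic continuation extends $\varphi_n$ to a local isometry defined on a neighbourhood of rescaled size $\to \infty$, with displacement $\to 0$. Passing to the limit and collecting these $\varphi_n$'s produces a non-trivial one-parameter subgroup $G \subset \Isom_0(S^2 \times \IR) = SO(3) \times \IR$, whose generator is an axial translation, an $SO(2)$-rotation fixing an antipodal axis of an $S^2$-factor, or a screw motion combining both.

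The contradiction comes from combining this collapse with the non-closedness hypothesis. The discrete identifications $\varphi_n$ have ``period'' $T_n \to 0$ in $M'_n$, while the neck in $M_n$ has length $\sim 2/\varepsilon_n \to \infty$ in the rescaled metric, so $\pi_n$ restricted to the neck already wraps around the short collapsing direction of $M'_n$ many times. Consequently the image $\pi_n(U_n)$ of the neck alone fills an open neighbourhood $W_n$ of $x'_{0,n}$ in $M'_n$ which is locally isometric to the short cyclic quotient of $S^2 \times \IR$. Assuming as usual that $M_n$ is complete (so that $\pi_n$ is an honest covering map onto its image), every limit point of $\pi_n(M_n)$ sufficiently close to $x'_{0,n}$ admits a preimage: one uses a deck transformation of $\pi_n$ to translate an approximating sequence of preimages into a compact region of $M_n$ and then extracts a convergent subsequence. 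This shows $\pi_n(M_n)$ contains a neighbourhood of each of its boundary points near $x'_{0,n}$; iterating this over the connected manifold $M'_n$ yields $\pi_n(M_n) = M'_n$, contradicting the hypothesis that $\pi_n(M_n)$ is not closed.

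The main obstacle is the final step: making precise the claim that small-period identifications force the neck image to locally saturate $M'_n$, and extracting from this the closedness of $\pi_n(M_n)$. Particular care is needed in the screw-motion case with irrational pitch, where the action of the $\varphi_n$ is dense rather than strictly periodic in the axial direction, so that periodicity arguments must be replaced by a combination of completeness of $M_n$ with the discreteness of the covering for finite $n$ in order to extract the desired closed neighbourhoods.
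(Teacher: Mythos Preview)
Your proposal is an outline rather than a proof, and the part you yourself flag as ``the main obstacle'' is not carried out.  Two concrete issues:

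\textbf{The completeness assumption is illegitimate.}  You write ``assuming as usual that $M_n$ is complete (so that $\pi_n$ is an honest covering map onto its image)''.  The lemma does not assume $M$ is complete, and in the intended applications (and in the paper's own reduction) one replaces $M$ by the $\varepsilon$-neck itself, which is an incomplete manifold diffeomorphic to $S^2\times(-\frac1\varepsilon,\frac1\varepsilon)$.  In that situation $\pi$ is only a local isometry, there are no globally defined deck transformations, and the sentence ``use a deck transformation of $\pi_n$ to translate an approximating sequence of preimages into a compact region'' has no obvious meaning.  Your local isometry $\varphi_n$ is only defined on a ball, and your subsequent ``analytic continuation'' extends it over a large but still bounded region of the neck, not over all of $M_n$.

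\textbf{The contradiction is not derived.}  Even granting a local isometry $\varphi_n$ with displacement $\to 0$ on a region of size $\to\infty$, you have not shown that $\pi_n(M_n)$ is closed.  The passage to a one-parameter subgroup of $\Isom(S^2\times\IR)$ in the limit does not control any fixed finite $n$; you need a statement that for each large $n$ the image is closed, and you have only sketched why this ought to be true in the pure-translation case.  In the screw-motion or near-rotation cases (where the axial component of $\varphi_n$ may be much smaller than the rotational displacement) the argument you indicate does not obviously terminate.

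The paper's proof avoids compactness altogether and is much shorter.  After normalising the scale, it shows directly that $\pi|_{B(x_0,1)}$ is injective.  If not, one gets a single orientation-preserving local isometry $\varphi$ of the neck that moves $x_0$ a bounded distance and, crucially, preserves the axial vector field $X$ (the eigenvector of $\Ric$ for the small eigenvalue, well-defined up to sign on a sufficiently precise neck).  Projecting along $X$-trajectories to the central sphere $\Sigma\cong S^2$ gives an orientation-preserving self-homeomorphism $\varphi':\Sigma\to\Sigma$, which has a fixed point $z_0$ by Brouwer.  Iterating $\varphi$ along the $X$-trajectory through $z_0$ produces a sphere $\Sigma'=\varphi^{(k_0)}(\Sigma)$ disjoint from $\Sigma$, and the compact region between $\Sigma$ and $\Sigma'$ then maps under $\pi$ onto a \emph{closed} manifold, contradicting the hypothesis that $\pi(M)$ is not closed.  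The fixed-point step is the idea your argument is missing: it replaces your case analysis of isometries of $S^2\times\IR$ and gives the axial translation component for free.
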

\begin{proof}
We may assume without loss of generality that the scale $\lambda$ in Definition \ref{Def:epsneck} is equal to $1$ (and hence $r < 1.1$ for small $\varepsilon$), that $M$ is an $\varepsilon$-neck and that $\pi$ is surjective.
So, we can identify $M = S^2 \times (- \frac1{\varepsilon}, \frac1{\varepsilon} )$ such that $x_0 \in S^2 \times \{ 0 \}$ and assume that $\Vert g - g_{S^2 \times \IR} \Vert_{C^{[\varepsilon^{-1}]}} < \varepsilon$.
If $\varepsilon$ is small enough, there is a smooth unit vector field $X$ on $M$, pointing in the direction of the eigenspace of $\Ric$ associated to the smallest eigenvalue, which is unique up to sign.
For any $y_1, y_2 \in M$ with $\pi (y_1) = \pi(y_2)$, we have $d\pi (X_{y_1}) = \pm d\pi (X_{y_2})$.
So by possibly passing to a $2$-fold cover of $M'$, we can assume that $d\pi ( X )= X'$ for some smooth vector field $X'$ on $M'$ (passing to a $2$-fold cover may change the constant $\td{w}_0$ by a factor of $2$).
Moreover, by possibly passing to another $2$-fold cover, we can assume that $M'$ is orientable.
Let $\Sigma \subset M$ be the embedded $2$-sphere corresponding to $S^2 \times \{ 0 \}$.
If $\varepsilon$ is small enough, the trajectories of $X$ cross $\Sigma$ exactly once and transversely.
Finally, let $U_0 \subset M$ be the open set corresponding to $S^2 \times (-30, 30)$ and assume that $\varepsilon^{-1} > 100$.

We will first show by contradiction that $\pi$ restricted to the ball $B(x_0, 1)$ is injective.
So assume that there are two distinct points $y_1, y_2 \in B(x_0, 1)$ with $\pi(y_1) = \pi(y_2)$.
Consider a geodesic segment $\gamma$ between $y_1$ and $x_0$ and lift its projection $\pi \circ \gamma$ starting from $y_2$.
This produces a point $x_1 \in M$ with $\pi(x_0) = \pi(x_1)$ and $\dist(x_0, x_1) < 2$.

We now show that we can construct an isometric local deck transformation $\varphi : U_0 \to U_1 \subset M$, that is a smooth map that satisfies $\pi \circ \varphi = \pi$ and $\varphi^* g = g$, with $\varphi(x_0) = x_1$.
Fix some point $p \in U_0$.
We can find a piecewise smooth curve $\gamma : [0,1] \to M$ of length less than $40$ such that $\gamma(0) = x_0$ and $\gamma (1) = p$.
Moreover, any two such curves are homotopic relative endpoints to one another, through curves of length less than $50$.
Now consider the projection $\pi \circ \gamma : [0,1] \to M'$.
Observe that $\pi (\gamma (0)) = \pi (x_0) = \pi(x_1)$.
So since $B(x_1, 40) \subset B(x_0, 42)$ is relatively compact in $M$, we can lift $\pi \circ \gamma$ to a curve $\gamma^* : [0,1] \to M$ with $\gamma^* (0) = x_1$.
Then $\pi \circ \gamma = \pi \circ \gamma^*$ and, in particular, $\pi (\gamma^* (1) ) = \pi ( \gamma (1) ) = \pi (p)$.

We now argue that $\gamma^* (1)$ only depends on $p$ and not on the choice of $\gamma$.
Consider a homotopy $H : [0,1] \times [0,1] \to M$ between two curves $\gamma_0 = H(\cdot, 0), \gamma_1 = H(\cdot , 1) : [0,1] \to M$.
Assume that for all $s \in [0,1]$, the curve $H(\cdot, s)$ has length less than $50$ and $H(0,s) = x_0$ and $H(1,s) = p$.
Then $\pi \circ H : [0,1] \times [0,1] \to M'$ can be lifted to a homotopy $H^* : [0,1] \times [0,1] \to M$ such that $\pi \circ H = \pi \circ H^*$ and such that $H^* (0,s) = x_1$ for all $s \in [0,1]$.
Note that in order to carry out this lift, it is important that $B(x_1, 50) \subset B(x_0, 52)$ is relatively compact in $M$.
The curves $\gamma^*_0 := H^* (\cdot, 0)$ and $\gamma^*_1 := H^* (\cdot, 1)$ are lifts of $\pi \circ \gamma_0$ and $\pi \circ \gamma_1$ with $\gamma^*_0 (0) = \gamma^*_1 (0) = x_1$.
Since $\pi \circ H^* (1,s) = \pi \circ H (1,s) = \pi (p)$ is constant in $s$, it follows that $H^* (1,s)$ is constant in $s$ and hence $\gamma^*_0 (1) = H(1,0) = H(1,1) = \gamma^*_1 (1)$.
This shows that point $\gamma^* (1)$ in the previous paragraph does not depend on the choice of $\gamma^*$.
So we can define $\varphi (p) := \gamma^* (1)$.
Letting $p$ vary over $U_0$, defines a map $\varphi : U_0 \to M$ with $\pi \circ \varphi = \pi$.

In order to show that $\varphi$ is smooth and isometric, it remains to show that $\varphi$ is continuous.
Fix some point $p \in U_0$ and let $\gamma : [0,1] \to M$ be a curve between $x_0$ and $p$ of length $l < 40$.
Let moreover, $\gamma^* : [0,1] \to M$ be a lift of $\gamma$ at $x_1$.
Choose $0 < d < 40 - l$.
Then every point $q \in B(p, d)$ can be reached from $x_0$, by first following $\gamma$ and then following a geodesic between $p$ and $q$.
The length of the resulting curve $\gamma' : [0,1] \to M$ is less than $40$.
Let $\gamma^{\prime *} : [0,1] \to M$ be a lift of $\pi \circ \gamma'$ at $x_1$.
Note that $\gamma^{\prime *}$ arises from concatenating $\gamma^*$ with a curve of length less than $d$.
So $\varphi (q) = \gamma^{\prime *} (1)$ has distance less than $d$ from $\varphi (p) = \gamma^* (1)$.
This finishes the proof of the continuity of $\varphi$.

Since $\pi \circ \varphi = \pi$, the map $\varphi$ preserves orientation and the vector field $X$.
Now for any $x \in \Sigma$ define $\varphi'(x)$ to be the unique intersection point of the $X$-trajectory passing through $\varphi(x)$ with $\Sigma$.
Then $\varphi' : \Sigma \to \Sigma$ is bijective continuous and orientation preserving.
Hence it has a fixed point $z_0 \in \Sigma$.

Note that for sufficiently small $\varepsilon$ we have
\begin{multline*}
 \dist (z_0, \varphi (z_0)) \leq \dist (z_0, x_0) + \dist (x_0, \varphi(x_0)) + \dist (\varphi(x_0), \varphi (z_0))  \\
< 2 \dist (z_0, x_0) + \dist (x_0, x_1) < 7 + 2 < 10.
\end{multline*}
Let now $z_k = \varphi^{(k)}(z_0) \in U_1$ as long as this is defined.
Those points all lie on the trajectory through $z_0$ and have consecutive distance $\dist (z_0, \varphi (z_0)) < 10$.
Hence, there is a point $z_{k_0} \in U_1$ whose distance to $z_0$ is contained in the interval $[10,20]$.
This implies that $\Sigma' = \varphi^{(k_0)}(\Sigma)$ is disjoint from $\Sigma$.

For every $x \in \Sigma$ let $\sigma_x : (a_x, b_x) \to M$ be the trajectory of the vector field $X$ through $x$.
That is
\[ \sigma'_x (s) = X (\sigma (s)) \quad \text{for all} \quad s \in (a_x, b_x) \qquad \text{and} \qquad \sigma_x (0) = x. \]
Here $a_x < b_x$ are chosen such that $(a_x, b_x)$ is the maximal domain of $\sigma_x$.
Since every such trajectory intersects $\Sigma'$ exactly once, we can find a function $S : \Sigma \to \IR$ such that for all $x \in \Sigma$ we have $S(x) \in (a_x, b_x)$ and $\sigma_x (S(x)) \in \Sigma'$.
Since $\Sigma$ and $\Sigma'$ are disjoint, $S$ vanishes nowhere.
By transversality, we find that $S$ is smooth.
Since $\Sigma$ and $\Sigma'$ are disjoint, the function $S$ is never zero.
It follows that the map
\[ S^2 \times [0,1] \longrightarrow M, \qquad  (x,s) \longmapsto \sigma_x (s \cdot S(x)) \]
is a smooth embedding.
Denote its image by $P \subset M$.
Then $P$ is compact and $\partial P = \Sigma \cup \Sigma'$.
Moreover, the vector field $X$ points inwards on $\Sigma$ and outwards on $\Sigma'$ or vice versa.
Since $\pi (\Sigma) = \pi (\Sigma')$, $\pi \circ \varphi = \pi$ on $\Sigma$ and $d\pi (X |_{\Sigma} ) = d\pi (X |_{\Sigma'})$ it follows that $\pi (P) = \ov\pi (\ov{P})$, where $\ov{P}$ is the (closed) manifold that arises from $P$ by identifying each $x \in \Sigma$ with $\varphi (x) \in \Sigma'$ and $\ov{\pi} : \ov{P} \to M'$ is an open map.
It follows that $\pi ( P ) \subset M'$ is equal to a closed component of $M'$.
So $\pi (M) = \pi (P)$, which contradicts our assumptions.

So $\pi |_{B(x_0, 1)}$ is indeed injective.
This implies that for all $r \leq 1$ we have $\vol_{g'} B(x'_0, r) = \vol_g B(x_0, r) > c r^3$ for some universal $c > 0$.
This finishes the proof.
\end{proof}

\subsection{Bounded curvature around good points} \label{subsec:boundedcurvaroundgoodpts}
We start by presenting a simple generalization of Corollary \ref{Cor:Perelman68} and consequence of Proposition \ref{Prop:genPerelman}, which exhibits the flavor of the subsequent results.
We point out that the following Proposition is also a consequence of the far more general Proposition \ref{Prop:curvcontrolincompressiblecollapse} below.

\begin{Proposition} \label{Prop:curvcontrolgood}
There is a continuous positive function $\delta : [0, \infty) \to (0, \infty)$ such that for any $w, \theta > 0$ there are $\tau = \tau(w), \ov{\rho} = \ov{\rho} (w) > 0$ and $K = K(w), T = T(w, \theta) < \infty$ such that: \\
Let $\MM$ be a Ricci flow with surgery on the time-interval $[0, \infty)$ with normalized initial conditions that is performed by $\delta(t)$-precise cutoff.
Let $t_0 > T$ be a time, $x_0 \in \MM (t_0)$ a point and $r _0 > 0$ and assume that
\begin{enumerate}[label=(\roman*)]
\item $\theta \sqrt{t_0} \leq r_0 \leq \sqrt{t_0}$,
\item $x_0$ is $w$-good at scale $r_0$ and time $t_0$.
\end{enumerate}
Then we have $\rho(x_0, t_0) > r_1: = \min \{ \ov{\rho} \sqrt{t_0}, r_0 \}$ and the parabolic neighborhood $P(x_0, t_0,r_1 , - \tau r_1^2)$ is non-singular and $|{\Rm}| < K r_0^{-2}$ on $P(x_0, t_0,r_1 , - \tau r_1^2)$.
\end{Proposition}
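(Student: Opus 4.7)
The plan is to lift the setup to the universal covering flow of $\MM$ and apply Proposition \ref{Prop:genPerelman} there, with the $w$-goodness furnishing the upstairs volume bound; the resulting estimates then descend to $\MM$, and Lemma \ref{Lem:HIbound} converts them into the desired lower bound on $\rho$.

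First, after requiring $\delta(t) \leq \un\delta_{\varepsilon_0}(t)$, Proposition \ref{Prop:CNThm-mostgeneral} ensures $\MM$ satisfies the canonical neighborhood assumptions $CNA(\un r_{\varepsilon_0}(t), \varepsilon_0, \un E, \un\eta)$ and is $\un\kappa(t)$-noncollapsed at scales $< \sqrt{t}$. Using Lemma \ref{Lem:tdMM}, I form the universal covering flow $\td\MM$. Both properties transfer to $\td\MM$: the CNA lift because the extra condition in Definition \ref{Def:CNA} for $\IR P^3 \setminus \ov B^3$-caps was incorporated precisely to ensure stability under covers, while noncollapsing lifts because $\pi : B^{\td\MM}(\td x_0, t, r) \to B(x_0, t, r)$ is a surjective Riemannian covering map, so volumes upstairs dominate those downstairs.

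Next, set $r_2 := \rho_{r_0}(x_0, t_0)$. The $w$-goodness hypothesis gives $\vol_{t_0} B^{\td\MM}(\td x_0, r_2) > w r_2^3$, and $r_2 \leq \rho(x_0, t_0)$ gives $\sec_{t_0} \geq - r_2^{-2}$ on $B(x_0, t_0, r_2)$, a bound that lifts to $\td\MM$. If $r_2$ exceeds the threshold $\ov r \sqrt{t_0}$ of Proposition \ref{Prop:genPerelman} (with $\ov r$ taken in addition at most $\ov r_{\ref{Lem:HIbound}}(K')$; see below), I set $r_3 := \ov r \sqrt{t_0}$: Bishop--Gromov with the common sec lower bound $-r_2^{-2}$ and the ratio bound $r_3/r_2 \geq \ov r$ (which follows from $r_2 \leq \sqrt{t_0}$) yields $\vol_{t_0} B^{\td\MM}(\td x_0, r_3) > w' r_3^3$ with $w' = w'(w)$; otherwise I set $r_3 := r_2$. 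For a pre-chosen $A \geq \sqrt{2}$, I then apply Proposition \ref{Prop:genPerelman} on $\td\MM$ at $(\td x_0, t_0)$ at scale $r_3$: its boundary hypotheses (i)--(ii) are vacuous, and (iii)--(v) hold. For $\delta$ sufficiently small this yields a non-singular parabolic neighborhood $P^{\td\MM}(\td x_0, t_0, A r_3, -\tau' r_3^2)$ with $|\Rm| < K' r_3^{-2}$ throughout, $K'$ and $\tau'$ depending on $w$ and $A$. Both facts descend to the corresponding neighborhood in $\MM$: surgery points lift bijectively under $\pi$, and $\pi(B^{\td\MM}(\td x_0, t, A r_3)) = B(x_0, t, A r_3)$ (every geodesic from $x_0$ lifts).

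Lemma \ref{Lem:HIbound} applied on $B(x_0, t_0, A r_3)$, where the $t_0^{-1}$-positive-curvature property still holds, upgrades the curvature bound to $\sec_{t_0} \geq -\tfrac12 r_3^{-2}$, so $\rho(x_0, t_0) \geq \sqrt{2}\, r_3$. A short case check shows $\sqrt{2}\, r_3 > r_1 = \min\{\ov\rho \sqrt{t_0}, r_0\}$ with $\ov\rho := \ov r$, and the curvature bound rescales to $|\Rm| < K r_0^{-2}$ with $K = K'/\ov\rho^2$ (using $r_0/r_3 \leq 1/\ov\rho$) on the requested $P(x_0, t_0, r_1, -\tau r_1^2)$ for appropriate $\tau = \tau(w)$. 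The main obstacle is reconciling the three scales $r_2$, $\ov r \sqrt{t_0}$ and $r_0$: Bishop--Gromov handles the reduction to a scale below the $\ov r$ threshold, and since $\rho(\td x_0, t_0)$ may strictly exceed $\rho(x_0, t_0)$, the ``in particular'' clause of Proposition \ref{Prop:genPerelman} cannot be used directly in the cover and must be replaced by Lemma \ref{Lem:HIbound}.
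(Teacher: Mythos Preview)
Your proof is correct and follows the same core strategy as the paper: lift to the universal covering flow $\td\MM$ via Lemma~\ref{Lem:tdMM} and apply Proposition~\ref{Prop:genPerelman} there, using $w$-goodness for the volume hypothesis. The paper's argument is slightly more streamlined in one respect: it applies Proposition~\ref{Prop:genPerelman} directly with $r_0 \leftarrow r_2 = \min\{\rho_{r_0}(x_0,t_0), \ov r_{\ref{Prop:genPerelman}}\sqrt{t_0}, \tfrac12\sqrt{t_0}\}$ and then invokes the ``in particular'' clause (if $r_2 = \rho(x_0,t_0)$ then $r_2 > \widehat r\sqrt{t_0}$) to obtain the $\rho$ lower bound, rather than going through Lemma~\ref{Lem:HIbound} by hand. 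Your stated reason for avoiding that clause --- that $\rho(\td x_0,t_0)$ might strictly exceed $\rho(x_0,t_0)$ --- is actually a misconception: since $\pi$ is a local isometry and $\pi(B^{\td M}(\td x_0,r)) = B(x_0,r)$, the sectional curvature bounds on the two balls are equivalent, so $\rho(\td x_0,t_0) = \rho(x_0,t_0)$. Your route via Lemma~\ref{Lem:HIbound} is nonetheless valid (and is in fact how the ``in particular'' clause is proved inside Proposition~\ref{Prop:genPerelman}), so the extra work is harmless; it just reproduces machinery already packaged into the proposition.
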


\begin{proof}
The proof is very similar to that of Corollary \ref{Cor:Perelman68}.
Let $\varepsilon_0$ be the constant from Proposition \ref{Prop:genPerelman} and $\un{E}_{\varepsilon_0}$, $\un\eta$ and $\un{r}_{\varepsilon_0}$, $\un{\delta}_{\varepsilon_0} : [0, \infty) \to (0, \infty)$ the constants and decreasing functions from Proposition \ref{Prop:CNThm-mostgeneral}.
Consider the constant $\delta_{\ref{Prop:genPerelman}} = \delta_{\ref{Prop:genPerelman}} (r, w, A, E, \eta, m)$ from Proposition \ref{Prop:genPerelman} and assume again that it satisfies the same monotonicity property as explained in the beginning of the proof of  Corollary \ref{Cor:Perelman68}.
We now choose $\delta : [0, \infty) \to (0, \infty)$ such that for all $t > 0$
\[ \delta (t) < \min \big\{ \delta_{\ref{Prop:genPerelman}} (\tfrac12 \un{r}_{\varepsilon_0} (2t), t^{-1},  1, \un{E}_{\varepsilon_0}, \un\eta, 0 ), \un{\delta}_{\varepsilon_0} (t), t^{-1} \big\}. \]

Consider now the Ricci flow with surgery $\MM$.
Since $\delta(t) < \un{\delta}_{\varepsilon_0} (t)$, we get by Proposition \ref{Prop:CNThm-mostgeneral} that every point $(x,t) \in \MM$ with $t \in [\frac12 t_0, t_0]$ satisfies the canonical neighborhood assumptions $CNA (\frac12 \un{r}_{\varepsilon_0} (t_0) \sqrt{t_0}, \varepsilon_0, \un{E}_{\varepsilon}, \un\eta)$.
This implies that also every point $(x,t) \in \td\MM$  in the universal covering flow (see Lemma \ref{Lem:tdMM}) with $t \in [\frac12 t_0, t_0]$ satisfies the same canonical neighborhood assumptions $CNA (\frac12 \un{r}_{\varepsilon_0} (t_0) \sqrt{t_0}, \varepsilon_0, \un{E}_{\varepsilon_0}, \un\eta)$.

Next, we choose $T = T(w, \theta) < \infty$ such that
\[ T > \max \{ 2 \td{c}^{-1} w^{-1}, 2 C_{1, \ref{Prop:genPerelman}} \theta^{-1} \}, \]
where $C_{1, \ref{Prop:genPerelman}} = C_{1, \ref{Prop:genPerelman}} (\td{c} w, 1, \un{E}_{\varepsilon_0}, \un\eta )$ is the constant from Proposition \ref{Prop:genPerelman} and $\td{c}$ was defined at the end of subsection \ref{subsec:goodness}.

Assume that $t_0 > T$.
By the choice of $\delta(t)$ and $T$, the surgeries on $\td\MM$ restricted to the time-interval $[\frac12 t_0, t_0]$ are performed by $\delta(t_0 / 2)$-precise cutoff, where
\[ \delta (t_0 / 2) < \delta_{\ref{Prop:genPerelman}} (\tfrac12 \un{r}_{\varepsilon_0} (t_0), 2 t_0^{-1}, \linebreak[1] 1, \linebreak[1] \un{E}_{\varepsilon_0}, \linebreak[1] \un{\eta}, \linebreak[1] 0) \leq \delta_{\ref{Prop:genPerelman}} (\tfrac12 \un{r}_{\varepsilon_0} (t_0), \linebreak[1] \td{c} w, \linebreak[1] 1, \linebreak[1] \un{E}_{\varepsilon_0}, \linebreak[1] \un{\eta}, \linebreak[1] 0). \]
Moreover, $\td\MM$ has complete time-slices without boundary and the curvature on $\td\MM$ is uniformly bounded on compact time-intervals that don't contain surgery points.
Let $\td{x}_0 \in \td\MM (t_0)$ be a lift of $x_0 \in \MM (t_0)$.
Set $r_2  = \min\{ \rho_{r_0}(x_0, t_0), \ov{r}_{\ref{Prop:genPerelman}} \sqrt{t_0}, \frac12 \sqrt{t_0} \}$, where $\ov{r}_{\ref{Prop:genPerelman}}  = \ov{r}_{\ref{Prop:genPerelman}} (\td{c} w,1, \un{E}_{\varepsilon_0}, \un\eta)$ is the constant from Proposition \ref{Prop:genPerelman}. 
Then $\sec_{t_0} \geq - r_2^{-2}$ on $B (\td{x}_0, t_0, r_2)$ and $\vol_{t_0} B(\td{x}_0, t_0, r_2) \geq \td{c} w r_2^3$.
We now apply Proposition \ref{Prop:genPerelman} to $\td\MM$ with $x_0 \leftarrow \td{x}_0$, $t_0 \leftarrow t_0$, $r_0 \leftarrow r_2$, $r \leftarrow \frac12 \un{r}_{\varepsilon_0} (t_0)$, $w \leftarrow \td{c} w$, $A \leftarrow 1$, $E \leftarrow \un{E}_{\varepsilon_0}$, $\eta \leftarrow \un\eta$, $m \leftarrow 0$.
Then we obtain that if $r_2 = \rho(x_0, t_0)$, then $r_2 > \widehat{r}_{\ref{Prop:genPerelman}}(\td{c} w, \un{E}_{\varepsilon_0}, \un\eta) \sqrt{t_0}$.
This implies that $\rho(x_0, t_0) > \min \{ \min \{ \widehat{r}_{\ref{Prop:genPerelman}}, \ov{r}_{\ref{Prop:genPerelman}}, \frac12 \} \sqrt{t_0}, r_0 \}$ and hence the first claim for $\ov{\rho} = \min \{ \widehat{r}_{\ref{Prop:genPerelman}}, \ov{r}_{\ref{Prop:genPerelman}}, \frac12 \}$.
Note that with this choice of $\ov\rho$, we have $r_1 = \min \{ \ov\rho \sqrt{t_0}, r_0 \} \leq r_2$.

Next, observe that by the choice of $T$, we have
\[ C_{1, \ref{Prop:genPerelman}} \delta (t_0/2) \sqrt{t_0} < C_{1, \ref{Prop:genPerelman}} (T/2)^{-1} \cdot \theta^{-1} r_0 \leq r_0. \]
So by the second part of Proposition \ref{Prop:genPerelman}, we obtain that the parabolic neighborhood $P(\td{x}_0, t_0, r_2, - \tau_{\ref{Prop:genPerelman}} (\td{c} w, 1, \un{E}_{\varepsilon_0} , \un\eta ) r_2^2)$ is non-singular and that we have $|{\Rm}| < K_{0, \ref{Prop:genPerelman}} (\td{c} w, 1, \un{E}_{\varepsilon_0} , \un\eta ) r_2^{-2}$ there.
This implies the second claim since $r_1 \leq r_2$.
\end{proof}

\subsection{Bounded curvature at bounded distance from sufficiently collapsed and good regions} \label{subsec:boundedcurvboundeddistgoodregions}
We now extend the curvature bound from Proposition \ref{Prop:curvcontrolgood} to balls of larger radii $A r_0$.
It is crucial here that by assuming sufficient collapsedness around the basepoint (depending on $A$), we don't have to impose an assumption of the form  $r_0 < \ov{r}(w, A) \sqrt{t_0}$ as in Proposition \ref{Prop:genPerelman}.
So the quantity $A r_0 t_0^{-1/2}$ can indeed be chosen arbitrarily large.
\begin{Proposition} \label{Prop:curvcontrolincompressiblecollapse}
There is a continuous positive function $\delta : [0, \infty) \to (0, \infty)$ such that for any $w, \theta > 0$ and $1 \leq A < \infty$ there are $\tau = \tau(w), \ov{\rho} = \ov{\rho} (w, A), \ov{w} = \ov{w}(w, A) > 0$ and $K(w, A), T (w, A, \theta) < \infty$ such that: \\
Let $\MM$ be a Ricci flow with surgery on the time-interval $[0, \infty)$ with normalized initial conditions that is performed by $\delta(t)$-precise cutoff and let $t_0 > T$.
Choose $x_0 \in \MM(t_0)$ and $r_0 > 0$ and assume that
\begin{enumerate}[label=(\roman*)]
\item $\theta \sqrt{t_0} \leq r_0 \leq \sqrt{t_0}$,
\item $x_0$ is $w$-good at scale $r_0$ and time $t_0$,
\item $\vol_{t_0} B(x_0, t_0, r_0) < \ov{w} r_0^3$.
\end{enumerate}
Then $|{\Rm}| < K r_0^{-2}$ on $B = \bigcup_{t \in [t_0 - \tau r_0^2, t_0]} B(x_0, t, A r_0)$ and there are no surgery points on $B$.

In particular, if $r_0 \geq \rho(x_0, t_0)$, then $\rho( x_0, t_0) > \ov\rho \sqrt{t_0}$ and the curvature estimate becomes $| {\Rm} | < K t_0^{-1}$.
\end{Proposition}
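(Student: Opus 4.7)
The plan is to argue by contradiction and rescaling, carried out in the universal covering flow $\td\MM$ (Lemma~\ref{Lem:tdMM}), following the scheme of Lemma~\ref{Lem:6.3bc}. Fix $w, A, \theta$ and suppose no constants $\ov w, K$ make the conclusion hold. Extract a sequence of counterexamples $\MM^\alpha, (x_0^\alpha, t_0^\alpha), r_0^\alpha$ satisfying (i)--(ii) together with $\vol_{t_0^\alpha} B(x_0^\alpha, t_0^\alpha, r_0^\alpha) < \alpha^{-1} (r_0^\alpha)^3$ and bad points $(y^\alpha, t^\alpha) \in B^\alpha$ at which either $|{\Rm}|(y^\alpha, t^\alpha) \ge \alpha (r_0^\alpha)^{-2}$ or $(y^\alpha, t^\alpha)$ is a surgery point (whose curvature is $\gtrsim \delta(t_0^\alpha)^{-2} \to \infty$ by the precise-cutoff condition of Definition~\ref{Def:precisecutoff}). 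Parabolically rescale so $r_0^\alpha = 1$; then $\sqrt{t_0^\alpha} \in [1, 1/\theta]$ and $\vol_{t_0^\alpha} B(x_0^\alpha, t_0^\alpha, 1) \to 0$.

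Proposition~\ref{Prop:curvcontrolgood} applied at the basepoint yields a uniform lower bound $\rho(x_0^\alpha, t_0^\alpha) > c_0(w) > 0$ together with $|{\Rm}| < K_0(w)$ on a parabolic neighborhood of $x_0^\alpha$ of uniform size. Lifting to $\td\MM^\alpha$ and using the goodness assumption with Bishop-Gromov comparison, the lift $\td x_0^\alpha$ is uniformly non-collapsed at bounded scales. Now run the point-picking process from the proof of Lemma~\ref{Lem:6.3bc}(a) to produce a refined bad point $(\ov y^\alpha, \ov t^\alpha)$ with $\ov q^\alpha := |{\Rm}|^{-1/2}(\ov y^\alpha, \ov t^\alpha) \to 0$, with $\ov t^\alpha \in [t_0^\alpha - \tau, t_0^\alpha]$ and $\dist_{\ov t^\alpha}(x_0^\alpha, \ov y^\alpha) < A+1$, such that $(\ov y^\alpha, \ov t^\alpha)$ violates $CNA(\ov q^\alpha, \varepsilon_0, \un{E}_{\varepsilon_0}, \un\eta)$ while every point of a parabolic neighborhood $P_{\ov y^\alpha, \ov t^\alpha}$ of suitable relative size satisfies $CNA(\tfrac12 \ov q^\alpha, \ldots)$. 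CNA property~(3) at scale $\ov q^\alpha$ gives $\un\eta$-non-collapse of $(\ov y^\alpha, \ov t^\alpha)$ in $\MM^\alpha$, hence in $\td\MM^\alpha$; combining with Perelman's $\un\kappa(t_0^\alpha)$-non-collapse at all scales $\le \sqrt{t_0^\alpha}$ (Proposition~\ref{Prop:CNThm-mostgeneral}, lifted via Lemma~\ref{Lem:tdMM}) yields uniform non-collapse of $\td{\ov y}^\alpha$ at every intermediate scale.

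Lift $(\ov y^\alpha, \ov t^\alpha)$ to $(\td{\ov y}^\alpha, \ov t^\alpha) \in \td\MM^\alpha$ and parabolically rescale by $\ov q^\alpha$. Because $t_0^\alpha/(\ov q^\alpha)^2 \to \infty$, the backward time extent is unbounded, so Lemma~\ref{Lem:lmiitswithCNA} extracts a smooth ancient limit $(\td M^\infty, (\td g^\infty_t)_{t \in (-\infty, 0]})$ of bounded non-negative curvature that is uniformly non-collapsed at all scales, i.e.\ a $\kappa_0$-solution. Lemma~\ref{Lem:kappasolCNA} applied at its basepoint then gives $CNA$ at every positive scale, contradicting the failure of $CNA(\ov q^\alpha, \ldots)$ along the sequence. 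This rules out the existence of the bad points and delivers $|{\Rm}| < K r_0^{-2}$ on $B(x_0, t_0, A r_0)$; the parabolic-neighborhood version of the conclusion and the absence of surgery points then follow from Lemma~\ref{Lem:shortrangebounds} together with the surgery-point curvature bound of Definition~\ref{Def:precisecutoff}. The main obstacle is ensuring that the rescaled limit is genuinely a $\kappa$-solution in the regime $r_0 \sim \sqrt{t_0}$: this is precisely the regime where Lemma~\ref{Lem:6.3bc}(b) cannot be invoked in $\td\MM^\alpha$ (its restriction $r_0 \le \ov r(A)\sqrt{t_0}$ would fail), and the collapse assumption~(iii), combined with Lemma~\ref{Lem:neckhasfewquotients}, is what prevents small-displacement deck transformations near $\ov y^\alpha$ from collapsing the universal-cover limit onto a quotient and thereby avoids the degeneration of the $\kappa$-solution extraction.
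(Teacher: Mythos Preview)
Your proposal misidentifies where the contradiction comes from. A high-curvature point $(y^\alpha,t^\alpha)$ does \emph{not} violate the canonical neighborhood assumptions: by Proposition~\ref{Prop:CNThm-mostgeneral} every point of $\MM^\alpha$ (hence of $\td\MM^\alpha$) already satisfies $CNA(\un r_{\varepsilon_0}(t),\varepsilon_0,\un E_{\varepsilon_0},\un\eta)$. So there is no CNA-violation to feed into the point-picking of Lemma~\ref{Lem:6.3bc}(a), and your limit-extraction argument has no starting point. Your closing sentence gestures at the collapsing hypothesis and Lemma~\ref{Lem:neckhasfewquotients}, but they never actually enter your argument; without them you are effectively trying to prove Lemma~\ref{Lem:6.3bc}(b) in the regime $r_0\sim\sqrt{t_0}$, which is exactly where it fails.

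The paper's argument is not by contradiction on CNA. It first \emph{applies} Lemma~\ref{Lem:6.3bc}(a) in $\td\MM$ (with $r_0$ replaced by a small multiple $\gamma r_0$ on which curvature is already controlled via Proposition~\ref{Prop:curvcontrolgood}) to obtain $CNA(\td\rho\, r_0,\varepsilon_0,E,\eta)$ on all of $B(\td x_0,t,(A+1)r_0)$; note that part~(a) of Lemma~\ref{Lem:6.3bc} carries no smallness restriction on $r_0/\sqrt{t_0}$. Now take $a\le A$ maximal with $|{\Rm}|<K r_0^{-2}$ on $B(\td x_0,t,ar_0)$ for $K=\gamma^{-2}\max\{\td\rho^{-2},E^2\}$. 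If $a<A$, a boundary point $\td x_1$ has $|{\Rm}|(\td x_1,t)=K r_0^{-2}$ and is therefore the center of an $\varepsilon_0$-neck or $(\varepsilon_0,E)$-cap in $\td\MM(t)$; in either case one locates a neck center $\td x_2$ within distance $\lesssim K^{-1/2}r_0$. Projecting to $\MM$ and invoking Lemma~\ref{Lem:neckhasfewquotients} gives $\vol_t B(x_2,t,cK^{-1/2}r_0)>\td w_0(cK^{-1/2}r_0)^3$. On the other hand, the curvature bound on $B(\td x_0,t,ar_0+\tfrac12\eta K^{-1/2}r_0)$ and volume comparison force $\vol_t B(x_2,t,cK^{-1/2}r_0)\le C(w,A)\,\ov w\, r_0^3$ via assumption~(iii). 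Choosing $\ov w$ small enough produces the contradiction. This is where, and how, the collapsing hypothesis is used; it is a direct volume comparison in $\MM$ against the universal lower bound from Lemma~\ref{Lem:neckhasfewquotients}, not an obstruction to deck transformations in a rescaled limit.
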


\begin{proof}
We first set up an argument in the spirit of the proof of Corollary \ref{Cor:Perelman68}.
Choose $\varepsilon_0 > 0$ to be smaller than the corresponding constant in Lemma \ref{Lem:6.3bc} and the constant $\td\varepsilon_0$ in Lemma \ref{Lem:neckhasfewquotients}.
By Proposition \ref{Prop:CNThm-mostgeneral} there are decreasing continuous positive functions $\un{r}_{\varepsilon_0}, \un{\delta}_{\varepsilon_0} : [0, \infty) \to (0, \infty)$ such that if $\delta(t) \leq \un\delta_{\varepsilon_0} (t)$ for all $t \in [0, \infty)$, then every point $(x,t) \in \MM$ satisfies the canonical neighborhood assumptions $CNA (\un{r}_{\varepsilon_0} (t) \sqrt{t}, \varepsilon_0, E, \eta)$ for any constants $0 < \eta < \un\eta$, $\un{E}_{\varepsilon_0} < E < \infty$.
Without loss of generality, we can assume that $E > E_{0, \ref{Lem:6.3bc}} (\varepsilon_0)$ and $\eta < \eta_{0, \ref{Lem:6.3bc}}$ where $E_{0, \ref{Lem:6.3bc}}$ and $\eta_{0, \ref{Lem:6.3bc}}$ are the constants from Lemma \ref{Lem:6.3bc}.
Consider the constant $\delta_{\ref{Lem:6.3bc}} (w', A', r', \varepsilon_0, E, \eta)$ and assume that it depends on its parameters $w', A', r'$ in a monotone way, i.e. $\delta_{\ref{Lem:6.3bc}} (w'', A'', r'', \varepsilon_0, E, \eta) \leq \delta_{\ref{Lem:6.3bc}} (w', A', r', \varepsilon_0, E, \eta)$ whenever $w'' \leq w'$, $A'' \geq A'$ and $r'' \leq r'$.
Let $\delta_{\ref{Prop:curvcontrolgood}}$ be the constant from Proposition \ref{Prop:curvcontrolgood} and assume that for all $t > 0$
\[ \delta(t) < \min \big\{ \delta_{\ref{Lem:6.3bc}} (t^{-1}, t, \tfrac12 \un{r}_{\varepsilon_0}(2t), \varepsilon_0, E, \eta), \; \un{\delta}_{\varepsilon_0} (t), \; \delta_{\ref{Prop:curvcontrolgood}}(t), \; t^{-1} \big\}. \]

By Proposition \ref{Prop:curvcontrolgood}, and for large enough $T$ depending on $w$ and $\theta$, we have $\rho(x_0, t_0) > r_ 1 = \min \{ \ov{\rho}_{\ref{Prop:curvcontrolgood}} (w)  \sqrt{t_0}, r_0 \}$ and $|{\Rm}|< K_{\ref{Prop:curvcontrolgood}} (w) r_0^{-2}$ on the non-singular parabolic neighborhood $P(x_0, \linebreak[1] t_0, \linebreak[1] r_1, \linebreak[1] -\tau_{\ref{Prop:curvcontrolgood}} \linebreak[1] (w) r_1^2)$.
In particular, this shows how the last assertion of the proposition follows from the first one.

It remains to prove the first assertion.
Consider the constants $\tau_{\ref{Prop:curvcontrolgood}}(w)$, $K_{\ref{Prop:curvcontrolgood}}(w)$ from Proposition \ref{Prop:curvcontrolgood} and set
\[ \gamma = \gamma(w) = \tfrac1{10} \min \big\{ 1, \tau_{\ref{Prop:curvcontrolgood}}^{1/2}(w), K_{\ref{Prop:curvcontrolgood}}^{-1/2}(w) \big\}. \]
Consider the universal covering flow $\td\MM$ of $\MM$ as described in Lemma \ref{Lem:tdMM} and let $\td{x}_0 \in \td\MM (t_0)$ be a lift of $x_0$.
By the choice of $\gamma$ we have
\[ |{\Rm}| < (\gamma r_0)^{-2} \quad \text{on} \quad P(\td{x}_0, t, \gamma r_0, - (\gamma r_0)^2) \quad \text{for all} \quad t \in [t_0 - (\gamma r_0)^2, t_0] \]
and $\vol_t B(\td{x}_0, t, \gamma r_0) > \frac1{10} \td{c} w (\gamma r_0)^3$ for all such $t$.
We now argue that for sufficiently large $T$, depending on $w, A$, we can apply Lemma \ref{Lem:6.3bc}(a) with $\MM \leftarrow \td\MM$, $\widetilde{x}_0 \leftarrow x_0$, $t_0 \leftarrow t \in [t_0 - (\gamma r_0)^2, t_0]$, $r_0 \leftarrow \gamma r_0$, $w \leftarrow \frac1{10} \td{c} w$, $A \leftarrow \gamma^{-1} (A + 1)$, $r \leftarrow \frac12 \un{r}_{\varepsilon_0} (t_0)$, $\eta \leftarrow \eta$, $\varepsilon \leftarrow \varepsilon_0$, $E \leftarrow E$.
First, choose $T = T(w, A, \theta)$ large enough such that $2 T^{-1} < \frac1{10} \td{c} w$ and $\frac12 T > \gamma^{-1} (A+1)$.
Observe that for all $(x',t') \in \MM$ with $t' \in [\frac12 t_0, t_0]$ the canonical neighborhood assumptions $CNA (\frac12 \un{r}_{\varepsilon} (t_0), \varepsilon_0, E, \eta)$ hold.
So these canonical neighborhood assumptions also hold for all $(x',t') \in \td\MM$ with $t' \in [\frac12 t_0, t_0]$.
Moreover, by the choice of $T$, we have $\delta(t') <  \delta_{\ref{Lem:6.3bc}} (\frac1{10} \td{c} w, \gamma^{-1}(A+1), \frac12 \un{r}_{\varepsilon_0}(t_0), \varepsilon_0, E, \eta)$ for all $t' \in [\frac12 t_0, t_0]$.
So Lemma \ref{Lem:6.3bc}(a) can be applied and we conclude that for any $t \in [t_0 - (\gamma r_0)^2, t_0]$ the points in $B(\td{x}_0, t, (A+1)r_0) \subset \td\MM (t)$ satisfy the canonical neighborhood assumptions $CNA(\gamma \td{\rho}_{\ref{Lem:6.3bc}} r_0, \varepsilon_0, E, \eta)$.
Here $\td{\rho}_{\ref{Lem:6.3bc}} = \td{\rho}_{\ref{Lem:6.3bc}}(\frac1{10} \td{c} w, \gamma^{-1} (A+1), \varepsilon_0, E, \eta)$.

Set $\tau = \tau(w) = \gamma^2(w)$ and $K = \gamma^{-2} \max \{ \td{\rho}^{-2}_{\ref{Lem:6.3bc}}, E^2 \}$.
So, if $|{\Rm}|(x,t) \geq K r_0^{-2}$ for some $t \in [t_0 - \tau r_0^2, t_0]$ and $x \in B(\td{x}_0, t, (A+1)r_0)$, then\begin{equation} \label{eq:deretabound}
 |\nabla |{\Rm}|^{-1/2}|(x,t) < \eta^{-1}
\end{equation}
and $(x,t)$ is a center of a strong $\varepsilon_0$-neck or an $(\varepsilon_0, E)$-cap or the component of $\MM (t)$ in which $x$ lies has positive, $E^2$-pinched sectional curvatures.
In the last case we are done, since $K \geq E^2$.
So assume that $(x,t)$ is a center of a strong $\varepsilon_0$-neck or an $(\varepsilon_0, E)$-cap.

Fix some $t \in [t_0 - \tau r_0^2, t_0]$.
Let $a \leq A$ be maximal with the property that $|{\Rm_t}| < K r_0^{-2}$ on $B(\td{x}_0, t, a r_0)$.
If $a = A$, we are done, so assume $a < A$.
By (\ref{eq:deretabound}), we can conclude that (compare also with Lemma \ref{Lem:shortrangebounds})
\begin{equation} \label{eq:etaextcurvbound}
 |{\Rm_t}| < 4 K r_0^{-2} \qquad \text{on} \qquad B(\td{x}_0, t, a r_0 + \tfrac12 \eta K^{-1/2} r_0).
\end{equation}
By the choice of $a$ we can find a point $\td{x}_1 \in \td\MM(t)$ of time-$t$ distance exactly $a r_0$ from $\td{x}_0$ with $|{\Rm}| (\td{x}_1, t) = K r_0^{-2}$.
So $(\td{x}_1, t)$ is a center of an $\varepsilon_0$-neck or an $(\varepsilon_0, E)$-cap in $\td\MM (t)$.

Let $x_1 \in \MM(t)$ be the projection of $\td{x}_1$.
By (\ref{eq:etaextcurvbound}) and volume comparison, we can crudely estimate that for some constant $C = C(w, A) < \infty$, which only depends on $w, A$, and some universal constant $C' < \infty$
\begin{multline}
 \vol_{t} B(x_1, t, \tfrac12 \eta K^{-1/2} r_0) < \vol_t B(x_0, t, a r_0 + \tfrac12 \eta K^{-1/2} r_0) \\
 < C(A, K) \vol_t B(x_0, t, \tfrac1{10} r_0) 
<  C' C(A, K) \vol_{t_0} B(x_0, t_0, r_0) \\
< C' C(w, A) \ov{w} r_0^3. \label{eq:etaballsmallvol}
\end{multline}
If $(\td{x}_1, t)$ is a center of an $\varepsilon_0$-neck, then we obtain a contradiction using Lemma \ref{Lem:neckhasfewquotients} assuming $\ov{w}$ is chosen small enough such that $C' C(w, A) \ov{w} < \td{w}_0 (\frac12 \eta K^{-1/2})^3$ (here $\td{w}_0$ is the constant from Lemma \ref{Lem:neckhasfewquotients}).
So assume for the rest of the proof that $(\td{x}_1, t)$ is a center of an $(\varepsilon_0, E)$-cap $U \subset \td\MM (t)$.
Let $K \subset U$ be a compact subset such that $\td{x}_1 \in K$ and $U \setminus K$ is an $\varepsilon_0$-neck and let $\td{y} \in U$ be a center of this neck.
By Definition \ref{Def:epscap} we have $\gamma^{-2} r_0^{-2} \leq E^{-2} K r_0^{-2} \leq |{\Rm}| \leq E^2 K r_0^{-2}$ on $U$.
So $\td{x}_0 \not\in U$ and hence the minimizing geodesic segment between $\td{x}_0$ and $\td{x}_1$ passes through the whole $\varepsilon_0$-neck $U \setminus K$.
So for sufficiently small $\varepsilon_0$ we have $\dist_t (\td{x}_0, \td{y}) < \dist_t(\td{x}_0, \td{x}_1) = a r_0$.
In particular, for the projection $y$ of $\td{y}$ we find $B(y, t, \frac12 \eta E^{-1} K^{-1/2} r_0) \subset B(x_0, t, ar_0 + \frac12 \eta K^{-1/2} r_0)$.
Now again, using Lemma \ref{Lem:neckhasfewquotients} and (\ref{eq:etaballsmallvol}), we conclude
\[ \td{w}_0 \big( \tfrac12 \eta E^{-1} K^{-1/2} \big)^3 r_0^3 < \vol_t B(y, t, \tfrac12 \eta E^{-1} K^{-1/2} r_0) < C' C(w, A) \ov{w} r_0^3. \]
This yields a contradiction for sufficiently small $\ov{w}$, depending on $w$ and $A$.

It remains to show that there are no surgery points on $B$.
To see this, observe that $|{\Rm}| < K \theta^{-2} t_0^{-1}$ on $B$, but by (\ref{eq:surgpointhighcurv}) we have
\[ |{\Rm}|(x, t) > c' \delta^{-2} (t) t^{-1} \geq c' \delta^{-2} (T/2) t_0^{-1} \geq \tfrac14 c' T^2 r_0^{-2} \]
at every surgery point $(x,t) \in \MM$ for some universal $c' > 0$.
So choosing $T$ large enough, depending on $w, A$ and $\theta$, yields the desired result.
\end{proof}

\subsection{Curvature control at points that are good relative to regions whose boundary is geometrically controlled} \label{subsec:curvboundinbetween}
Next, we generalize Proposition \ref{Prop:curvcontrolgood} to points that are good relative to some open set $U$.
In order to do this, we need to assume that the metric around the boundary of $U$ is sufficiently controlled on a time-interval of uniform size.

\begin{Proposition} \label{Prop:curvboundinbetween}
There is a continuous positive function $\delta : [0, \infty) \to (0, \infty)$ such that for any $w, \theta > 0$ there are $\alpha = \alpha (w) > 0$ and $K = K(w), T = T(w, \theta) < \infty$ such that: \\
Let $\MM$ be a Ricci flow with surgery on the time-interval $[0, \infty)$ with normalized initial conditions that is performed by $\delta(t)$-precise cutoff and let $t_0 > T$.
Let $0 < r_0 \leq \sqrt{t_0}$ and consider a sub-Ricci flow with surgery $U \subset \MM$ (see Definition \ref{Def:subRF}) on the time-interval $[t_0 - r_0^2, t_0]$, whose time-slices $U(t)$ are closed subsets of $\MM(t)$.
Finally, let $x_0 \in U (t_0)$ be a point and assume that
\begin{enumerate}[label=(\roman*)]
\item $\theta \sqrt{t_0} \leq r_0 \leq \sqrt{t_0}$,
\item for all $x \in \partial U (t_0)$, the parabolic neighborhood $P(x, t_0, 2 r_0, - r_0^2)$ is non-singular and we have $| {\Rm} | < r_0^{-2}$ there,
\item $x_0$ is $w$-good at scale $r_0$ relative to the interior of $U (t_0)$ at time $t_0$.
\end{enumerate}
Then the parabolic neighborhood $P(x_0, t_0,  \alpha r_0, - \alpha^2 r_0^2)$ is non-singular and we have $|{\Rm}| < K r_0^{-2}$ there.
\end{Proposition}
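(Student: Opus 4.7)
The plan is to follow the scheme of Proposition \ref{Prop:curvcontrolgood} (Result III) by applying Proposition \ref{Prop:genPerelman} (Result I), but now in the \emph{universal covering flow $\pi \colon \td U \to U$ of the sub-Ricci flow $U$} rather than in the universal covering flow of $\MM$. First I would extend the argument of Lemma \ref{Lem:tdMM} to the sub-Ricci flow $U$ to produce $\td U$; since the canonical neighborhood assumptions and the $\delta(t)$-precise cutoff property are local, they transfer to $\td U$, and the boundary is $\partial \td U (t) = \pi_t^{-1}(\partial U (t))$. Let $\td x_0 \in \td U(t_0)$ be a lift of $x_0$ and set $\rho_0 = \rho_{r_0}(x_0, t_0)$.

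The next step is to split according to the two clauses in the definition of $w$-goodness relative to $U$. In the easy case $B(x_0, t_0, \rho_0) \not\subset U(t_0)$, some $y \in \partial U(t_0)$ lies at distance $< \rho_0 \leq r_0$ from $x_0$, so assumption (ii) directly provides $|{\Rm}|(x_0, t) < r_0^{-2}$ for every $t \in [t_0 - r_0^2, t_0]$ during which $x_0$ survives. Covering $B(x_0, t, \alpha r_0)$ by finitely many boundary parabolic neighborhoods and invoking Lemma \ref{Lem:shortrangebounds} together with Shi's estimates, one extracts $|{\Rm}| < K r_0^{-2}$ on $P(x_0, t_0, \alpha r_0, -\alpha^2 r_0^2)$; the non-singularity of this parabolic neighborhood follows by taking $T$ large enough so that $\delta(t)$ is much smaller than $r_0 \geq \theta \sqrt{t_0}$, exactly as at the end of the proof of Proposition \ref{Prop:curvcontrolincompressiblecollapse}.

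In the main case $B(x_0, t_0, \rho_0) \subset U(t_0)$, the goodness hypothesis supplies $\vol_{t_0} B^{\td U}(\td x_0, t_0, \rho_0) > w \rho_0^3$. I would then apply Proposition \ref{Prop:genPerelman} to $\td U$ at $\td x_0$ at the scale $r_2 = \min\{\rho_0, \ov{r} \sqrt{t_0}, \tfrac12 \sqrt{t_0}\}$ with $A = 1$, where $\ov r$ is the corresponding constant from Proposition \ref{Prop:genPerelman} with parameters determined by $w$; volume comparison (permitted since $\sec \geq -\rho_0^{-2}$ on $B(x_0, \rho_0) \subset U$, a bound that lifts to $\td U$) transfers the volume lower bound to scale $r_2 \leq \rho_0$. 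The new ingredient compared to Proposition \ref{Prop:curvcontrolgood} is the verification of the boundary hypotheses (i) and (ii) of Proposition \ref{Prop:genPerelman}: these exploit the ``buffer zone'' of bounded curvature around $\partial U$ furnished by assumption (ii) of the present Proposition. Any space-time curve $\gamma$ reaching $\partial \td U$ on $[t_0 - \tfrac{1}{10} r_2^2, t_0]$ projects to a curve in $\MM$ that must traverse this buffer zone, where $|{\Rm}| < r_0^{-2}$ and hence $\scal > - 6 r_0^{-2}$; combining this with Lemma \ref{Lem:distdistortion} gives an $\LL$-length lower bound $\LL(\gamma) \geq Z r_2$ and, by an analogous distance-distortion argument, verifies the ball condition (ii) on a shortened time-interval $[t_0 - \tau r_2^2, t_0]$ for sufficiently small $\tau = \tau(w)$.

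Proposition \ref{Prop:genPerelman} then yields $|{\Rm}| < K r_2^{-2}$ on $P(x_0, t_0, r_2, -\tau r_2^2)$ (non-singular once $T$ is chosen so that $C_1 \delta(t) \leq r_2$) and, through its ``in particular'' clause, $\rho_0 \geq \widehat r \sqrt{t_0}$ whenever $r_2 = \rho(x_0, t_0)$. Together with $r_0 \leq \sqrt{t_0}$ this forces $r_2 \geq \alpha(w) r_0$, whence $|{\Rm}| < K r_0^{-2}$ on $P(x_0, t_0, \alpha r_0, -\alpha^2 r_0^2)$ with $\alpha = \alpha(w)$ and $K = K(w)$, as claimed. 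The hardest step I anticipate is precisely the verification of the boundary hypotheses of Proposition \ref{Prop:genPerelman} in the intermediate regime where $\rho_0$, $\dist_{t_0}(x_0, \partial U)$ and $r_0$ are all of the same order: here the $\LL$-length estimate from (i) and the distance-distortion estimate from the buffer zone must be combined carefully to close the argument.
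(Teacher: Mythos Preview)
Your approach is essentially the paper's: lift to the universal covering flow $\td U$ of the sub-Ricci flow $U$, check that the canonical neighborhood assumptions transfer, and then verify the boundary hypotheses (i)--(ii) of Proposition \ref{Prop:genPerelman} using the curvature-controlled collar from assumption (ii). Two remarks, one minor and one substantive.

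First, the transfer of the canonical neighborhood assumptions to $\td U$ is not quite as automatic as ``local''. A strong $\varepsilon_0$-neck or $(\varepsilon_0,E)$-cap centered at a point of $U$ could a priori protrude out of $U$. The paper handles this by observing that if $|{\Rm}|(x,t) > \un{r}_{\varepsilon_0}(t_0)^{-2}$ (the only case where a canonical neighborhood is required), then $|{\Rm}|(x,t) > r_0^{-2}$ for $T$ large, so by assumption (ii) the point $(x,t)$ lies outside the collar $P(\partial U(t_0), t_0, r_0, -r_0^2)$ and hence $B(x,t,\tfrac{1}{10}r_0) \subset U(t)$, which contains the neck or cap.

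Second, and more importantly, your scale $r_2 = \min\{\rho_0, \ov r \sqrt{t_0}, \tfrac12 \sqrt{t_0}\}$ can be comparable to $r_0$, and in that regime the $\LL$-length estimate you sketch does not close. A space-time curve crossing the collar of width $\sim \tfrac12 r_0$ over a time-interval of length $\sim \tfrac{1}{10} r_2^2 \sim \tfrac{1}{10} r_0^2$ has $\LL$-length only of order $r_0$, whereas Proposition \ref{Prop:genPerelman} requires $\LL(\gamma) > Z r_2$ with $Z = Z(w)$ possibly large. The paper's remedy, which is exactly the ``careful combination'' you anticipate, is to introduce an explicit shrinking factor $\beta = \beta(w) \in (0, \tfrac{1}{200})$ and apply Proposition \ref{Prop:genPerelman} at the scale $r_1 = \min\{\rho(x_0,t_0),\, \beta r_0,\, \ov r_{\ref{Prop:genPerelman}} r_0\}$. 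On the shortened interval $[t_0 - \tfrac{1}{10}(\beta r_0)^2, t_0]$, any curve $\gamma$ from $B(\td x_0, t_0, 5\beta r_0)$ to $\partial \td U$ must traverse a time-$t_0$ distance $\geq \tfrac12 r_0$ through the collar; the substitution $s^2 = t_2' - t$ then gives
\[
\LL(\gamma) \;\geq\; \frac{c\, r_0^2}{8\sqrt{t_2'-t_1'}} - \tfrac32 \beta r_0 \;\geq\; \Big(\frac{c}{8\beta} - \tfrac32 \beta\Big) r_0,
\]
which exceeds $Z \beta r_0$ once $\beta$ is chosen small depending on $Z = Z(w)$. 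With this $\beta$ in hand, the remainder of your argument (including the $\rho$-lower bound via the ``in particular'' clause) goes through verbatim, yielding $\alpha = \min\{\beta, \widehat r_{\ref{Prop:genPerelman}}, \ov r_{\ref{Prop:genPerelman}}\}$.
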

\begin{proof}
The idea of the proof will be to apply Proposition \ref{Prop:genPerelman} to the universal covering flow $\td{U}$ of $U$ (see Lemma \ref{Lem:tdMM}).
So our main task will be to verify assumptions (i) and (ii) of that Proposition.
Besides that, the proof essentially follows along the lines of the proof of Proposition \ref{Prop:curvcontrolgood}.

We first choose the function $\delta(t)$.
Let $\varepsilon_0 > 0$ be the constant from Proposition \ref{Prop:genPerelman} and consider the constants $\un{E}_{\varepsilon_0}$, $\un\eta$ and the functions $\un\delta_{\varepsilon_0} (t), \un{r}_{\varepsilon_0} (t)$ from Proposition \ref{Prop:CNThm-mostgeneral}.
So if $\delta(t) < \un\delta_{\varepsilon_0}(t)$ for all $t \geq 0$, then $\MM$ satisfies the canonical neighborhood assumptions $CNA (\un{r}_{\varepsilon_0} (t) \sqrt{t}, \varepsilon_0, \un{E}_{\varepsilon_0}, \un\eta)$.
Without loss of generality, we assume that $\un{r}_{\varepsilon_0} (t) \to 0$ as $t \to \infty$.
Similarly as in the proof of Proposition \ref{Prop:curvcontrolgood} or Corollary \ref{Cor:Perelman68}, we assume that
\begin{equation} \label{eq:deltalessinrelativecase}
 \delta (t) < \min \big\{ \delta_{\ref{Prop:genPerelman}} ( \tfrac12 \un{r}_{\varepsilon_0} (2t), t^{-1}, 1, \un{E}_{\varepsilon_0}, \un\eta, 0), \; \un\delta_{\varepsilon_0} (t), \; t^{-1}, \; 1 \big\},
\end{equation}
where $\delta_{\ref{Prop:genPerelman}}$ is the constant in Proposition \ref{Prop:genPerelman}, which we assume to satisfy the before-mentioned monotonicity property.
Furthermore, we assume that $T = T(w, \theta)$ is large enough such that $2 T^{-1} < \td{c} w$ and such that $\un{r}_{\varepsilon_0} (t) < \frac1{10}  \theta \min \{ 1, \linebreak[1] \un{E}_{\varepsilon_0}^{-1}, \linebreak[1]\varepsilon_0 \}$ for all $t \geq \frac12 T$.

We now present the main argument.
By assumption (ii), we can consider the case in which $B(x_0, t_0, r_0) \subset U(t_0)$.
Our goal will be to apply Proposition \ref{Prop:genPerelman} in the universal covering flow $\td{U}$ of $U$ (see Lemma \ref{Lem:tdMM}) at a lift $(\td{x}_0, t_0) \in \td{U} (t_0)$ of $(x_0, t_0) \in U (t_0)$.
We first check that all points $(x,t) \in \td{U}$ with $t \in [t_0 - \frac12 r_0^2, t_0]$ satisfy the canonical neighborhood assumptions $CNA ( \frac12 \un{r}_{\varepsilon_0} (t_0) \sqrt{t_0}, \varepsilon_0, \un{E}_{\varepsilon_0}, \un\eta)$.
To do this, consider first a point $(x,t) \in U \subset \MM$ with $t \in [ t_0 - \frac12 r_0^2, t_0] \subset [\frac12 t_0, t_0]$.
By the previous conclusion, $(x,t)$ satisfies the desired canonical neighborhood assumptions in $\MM$.
We now argue that $(x,t)$ satisfies those canonical neighborhood assumptions also in $U$.
If $|{\Rm}|^{-1/2} (x,t) > \frac12 \un{r}_{\varepsilon_0} (t_0) \sqrt{t_0}$, then there is nothing to show.
So assume that (the second inequality holds by the choice of $T$)
\begin{multline} \label{eq:Rmless1Eeps}
|{\Rm}|^{-1/2} (x,t) \leq \tfrac12 \un{r}_{\varepsilon_0} (t_0) \sqrt{t_0} 
< \tfrac1{20}  \theta \min \{ 1, \un{E}_{\varepsilon_0}^{-1}, \varepsilon_0 \} \sqrt{t_0} \\
 \leq \tfrac1{10} \big(\max \{ 1, \un{E}_{\varepsilon_0}, 2 \varepsilon_0^{-1} \} \big)^{-1} r_0.
\end{multline}
Then, in particular, $|{\Rm}|(x,t) > r_0^{-2}$, which implies by assumption (ii) that $(x,t) \not\in P(x', t_0, 2r_0, - r_0^2)$ for all $x' \in \partial U(t_0)$ and hence $B(x, t, \frac1{10} r_0) \subset U(t)$ (recall from Definition \ref{Def:subRF} that the boundary $\partial U (t')$ is constant in time).
The point $(x,t)$ is a center of a strong $\varepsilon_0$-neck or an $(\varepsilon_0, \un{E}_{\varepsilon_0})$-cap in $\MM$.
The time-$t$ slice of this strong $\varepsilon_0$-neck or $(\varepsilon_0, \un{E}_{\varepsilon_0})$-cap is contained in the ball (compare with (\ref{eq:Rmless1Eeps}))
\[ B \big( x, t, \max \{ \un{E}_{\varepsilon_0}, 2 \varepsilon_0^{-1} \} |{\Rm}|^{-1/2} (x,t) \big) \subset B(x,t, \tfrac1{10} r_0) \subset U(t). \]
Moreover, if $(x,t)$ is the center of a strong $\varepsilon_0$-neck, then parabolic domain of this strong neck can be chosen such that its initial time is larger than $t - 2 |{\Rm}|^{-1} (x,t) > t_0 - \frac12 r_0^2 - \frac1{50} r_0^2 > t - r_0^2$.
So $(x,t)$ in fact satisfies the canonical neighborhood assumptions $CNA (\frac12 \un{r}_{\varepsilon_0} (t_0) \sqrt{t_0}, \linebreak[1] \varepsilon_0, \linebreak[1] \un{E}_{\varepsilon_0}, \un\eta)$ in $U$.
It follows that all points $(x,t) \in \td{U}$ with $t \in [t_0 - \frac12 r_0^2, t_0]$ satisfy those canonical neighborhood assumptions in $\td{U}$.

Let $\td{x}_0 \in \td{U}(t_0)$ be a lift of $x_0 \in U(t_0)$.
Note that all surgeries on $\td{U}$ in the time-interval $[t_0 - \frac12 r_0^2, t_0]$ are performed by $\delta_{\ref{Prop:genPerelman}} ( \frac12 \un{r}_{\varepsilon_0} (t_0), \td{c} w, 1, \un{E}_{\varepsilon_0}, \un\eta, 0)$-precise cutoff (here we have used (\ref{eq:deltalessinrelativecase}) and $2 T^{-1} < \td{c} w$).
So whenever $r_1 \leq \min \{ \rho(x_0, t_0), \linebreak[1] \frac12 r_0, \linebreak[1] \ov{r}_{\ref{Prop:genPerelman}} (\td{c} w, 1, \un{E}_{\varepsilon_0}, \un\eta ) r_0 \}$, where $\ov{r}_{\ref{Prop:genPerelman}}$ is the constant from Proposition \ref{Prop:genPerelman}, then the first paragraph of the assumptions as well as assumptions (iii)--(v) of Proposition \ref{Prop:genPerelman} are satisfied for $\MM \leftarrow \td{U}$, $t_0 \leftarrow t_0$, $x_0 \leftarrow \td{x}_0$, $r_0 \leftarrow r_1$, $w \leftarrow \td{c} w$, $A \leftarrow 1$, $r \leftarrow \frac12 \un{r}_{\varepsilon_0}(t_0)$, $E \leftarrow \un{E}_{\varepsilon_0}$, $\eta \leftarrow \un\eta$, $m \leftarrow 0$.
We will now argue that assumptions (i) and (ii) are satisfied for the right choice of $r_1$, i.e. we will show that there is a constant $\beta = \beta (w) > 0$ (depending only on $w$) such that these assumptions hold whenever $r_1 \leq \min \{ \rho(x_0, t_0), \min \{ \frac12, \beta, \ov{r}_{\ref{Prop:genPerelman}} (\td{c} w, 1, \un{E}_{\eps_0}, \un\eta ) \} r_0 \}$.

Consider first assumption (ii).
Since $B(x_0, t_0, r_0) \subset U(t_0)$, we have $\dist_{t_0} (x_0, \linebreak[1] \partial U(t_0)) \geq r_0$.
Let $x \in B(\td{x}_0, t_0, \beta r_0)$ be a point that survives until some time $t \in [t_0 - \frac1{10} \beta^2 r_0^2, t_0]$.
Then $\dist_{t_0} (x, \partial \td{U}(t_0)) > \frac12 r_0$ for $\beta < \frac12$ and we conclude, using distance distortion estimates and assumption (ii) of this proposition, that $\dist_t (x, \partial \td{U}(t) ) > \frac1{20} r_0$.
So assumption (ii) of Proposition \ref{Prop:genPerelman} holds if $\beta < \frac1{200}$.

Assumption (i) of Proposition \ref{Prop:genPerelman} requires more work.
Set $Z = Z_{\ref{Prop:genPerelman}} ( \td{c} w, \linebreak[1] 1, \linebreak[1] \un{E}_{\varepsilon_0}, \linebreak[1] \un\eta)$.
Let $t_1, t_2 \in [t_0 - \frac1{10} \beta^2 r_0^2, t_0]$, $t_1 < t_2$ and consider some point $x \in B(\td{x}_0, t_0, \beta r_0)$ that survives until time $t_2$ and a space-time curve $\gamma : [t_1, t_2] \to \MM$ with endpoint $\gamma(t_2) \in B(x, t, 4 \beta r_0)$ and that meets the boundary $\partial \td{U}$.
We want to show that for a sufficiently small choice of $\beta$ we have $\LL(\gamma) > Z \beta r_0$.
Similarly as in the last paragraph, we conclude that $\dist_{t_0} (x, \partial \td{U}(t_0) ) > \frac12 r_0$ if $\beta < \frac1{2}$ and that $\dist_{t_0} (\gamma(t_2), \partial \td{U}(t_0) ) > \frac14 r_0$ if $\beta < \frac1{200}$.
So assume from now on that $\beta < \frac1{200}$.
Let now
\[ P = \bigcup_{x' \in \partial \td{U}(t_0)} P(x', t_0, \tfrac14 r_0, - r_0^2) \]
be a parabolic collar neighborhood of $\partial \td{U}$.
Recall that $P$ is non-singular and $|{\Rm}| < r_0^{-2}$ on $P$.
Since $\dist_{t_0} (\gamma(t_2), \partial \td{U}(t_0) ) > \frac14 r_0$, we have $(\gamma(t_2), t_2) \not\in P$ and we can find a time-interval $[t'_1, t'_2] \subset [t_1, t_2]$ such that $\gamma(t'_1) \in \partial \td{U}(t'_1)$, $\gamma( [t'_1, t'_2) ) \subset P$ and such that $\dist_{t_0} (\gamma(t'_2), \partial \td{U}' (t'_2)) \geq \frac14  r_0$.
Then we can estimate, using the $t^{-1}$-positive curvature condition and the fact that $t_2 - t_1 \leq \frac1{10} \beta^2 r_0^2$,
\begin{multline*}
 \LL(\gamma) \geq \int_{t_1}^{t_2} \sqrt{t_2 - t} \big( |\gamma'(t)|^2_t - \tfrac32 t^{-1} \big) dt \geq \int_{t'_1}^{t'_2} \sqrt{t_2 - t} \big| \gamma'(t) \big|_t^2 dt -  \beta r_0 \\
 \geq \frac1{100} \int_{t'_1}^{t'_2} \sqrt{t_2' - t} |\gamma'(t)|_{t_0}^2 dt -  \beta r_0. 
\end{multline*}
Substituting $s^2 = t'_2 - t$ and setting $s_1^2 = t'_2 - t'_1$ yields
\begin{multline*}
 \int_{t'_1}^{t'_2} \sqrt{t_2' - t} |\gamma'(t)|_{t_0}^2 dt = \frac12 \int_0^{s_1} \Big| \frac{d}{ds} \gamma (t'_2 - s^2) \Big|^2_{t_0} ds \\ \geq \frac1{2s_1} \dist_{t_0}^2 (\gamma(t'_2), \gamma (t'_1)) 
 \geq \frac{r^2_0}{32 \sqrt{t'_2 - t'_1}} \geq \frac1{32\beta} r_0.
\end{multline*}
Thus
\[ \LL(\gamma) > \Big( \frac{1}{4000 \beta} -  \beta \Big) r_0. \]
For sufficiently small $\beta$, depending only on $w$, the right hand side is larger than $Z \beta r_0$.
So we have verified all assumptions of Proposition \ref{Prop:genPerelman}.

We can finally apply Proposition \ref{Prop:genPerelman} with the parameters mentioned before and with $r_1 = \min \{ \rho(x_0, t_0), \min \{ \frac12, \beta, \ov{r}_{\ref{Prop:genPerelman}} (\td{c} w, 1, \un{E}_{\varepsilon_0}, \un\eta ) \} r_0 \}$.
We first obtain that if $r_1 = \rho (x_0, t_0)$, then $r_1 > \widehat{r}_{\ref{Prop:genPerelman}}(\td{c} w, 1, \un{E}_{\varepsilon_0}, \un\eta ) \sqrt{t_0}$, where $\widehat{r}_{\ref{Prop:genPerelman}}$ is the constant from Proposition \ref{Prop:genPerelman}.
This implies that we always have $r_1 > \widehat{r}_1 r_0$, where $\widehat{r}_1 = \widehat{r}_1 (w) = \min \{ \frac12, \beta, \ov{r}_{\ref{Prop:genPerelman}}, \widehat{r}_{\ref{Prop:genPerelman}} \}$.
Let us now assume that $T = T(w, \theta)$ is large enough such that $\delta(t) < \widehat{r}_1 \theta C_{1, \ref{Prop:genPerelman}}^{-1}(\td{c} w, 1, \un{E}_{\varepsilon_0}, \un\eta)$ for all $t \in [\frac12 t_0, t_0]$.
Here $C_{1, \ref{Prop:genPerelman}}$ is the constant from Proposition \ref{Prop:genPerelman}.
Then for all $t \in [\frac12 t_0, t_0]$, we have $C_{1, \ref{Prop:genPerelman}} \delta (t) \sqrt{t_0} \leq \widehat{r}_1 \theta \sqrt{t_0} \leq \widehat{r}_1 r_0 < r_1$.
So by the last part of Proposition \ref{Prop:genPerelman}, the parabolic neighborhood $P(\td{x}_0, t_0, r_1, - \tau_{\ref{Prop:genPerelman}} (\td{c} w, 1, \un{E}_{\varepsilon_0}, \un\eta) r_1^2)$ is non-singular and we have $|{\Rm}| < K_{0, \ref{Prop:genPerelman}}(\td{c} w, 1, \un{E}_{\varepsilon_0}, \un\eta) r_1^{-2}$ there.
This implies the result for $\alpha = \alpha (w) = \min \{ \widehat{r}_1, \tau_{\ref{Prop:genPerelman}}^{1/2} \}$.
\end{proof}

\subsection{Controlled diameter growth of regions whose boundary is sufficiently collapsed and good} \label{subsec:controlleddiamgrowth}
In this subsection we show that if a region $U$ in a Ricci flow with surgery has bounded diameter at some time $t_1$, then we can bound its curvature and diameter from above at some slightly later time $t_2 > t_1$ if the geometry around the boundary $\partial U$ satisfies certain collapsedness and goodness assumptions.
The important point is hereby that the size of the time-interval $[t_1, t_2]$ does not depend on the diameter of $U$ at time $t_1$.
We are able to guarantee this independence by imposing a collapsedness condition, which depends on the diameter of $U$ at time $t_1$.
Note that the opposite statement is most likely wrong, namely a bound on the diameter of $U$ at time $t_1$, does not necessarily imply a bound on the diameter at \emph{earlier} times $t_2 < t_1$, even under very strong collapsedness assumptions.
For example, if we consider a parabolic rescaling of the cigar soliton, with a very small scaling factor, then the diameter of a region around its tip contracts arbitrarily fast under the Ricci flow.
The statement of the following proposition is that in certain settings diameters, however, cannot ``\emph{expand} too fast''.

The idea of the following proof is that, by an $\LL$-geometry argument similar to Lemma \ref{Lem:6.3a}, we can deduce a $\kappa$-noncollapsedness result where the constant $\kappa$ only depends on the diameter of $U$ \emph{at time $t_1$}.
Then an argument similar to the one in the proof of Lemma \ref{Lem:6.3bc}(b) will help us derive more uniform canonical neighborhood assumptions on $U$ and finally an argument similar to the one in the proof of Proposition \ref{Prop:curvcontrolincompressiblecollapse} will yield a curvature bound on $U$.

\begin{Proposition} \label{Prop:slowdiamgrowth}
There is a continuous positive function $\delta : [0, \infty) \to (0, \infty)$ and for every $w > 0$ there is a $\tau_0 = \tau_0(w) > 0$ such that for all $\theta > 0$ and $A < \infty$ there are constants $\kappa = \kappa(w, A), \td\rho = \td\rho (w, A), \ov{w} = \ov{w} (w, A) > 0$ and $K = K (w, A), A' = A'(w, A), T = T(w, A, \theta) < \infty$ such that: \\
Let $\MM$ be a Ricci flow with surgery on the time-interval $[0, \infty)$ with normalized initial conditions that is performed by $\delta(t)$-precise cutoff and let $t_0 > T$.
Let $\tau \in (0, \tau_0]$ and $r_0 > 0$ and consider a sub-Ricci flow with surgery $U \subset \MM$ on the time-interval $[t_0 - \tau r_0^2, t_0]$.
Let $x_0 \in U(t_0)$ be a point that survives until time $t_0 - \tau r_0^2$.
Assume that
\begin{enumerate}[label=(\roman*)]
\item $\theta \sqrt{t_0} \leq r_0 \leq \sqrt{t_0}$,
\item $x_0$ is $w$-good at scale $r_0$ and time $t_0$,
\item $\vol_{t_0} B(x_0, t_0, r_0) < \ov{w} r_0^3$,
\item $\partial U(t) \subset B(x_0, t, A r_0)$ for all $t \in [t_0 - \tau r_0^2, t_0]$,
\item $U(t_0 - \tau r_0^2) \subset B(x_0, t_0 - \tau r_0^2, A r_0)$.
\end{enumerate}
Consider the universal covering flow $\td\MM$ of $\MM$, as described in Lemma \ref{Lem:tdMM}, and let $\td{U} \subset \td\MM$ be the sub-Ricci flow with surgery for which $\td{U}(t) \subset \td\MM(t)$ is the preimage of $U(t)$ under the universal covering projection $\pi_t : \td\MM (t) \to \MM (t)$ for all $t \in [t_0 - \tau r_0^2, t_0]$.
Then
\begin{enumerate}[label=(\alph*)]
\item For all $t \in [t_0 - \tau r_0^2, t_0]$ all points of $\td{U}(t)$ are $\kappa$-noncollapsed on scales $< r_0$ in $\td\MM$.
\item There are universal constants $\eta > 0$, $E < \infty$ and $0 < \varepsilon \leq \td{\varepsilon}_0$ (where $\td\varepsilon_0$ is the constant from Lemma \ref{Lem:neckhasfewquotients}), which don't depend on $w, \theta, A$ or $\MM$, such that for every $t \in [t_0 - \tau r_0^2, t_0]$ the points in $\td{U}(t)$ satisfy the canonical neighborhood assumptions $CNA( \td\rho r_0, \linebreak[1] \varepsilon, \linebreak[1] E, \linebreak[1] \eta)$ in $\td\MM$.
\item There are no surgery points in $U$ (i.e. the Ricci flow with surgery $U$ is non-singular and we can write $U = U(t_0) \times [t_0 - \tau r_0^2, t_0]$) and we have $|{\Rm}| < K r_0^{-2}$ on $U(t_0) \times [t_0 - \tau r_0^2, t_0]$.
\item $U(t) \subset B(x_0, t, A' r_0)$ for all $t \in [t_0 - \tau r_0^2, t_0]$.
\end{enumerate}
\end{Proposition}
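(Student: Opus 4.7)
The plan is to establish (a)--(d) in the order stated, using each part to feed into the next.

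For (a), we adapt the $\LL$-geometry argument of Lemma \ref{Lem:6.3a} to the universal covering flow $\td\MM$ from Lemma \ref{Lem:tdMM}. Choose $\delta(t)\le\un\delta_{\varepsilon_0}(t)$ so that Proposition \ref{Prop:CNThm-mostgeneral} applies to $\MM$, and hence by Lemma \ref{Lem:tdMM} to $\td\MM$, yielding canonical neighborhood assumptions $CNA(\un r_{\varepsilon_0}(t),\varepsilon_0,\un E_{\varepsilon_0},\un\eta)$ and $\un\kappa(t)$-noncollapsedness at scales less than $\sqrt{t}$. The $w$-goodness of $x_0$ combined with Proposition \ref{Prop:curvcontrolgood} applied at $(\td x_0,t_0)$ in $\td\MM$ gives $\rho(x_0,t_0)>\ov\rho(w)\sqrt{t_0}$ and a curvature bound $|{\Rm}|<K_1(w) r_0^{-2}$ on a non-singular parabolic neighborhood $P(\td x_0,t_0,r_*,-\tau_1 r_*^2)$ with $r_*\ge\min\{\ov\rho(w),\theta\}\sqrt{t_0}$. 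Fix $\tau_0=\tau_0(w)$ small enough that this control persists throughout $[t_1,t_0]$, where $t_1=t_0-\tau r_0^2$. To prove $\kappa$-noncollapsedness at an arbitrary $(\td y,t)$ with $\td y\in\td U(t)$, we base $\LL$ at $(\td y,t)$ and set up a Perelman-type maximum principle for $h=\phi\widehat L$, where $\phi$ is a cutoff centered at $\td x_0$ whose spatial support at time $t$ covers the full $\td\MM$-diameter of $\td U(t)$ but shrinks to scale $O(r_0)$ at the past time. The crucial geometric input is a bound $\diam_t \td U(t)\le B(A) r_0$ in $\td\MM$: assumption (v) gives $\td U(t_1)\subset B^{\td\MM}(\td x_0,t_1, A r_0)$, and the curvature control near $\td x_0$ together with assumption (iv) on the motion of $\partial U$ propagates this bound forward to time $t$. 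The max-principle then bounds $L$ from above on a small ball around $\td x_0$ at a past time $t'$ inside the curvature-controlled region, where the $\un\kappa(t')$-noncollapsedness of $\MM$ supplies a volume lower bound. Perelman's reduced-volume monotonicity then yields $\kappa$-noncollapsedness at $(\td y,t)$ with $\kappa=\kappa(w,A)$.

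Given (a), claim (b) follows from the point-picking and blow-up contradiction of Lemma \ref{Lem:6.3bc}(a) applied at each $\td y\in\td U(t)$: a hypothetical failure of $CNA(\td\rho r_0,\td\varepsilon_0,E,\eta)$ at such a point would, after the Perelman point-picking procedure in $\td\MM$, produce a sequence of pointed Ricci flows satisfying Lemma \ref{Lem:lmiitswithCNA}, hence a $\kappa$-solution limit, which by Lemma \ref{Lem:kappasolCNA} does satisfy CNA at the contradicted scale. For (c), we combine (b) with the collapsedness hypothesis (iii) in the style of Proposition \ref{Prop:curvcontrolincompressiblecollapse}: if $(\td y,t)\in\td U$ has $|{\Rm}|(\td y,t)>K r_0^{-2}$, then by (b) the point $\td y$ is a center of a strong $\varepsilon_0$-neck or an $(\varepsilon_0,E)$-cap in $\td\MM(t)$, and Lemma \ref{Lem:neckhasfewquotients} applied to the covering $\td\MM(t)\to\MM(t)$ forces the corresponding ball in $\MM(t)$ to have volume at least a universal constant times the neck scale cubed; volume comparison from (iii) controls this same volume from above by $C(w,A)\ov w r_0^3$ after rescaling, yielding a contradiction for $\ov w=\ov w(w,A)$ small enough. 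The absence of surgery points in $U$ then follows from the lower bound $|{\Rm}|>c'\delta(t)^{-2}$ at any surgery point guaranteed by Definition \ref{Def:precisecutoff}(3), once $T$ is chosen large. Finally, claim (d) is immediate from (c) via Lemma \ref{Lem:distdistortion}(a): the uniform curvature bound gives distance distortion at most $e^{O(K\tau_0)}$ on $[t_1,t_0]$, so assumption (v) upgrades to $U(t)\subset B(x_0,t,A' r_0)$ for a suitable $A'=A'(w,A)$.

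The main obstacle is stage (a). The standard $\LL$-geometry of Lemma \ref{Lem:6.3a} places the good ball at the past endpoint of the $\LL$-geodesic, whereas here the only direct volume information---the $w$-goodness---sits at the later time $t_0$. We must therefore rely on the universal $\un\kappa$-noncollapsedness of $\MM$ as the source of a volume bound at the past time, which itself requires curvature control there; such control is available only near $\td x_0$ (from goodness, propagated backward in time for a uniform duration $\tau_0$), so the cutoff $\phi$ must be arranged to shrink onto that curvature-controlled region as we go back in time while remaining large enough at the basepoint time $t$ to catch the full $\td\MM$-diameter of $\td U(t)$. It is precisely this mismatch that forces $\kappa$ to depend on $A$.
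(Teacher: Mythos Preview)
Your outline for parts (b)--(d) matches the paper's approach closely, but part (a) has a genuine gap that your own ``main obstacle'' paragraph does not resolve.

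You claim a bound $\diam_t \td U(t)\le B(A)\,r_0$ in $\td\MM$, arguing that assumption (v) gives this at the initial time $t_1=t_0-\tau r_0^2$ and that ``curvature control near $\td x_0$ together with assumption (iv) on the motion of $\partial U$ propagates this bound forward.'' This is circular. Assumption (iv) only pins down $\partial U(t)$; it says nothing about the interior of $U(t)$, which could a priori have enormous diameter for $t>t_1$. Propagating the diameter bound forward would require curvature control on \emph{all} of $U$, which is exactly assertion (c) --- and (c) is proved using (a) and (b). Without this diameter bound, your cutoff $\phi$ cannot be set up to cover $\td U(t)$ at the basepoint time, and the localized maximum-principle argument collapses.

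The paper sidesteps this entirely. For (a) it works with the \emph{unlocalized} $\ov L$ on the region $U$ in $\MM$ (not $\td\MM$), based at the projection $x_1$ of $\td x_1$. A maximum principle on $U(t)$ shows that either $\inf_{U(t)}\ov L(\cdot,t)\le 6(t_1-t)$ for all $t$, or the infimum is attained on $\partial U(t)$ --- and $\partial U(t)\subset B(x_0,t,Ar_0)$ lies in a region where Proposition \ref{Prop:curvcontrolincompressiblecollapse} (which already uses the collapsing hypothesis (iii), not just Proposition \ref{Prop:curvcontrolgood}) gives curvature control on $B(x_0,t,2Ar_0)$. Either way one obtains $L(y,t_0-\tau r_0^2)<C_2(w,A)\,r_0$ for some $y\in U(t_0-\tau r_0^2)\subset B(x_0,t_0-\tau r_0^2,Ar_0)$, and then extends by a geodesic segment to reach $x_0$ at a slightly earlier time. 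Only then is the resulting $\LL$-geodesic \emph{lifted} to $\td\MM$, and the volume lower bound at the past endpoint comes from propagating the $w$-goodness of $x_0$ backward using the curvature control --- not from the non-uniform $\un\kappa(t)$-noncollapsedness you invoke, which would make $\kappa$ depend on $t_0$. The point is that $U$ serves as the domain for the maximum principle precisely because its boundary is controlled; no diameter bound on $U$ is needed.
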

\begin{proof}
Let $\varepsilon = \min \{ \td\varepsilon_0, \varepsilon_{0, \ref{Lem:kappasolCNA}} \}$, where $\varepsilon_{0, \ref{Lem:kappasolCNA}}$ is the constant from Lemma \ref{Lem:kappasolCNA}.
Consider the functions $\underline{\delta}_{\varepsilon} (t), \underline{r}_{\varepsilon} (t)$ and the constants $\un{E}_{\varepsilon}, \un\eta$ from Proposition \ref{Prop:CNThm-mostgeneral} and the function $\delta_{\ref{Prop:curvcontrolincompressiblecollapse}} (t)$ from Proposition \ref{Prop:curvcontrolincompressiblecollapse}.
Without loss of generality, we may assume that $\un{r}_{\varepsilon} (t) \to 0$ as $t \to \infty$.
Let furthermore $\delta_0$ be the constant from Claim 1 and $\delta^* (\Lambda, r, \eta )$ the function from Claim 2 in the proof of Lemma \ref{Lem:6.3a}.
We can assume without loss of generality that $\delta^*$ is monotone in the sense that $\delta^* (\Lambda', r', \eta) \leq \delta^* (\Lambda, r, \eta)$ whenever $\Lambda' \geq \Lambda$ and $r' \leq r$.
Assume now that for all $t \geq 0$
\[ \delta(t) < \min \big\{ \delta^*( t, \tfrac14 \un{r}_{\varepsilon} (2 t), \un\eta ), \; \un\delta_{\varepsilon} (t), \; \delta_{\ref{Prop:curvcontrolincompressiblecollapse}} (t), \; t^{-1}  \un{r}_{\varepsilon} (2 t), \; \delta_0, \; t^{-1/2} \big\}. \]
We note that then, by Proposition \ref{Prop:CNThm-mostgeneral}, the flows $\MM$ and $\td\MM$ satisfy the canonical neighborhood assumptions $CNA (\un{r}_{\varepsilon} (t) \sqrt{t}, \varepsilon, \un{E}_{\varepsilon}, \un\eta)$ at any time $t$.

Set $\tau_0(w) = \min \{ \frac12 \tau_{\ref{Prop:curvcontrolincompressiblecollapse}}(w), 1\}$ and assume that $\ov{w} < \ov{w}_{\ref{Prop:curvcontrolincompressiblecollapse}} (w, 2A)$ and $T > T_{\ref{Prop:curvcontrolincompressiblecollapse}} (w, 2A, \theta)$, where $\tau_{\ref{Prop:curvcontrolincompressiblecollapse}}$, $\ov{w}_{\ref{Prop:curvcontrolincompressiblecollapse}}$ and $T_{\ref{Prop:curvcontrolincompressiblecollapse}}$ are the constants from Proposition \ref{Prop:curvcontrolincompressiblecollapse}.
Then, by Proposition \ref{Prop:curvcontrolincompressiblecollapse}, there is a constant $0 < \tau' = \tau' (w, A) < \tau_0 (w)$ such that the parabolic neighborhood $P(x_0, t_0 - \tau r_0^2, A r_0, - \tau' r_0^2)$ is non-singular and
\begin{equation}  \label{eq:curvboundonAparnbdtauprime}
 |{\Rm}| < K_1^* (w, A) r_0^{-2} \qquad \text{on} \qquad P(x_0, t_0 - \tau r_0^2, A r_0, - \tau' r_0^2)
\end{equation}
and such that the distance distortion on $P(x_0, t_0 - \tau r_0^2, A r_0, - \tau' r_0^2)$ can be controlled by a factor of $2$, i.e. $U(t) \subset B(x_0, t, 2 A r_0)$ for all $t \in [t_0 - (\tau + \tau') r_0^2, t_0 - \tau  r_0^2]$ (note that since the previous parabolic neighborhood is non-singular, we can extend $U$ to the time-interval $[t_0 - (\tau + \tau') r_0^2, t_0]$).
Moreover, we obtain the bound
\begin{equation} \label{eq:curvboundon2Afortau}
 |{\Rm}| < K_1^*(w, A) r_0^{-2} \qquad \text{on} \qquad B = \bigcup_{t \in [t_0 - (\tau + \tau') r_0^2, t_0]} B(x_0, t, 2 A r_0)
\end{equation}
and we can assume that there are no surgery points in $B$.

\textit{Proof of assertion (a). \quad}
We follow a modified version of the proof of Lemma \ref{Lem:6.3a}.
Let $t_1 \in [t_0 - \tau r_0^2, t_0]$, $\td{x}_1 \in \td{U}(t_1) \subset \td\MM (t_1)$, $0 < r_1 < r_0$ such that $P(\td{x}_1, t_1, r_1, - r_1^2) \subset \td\MM$ is non-singular and $|{\Rm}| < r_1^{-2}$ on $P(\td{x}_1, t_1, r_1, - r_1^2)$.

We first explain that, for sufficiently large $T$, we can restrict ourselves to the case $r_1 > \frac12 \underline{r}_{\varepsilon} (t_1) \sqrt{t_1} \geq \frac14 \underline{r}_{\varepsilon} (t_0) \sqrt{t_0}$.
Compare this statement with Claim 1 in the proof of Lemma \ref{Lem:6.3a} (applied to $\td\MM$).
As in the proof of this claim, we choose $s > 0$ to be the supremum over all $0 < r'_1 < r_0$ that satisfy the properties above, meaning that $|{\Rm}| < r_1^{\prime -2}$ on the non-singular parabolic neighborhood $P(\td{x}_1, t_1, r'_1, - r_1^{\prime 2})$.
Consider first the case in which $s \leq \frac12 \un{r}_{\varepsilon} (t_1) \sqrt{t_1}$.
Then we can argue as in cases (1), (2) of the proof of this claim.
Note that case (3) does not occur since we can assume that for large enough $T$ we have $s \leq \frac12 \un{r}_{\varepsilon} (t_1) \sqrt{t_1} < \theta \sqrt{t_0} \leq r_0$.
So it remains to consider the case $s > \frac12 \un{r}_{\varepsilon} (t_1) \sqrt{t_1}$.
We can then replace $r_1$ by some $r'_1 \in ( \frac12 \un{r}_{\varepsilon} (t_1) \sqrt{t_1}, s)$.
If we can prove the assertion for $r_1$ replaced by $r'_1$, then, by volume comparison, the assertion also holds for the original $r_1$ after reducing $\kappa$ by some uniform factor.

Let $x_1 \in \MM(t_1)$ be the projection of $\td{x}_1$.
Consider the functions $L$, $\ov{L}$ and the family of domains $D_t$ on $\MM$ based in $(x_1, t_1)$ (see the proof of Lemma \ref{Lem:6.3a} for more details).
Our first goal will be to show that $L(x_0, t_0 - (\tau + \frac12 \tau') r_0^2) < C_3 r_0$ for some universal $C_3 = C_3 (w, A, \tau) < \infty$.
An important tool will hereby be the following claim, which is analogous to Claim 2 in the proof of Lemma \ref{Lem:6.3a}:

\begin{Claim}
For any $\Lambda < \infty$ there is a $T^* = T^* (\Lambda) < \infty$ such that whenever $t_0 \geq T^*$, then the following holds: If $t \in [t_0 - (\tau + \tau') r_0^2, t_1)$, $x \in \MM(t)$, $r_1 > \frac14 \un{r}_{\varepsilon} (t_0) \sqrt{t_0}$ and $L(x, t) \leq \Lambda r_0$, then $x \in D_t$ and $(x,t)$ is not a surgery point.
Even more generally, there is a minimizing $\LL$-geodesic between $(x_1, t_1)$ and $(x,t)$ and any such minimizing $\LL$-geodesic does not meet surgery points.
\end{Claim}

\begin{proof}
This follows by the choice of $\delta$ in (\ref{eq:curvboundon2Afortau}) along with Claim 2 in the proof of Lemma \ref{Lem:6.3a} (applied to $\MM$) for $T^* (\Lambda) = \Lambda$.
\end{proof}

In contrast to the proof of Lemma \ref{Lem:6.3a}, we don't need to localize the function $\ov{L}$.
So we will only make use of the inequality
\begin{equation} \label{eq:evolovLinU}
 \Big( \frac{\partial}{\partial t^-} - \triangle \Big) \ov{L} (x, t) \geq - 6,
\end{equation}
which holds on $D_t$ in the barrier sense (cf \cite[7.1]{PerelmanI}).
We will now apply a maximum principle argument to (\ref{eq:evolovLinU}) to show that the following holds: $\inf_{U(t)} \ov{L} (\cdot, t) \leq 6 (t_1 - t)$ for all $t \in [t_0 - \tau r_0^2, t_1)$ or there is a $t \in [t_0 - \tau r_0^2, t_1)$ such that $\inf_{\partial U(t)} \ov{L} (\cdot, t) \leq 6 (t_1 - t)$.
Assume that neither of these cases occurs.
Since, $\ov{L}(x_1, t) \sim \const (t_1 - t)^2$ as $t \to t_1$, there is some $t' \in [t_0 - \tau r_0^2, t_1)$ such that $\inf_{U(t)} \ov{L} (\cdot, t) \leq 6 (t_1 - t)$ for all $t \in [t', t_1)$.
Choose $\nu > 0$ small enough such that $\inf_{\partial U(t)} \ov{L} (\cdot, t) > (6+\nu) (t_1 - t)$ for all $t \in [t_0 - \tau r_0^2, t']$ and choose $t^* \in [t_0 - \tau r_0^2, t']$ minimal with the property that $\inf_{U(t)} \ov{L} (\cdot, t) \leq (6 + \nu) (t_1 - t)$ for all $t \in (t^*, t_1)$.
We now argue that then also 
\begin{equation} \label{eq:infimumbound}
 \inf_{U(t)} \ov{L} (\cdot, t) \leq (6 + \nu) (t_1 - t) \qquad \text{for all} \qquad t \in [t^*, t_1). 
\end{equation}
Choose $t^{*\prime} \in (t^*, t_1)$ such that there is no surgery time in $(t^*, t^{*\prime}]$.
So $\MM$ restricted to $[t^*, t^{*\prime}]$ is a non-singular Ricci flow and hence $\ov{L}$ is continuous on this restriction (see for example \cite[Lemma 78.3(2)]{KLnotes}).
It follows that $\inf_{U(t)} \ov{L} (\cdot, t)$ is continuous on $[t^*, t^{*\prime}]$, proving (\ref{eq:infimumbound}).

So $\ov{L} (\cdot, t^*)$ attains its minimum at an interior point $x^* \in U(t^*)$.
This implies that $\triangle \ov{L} (x^* , t^*) \geq 0$.
Since $\ov{L} (x^*, t^*) \leq (6+\nu) (t_1 - t^*)$, we have $L (x^*, t^*) \leq (3 + \nu) \sqrt{t_1 - t^*} \leq 4 r_0$.
Hence by the Claim, assuming $T \geq T^*(4)$, we conclude $x^* \in D_{t^*}$ and $(x^*, t^*)$ is not a surgery point.
By the assumption on $t^*$, we must then have $t^* = t_0 -  \tau r_0^2$ or $\ov{L} (x^*, t^*) = (6+\nu) (t_1 - t^*)$ and $\frac{\partial}{\partial t} \ov{L} (x^*, t^*) \leq - 6 - \nu$, which, however, contradicts (\ref{eq:evolovLinU}).
So $\inf_{U(t)} \ov{L} (\cdot, t) \leq (6 + \nu) (t_1 - t)$ holds for all $\nu > 0$ and $t \in [t_0 - \tau r_0^2, t_1)$ and, by letting $\nu$ go to zero, we obtain a contradiction.

Consider now the case in which there is a $t \in [t_0 - \tau r_0^2, t_1)$ such that $\inf_{\partial U(t)} \ov{L} (\cdot, t) \linebreak[1] \leq 6 (t_1 - t)$.
Let $x \in \partial U (t)$ such that $\ov{L} (x, t) \leq 6 (t_1 - t)$, i.e. $L (x, t) \leq 3 \sqrt{t_1 - t} \leq 3 r_0$.
By concatenating an $\LL$-geodesic between $(x_1, t_1)$ and $(x, t)$ with a constant space-time curve on the time-interval $[t_0 - \tau r_0^2, t]$, we conclude, using (\ref{eq:curvboundon2Afortau}), assumption (iv) and the fact that $\tau \leq \tau_0 \leq 1$,
\[ L (x, t_0 - \tau r_0^2) \leq L (x, t) + C_1 K^*_1 r_0^{-2} \int_{ t_0 - \tau r_0^2}^{t} \sqrt{t_1 - t'} dt' 
 \leq 3 r_0  + C_1 K^*_1  r_0. \]
 Here $C_1 < \infty$ is a universal constant.
Thus, in both cases (i.e. in the case in which the infimum of $\ov{L}$ can be controlled on the boundary of $U$ as well as in the case in which it can be controlled everywhere on $U$), we can find some point $y \in U( t_0 -  \tau r_0^2 )$ such that $L(y, t_0 - \tau r_0^2) < C_2 r_0$ for some constant $C_2 = C_2 (w, A) < \infty$.
Observe that by (v) we have $y \in B( x_0, t_0 - \tau r_0^2, A r_0)$.
So by extending an $\LL$-geodesic between $(x_1, t_1)$ and $(y,  t_0 - \tau r_0^2)$ by a time-$(t_0 - \tau r_0^2)$ geodesic segment, we can conclude, using (\ref{eq:curvboundonAparnbdtauprime}), that there is a constant $C_3 = C_3(w, A, \tau') = C_3 (w, A) < \infty$ such that $L(x_0,  t_0 - (\tau + \frac12 \tau') r_0^2) < C_3 r_0$.

By the Claim, assuming $T \geq T^* (C_3)$, we find that there is a smooth minimizing $\LL$-geodesic $\gamma$ between $(x_1, t_1)$ and $(x_0,  t_0 - (\tau + \frac12 \tau') r_0^2)$ that does not hit any surgery points.
We now lift $\gamma$ to an $\LL$-geodesic $\td\gamma$ in $\td\MM$ starting from $(\td{x}_1,  t_1)$ and going backwards in time.
If there are no surgery times on the time-interval $[t_0 - (\tau + \frac12 \tau') r_0^2, t_1]$, then this is trivial.
If there are, then let $T^i$ be the last surgery time that is $\leq t_1$ and lift $\gamma$ on the time-interval $[T^i, t_1]$ to $\td\MM(T^i)$.
Note that $\td\gamma (T^i) \in \td{U}^i_+$, so we can use the diffeomorphism $\td{\Phi}^i$ from Definition \ref{Def:RFsurg} to determine the limit $\lim_{t \nearrow T^i} \td\gamma(t)$.
Starting from this limit point, we can lift $\gamma$ on the interval $[T^{i-1}, T^i)$ or $[t_0 - (\tau + \frac12 \tau') r_0^2), T^i)$ and continue the process until we reach time $t_0 - (\tau + \frac12 \tau') r_0^2$.
Let $\td{x}_0 = \td\gamma (t_0 - ( \tau + \frac12 \tau') r_0^2) \in \td\MM (t_0 - (\tau + \frac12 \tau') r_0^2)$ be the initial point of $\td\gamma$.
Then $\td{x}_0$ is a lift of $x_0$ and, by (\ref{eq:curvboundon2Afortau}) and assumption (ii), there is a $v_1 = v_1 (w) > 0$ such that
\[  \vol_{t_0 - (\tau + \tau') r_0^2} B(\td{x}_0, t_0 - (\tau + \tau') r_0^2, r_0) > v_1 r_0^3. \]

We consider now the functions $L^{\td\MM}, \ell^{\td\MM}$, the domains $D^{\td\MM}_t$ and the reduced volume $\td{V}^{\td\MM} (t)$ in $\td\MM$ based in $(\td{x}_1, t_1)$.
By concatenating $\td\gamma$ with time-$(t_0 - (\tau + \frac12 \tau') r_0^2)$ geodesic segments, we conclude, using the curvature bound in (\ref{eq:curvboundon2Afortau}), that there is some $C_4 = C_4 (w, A) < \infty$ such that
\[ L^{\td\MM} (x, t_0 - (\tau + \tau') r_0^2) < C_4 r_0 \qquad \text{for all} \qquad x \in B(\td{x}_0, t_0 - (\tau + \tau') r_0^2, r_0). \]
Again, using the Claim and assuming $T \geq T^*(C_4)$, we conclude that $B(\td{x}_0, t_0 - (\tau+ \tau') r_0^2, r_0) \subset D^{\td\MM}_{t_0 - (\tau + \tau') r_0^2}$.
So, together with the inequality $t_1 - (t_0 - (\tau + \tau') r_0^2) \geq \frac12 \tau' r_0^2$, this implies that there is some $v_2 = v_2(w, A) > 0$ such that
\[ \td{V}^{\td\MM} (t_0 - (\tau + \tau') r_0^2) > v_2. \]
This implies the noncollapsedness in $(\td{x}_1, t_1)$ as in the end of the proof of Lemma \ref{Lem:6.3a} (see also \cite[7.3]{PerelmanI}, \cite[Theorem 26.2]{KLnotes}, \cite[Lemma 4.2.3]{Bamler-diploma}).

\textit{Proof of assertion (b). \quad} The proof of this part follows along the lines of the proof of Lemma \ref{Lem:6.3bc}(a).
The main difference is, however, that instead of invoking Lemma \ref{Lem:6.3a} for the non-collapsing statement, we make use of assertion (a) of this proposition.

Observe that by (\ref{eq:curvboundon2Afortau}), (ii) and basic volume comparison, we can choose $\kappa = \kappa (w, A) > 0$ such that the $\kappa$-noncollapsedness from assertion (a) even holds for all $t \in [t_0 - (\tau + \tau') r_0^2, t_0]$.

Let $w, A$ be given and let $E = \max \{ \un{E}_{\varepsilon}, E_{\ref{Lem:kappasolCNA}}(\varepsilon) \}$ and $\eta = \min \{ \un\eta, \eta_{\ref{Lem:kappasolCNA}} \}$, where $\un{E}_{\varepsilon}$, $\un\eta$ are the constants from Proposition \ref{Prop:CNThm-mostgeneral} and $E_{\ref{Lem:kappasolCNA}}(\varepsilon)$, $\eta_{\ref{Lem:kappasolCNA}}$ are the constants from Lemma \ref{Lem:kappasolCNA}.

Assume first that the statement is false for some small $0 < \td\rho < (K_1^*)^{-1/2}$, i.e. there is a time $t \in [t_0 - \tau r_0^2, t_0]$ and a point $\td{x} \in \td{U} (t)$ such that $(x,t)$ does not satisfy the canonical neighborhood assumptions $CNA (\td\rho r_0, \varepsilon, E, \eta)$ in $\td\MM$.
In particular $|{\Rm}|(\td{x}, t) \geq \td\rho^{-2} r_0^{-2}$.

We now use a point picking argument to find a time $\ov{t} \in [t_0 - \tau r_0^2, t_0]$ and a point $\ov{x} \in \td{U} ( \ov{t} )$ that also doesn't satisfy the canonical neighborhood assumptions $CNA (\td\rho r_0, \varepsilon, E, \eta)$ in $\td\MM$ and that additionally satisfies the following condition:
Set $\ov{q} = |{\Rm}|^{-1/2} (\ov{x}, \ov{t}) \leq \td\rho r_0$.
Then for any $t' \in [t_0 - (\tau + \tau') r_0^2, \ov{t}]$, all points in $\td{U}(t')$ satisfy the canonical neighborhood assumptions $CNA (\frac12 \ov{q}, \varepsilon, E, \eta)$ in $\td\MM$.
The point $(\ov{x}, \ov{t})$ can be obtained as follows:
Set first $(\ov{x}, \ov{t}) = (\td{x}, t)$ and $\ov{q} = |{\Rm}|^{-1/2} (\ov{x}, \ov{t}) \leq \td\rho r_0$.
If $(\ov{x}, \ov{t})$ satisfies the desired properties, then we stop.
Otherwise, we find another time $\ov{t}' \in [t_0 - (\tau + \tau') r_0^2, \ov{t}]$ and point $\ov{x}' \in U( \ov{t}')$ such that $(\ov{x}', \ov{t}')$ does not satisfy the canonical neighborhood assumptions $CNA (\frac12 \ov{q}, \varepsilon, E, \eta)$ in $\td\MM$.
Since we have assumed that $\td{\rho}^{-2} > K_1^*$ and due to (\ref{eq:curvboundon2Afortau}), we actually have $\ov{t}' \in [t_0 - \tau r_0^2, t_0]$.
We can now replace $(\ov{x}, \ov{t})$ by $(\ov{x}', \ov{t}')$ and repeat the process.
This process has to come to a close, since in every step we decrease $\ov{q}$ by a factor of $2$ and $\td\MM$ satisfies the canonical neighborhood assumptions $CNA (\frac12 \un{r}_\eps (t_0) \sqrt{t_0}, \eps, \un{E}_\eps, \un\eta)$ on $[t_0 - \tau r_0^2, t_0]$.
So we obtain $(\ov{x}, \ov{t})$ with the desired properties.
We furthermore conclude from (\ref{eq:curvboundon2Afortau}) that
\begin{equation} \label{eq:xbarfarawayfromdellU}
\dist_{\ov{t}} (\ov{x}, \partial \td{U} (\ov{t})) > 2A r_0.
\end{equation}

We now assume that there are no uniform constants $\td\rho$ and $T$ such that assertion (b) holds.
Then for some given $w, A$, we can find a sequence $\td\rho^\alpha \to 0$ and a sequence of counterexamples $\td\MM^\alpha$, $U^\alpha$, $t_0^\alpha$, $r_0^\alpha$, $\tau^\alpha$, $\theta^\alpha$, $x_0^\alpha$ with $t_0^\alpha \to \infty$ and $t^\alpha_0 > T_{(a)} (w, A, \theta^\alpha)$ (here $T_{(a)}$ is the constant for which assertion (a) holds) such that there are times $t^\alpha \in [t^\alpha_0 - \tau^\alpha (r_0^\alpha)^2, t_0^\alpha]$ and points $\td{x}^\alpha \in \td{U}^\alpha (t^\alpha)$ that do not satisfy the canonical neighborhood assumptions $CNA (\td\rho^\alpha r_0^\alpha, \varepsilon, E, \eta)$ in $\td\MM$.
By the last paragraph, we find times $\ov{t}^\alpha \in [t_0^\alpha - \tau^\alpha (r_0^\alpha)^2, t_0^\alpha]$ and points $\ov{x}^\alpha \in \td{U}(\ov{t}^\alpha)$ such that $(\ov{x}^\alpha, \ov{t}^\alpha)$ does not satisfy the canonical neighborhood assumptions $CNA(\ov{q}^\alpha, \varepsilon, E, \eta)$ in $\td\MM$ with $\ov{q}^\alpha = |{\Rm}|^{-1/2} (\ov{x}^\alpha, \ov{t}^\alpha) \leq \td\rho^\alpha r_0^\alpha$, but for any $t' \in [\ov{t}^\alpha - (\tau^\alpha + \tau') (r_0^\alpha)^2, \ov{t}^\alpha]$, all points in $\td{U}^\alpha (t')$ satisfy the canonical neighborhood assumptions $CNA (\frac12 \ov{q}^\alpha, \varepsilon, E, \eta)$ in $\td\MM$.

Recall that by Proposition \ref{Prop:CNThm-mostgeneral} we must have
\[ \ov{q}^\alpha > \un{r}_{\varepsilon} (\ov{t}^\alpha) (\ov{t}^\alpha)^{1/2} \linebreak[1] \geq \tfrac12 \un{r}_{\varepsilon} (t_0^\alpha) (t^\alpha_0)^{1/2}. \]
Let $(x', t') \in \MM^\alpha$ be a surgery point with $t' \in [t_0^\alpha - \tau^\alpha (r_0^\alpha)^2, t_0^\alpha]$.
Then as in (\ref{eq:surgpointhighcurv}) we have by the choice of $\delta$
\begin{equation} \label{eq:surgptRmlargeU}
 |{\Rm}| (x', t') > c' \delta^{-2} (t') t^{\prime -1} \geq c' t' \un{r}_{\varepsilon}^{-2} (2 t') \geq \tfrac{c'}2 t_0^\alpha \un{r}_{\varepsilon}^{-2} (t^\alpha_0) \geq \tfrac{c'}8 (t_0^\alpha)^2 (\ov{q}^\alpha )^{-2}
\end{equation}
and hence $(\ov{q}^\alpha)^2 |{\Rm}| (x',t') > \frac{c'}8 (t_0^\alpha)^2 \to \infty$.
So, as in the proof of Lemma \ref{Lem:6.3bc}(a), we conclude, using Lemma \ref{Lem:shortrangebounds}, that there is a constant $c > 0$ such that for large $\alpha$ the parabolic neighborhood $P(\ov{x}^\alpha, \ov{t}^\alpha, c \ov{q}^\alpha, - c(\ov{q}^\alpha)^2)$ is non-singular and we have $|{\Rm}| < 8 (\ov{q}^\alpha)^{-2}$ there.

Again, as in the proof of Lemma \ref{Lem:6.3bc}(a), we choose $\tau^*_\alpha > 0$ maximal with the property that $\ov{t}^\alpha - \tau^*_\alpha (\ov{q}^\alpha )^2 \geq t_0^\alpha - (\tau^\alpha + \tau') (r_0^\alpha)^2$ and such that the point $(\ov{x}^\alpha, \ov{t}^\alpha)$ survives until time $\ov{t}^\alpha - \tau^*_\alpha (\ov{q}^\alpha)^2$.
After passing to a subsequence, we may assume that the limit $\tau^*_\infty = \lim_{\alpha \to \infty} \tau^*_\alpha \in [0, \infty]$ exists.
By the conclusion in the previous paragraph, we must have $\tau^*_\infty \geq c > 0$.
Recall that by (\ref{eq:xbarfarawayfromdellU}) we have $\dist_{\ov{t}^\alpha} (\ov{x}^\alpha, \partial \td{U}^\alpha ( \ov{t}^\alpha )) > 2 A r_0^\alpha$.
By (\ref{eq:curvboundon2Afortau}) and a distance distortion estimate in $B$, we obtain that $\dist_{t} (\ov{x}^\alpha, \partial \td{U}^\alpha ( t )) > b r_0^\alpha$ for all $t \in [ \ov{t}^\alpha - \tau^*_\alpha (\ov{q}^\alpha)^2, \ov{t}^\alpha]$ and some $b = b(w, A) > 0$ (actually we can choose $b = b(w) > 0$).
So for every $a < \infty$ we have $\dist_{t} (\partial \td{U}^\alpha (t), \ov{x}^\alpha) > a \ov{q}^\alpha$ for all $t \in [\ov{t}^\alpha - \tau^*_\alpha (\ov{q}^\alpha)^2, \ov{t}^\alpha]$ whenever $\alpha$ is sufficiently large.

So by assertion (a) of this proposition and the choice of $(\ov{x}^\alpha, \ov{t}^\alpha)$, there is a uniform constant $\kappa > 0$ such that:
For all $a < \infty$ and for sufficiently large $\alpha$ (depending on $a$) we have that for all $t \in [\ov{t}^\alpha - \tau^*_\alpha (\ov{q}^\alpha)^2, \ov{t}^\alpha]$ the points in the ball $B(\ov{x}^\alpha, t, a \ov{q}^\alpha)$ are $\kappa$-noncollapsed on scales $< r_0^\alpha$ and satisfy the canonical neighborhood assumptions $CNA(\frac12 \ov{q}^\alpha, \varepsilon, E, \eta)$.
Therefore, we can follow the reasoning of the proof of Lemma \ref{Lem:6.3bc}(a) and apply Lemma \ref{Lem:lmiitswithCNA} to the flows $\td\MM^\alpha$ restricted to $[\ov{t}^\alpha - \tau^*_\alpha (\ov{q}^\alpha)^2, \ov{t}^\alpha]$ and parabolically rescaled by $(\ov{q}^\alpha)^{-1}$.
We conclude that, after passing to a subsequence, we have convergence to a non-singular Ricci flow on the time-interval $(-\tau^*_\infty, 0]$, which has bounded curvature.
So there is a $K^*_2 < \infty$ such that that for all $0 < \tau^{**} < \tau^*_\infty$ and for large $\alpha$ (depending on $\tau^{**}$) we have $(\ov{q}^\alpha)^2 |{\Rm}|(\ov{x}^\alpha,t) < K_2^*$ for all $t \in [\ov{t}^\alpha - \tau^{**} (\ov{q}^\alpha)^2, \ov{t}^\alpha]$.
Using Lemma \ref{Lem:shortrangebounds} and  (\ref{eq:surgptRmlargeU}), we can find a constant $c'' > 0$ such that for large $\alpha$ the point $(\ov{x}^\alpha, \ov{t}^\alpha)$ survives until time $\ov{t}^\alpha - (\tau^{**} + 2c'') (\ov{q}^\alpha)^2$ and we have $(\ov{q}^\alpha)^2 |{\Rm}|(\ov{x}^\alpha,t) < 2 K_2^*$ for all $t \in [\ov{t}^\alpha - (\tau^{**} + 2c'') (\ov{q}^\alpha)^2, \ov{t}^\alpha]$.
If $\tau^*_\infty < \infty$, we may choose $\tau^*_\infty - c'' < \tau^{**} < \tau^*_\infty$ and conclude that for large $\alpha$ the point $(\ov{x}^\alpha, \ov{t}^\alpha)$ survives until time $\ov{t}^\alpha - (\tau^{*}_\infty + c'') (\ov{q}^\alpha)^2$ and we have $(\ov{q}^\alpha)^2 |{\Rm}|(\ov{x}^\alpha,t) < 2 K_2^*$ for all $t \in [\ov{t}^\alpha - (\tau^{*}_\infty + c'') (\ov{q}^\alpha)^2, \ov{t}^\alpha]$.
Since for large $\alpha$
\[ \ov{t}^\alpha - (\tau^*_\infty + c'') (\ov{q}^\alpha)^2 \geq t_0^\alpha - \tau^\alpha (r_0^\alpha)^2 - (\tau^*_\infty + c'') (\td\rho^\alpha r^\alpha_0)^2 > t_0^\alpha - (\tau^\alpha + \tau') (r_0^\alpha)^2, \]
this gives us a contradiction to the choice of the $\tau_\alpha^*$.
So $\tau^*_\infty = \infty$ and again Lemma \ref{Lem:lmiitswithCNA} yields that the pointed Ricci flows with surgery $(\MM^\alpha, (\ov{x}^\alpha, \ov{t}^\alpha))$ subconverge to a $\kappa$-solution after parabolically rescaling by $(\ov{q}^\alpha)^{-1}$.
Using Lemma \ref{Lem:kappasolCNA}, this yields a contradiction to the assumption that the points $(\ov{x}^\alpha, \ov{t}^\alpha)$ don't satisfy the canonical neighborhood assumptions $CNA(\ov{q}^\alpha, \varepsilon, E, \eta)$.

\textit{Proof of assertion (c). \quad}
The proof is similar to the proof of Proposition \ref{Prop:curvcontrolincompressiblecollapse}.
However, instead of using Lemma \ref{Lem:6.3bc}(a), we will invoke the canonical neighborhood assumptions from assertion (b), which are independent of the distance to $x_0$.
Choose $E$ and $\eta$ according to assertion (b) and set $K = \max \{ \td\rho^{-2} (w, A), E^2 K^*_1(w,A), E^2 \}$.
In the following we may assume without loss of generality that $A > 10$.

Note first that by (\ref{eq:curvboundon2Afortau}) we have $|{\Rm}| < K r_0^{-2}$ on $U(t_0 - \tau r_0^2)$.
Consider a time $t_1 \in [t_0 - \tau r_0^2, t_0]$ with the property that $U$ restricted to $[t_0 - \tau r_0^2, t_1]$ is non-singular and for which
\begin{equation} \label{eq:Rmless2K}
|{\Rm}| < 2 K r_0^{-2} \qquad \text{on} \qquad U(t) \qquad \text{for all} \qquad t \in [t_0 - \tau r_0^2, t_1].
\end{equation}
We will then show that we actually have
\begin{equation} \label{eq:RmlessK}
|{\Rm}| < K r_0^{-2} \qquad \text{on} \qquad U(t) \qquad \text{for all} \qquad t \in [t_0 - \tau r_0^2, t_1], 
\end{equation}
if $\ov{w}$ is chosen small enough depending on $w$ and $A$.
This fact and the observation that for every surgery point $(x',t')$ we have
\[ |{\Rm}| (x', t') > c' \delta^{-2} (t') t^{\prime -1} \geq c' \geq c' \theta^2 t_0 \cdot r_0^{-2} \]
will then imply assertion (c) for sufficiently large $T$, depending on $K$ and $\theta$.

So assume that $U$ restricted to $[t_0 - \tau r_0^2, t_1]$ is non-singular and that (\ref{eq:Rmless2K}) holds.
It suffices to prove the curvature bound in (\ref{eq:RmlessK}) for $t = t_1$, because otherwise we may replace $t_1$ by $t$.
Assume that there was a point $x_1 \in U(t_1)$ with $|{\Rm}|(x_1, t_1) \geq K r_0^{-2}$.
Then $x_1 \not\in B(x_0, t_1, 2 A r_0)$ by (\ref{eq:curvboundon2Afortau}).
So by assumption (iv) we have
\begin{equation} \label{eq:x1partialUdistance}
 \dist_{t_1} (x_1, \partial U (t_1)) > A r_0.
\end{equation}
Using (\ref{eq:Rmless2K}), the distance distortion estimates from Lemma \ref{Lem:distdistortion}(a) and assumption (v) we conclude that
\[ \dist_{t_1} ( x_1, \partial U (t_1)) < e^{4K r_0^{-2} (t_1 - t_0 + \tau r_0^2)} \dist_{t_0 - \tau r_0^2} (x_1, \partial U (t_0 - \tau r_0^2)) < e^{4K \tau_0} \cdot 2 A r_0. \]
So
\begin{equation} \label{eq:distx0x1byexp}
\dist_{t_1} (x_0, x_1) < \big( 2 A e^{4K \tau_0} + A \big) r_0. 
\end{equation}

Let $\td{x}_1 \in \td{U}(t_1) \subset \td\MM (t_1)$ be a lift of $x_1$.
By assertion (b) we know that $(\td{x}_1, t_1)$ satisfies the canonical neighborhood assumptions $CNA(\td{\rho} r_0, \td{\varepsilon}_0, E, \eta)$ in $\td\MM$.
Since $|{\Rm}| (\td{x}_1, t_1) \geq K r_0^{-2} \geq (\td{\rho} r_0)^{-2}$, the point $(\td{x}_1, t_1)$ has a canonical neighborhood in $\td\MM$.
Note that $|{\Rm}|(\td{x}_1,t_1) \geq K r_0^{-2} \geq E^2 K_1^* r_0^{-2}$ and by (\ref{eq:curvboundon2Afortau}) we have $|{\Rm}|(\td{x}_0, t_1) < K^*_1 r_0^{-2}$ for any lift of $\td{x}_0$ in $\td\MM (t_1)$.
So the very last case in the Definition \ref{Def:CNA} of the canonical neighborhood assumptions cannot  occur.
Therefore, $(\td{x}_1, t_1)$ is the center of an $\varepsilon$-neck or an $(\varepsilon, E)$-cap.
In the first case set $\td{x}_2 = \td{x}_1$ and in the second case let $\td{x}_2 \in \td\MM (t_1)$ be the center of an $\varepsilon$-neck that bounds this cap.
So in either case $(\td{x}_2, t_1)$ is the center of an $\varepsilon$-neck in $\td\MM (t_1)$ and
\begin{multline*}
 \dist_{t_1} (\td{x}_1, \td{x}_2) < E |{\Rm}|^{-1/2} (\td{x}_1, t_1) \leq E K^{-1/2} r_0 \leq r_0 \qquad \\ \text{and} \qquad E^{-2} K r_0^{-2} \leq |{\Rm}| (\td{x}_2, t_1) < 2 K r_0^{-2}.
\end{multline*}
(For the last inequality, we have used (\ref{eq:Rmless2K}) and the fact that $\td{x}_2 \in \td{U} (t_1)$, which follows from (\ref{eq:x1partialUdistance}), assuming that without loss of generality $A \geq 1$.)
Let $x_2 \in \MM(t_1)$ be the projection of $\td{x}_2$.
We can then apply Lemma \ref{Lem:neckhasfewquotients} and conclude that
\begin{equation} \label{eq:volisalwaysbounded}
 \vol_{t_1} B( x_2, \tfrac12 K^{-1/2} r_0 ) > \td{w}_0 \cdot \tfrac18 K^{-3/2} r_0^3.
\end{equation}

Next, note that by (\ref{eq:distx0x1byexp}) and the conclusion in the previous paragraph we have
\begin{equation} \label{eq:distx0x2byexp}
 \dist_{t_1} (x_0, x_2) < \big( 2A e^{4K\tau_0} + A + 1 \big) r_0.
\end{equation}
Also, using (\ref{eq:x1partialUdistance}) and assuming without loss of generality that $A > 2$, we find that $B( x_2, t_1, \tfrac12 K^{-1/2} r_0 ) \subset U (t_1)$.
Observe that by assumption (iv) any minimizing geodesic in $\MM (t_1)$ connecting $x_0$ with a point in $U(t_1)$ is contained in $B(x_0, t_1, 2Ar_0) \cup U(t_1)$ and recall that by (\ref{eq:curvboundon2Afortau}) and (\ref{eq:Rmless2K}) we have
\begin{equation} \label{eq:curvboundforvc}
 |{\Rm}| < 2K r_0^{-2} \qquad \text{on} \qquad B(x_0, t_1, 2A r_0) \cup U(t_1).
\end{equation}
Lastly, by (\ref{eq:curvboundon2Afortau}) and distance and volume distortion estimates, there are uniform constants $0 < \beta = \beta (w, A) < 1$ and $C_1 = C_1 (w, A) < \infty$, which only depend on $w$ and $A$, such that
\begin{equation} \label{eq:volatt1issmall} 
\vol_{t_1} B(x_0, t_1, \beta r_0) < C_1 \vol_{t_0} B(x_0, t_0, r_0). 
\end{equation}
We now apply volume comparison to deduce a lower bound on $\vol_{t_1} B(x_0, t_1, \beta r_0)$.
In order to do this, observe that, due to assumption (iv), every minimizing geodesic that connects $x_0$ with a point in $U(t_1)$ has to lie within $B(x_0, t_1, 2 A r_0) \cup U(t_1)$.
Moreover, by (\ref{eq:distx0x2byexp})
\[  B(x_2, t_1, \tfrac12  K^{-1/2} r_0) \subset B \big( x_0, t_1, (2A e^{4K\tau_0} + A + 2) r_0 \big) . \]
So, using the curvature bound (\ref{eq:curvboundforvc}), volume comparison along minimizing geodesics between $x_0$ and points in $B(x_2, t_1, \tfrac12  K^{-1/2} r_0)$ and using (\ref{eq:volisalwaysbounded}), we find
\begin{multline*} \vol_{t_1} B(x_0, t_1, \beta r_0) 
>  \frac{c_2 (w,A) \beta^3}{(2A e^{4 K \tau_0} + A + 2)^3} \vol_{t_1} B(x_2, t_1, \tfrac12  K^{-1/2} r_0) \\
 > c_2 (w,A) c_3(w,A) \td{w}_0 \cdot \tfrac18  K^{-3/2} r_0^3.
\end{multline*}
Here $c_2 = c_2(w,A), c_3 = c_3 (w,A) > 0$ are uniform constants, which only depend on $w$ and $A$.
Together with (\ref{eq:volatt1issmall}) and assumption (iii), this yields
\[ c_2 (w,A) c_3(w,A) \td{w}_0 \cdot \tfrac18 K^{-3/2}(w,A)  r_0^3 < C_1(w,A) \ov{w} r_0^3. \]
So for small enough $\ov{w}$, depending only on $w$ and $A$, we obtain a contradiction.

Thus with this choice of $\ov{w}$, the bound (\ref{eq:Rmless2K}) does indeed imply the bound (\ref{eq:RmlessK}).
As mentioned before, this implication proves the desired result.

\textit{Proof of assertion (d). \quad} Assertion (d) follows from assertion (c) by a distance distortion estimate or from (\ref{eq:distx0x1byexp}) in the previous proof.
\end{proof}

\subsection{Curvature control in large regions that are locally good everywhere} \label{subsec:curvcontrollocgoodeverywhere}
We will now show that if we only have \emph{local} goodness control within some distance to some geometrically controlled region and if we can guarantee this control on a time-interval of uniform size, then we can deduce a curvature bound, which is independent of this distance.

In this section, we will use the following notation:
Let $U \subset \MM$ be a sub-Ricci flow with surgery of $\MM$, $t$ be a time for which $U(t)$ is defined and $d \geq 0$.
Then we denote the time-$t$ $d$-tubular neighborhood around $\partial U (t)$ in $U(t)$ by $B^U (\partial U, t, d) = B( \partial U(t), t, d) \cap U(t)$.
The parabolic neighborhood $P^U (\partial U, t, d, \Delta t)$ is defined similarly.

\begin{Proposition} \label{Prop:curvboundnotnullinarea}
There is a continuous positive function $\delta : [0, \infty) \to (0, \infty)$ such that for every $w, \theta > 0$ and $A < \infty$ there are constants $K = K(w, A), \linebreak[1] T (w, \linebreak[1] A, \linebreak[1] \theta) < \infty$ such that the following holds: \\
Let $\MM$ be a Ricci flow with surgery on the time-interval $[0, \infty)$ that is performed by $\delta(t)$-precise cutoff and whose time-slices are closed and let $t_0 > T$.
Consider a sub-Ricci flow with surgery $U \subset \MM$ on the time-interval $[t_0 - r_1^2, t_0]$.
Let $r_1, r_0, b > 0$ be constants such that
\begin{enumerate}[label=(\roman*)]
\item $\theta \sqrt{t_0} \leq r_1 \leq r_0 \leq \frac12 \sqrt{t_0}$,
\item for all $x \in \partial U(t_0)$ we have $| {\Rm} | \leq A r_1^{-2}$ on $P (x, t_0, r_1, - r_1^2)$,
\item for every $t \in [t_0 - r_1^2, t_0]$ and $x \in B^U (\partial U, t, b)$, the point $x$ is locally $w$-good at scale $r_0$ and time $t$ or $|{\Rm}| (x,t) \leq A r_1^{-2}$.
\end{enumerate}
Then for every $t \in (t_0 - r_1^2, t_0]$ and $x \in B^U (\partial U, t, b)$ we have
\[ | {\Rm} |(x,t) < K \big( (b - \dist_t(\partial U(t), x))^{-2} + (t - t_0 + r_1^2)^{-1} \big). \]
\end{Proposition}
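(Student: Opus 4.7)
The plan is a contradiction argument combining point-picking with a volume-comparison analysis, in the spirit of the proofs of Proposition~\ref{Prop:curvcontrolincompressiblecollapse} and Proposition~\ref{Prop:slowdiamgrowth}. First I fix $\delta(t)$ small enough that the canonical neighborhood assumptions $CNA(\un{r}_{\varepsilon_0}(t), \varepsilon_0, E, \un\eta)$ of Proposition~\ref{Prop:CNThm-mostgeneral} hold on $\MM$, and hence, by Lemma~\ref{Lem:tdMM}, on every Riemannian covering of $\MM$; $\delta(t)$ will also be taken smaller than every cutoff requirement of the results invoked below. For $T$ large, $\delta(t_0)$ is much smaller than $\theta\sqrt{t_0}$, and in particular than $r_1$ and the reciprocal of every curvature scale that will enter the argument. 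As a preliminary step, assumption (ii) combined with a distance distortion estimate (Lemma~\ref{Lem:distdistortion}) extends the boundary curvature bound to a uniform parabolic $c_0(A) r_1$-collar of $\partial U$, so the conclusion holds automatically close to $\partial U$.

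Assume the conclusion fails along a sequence of counterexamples with constants $K^\alpha \to \infty$. Consider the excess function
\[
\Phi(x,t) = |{\Rm}|(x,t) \cdot \min\bigl\{ (b - \dist_t(\partial U(t), x))^2,\; t - t_0 + r_1^2 \bigr\}
\]
on the parabolic tube $B^U(\partial U, t, b) \times (t_0 - r_1^2, t_0]$. A Perelman-style point-picking scheme, analogous to the ones carried out inside the proofs of Lemma~\ref{Lem:6.3bc}(a) and Proposition~\ref{Prop:slowdiamgrowth}(b), extracts worst points $(x^{*,\alpha}, t^{*,\alpha})$ with $Q^\alpha := |{\Rm}|(x^{*,\alpha}, t^{*,\alpha}) \to \infty$, with curvature bounded by $4 Q^\alpha$ on $P(x^{*,\alpha}, t^{*,\alpha}, c(Q^\alpha)^{-1/2}, -c(Q^\alpha)^{-1})$ for a universal $c > 0$, and, by the collar estimate above, at distance at least $c_0(A) r_1 \gg (Q^\alpha)^{-1/2}$ from $\partial U^\alpha$. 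Since $Q^\alpha > A r_1^{-2}$, assumption (iii) forces $x^{*,\alpha}$ to be locally $w$-good at scale $r_0$; the canonical neighborhood assumptions, together with $Q^\alpha \to \infty$, then imply that $(x^{*,\alpha}, t^{*,\alpha})$ is a center of a strong $\varepsilon_0$-neck or of an $(\varepsilon_0, E)$-cap in $\MM(t^{*,\alpha})$ at curvature scale $\sim (Q^\alpha)^{-1/2}$.

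The contradiction is now extracted from the combination of the neck structure at $x^{*,\alpha}$, the local goodness condition, and volume comparison inside $V^\alpha := B(x^{*,\alpha}, \rho_{r_0}(x^{*,\alpha}, t^{*,\alpha}), t^{*,\alpha})$ and its universal cover $\pi : \td V^\alpha \to V^\alpha$. Since $\varepsilon_0$-necks are close to round cylinders and hence have essentially non-negative sectional curvatures at their own scale, $\rho_{r_0}(x^{*,\alpha}, t^{*,\alpha}) \gg (Q^\alpha)^{-1/2}$; after a harmless displacement of $x^{*,\alpha}$ by at most $E(Q^\alpha)^{-1/2}$ inside its canonical neighborhood, as in the proof of Proposition~\ref{Prop:curvcontrolincompressiblecollapse}, the entire $\varepsilon_0$-neck at $(x^{*,\alpha}, t^{*,\alpha})$ is contained in $V^\alpha$ and lifts isometrically through $\pi$. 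Lemma~\ref{Lem:neckhasfewquotients} applied to $\pi$ (its hypothesis being satisfied because $V^\alpha$ is a proper open subset of $\MM(t^{*,\alpha})$, as $\partial V^\alpha \neq \emptyset$) yields a lower volume bound of order $(Q^\alpha)^{-3/2}$ on a $\tfrac12(Q^\alpha)^{-1/2}$-ball around $x^{*,\alpha}$ in $V^\alpha$. On the other hand, local $w$-goodness of $x^{*,\alpha}$ combined with a Bishop--Gromov comparison in $\td V^\alpha$ at the sectional curvature lower bound $-\rho_{r_0}^{-2}$ defining $\rho_{r_0}$, chained along the long $\varepsilon_0$-neck via the neck-propagation argument of \cite[Proposition 5.4.7]{Bamler-diploma}, produces an incompatible upper bound on the same ball. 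This forces $Q^\alpha$ to be bounded by a constant depending only on $w$ and $r_0/r_1$, contradicting $Q^\alpha \to \infty$.

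The main obstacle is that goodness is only \emph{local}: the volume lower bound lives in the universal cover of $V^\alpha$, whose boundary carries no a priori curvature control. Consequently Proposition~\ref{Prop:curvboundinbetween} cannot be invoked at the natural blow-up scale $(Q^\alpha)^{-1/2}$, and the contradiction must be engineered purely from the canonical neighborhood structure at $(x^{*,\alpha}, t^{*,\alpha})$ via Lemma~\ref{Lem:neckhasfewquotients} and a chain-of-necks volume analysis, keeping careful track of the hierarchy of scales $(Q^\alpha)^{-1/2} \ll \rho_{r_0}(x^{*,\alpha}, t^{*,\alpha}) \leq r_0$.
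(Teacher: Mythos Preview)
Your approach has a genuine gap, and it diverges substantially from the paper's proof.

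The paper's proof does \emph{not} invoke the canonical neighborhood assumptions or Lemma~\ref{Lem:neckhasfewquotients} at all. It works directly with the weighted function
\[
f(x,t) = |{\Rm}|(x,t)\Big( (b - \dist_t(\partial U(t),x))^{-2} + (t - t_0 + r_1^2)^{-1} \Big)^{-1},
\]
picks a near-maximal point $(x_1,t_1)$ with $Q = r_1^2|{\Rm}|(x_1,t_1)$, and uses the bound $f\le H$ \emph{globally} to get $|{\Rm}|<16Q r_1^{-2}$ on a parabolic neighborhood of $(x_1,t_1)$ of spatial size comparable to $H^{1/2}Q^{-1/2}r_1$ and temporal depth $H^{1/2}Q^{-1}r_1^2$. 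Hamilton--Ivey pinching ($t^{-1}$-positivity) then converts this into $\sec \ge -S^{-2}(H)Q r_1^{-2}$ on that neighborhood, with $S(H)\to\infty$, giving $\rho_{r_0}(x_1,t_1)\ge r:= S(H)Q^{-1/2}r_1$. Local $w$-goodness lifts the parabolic neighborhood to the universal cover of the $\rho_{r_0}$-ball, where the $r$-ball now has volume $\ge \td c w r^3$, and Lemma~\ref{Lem:6.5} applied there yields $|{\Rm}|(x_1,t_1)\le K_0\tau_0^{-1}r^{-2}$. Comparing scales bounds $S(H)$ and hence $H$.

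Your argument, by contrast, is modelled on Propositions~\ref{Prop:curvcontrolincompressiblecollapse} and~\ref{Prop:slowdiamgrowth}, which both rely on an \emph{upper} volume bound (the collapse hypothesis $\vol B(x_0,t_0,r_0)<\ov w r_0^3$) to contradict the lower bound from Lemma~\ref{Lem:neckhasfewquotients}. No such collapse hypothesis is available here: local $w$-goodness is itself a lower volume bound in the cover, and the ``incompatible upper bound'' you claim to extract from it via Bishop--Gromov and neck-propagation is not established. Concretely, Bishop--Gromov with $\sec\ge -\rho_{r_0}^{-2}$ only gives \emph{lower} bounds on small balls from the goodness bound at scale $\rho_{r_0}$, and neck-propagation along CNA necks would bound the volume of the $\rho_{r_0}$-ball from above only if you already knew that the curvature stays above the CNA threshold everywhere on that ball --- which your local point-picking (giving control only on $P(x^*,c(Q)^{-1/2},-c(Q)^{-1})$) does not supply. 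Relatedly, your claim that $\rho_{r_0}(x^{*,\alpha})\gg (Q^\alpha)^{-1/2}$ from the near-nonnegativity of the neck only gives a fixed factor $\sim\varepsilon_0^{-1}$, not a factor going to infinity; the paper obtains the diverging factor $S(H)$ from the \emph{global} bound $f\le H$ together with Hamilton--Ivey, not from the neck structure.

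The missing ingredient is Lemma~\ref{Lem:6.5} applied in the universal cover of the $\rho_{r_0}$-ball, fed by a point-picking based on the weighted function $f$ rather than on raw curvature. That combination simultaneously produces a large parabolic neighborhood with curvature control and the volume lower bound needed for Lemma~\ref{Lem:6.5}, closing the argument without any neck or collapse input.
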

\begin{proof}
Let $\delta(t)$ be an arbitrary function that goes to zero as $t \to \infty$.
Then for sufficiently large $t$ (depending on $w$, $A$ and $\theta$), we can use Definition \ref{Def:precisecutoff}(3) and volume comparison to conclude that no surgery point of $\MM(t)$ is locally $w$-good at scale $r_0$ and the curvature at every surgery point satisfies $|{\Rm}| > A r_1^{-2}$.
So we can assume in the following that there are no surgery points in the space-time neighborhood
\[ B = \bigcup_{t \in (t_0 - r_1^2, t_0]} B^U (\partial U, t, b) . \]
Consider the function
\[ f \quad : 
\quad (x,t) \quad \longmapsto \quad | {\Rm} |(x,t) \big( (b - \dist_t(\partial U(t), x))^{-2} + (t - t_0 + r_1^2)^{-1} \big)^{-1}  \]
on $B$.
Since $B$ is free of surgery points, we find that $|{\Rm}|$ and hence $f$ is bounded on $B$ (by a non-universal constant).

In the following, we will bound the supremum $H$ of $f$.
Choose some $(x_1, t_1) \in B$ where this supremum is attained up to a factor of $2$, i.e. $f(x_1, t_1) > \frac12 H$ and set $Q = r_1^2 | {\Rm} |(x_1, t_1)$.
Observe that
\begin{equation} \label{eq:Qf12H}
Q > f (x_1, t_1) > \tfrac12 H .
\end{equation}
Now if $H \leq \max \{ 2, 2 A \}$, then we are done, assuming $K > \max \{ 2, 2A \}$.
So assume in the following that $H > \max \{ 2, 2 A \}$.
This implies, in particular, that $Q > f(x_1, t_1) > \frac12 H > \max \{ 1, A \}$ and hence by assumption (iii) that the point $x_1$ is locally $w$-good at scale $r_0$ and time $t_1$.
Moreover, by assumption (ii),
\begin{equation} \label{eq:farfromboundaryofU}
(x_1, t_1) \not\in P (x, t_0, r_1, -r_1^2) \qquad \text{for all} \qquad x \in \partial U(t_0).
\end{equation}

Set $d_1 = \dist_{t_1}(\partial U(t_1), x_1)$, $a = r_1^{-1} (b -d_1)$ and observe that
\begin{equation}  \label{eq:4QaH2}
 Q a^2 >  f(x_1, t_1) > \tfrac12 H > 1  \qquad \text{and} \qquad   Q (t_1 - t_0 + r_1^2) r_1^{-2} > f(x_1, t_1) > \tfrac12 H . 
\end{equation}
So for all $t \geq \frac14 (t_1 - t_0 + r_1^2) + t_0 - r_1^2$ and $x \in B^U (\partial U, t, d_1 + \frac12 a r_1)$ we have
\begin{multline} 
 | {\Rm} |(x,t) \leq H \big( (b - \dist_t (\partial U (t), x) )^{-2} + (t - t_0 + r_1^2)^{-1} \big) \\
\leq H \big( 4 a^{-2} r_1^{-2} + 4 (t_1 - t_0 + r_1^2)^{-1} \big) < 16 Q r_1^{-2}. \label{eq:8Qr1}
\end{multline}

For a moment fix some arbitrary $x \in B^U (\partial U, t_1 , d_1 + \frac14 a r_1)$ and choose $\Delta t > 0$ maximal with the property that $t_1 - \Delta t \geq \frac14 (t_1 - t_0 + r_1^2) + t_0 - r_1^2$ and $\dist_t(\partial U (t), x) < d_1 + \frac38 a r_1$ for all $t \in (t_1 - \Delta t, t_1]$.
We will now estimate the distance distortion between $x$ and any point $x_0 \in \partial U$ using Lemma \ref{Lem:distdistortion}(b).
Using (\ref{eq:8Qr1}) we find that for all $t \in [t_1 - \Delta t, t_1]$ we have $| {\Rm} | < 16 Q r_1^{-2}$ on $B(x, t, \frac18 a r_1) \cap U(t) \subset B^U ( \partial U, t, d_1 + \frac12 a r_1)$.
Moreover, by (\ref{eq:4QaH2}), we have $\frac18 Q^{-1/2} r_1 < \frac18 a r_1$.
By assumption (ii) and distance distortion estimates, we can also find a uniform $0 < \beta = \beta(A) < \frac12$ such that for all $t \in [t_1 - \Delta t, t_1]$ and all $y \in \partial U(t)$ we have $B(y, t, \beta r_1) \subset B(y, t_0, r_1)$ and $|{\Rm}| < \beta^{-2} r_1^{-2}$ on $B(y, t, \beta r_1)$.
So for all $t \in [t_1 - \Delta t, t_1]$ we have
\begin{multline*}
 |{\Rm}| < \big( {\min \{ \tfrac18 Q^{-1/2} , \beta \}} \big)^{-2} r_1^{-2} \qquad \\ \text{on} \qquad B(x_0, t, \beta r_1) \cup B \big( x, t, \min \{ \tfrac18 Q^{-1/2}, \beta \} r_1 \big)
\end{multline*}
and thus Lemma \ref{Lem:distdistortion}(b) yields
\[ \frac{d}{dt} \dist_t (x_0, x) > - C \big( { \min \{ \tfrac18 Q^{-1/2}, \beta \} } \big)^{-1} r_1^{-1} \]
for some universal constant $C < \infty$.
This implies that for all $t \in [t_1 - \Delta t, t_1]$ we have
\[ \dist_t (x_0, x) \leq d_{t_1} (x_0, x) + C \big( { \min \{ \tfrac18 Q^{-1/2} , \beta \} } \big)^{-1} r_1^{-1} (t_1 - t) \]
for all $t \in [t_1 - \Delta t, t_1]$.
Letting $x_0$ vary over $\partial U$ yields
\[ \dist_t (\partial U (t), x) \leq d_1 + \tfrac14 a r_1 + C \big({  \min \{ \tfrac18 Q^{-1/2} , \beta \} } \big)^{-1} r_1^{-1} (t_1 - t) \]
So, by the definition of $\Delta t$ and using (\ref{eq:4QaH2}) and (\ref{eq:Qf12H}), we obtain (recall that $Q > \frac12 H > 1$)
\begin{multline*}
 \Delta t \geq \min \Big\{ \frac{\tfrac18  a r_1}{C \big({  \min \{ \tfrac18 Q^{-1/2} , \beta \} } \big)^{-1} r_1^{-1}}, \tfrac34 (t_1 - t_0 + r_1^2) \Big\} \\
  > c' \min \big\{ a Q^{-1/2}, a \beta, H Q^{-1} \big\} r_1^2 \\
> c \min \big\{  H^{1/2}  Q^{-1}, H^{1/2} Q^{-1/2}, H Q^{-1} \big\} r_1^2 = c H^{1/2} Q^{-1} r_1^2
\end{multline*}
for some universal $c' > 0$ and some $c = c( A) > 0$.
Note that $x \in B^U (\partial U, t_1 , d_1 + \frac14 a r_1)$ was chosen arbitrarily.
So by the choice of $\Delta t$, we find that for any such $x$ and any $t \in [t_1 - c H^{1/2} Q^{-1} r_1^2, t_1]$ we have 
\begin{equation} \label{eq:distanceboundUt}
 \dist_t (\partial U(t), x) < d_1 + \tfrac38 a r_1 < d_1 + \tfrac12 a r_1.
\end{equation}
Moreover, by (\ref{eq:8Qr1}), we conclude that $| {\Rm} | < 16 Q r_1^{-2}$ on $P ' = P^U (\partial U, t_1, d_1 + \frac14 a r_1, - c H^{1/2} Q^{-1} r_1^2)$.
So, in particular, $P'$ does not contain surgery points.
Moreover, by (\ref{eq:distanceboundUt}), we have
\[ P' \subset \bigcup_{t \in [t_1 - c H^{1/2} Q^{-1} r_1^2, t_1]} B^U \big( \partial U, t, d_1 + \tfrac12 a r_1 \big). \]

By the $t^{-1}$-positivity of the curvature on $\MM$, we have $\sec \geq - F (Q r_1^{-2} t_0 ) Q r_1^{-2}$ on $P'$, where $F : [0, \infty) \to [0, \infty)$ is a decreasing function that goes to zero on the open end.
Since $F(Q r_1^{-2} t_0) \leq F(4Q) \leq F(H)$, we have the bound $\sec \geq - F(H) Q r_1^{-2}$ on $P'$.
Next, using (\ref{eq:farfromboundaryofU}) and using the constant $0 < \beta = \beta(A) < 1$ from before, we get that $\dist_{t_1} (\partial U (t_1), x_1) > \beta r_1$.
Then
\[ P \big( x_1, t_1, \min \{ \beta, \tfrac14 a \} r_1, - c H^{1/2} Q^{-1} r_1^2 \big) \subset P'. \]
Define $S : (0, \infty) \to (0, \infty)$ by $S(x) = \min \{ F^{-1/2} (x), \frac18 x^{1/2}, \frac12 \beta  x^{1/2}, c^{1/2} x^{1/4} \}$.
Then $S(x) \to \infty$ as $x \to \infty$ and we find, using (\ref{eq:4QaH2}) and (\ref{eq:Qf12H}), that
\begin{alignat*}{1}
 \tfrac14 a &> \tfrac18 H^{1/2} Q^{-1/2} \geq S(H) Q^{-1/2}, \\
 \beta &\geq \tfrac12 \beta H^{1/2} Q^{-1/2} \geq S (H) Q^{-1/2}, \\
 c H^{1/2} Q^{-1} &\geq S^2(H) Q^{-1}. 
\end{alignat*}
This yields the bound
\[ \sec \geq - S^{-2} (H) Q r_1^{-2} \qquad \text{on} \qquad P(x_1, t_1, S(H) Q^{-1/2} r_1, -S^2(H) Q^{-1} r_1^2). \]
In particular $\rho_{r_0} (x_1, t_1) \geq  S(H) Q^{-1/2} r_1$ (observe that $S(H) Q^{-1/2} r_1 \leq \beta r_1 \leq r_0$).

So by property (iii), we conclude that for $r = S(H) Q^{-1/2} r_1$ we have $\vol_{t_1} \widetilde{B}(\td{x}_1, \linebreak[1] t_1, \linebreak[1] r) > \td{c} w r^3$, where $\td{B} (\td{x}_1, t_1, r) $ denotes the universal cover of the ball $B(x_1, t_1, r)$.
We can now lift the flow on $P(x_1, t_1, r, -r^2)$ to this universal cover, rescale it parabolically by $r^{-1}$ and use Lemma \ref{Lem:6.5} to obtain
\[ Q r_1^{-2} = |{\Rm}| (x_1, t_1) < K_{0, \ref{Lem:6.5}} (\td{c} w) \tau_{0, \ref{Lem:6.5}}^{-1}( \td{c} w) r^{-2} = K_{0, \ref{Lem:6.5}}  \tau_{0, \ref{Lem:6.5}}^{-1} S^{-2} (H) Q r_1^{-2}. \]
Here $K_{0, \ref{Lem:6.5}},  \tau_{0, \ref{Lem:6.5}}$ are the constants from Lemma \ref{Lem:6.5}.
The last inequality implies $S^2(H) < K_{0, \ref{Lem:6.5}}  \tau_{0, \ref{Lem:6.5}}^{-1}$, which in turn implies that $H$ is bounded by some universal constant $H_0 = H_0(w, A) < \infty$.
This finishes the proof.
\end{proof}

\end{document}